\newtheorem{theorem}{Theorem}[section]
\newtheorem{lemma}[theorem]{Lemma}
\newtheorem{proposition}[theorem]{Proposition}
\newtheorem{corollary}[theorem]{Corollary}
\theoremstyle{definition}
\newtheorem{definition}[theorem]{Definition}
\newtheorem{example}[theorem]{Example}
\newtheorem{remark}[theorem]{Remark}
\begin{document}

\title[Splitting ring extensions ]{Splitting ring extensions}

\author[G. Picavet and M. Picavet]{Gabriel Picavet and Martine Picavet-L'Hermitte}
\address{Math\'ematiques \\
8 Rue du Forez, 63670 - Le Cendre\\
 France}
\email{picavet.mathu (at) orange.fr}

\begin{abstract} The paper deals with ring extensions $R\subseteq S$ and their lattices $[R,S]$ of subextensions and is mainly devoted to FCP extensions (extensions whose lattices are Artinian and Noetherian). The object of the paper is the introduction and the study of elements of the lattices that split in some sense ring extensions. The reason why is that 
this splitting was used in earlier paper without their common nature being recognized. There are some favorable cases allowing to build splitters, mainly when we are dealing with $\mathcal B$-extensions, for example integral extensions. Integral closures and Pr\"ufer hulls of extensions play a dual role. The paper gives many combinatorics results with the explicit computation of the Pr\"ufer hull of an FCP extension. We show that a split extension cannot be pinched, except  trivially. 
\end{abstract} 

\subjclass[2010]{Primary: 13B02, 13B21, 13B22; Secondary: 13B30, 13F05}

\keywords {FCP extension, integral extension, Pr\"ufer extension, almost-Pr\"ufer extension, support of a module, integral closure, Pr\"ufer hull, splitters, $\mathcal B$-extension, pinched}

\maketitle

\section{Introduction and Notation}

 We consider the category of commutative and unital rings, whose    epimorphisms will be involved. If $R\subseteq S$ is a (ring) extension, we denote by $[R,S]$ the set of all $R$-subalgebras of $S$ and set $]R,S[: =[R,S]\setminus\{R,S\}$ (with a similar definition for $[R,S[$ or $]R,S]$). The poset $([R,S], \subseteq )$ is a lattice whose operations are defined by $(U,V) \mapsto U\cap V $ and $(U,V)\mapsto UV$, for $U, V \in[R,S]$, called the lattice of the extension. In the context of the paper, for an extension $R\subseteq S$ and $T\in [R,S]$, the support $\mathrm{Supp}_R(S/T)$ of the $R$-module $S/T$ is a preponderant tool, as well as $\mathrm{MSupp}_R(S/T):= \mathrm{Supp}_R(S/T) \cap \mathrm{Max}(R)$. 
 As a general rule the subscript $R$ is dropped and specified if it is not $R$. 

A {\it chain} of $R$-subalgebras of $S$ is a set of elements of $[R,S]$ that are pairwise comparable with respect to inclusion.   
  We   say that the extension $R\subseteq S$ has FCP (or is an FCP extension) if each chain in $[R,S]$ is finite, or equivalently, the lattice of the extension is Artinian and Noetherian.  In case $[R,S]$ has finitely many elements, we say that the extension has FIP (or is an FIP extension). Dobbs and the authors characterized FCP and FIP extensions \cite{DPP2}. 
 
We start by explaining the ideas that lead us to write this paper. The two books \cite{KZ} and \cite{KK} authored by Knebusch and his co-authors  introduce Pr\"ufer extensions and we use their terminology and results. They are known in the literature as normal pairs but the key point is that an extension $R\subseteq S$ is Pr\"ufer if and only if $R\to T$ is a flat epimorphism  for each $T \in [R,S]$.

We are interested in {\em factorizations} of ring extensions introduced in \cite{KZ} and comprehensively studied in \cite{KK}. An element $U$ of the lattice $[R,S]$ of an extension is called a factor of the extension if $U$ has a complement $V$ (in the sense of lattices); that is, $R =U\cap V$ and $S= UV$. It can be shown that for a Pr\"ufer extension, these two last conditions are equivalent to the following diagram is a pushout and a pullback:
\centerline{$\begin{matrix} 
 {} &      {}       & U &       {}      &{}   \\
 {} &\nearrow & {}  & \searrow & {}  \\
 R &       {}     & {}  &      {}        & S  \\
 {} &\searrow & {}  & \nearrow & {}  \\
 {} &     {}       & V  &      {}        & {}     
 \end{matrix}$} 
 Moreover, since $UV = U+V$ holds for a Pr\"ufer extension, we get that
$0 \to U/R \to  S/R \to V/R \to 0$ is a split exact sequence of $R$-modules. 

Now for an arbitrary extension, the nice properties of complements or factors that hold for Pr\"ufer extensions are no longer true. As our paper focusses on FCP extensions  and complements may appear in a hidden form, a fact we observed in our earlier papers on FCP extensions, it seemed to us that a systematic study was necessary. 

The key idea to get a theory that provides substantial results is the following. We say that a ring extension $R\subseteq S$ is {\em split} at $T\in [R,S]$ if the supports of the $R$-modules $T/R$ and $S/T$ do not meet. The Theorem \ref{4.5} shows that when the extension has FCP and is split at $T$, then $T$ has a unique complement $T^o$,  verifying   $TT^o = T + T^o$. Moreover, there is an order-isomorphism $\varphi:[R,S]\to[R,T]\times[T,S]$ defined by $V \mapsto(V\cap T,VT)$. There is crosswise exchange between the maximal support of $T/R$ and $S/T^o$ and the maximal support of $T^o/R$ and $S/T$. But the reader should note that the proof relies heavily on properties of FCP extensions. The properties of split  extensions  are the subject of Section 4.

It is now time to introduce the definition of a {\em splitter} $\sigma (X)$ at 
a subset $X$ of $\mathrm{MSupp}_R(S/R)$: an element $T$ of $[R,S]$ is a splitter of the extension at $X$ if $X=\mathrm{MSupp}_R(T/R)$ and $X^c=\mathrm{MSupp}_R(S/T)$, where $X^c$ is the complement of $X$ in $\mathrm{MSupp}_R(S/R)$, in which case we set $\sigma (X):= T$, because such a $T$ is unique.
Naturally, each element that splits an extension is a splitter and vice-versa. 
A comprehensive study of splitters is given in Section  4.
With the notation of the above quoted Theorem, we have $T^o =\sigma (X^c)$ if $X = \mathrm{MSupp}(T/R)$. 

We show that splitters exist in case we deal with an FCP $\mathcal B$-extension; that is, an element of $[R,S]$ is (uniquely) determined by an element of $\prod [[R_M,S_M] | M\in \mathrm{MSupp}(S/R)]$. Actually, an FCP extension $R\subseteq S$ is a $\mathcal B$-extension if and only if $R/P$ is a local ring for each $P \in \mathrm{Supp}(S/R)$, this last condition being valid if the extension is integral \cite[ Proposition 2.21]{Pic 10}. (Theorem ~\ref{1.15}) states that an FCP extension is a $\mathcal B$-extension if and only if each subset of its maximal support  has a splitter.
The properties of $\mathcal B$-extensions are studied in Section 3. A chained FCP extension, like a Pr\"ufer-Manis extension, is a $\mathcal B$-extension. This last fact is already an indication that we have to give a strong look at Pr\"ufer extensions, especially because an FCP extension $R\subseteq S$ has an integral closure $\overline R$ which is Pr\"ufer in $S$, {\em i.e.} the extension is quasi-Pr\"ufer, or equivalently the extension is an INC-pair (see \cite{KZ}). Moreover, an FCP extension is a $\mathcal B$-extension if $\overline R \subseteq S$ is a $\mathcal B$-extension. Among other results, the deep and hard Theorem~\ref{6.13}, characterizes Pr\"ufer FCP extensions that are $\mathcal B$-extensions, by the way generalizing Ayache-Jarboui's nice results on normal pairs of domains (note that these authors did not consider $\mathcal B$-extensions).  

Section 5 explores the properties of FCP $\mathcal B$-extensions and their splitters. For an FCP $\mathcal B$-extension $R\subseteq S$ and an element $M$ of its maximal support, we can consider the elementary splitter $\sigma (M)$. We then observe that any splitter of a subset $X$ of its maximal support is the product (in a unique way) of all the elementary splitters $\sigma (M)$ where $M\in X$ (Proposition~\ref{1.155}). It is also interesting to know that $\sigma (X)$ is the intersection of some ``large quotient rings" of the form $R_{[M]}$, where $M$ varies in the complement of $X$. When the extension is integral, this decomposition is related to a primary decomposition of the $R/I$-module $\sigma (X)/R$, where $I$ is the conductor of the extension. Among many other results, we have the following Theorem~\ref{1.161}: If $R\subseteq S$ is an FCP $\mathcal B$-extension, then any element of $[R,S]$ is a splitter if and only if the extension is FIP, locally chained and its lattice is Boolean.

Another principal aim of the paper concerns almost-Pr\"ufer extensions, that are extensions $R \subseteq S$ such that $\widetilde R \subseteq S$ is integral where $\widetilde R$ is the Pr\"ufer hull of the extension \cite{Pic 5}. They are quasi-Pr\"ufer and an FCP extension is almost-Pr\"ufer if and only if it splits at $\widetilde R$ (or at $\overline R$) (see (Proposition~\ref{split})). In Section 6 we characterize FCP extensions that are almost-Pr\"ufer by conditions on length or by order-isomorphisms of lattices associated to the lattices of some subextensions. We also consider in the same way the almost-Pr\"ufer closures of FCP (or FIP) extensions. One of the most striking result is as follows: an FCP extension $R\subseteq S$ is almost-Pr\"ufer if and only if the natural map $[R,\widetilde R]\to [\overline R, S]$ is an order-isomorphism. Moreover, we are able to compute the Pr\"ufer hull of an FCP extension $R\subseteq S$ as an intersection of some large quotient rings and also as the large quotient ring of $R$ with respect to $1+ (R: \overline R)$. All other results are difficult to sum up and the reader is invited to look at Section 6 for more details.

Section 7 contains information on pinched extensions. If an extension is pinched at an element of its lattice, it cannot be split at this element, except it is trivial. The case of an FIP extension is particularly vivid. A special look is given at FCP extensions that are pinched at the integral closure ({\em id est} each element of the lattice is comparable to the integral closure). We develop a new criterion and apply it to Dobbs-Jarboui AV ring pairs  and Ayache recent results.

\section{Some notation and definitions}

A {\em local} ring is here what is called elsewhere a quasi-local ring. As usual, Spec$(R)$ and Max$(R)$ are the set of prime and maximal ideals of a ring $R$. If $R\subseteq S$ is a ring extension and $P\in\mathrm{Spec}(R)$, then $S_P$ is both the localization $S_{R\setminus P}$ as a ring and the localization at $P$ of the $R$-module $S$. 
For an ideal $I$ of a ring $R$, we write $\mathrm{V}(I):=\{P\in\mathrm{Spec}(R)\mid I\subseteq P\}$ and $\mathrm{D}(I)$ for its complement.

The support of an $R$-module $E$ is $\mathrm{Supp}_R(E):=\{P\in\mathrm{Spec}(R)\mid E_P\neq 0\}$, and $\mathrm{MSupp}_R(E):=\mathrm{Supp}_R(E)\cap\mathrm{Max}(R)$. When $R\subseteq S$ is an extension, we will set $\mathrm{Supp}(T/R):=\mathrm{Supp}_R(T/R)$ and $\mathrm{Supp}(S/T):=\mathrm{Supp}_R(S/T)$ for each $T\in [R,S]$, unless otherwise specified. 

We denote by  $(R:S)$  the conductor of the extension $R\subseteq S$ while the integral closure of $R$ in $S$ is denoted by $\overline R^S$ (or by $\overline R$ if no confusion can occur).
  Recall that a ring extension $R\subseteq S$ is called an {\em i-extension} if the natural map $\mathrm{Spec}(S)\to\mathrm{Spec}(R)$ is injective.

\begin{definition}\label{crucial 1} An extension $R\subset S$ is called {\em crucial} if $|\mathrm{MSupp}(S/R)|$ $= 1$. In this case, setting $\mathrm{MSupp}(S/R) = \{M\}$, where $M$ is a maximal ideal of $R$, 
 $M$ is called the {\em crucial (maximal) ideal} $\mathcal{C}(R,S)$ of $R\subset S$ and the extension is called $M$-crucial. 
\end{definition}

We gave a different definition of crucial extensions \cite[Definition 2.10]{Pic 7}; that is, an extension $R\subset S$ is crucial if $|\mathrm{Supp}(S/R)|= 1$. Our new definition is more suitable for this paper. Moreover, $|\mathrm{Supp}(S/R)|= 1$ implies $|\mathrm{MSupp}(S/R)|= 1$.

One of our tools are the minimal (ring) extensions, introduced by Ferrand-Olivier \cite{FO}. An extension $R\subset S$ is called {\em minimal} if $[R, S]=\{R,S\}$, {\em simple} if $S=R[t]$ for some $t\in S$ and locally minimal if $R_M\subset S_M$ is minimal for each $M\in\mathrm{MSupp}(S/R)$. A minimal extension is simple and is either integral (finite) or a flat epimorphism (\cite[Th\'eor\`eme 2.2]{FO}). Three types of minimal integral extensions exist, ramified, decomposed and inert, whose definitions are given by the following Theorem. A minimal extension is crucial.

\begin{theorem}\label{minimal} \cite [Theorem 2.2]{DPP2} Let $R\subset T$ be an extension and $M:=(R: T)$. Then $R\subset T$ is minimal and finite if and only if $M\in\mathrm{Max}(R)$ and one of the following three conditions holds:
\begin{itemize}
\item {\em inert case}: $M\in\mathrm{Max}(T)$ and $R/M\to T/M$ is a minimal field extension.

\item {\em decomposed case}: There exist $M_1,M_2\in\mathrm{Max}(T)$ such that $M= M _1\cap M_2$ and the natural maps $R/M\to T/M_1$ and $R/M\to T/M_2$ are both isomorphisms.

\item {\em ramified case}: There exists $M'\in\mathrm{Max}(T)$ such that ${M'}^2 \subseteq M\subset M',\  [T/M:R/M]=2$, and the natural map $R/M\to T/M'$ is an isomorphism.
\end{itemize}
In each of the above cases, $M=\mathcal{C}(R,T)$ and $R\subset T$ is an $i$-extension when either inert or ramified. 
\end{theorem}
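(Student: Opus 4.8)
The plan is to quotient by the conductor $M:=(R:T)$ and reduce the whole statement to the classification of minimal finite extensions of a field. For the forward implication I would first prove $M\in\mathrm{Max}(R)$. A minimal extension is crucial, so with $\mathfrak m:=\mathcal C(R,T)$ we have $R_P=T_P$ for all maximal $P\ne\mathfrak m$ and $R_{\mathfrak m}\subsetneq T_{\mathfrak m}$; since $T/R$ is a finite $R$-module and $(R:T)=\mathrm{Ann}_R(T/R)$ localizes, it is enough to check $(R_{\mathfrak m}:T_{\mathfrak m})=\mathfrak m R_{\mathfrak m}$, so one may assume $R$ local with maximal ideal $\mathfrak m$. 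Then $R+\mathfrak m T$ lies between $R$ and $T$; if it were $T$ then $T/R=\mathfrak m(T/R)$ and Nakayama would give $T=R$. Hence $\mathfrak m T\subseteq R$, so $\mathfrak m T$ is an ideal of $T$ contained in $R$ and $\mathfrak m\subseteq\mathfrak m T\subseteq(R:T)\subseteq\mathfrak m$, giving $M=(R:T)=\mathfrak m=\mathcal C(R,T)\in\mathrm{Max}(R)$.

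Since $M=(R:T)$ is a common ideal of $R$ and $T$, the map $B\mapsto B/M$ is an isomorphism of $[R,T]$ onto $[R/M,T/M]$; writing $k:=R/M$ (a field) and $A:=T/M$, the extension $R\subset T$ is minimal iff $k\subset A$ is, one has $(k:A)=0$, and, as $M(T/R)=0$, finiteness of $T$ over $R$ is the same as $\dim_k A<\infty$. So the task becomes: classify minimal finite extensions $k\subset A$ of a field with $\dim_k A\ge2$, and read the answer back through $A=T/M$.

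That classification is the core of the argument and the place I expect the real work. I would decompose $A=\prod_{i=1}^nA_i$ into local Artinian factors, with $k$ embedded diagonally, and in each configuration not appearing on the list exhibit a $k$-subalgebra strictly between $k$ and $A$: if some $A_i$ has nonzero maximal ideal $\mathfrak n_i$ then $k+(0\times\cdots\times\mathfrak n_i\times\cdots\times0)$ works unless $n=1$; if $n\ge2$ all $A_i$ are fields, and then $n\ge3$ is killed by the subring generated by $k$ and the idempotent $(1,\dots,1,0)$, and $n=2$ with $[A_1:k]\ge2$ by $k\times A_2$, leaving only $A=k\times k$ (decomposed); if $n=1$ and $A$ is a field, minimality is exactly that $k\subset A$ is a minimal field extension (inert); and if $n=1$ with $\mathfrak n:=\mathrm{rad}(A)\ne0$ then $A/\mathfrak n=k$ (else $k+\mathfrak n$ is intermediate), so $A=k\oplus\mathfrak n$, and since $k\oplus V$ is a subring for any $k$-subspace $V$ with $\mathfrak n^2\subseteq V\subseteq\mathfrak n$, minimality forces $\mathfrak n^2=0$ and $\dim_k\mathfrak n=1$ (ramified). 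Translating via $A=T/M$ yields precisely the three alternatives of the statement, with crucial ideal $M$. The main obstacle is ensuring this enumeration of ``bad'' configurations is exhaustive — that anything outside the three shapes genuinely admits an intermediate subalgebra.

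For the converse I would observe that in each of the three cases $\dim_k A$ is finite ($2$ in the decomposed and ramified cases, $[A:k]$ in the inert case) and $k\subset A$ is minimal (a dimension count when $\dim_k A=2$; in the inert case a $k$-subalgebra of the field $A$ is a field), so $R\subset T$ is minimal and finite by the reduction above, and then $(R:T)=\mathcal C(R,T)$ by the first paragraph. For the $i$-extension clause, $T/M$ has a unique prime ideal in the inert and ramified cases (it is a field, respectively a local ring with nilpotent maximal ideal) while $R_P=T_P$ for $P\ne M$, so $\mathrm{Spec}(T)\to\mathrm{Spec}(R)$ is injective there, whereas $T/M=k\times k$ has two primes over $M$ in the decomposed case.
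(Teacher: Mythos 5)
The paper offers no proof of this statement: it is quoted verbatim from \cite[Theorem 2.2]{DPP2} (going back to Ferrand--Olivier), so there is no internal argument to compare yours with. Your proposal is a correct, self-contained reconstruction of the classical proof: show the conductor is the crucial maximal ideal, pass through the lattice isomorphism $[R,T]\cong[R/M,T/M]$ to a finite algebra $A:=T/M$ over the field $k:=R/M$, and classify minimal finite $k$-algebras by splitting $A$ into local Artinian factors. Your enumeration of the excluded configurations is exhaustive (a non-field factor with $n\geq 2$, three or more field factors, two field factors with one of them bigger than $k$, a local non-field factor with residue field bigger than $k$ or with $\dim_k\mathfrak n\geq 2$ or $\mathfrak n^2\neq 0$ all produce explicit proper intermediate subalgebras), and the translation back to the inert/decomposed/ramified alternatives, the identification $M=\mathcal{C}(R,T)$, and the $i$-extension clause via uniqueness of the prime of $A$ and $R_P=T_P$ for $P\neq M$ are all handled correctly.

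Two small points to make airtight. First, the Nakayama step after reducing to $R$ local uses that $R_{\mathfrak m}\subset T_{\mathfrak m}$ is still minimal; this is true because every $B'\in[R_{\mathfrak m},T_{\mathfrak m}]$ is the localization of its preimage in $T$, so the localization map on lattices is surjective --- or avoid the issue by arguing globally: $R+\mathfrak m T\in[R,T]$, and $R+\mathfrak m T=T$ would give $(T/R)_{\mathfrak m}=\mathfrak m(T/R)_{\mathfrak m}$, hence $R_{\mathfrak m}=T_{\mathfrak m}$ by Nakayama, a contradiction. Second, in the converse the inert case asserts ``minimal \emph{and finite}'', so you need the standard remark that a minimal field extension is automatically finite: it is simple, $A=k[\alpha]$, and a transcendental $\alpha$ would give the proper intermediate ring $k[\alpha^2]$, so $\alpha$ is algebraic. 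With these two observations supplied, the proof is complete and is essentially the standard Ferrand--Olivier argument that underlies the cited reference.
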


\begin{definition}\label{canonical}(1) Let $R\subset S$ be an integral extension. Then $R\subseteq S$ is called {\em infra-integral} \cite{Pic 1} if all its residual extensions are isomorphisms and {\em subintegral} \cite{S} if $R\subseteq S$ is an infra-integral $i$-extension.  

(2) Let $R\subset S$ be a  ring extension. The {\em seminormalization} ${}_S^+R$ is the largest subextension $T\in[R,S]$ such that $R\subseteq T$ is subintegral, and also the least subextension $T\in[R,S]$ such that $T\subseteq S$ is seminormal. The {\em t-closure}, ${}_S^tR$ is the largest subextension $T\in[R,S]$ such that $R\subseteq T$ is infra-integral, and also the least subextension $T\in[R,S]$ such that $T\subseteq S$ is t-closed. The {\it canonical decomposition} of an arbitrary ring extension $R\subset S$ is $R\subseteq{}_S^+R\subseteq{}_S^tR\subseteq\overline R\subseteq S$. An FCP extension which is either subintegral or t-closed is an $i$-extension (\cite[Proposition 2.10]{Pic 13}).
\end{definition}

We do not give here the definition of a seminormal or t-closed extension (see \cite{S} and \cite{Pic 1} for the definitions), but give a characterization in the context of FCP extensions.

The arguments of the following Propositions have often appeared in earlier papers. We state them now to get a reference.
 We emphasize that the following  result is crucial. 
  
\begin{proposition}\label{1.91}An integral FCP extension $R\subset S$ is seminormal if and only if $(R:S)$ is an intersection of finitely many maximal ideals of $S$ and this intersection is irredundant in $R$ and $S$.
\end{proposition}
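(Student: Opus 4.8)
The plan is to divide out by the conductor $\mathfrak{C}:=(R:S)$ and reduce the whole statement to a soft fact about extensions of finite products of fields.

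I would begin with reductions. Since $R\subseteq S$ is integral with FCP, recall that $\mathrm{Supp}(S/R)=\mathrm{MSupp}(S/R)$ is finite and, as $\mathrm{Ann}_R(S/R)=\mathfrak{C}$, equals $\mathrm{V}(\mathfrak{C})$ (\cite{DPP2}); hence $\dim(R/\mathfrak{C})=0$, and by integrality $\dim(S/\mathfrak{C})=0$, so only finitely many maximal ideals $\mathfrak{M}_1,\dots,\mathfrak{M}_m$ of $S$ contain $\mathfrak{C}$. Consequently $\mathfrak{C}$ is an intersection of finitely many maximal ideals of $S$ exactly when $\mathfrak{C}$ is a radical ideal of $S$; and when this holds, $\mathfrak{C}=\bigcap_j\mathfrak{M}_j$ is irredundant because distinct maximal ideals are incomparable, and contracting gives $\mathfrak{C}=\bigcap_j(\mathfrak{M}_j\cap R)$, again the irredundant intersection of the finitely many maximal ideals of $R$ above $\mathfrak{C}$. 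Thus both irredundancy clauses are automatic, and it suffices to prove that $R\subseteq S$ is seminormal if and only if $\mathfrak{C}$ is radical in $S$. Finally, since $\mathfrak{C}$ is common to $R$ and $S$ and $R+\mathfrak{C}=R$, the criterion characterizing seminormality of a ring extension — namely that $A\subseteq B$ is seminormal iff every $b\in B$ with $b^2,b^3\in A$ already lies in $A$ (\cite{S}) — shows at once that $R\subseteq S$ is seminormal if and only if $R/\mathfrak{C}\subseteq S/\mathfrak{C}$ is, and this last extension has conductor $(R/\mathfrak{C}:S/\mathfrak{C})=0$.

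For the forward implication I would use the elementary fact that a seminormal integral extension $A\subseteq B$ satisfies $\mathrm{Nil}(B)\subseteq A$: if $x\in\mathrm{Nil}(B)\setminus A$, then $\ell:=\max\{k: x^k\notin A\}$ is finite, and $y:=x^{\ell}$ violates the criterion since $y\notin A$ while $y^2=x^{2\ell}\in A$ and $y^3=x^{3\ell}\in A$. Applying this to $R/\mathfrak{C}\subseteq S/\mathfrak{C}$, whose conductor is $0$, we get $\mathrm{Nil}(S/\mathfrak{C})\subseteq R/\mathfrak{C}$; being an ideal of $S/\mathfrak{C}$ contained in $R/\mathfrak{C}$ it lies in the conductor $0$, so $\mathfrak{C}$ is radical in $S$. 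For the converse, assume $\mathfrak{C}$ radical in $S$; the Chinese Remainder Theorem gives $S/\mathfrak{C}\cong\prod_{j=1}^{m}L_j$ with $L_j=S/\mathfrak{M}_j$ a field. The subring $R/\mathfrak{C}$ is reduced and $\prod_j L_j$ is module-finite over it (\cite{DPP2}), so Eakin--Nagata makes $R/\mathfrak{C}$ Noetherian; being zero-dimensional and reduced, $R/\mathfrak{C}\cong\prod_{i=1}^{n}K_i$ is a finite product of fields, embedded in $\prod_j L_j$ via a surjection $j\mapsto i(j)$ of index sets together with algebraic field extensions $K_{i(j)}\subseteq L_j$. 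Then any $b=(b_j)_j$ with $b^2,b^3\in R/\mathfrak{C}$ has each $b_j$ equal to $0$ or to $b_j^3/b_j^2\in K_{i(j)}$, and the condition that $b^2$ and $b^3$ be constant on the fibres of $j\mapsto i(j)$ forces $b_j=b_{j'}$ whenever $i(j)=i(j')$; hence $b\in R/\mathfrak{C}$. So $R/\mathfrak{C}\subseteq S/\mathfrak{C}$, and therefore $R\subseteq S$, is seminormal.

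The step I expect to be most delicate is the passage modulo $\mathfrak{C}$: it rests on identifying the right notion of seminormal extension with the numerical criterion on squares and cubes of \cite{S}, and on the fact that an integral FCP extension has $\mathrm{Supp}(S/R)=\mathrm{MSupp}(S/R)$ finite — this is what makes "radical in $S$'' equivalent to "finite irredundant intersection of maximal ideals of $S$'' and makes the $R$-side irredundancy cost nothing. Once these are in place, the trivial-conductor case is entirely formal. If one prefers to avoid the criterion, the forward direction can alternatively be extracted from the canonical decomposition by ruling out a ramified minimal subextension in $[R,S]$, but the quotient argument seems shortest.
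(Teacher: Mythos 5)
Your proof is correct, but it takes a genuinely different route from the paper's. The paper disposes of the proposition in a few lines by quoting its own earlier machinery: seminormality of an integral FCP extension is characterized in \cite[Lemma 4.8]{DPP2} through the conductors $(R:T)$ being radical (intersections of maximal ideals) for the intermediate rings $T$, so the forward direction is immediate, and the converse only has to propagate the hypothesis from $(R:S)$ to every $(R:T)$, which is done via \cite[Lemma 4.7]{Pic 0} (an ideal of $T$ containing a finite intersection of maximal ideals is itself such an intersection). You instead work with the elementwise criterion of \cite{S} ($b^2,b^3\in R\Rightarrow b\in R$), pass to $R/\mathfrak C\subseteq S/\mathfrak C$, and argue directly: your forward direction re-proves the relevant half of \cite[Lemma 4.8]{DPP2} by the nice observation that $\mathrm{Nil}(S/\mathfrak C)$ is an ideal of $S/\mathfrak C$ inside $R/\mathfrak C$, hence inside the zero conductor; your converse replaces the intermediate-ring bookkeeping by the structure of the embedding of the reduced Artinian ring $R/\mathfrak C$ (a finite product of fields) into $S/\mathfrak C\cong\prod_j L_j$, and a short coordinate computation with the squares-and-cubes test. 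What you import from \cite{DPP2} is only the finiteness package of Theorem 4.2 (the extension is module-finite, $R/\mathfrak C$ Artinian, $\mathrm{V}_R(\mathfrak C)$ and $\mathrm{V}_S(\mathfrak C)$ finite sets of maximal ideals), which the paper's proof uses as well; in exchange you avoid both quoted lemmas, at the cost of the (routine, idempotent-based) description of subrings of a finite product of fields, which you state rather than prove. Your treatment of the two irredundancy clauses as automatic consequences of the first condition matches the paper, which dismisses them as obvious. Note also that, like the paper, you only need the radicality hypothesis (not irredundancy) for the converse, and that your criterion involves only the top extension, which is precisely why no analogue of the paper's reduction to arbitrary $T\in\,]R,S]$ is needed in your argument.
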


\begin{proof} Set $C:=(R:S)$. Since $R\subset S$ is an integral FCP extension, $\mathrm{V}_R(C)$ (resp.$\mathrm{V}_S(C)$) is finite and contained in $\mathrm{Max}(R)$, (resp. $\mathrm{Max}(S)\ (*)$) by \cite[Theorem 4.2]{DPP2}.

Assume first that $R\subset S$ is seminormal and FCP. Then $C$ is a radical ideal of $S$ by \cite[Lemma 4.8]{DPP2}, an intersection of finitely many maximal ideals of $S$ by $(*)$. 

Conversely, assume that $C$ is an intersection of finitely many maximal ideals of $S$. Let $T\in[R,S]\setminus\{R\}$. Then, $C$ is also an ideal of $T$ which is an intersection of finitely many maximal ideals of $T$. But $C\subseteq(R:T)$, so that $(R:T)$ is an intersection of finitely many maximal ideals of $T$ by \cite[Lemma 4.7]{Pic 0}. It follows that $R\subset S$ is seminormal according to \cite[Lemma 4.8]{DPP2}. 

The last result is obvious.
\end{proof}

\begin{proposition}\label{1.9} An FCP integral extension $R\subset S$ is t-closed if and only if $(R:S)$ is an intersection of finitely maximal ideals of $S$ such that $|\mathrm{V}_R((R:S))|=|\mathrm{V}_S((R:S))|$.
 \end{proposition}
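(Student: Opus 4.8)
The plan is to mirror the structure of the proof of Proposition~\ref{1.91}, replacing ``seminormal'' by ``t-closed'' throughout and tracking the additional bookkeeping on the cardinalities of the fibres $\mathrm{V}_R(C)$ and $\mathrm{V}_S(C)$, where $C := (R:S)$. As in the seminormal case, the finiteness of both $\mathrm{V}_R(C)$ and $\mathrm{V}_S(C)$ and their being contained in $\mathrm{Max}(R)$, resp. $\mathrm{Max}(S)$, follows from \cite[Theorem 4.2]{DPP2} since $R\subset S$ is an integral FCP extension; call this $(*)$ again.

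For the forward implication, I would assume $R\subset S$ is FCP and t-closed. Since a t-closed extension is in particular seminormal, Proposition~\ref{1.91} already gives that $C$ is an irredundant intersection of finitely many maximal ideals of $S$ which is also irredundant in $R$. It remains to see that the number of maximal ideals of $S$ containing $C$ equals the number of maximal ideals of $R$ containing $C$, i.e. $|\mathrm{V}_R(C)| = |\mathrm{V}_S(C)|$. The natural map $\mathrm{V}_S(C)\to\mathrm{V}_R(C)$ is surjective by lying over (integral extension), so the content is injectivity. Here I would invoke the characterization of t-closedness in the FCP setting via the canonical decomposition: t-closed means ${}_S^tR = R$, so the infra-integral part is trivial, and by \cite[Proposition 2.10]{Pic 13} (quoted in Definition~\ref{canonical}) an FCP t-closed extension is an $i$-extension; hence $\mathrm{Spec}(S)\to\mathrm{Spec}(R)$ is injective, and restricting to the fibre over $C$ gives $|\mathrm{V}_S(C)| = |\mathrm{V}_R(C)|$.

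For the converse, suppose $C$ is an intersection of finitely many maximal ideals of $S$ with $|\mathrm{V}_R(C)| = |\mathrm{V}_S(C)|$. By Proposition~\ref{1.91} (the irredundancy is automatic once the intersection is finite and the fibre cardinalities match, or can be arranged by discarding redundant terms), $R\subset S$ is seminormal. To upgrade to t-closed I would argue as in the converse of Proposition~\ref{1.91}: for any $T\in [R,S]\setminus\{R\}$, $C$ is an ideal of $T$ and an intersection of finitely many maximal ideals of $T$, hence so is $(R:T)\supseteq C$ by \cite[Lemma 4.7]{Pic 0}; moreover the equality of fibre cardinalities passes down to the subextension $R\subset T$ because the maps $\mathrm{V}_T(C)\to\mathrm{V}_R(C)$ and $\mathrm{V}_S(C)\to\mathrm{V}_T(C)$ are surjective and their composite is bijective, forcing each to be bijective. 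Then a minimal subextension $R\subset T$ would be seminormal and an $i$-extension, hence (by Theorem~\ref{minimal}, since decomposed is the only infra-integral minimal type and it is not an $i$-extension) inert with trivial residue extension, which is impossible for a minimal extension; therefore no infra-integral minimal extension sits above $R$ inside $S$, giving ${}_S^tR = R$, i.e. $R\subset S$ is t-closed.

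The main obstacle I anticipate is making the descent of the condition $|\mathrm{V}_R(C)| = |\mathrm{V}_S(C)|$ to subextensions fully rigorous: one must be careful that $C$, while an ideal of every $T\in[R,S]$, need not equal $(R:T)$, so the cardinality comparison should be run with the \emph{fixed} ideal $C$ (whose fibres in $R$, $T$, $S$ are all finite by $(*)$ applied to each integral FCP subextension) rather than with the varying conductors, and then transferred to $(R:T)$ via \cite[Lemma 4.7]{Pic 0}. The other delicate point is citing exactly the right form of ``FCP t-closed $\Rightarrow$ $i$-extension'' and its (near-)converse; if the literature only gives one direction cleanly, the converse implication may instead need the explicit local analysis of minimal extensions from Theorem~\ref{minimal} as sketched above.
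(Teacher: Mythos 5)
Your forward implication coincides with the paper's: t-closed implies seminormal, Proposition~\ref{1.91} gives that $C:=(R:S)$ is a finite intersection of maximal ideals of $S$, and the fact that an FCP t-closed extension is an $i$-extension (quoted in Definition~\ref{canonical}) yields $|\mathrm{V}_R(C)|=|\mathrm{V}_S(C)|$ via lying over. Your converse, however, takes a genuinely different route. The paper localizes at any $M\in\mathrm{V}_R(C)$ (t-closedness being a local property by \cite[Th\'eor\`eme 3.15]{Pic 1}), observes that in the local situation the hypothesis forces $(R:S)=M$ to be the unique maximal ideal of $S$, and concludes from the t-closed (field) extension $R/M\subset S/M$ via \cite[Lemme 3.10]{Pic 1}. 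You instead stay global: seminormality of $R\subset S$ (the converse direction of Proposition~\ref{1.91} uses only the finite-intersection hypothesis, so the irredundancy issue you flag is harmless) together with the descended bijection $\mathrm{V}_T(C)\to\mathrm{V}_R(C)$ is used to rule out every infra-integral minimal subextension $R\subset T$, whence ${}_S^tR=R$. This is viable and avoids the two results from \cite{Pic 1}; the price is the descent bookkeeping for the fibre condition (which you handle correctly: both restriction maps are surjective and their composite is bijective, so each is bijective) and a careful use of the classification of minimal extensions.

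On that last point there is a concrete slip: decomposed is \emph{not} the only infra-integral minimal type. A ramified minimal extension is also infra-integral, since its residual map $R/M\to T/M'$ is an isomorphism by Theorem~\ref{minimal}; indeed it is subintegral. So your parenthetical justification, and the deduction ``inert with trivial residue extension,'' do not follow as written. The argument is repaired with the ingredients you already list: an infra-integral minimal subextension $R\subset T$ is ramified or decomposed; the ramified case is excluded because a ramified minimal extension is subintegral, hence not seminormal, while $R\subset T$ inherits seminormality from $R\subset S$ (as ${}_T^+R\subseteq{}_S^+R=R$); the decomposed case is excluded because its two maximal ideals $M_1,M_2$ lying over the crucial ideal $M\supseteq C$ both belong to $\mathrm{V}_T(C)$, contradicting the injectivity of $\mathrm{V}_T(C)\to\mathrm{V}_R(C)$. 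With that correction your converse is sound.
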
 
\begin{proof} 
Let $M\in\mathrm{MSupp}(S/R)$. Assume that $R\subset S$ is an FCP t-closed extension. Then, $R\subset S$ is an FCP seminormal extension, so that $(R:S)$ is an intersection of finitely maximal ideals of $S$ by Proposition \ref{1.91}. Moreover, $R\subset S$ is an i-extension by Definition \ref{canonical}, so that $|\mathrm{V}_R((R:S))|=|\mathrm{V}_S((R:S))|$.

Conversely, assume that $(R:S)$ is an intersection of finitely maximal ideals of $S$ such that $|\mathrm{V}_R((R:S))|=|\mathrm{V}_S((R:S))|$. 
 Localizing at any $M\in\mathrm{V}_R((R:S))$, it follows that we can assume that $(R,M)$ is a local ring because $R\subset S$ is t-closed if and only if $R_M\subset S_M$ is t-closed for any $M\in\mathrm{Max}(R)$ by \cite[Th\'eor\`eme 3.15]{Pic 1}. Then $(R:S)=M$ is the only maximal ideal of $S$. Considering the extension $R/M\subset S/M$, \cite[Lemme 3.10]{Pic 1} shows that $R\subset S$ is t-closed, because so is $R/M\subset S/M$.
\end{proof}

The  connection between the above ideas is that if $R\subseteq S$ has FCP, then any maximal (necessarily finite) chain $\mathcal C$ of $R$-subalgebras of $S$, of the form  $R=R_0\subset R_1\subset\cdots\subset R_{n-1}\subset R_n=S$, with {\em length} $\ell(\mathcal C):=n <\infty$, results from juxtaposing $n$ minimal extensions $R_i\subset R_{i+1},\ 0\leq i\leq n-1$. 

For any extension $R\subseteq S$, the {\em length} $\ell[R,S]$ of $[R,S]$ is the supremum of the lengths of chains of $R$-subalgebras of $S$. Notice that if $R\subseteq S$ has FCP, then there {\em does} exist some maximal chain of $R$-subalgebras of $S$ with length $\ell[R,S]$ \cite[Theorem 4.11]{DPP3}. 

We will say that an extension $R\subseteq S$ is {\em chained} if $[R,S]$ is a chain.
 
Recall that an extension $R\subseteq S$ is called {\em Pr\"ufer} if $R\subseteq T$ is a flat epimorphism for each $T\in[R,S]$ (or equivalently, if $R\subseteq S$ is a normal pair \cite[Theorem 5.2, p. 47]{KZ}). A useful result is \cite[Theorem 5.2, p.47-48] {KZ} which states that a ring extension $R \subset S$ is Pr\"ufer if and only if $(R:s)R[s] =R[s]$ for each $s\in S$.
Note that a Pr\"ufer integral extension is an isomorphism. The {\em Pr\"ufer hull} of an extension $R\subseteq S$ is the greatest {\em Pr\"ufer} subextension $\widetilde R$ of $[R,S]$ \cite{Pic 3}. 
  In \cite{Pic 5}, we defined a {\em quasi-Pr\"ufer} extension as an extension that can be factored $R\subseteq R'\subseteq S$, where $R\subseteq R'$ is integral and $R'\subseteq S$ is Pr\"ufer, or equivalently, $\overline R \subseteq S$ is Pr\"ufer. An FCP extension is quasi-Pr\"ufer \cite[Corollary 3.4]{Pic 5}. 
   In \cite{Pic 5}, a minimal flat epimorphism, being Pr\"ufer, is called a {\it Pr\"ufer minimal} extension.
     
An extension $R\subseteq S$ is called {\em almost-Pr\"ufer} if $\widetilde R\subseteq S$ is integral, or equivalently, when $R\subseteq S$ is FCP, whence quasi-Pr\"ufer, if $S=\widetilde R\overline R$ \cite[Theorem 4.6]{Pic 5}. An almost-Pr\"ufer extension  is quasi-Pr\"ufer.  
 
Finally, $|X|$ is the cardinality of a set $X$, $\subset$ denotes proper inclusion and if $X\subseteq Y$, we denote by $X^c$ its complement in $Y$, and for a positive integer $n$, we set $\mathbb{N}_n:=\{1,\ldots,n\}$. 

  \section{$\mathcal B$-extensions}

Let $R\subset S$ be an FCP extension with $\mathrm{MSupp}(S/R)=\{M_i\}_{i=1}^n$. 
 Recall that $|\mathrm{MSupp}(S/R)|<\infty$ by \cite[Corollary 3.2]{DPP2}.
According to \cite[Theorem 3.6]{DPP2}, the map $\varphi:[R,S]\to\prod_{i=1}^n[R_{M_i},S_{M_i}]$ defined by $\varphi(T):=(T_{M_i})_{i=1}^n$ is injective. In \cite[the paragraph before Proposition 2.21]{Pic 10}, we say that $R\subset S$ is a $\mathcal B$-extension if $\varphi$ is bijective ($\mathcal B$ stands for bijective). In this case, $\varphi$ is an order isomorphism (for the product order on $\prod_{i=1}^n[R_{M_i},S_{M_i}]$).  
  We characterized an FCP $\mathcal B$-extension as follows, adding  now a property of the length of  such an extension:
  
 \begin{proposition}\label{6.5} \cite[ Proposition 2.21]{Pic 10} An FCP extension $R\subseteq S$ is a $\mathcal B$-extension if and only if $R/P$ is  local for each $P\in\mathrm{Supp}(S/R)$ and in this case
 $\ell[R,S]=\sum(\ell[R_M,S_M]\mid M\in \mathrm{MSupp}(S/R))$. 

\noindent The above ``local" condition on the factor domains $R/P$ holds in case $\mathrm{Supp}(S/R)\subseteq\mathrm{Max}(R)$, and, in particular, if $R\subset S$ is  integral. 

\end{proposition}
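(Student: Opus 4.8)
The plan is to borrow the equivalence ``$R\subseteq S$ is a $\mathcal B$-extension $\iff$ $R/P$ is local for every $P\in\mathrm{Supp}(S/R)$'' from \cite[Proposition 2.21]{Pic 10} (the idea being that, since $\varphi$ is always injective, only surjectivity is at stake, i.e.\ the freedom to prescribe the localizations $T_{M_i}$ independently, which is available precisely when no prime of $\mathrm{Supp}(S/R)$ lies below two distinct maximal ideals), and to prove the new ingredient, the length formula. So assume $R\subseteq S$ is an FCP $\mathcal B$-extension with $\mathrm{MSupp}(S/R)=\{M_i\}_{i=1}^n$. By the paragraph preceding the statement, $\varphi\colon[R,S]\to\prod_{i=1}^n[R_{M_i},S_{M_i}]$, $T\mapsto(T_{M_i})_{i=1}^n$, is an order isomorphism, and an order isomorphism sends chains to chains of the same length; hence $\ell[R,S]=\ell\bigl(\prod_{i=1}^n[R_{M_i},S_{M_i}]\bigr)$, so it suffices to compute the length of this finite product of lattices.

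First I would record that localization preserves FCP: for $P\in\mathrm{Spec}(R)$ the map $[R,S]\to[R_P,S_P]$, $T\mapsto T_P$, is order preserving and surjective, with a one-sided inverse given by pulling an element of $[R_P,S_P]$ back along the localization map $S\to S_P$, and this pullback sends chains to chains of the same length; so each $[R_{M_i},S_{M_i}]$ is an FCP lattice of finite length $\ell_i:=\ell[R_{M_i},S_{M_i}]$, realized by a maximal chain from $R_{M_i}$ to $S_{M_i}$ (\cite[Theorem 4.11]{DPP3}). Then I would prove the purely lattice-theoretic identity $\ell\bigl(\prod_{i=1}^nL_i\bigr)=\sum_{i=1}^n\ell(L_i)$ for FCP lattices $L_1,\dots,L_n$. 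For ``$\ge$'' concatenate maximal chains: ascend a maximal chain in the first coordinate while holding the others at their bottom element, then in the second, and so on, producing a chain of length $\sum_i\ell(L_i)$ from the bottom to the top of the product. For ``$\le$'', let $x^{(0)}<x^{(1)}<\cdots<x^{(m)}$ be a chain of $\prod_iL_i$; every step $x^{(k)}<x^{(k+1)}$ is a strict increase in at least one coordinate, so if $A_i$ denotes the set of steps $k$ at which the $i$-th coordinate strictly increases, then $\{0,\dots,m-1\}=\bigcup_iA_i$, whence $m\le\sum_i|A_i|$; and the distinct values of the $i$-th coordinate along the chain form a chain of $L_i$ of length $|A_i|$, so $|A_i|\le\ell(L_i)$ and $m\le\sum_i\ell(L_i)$. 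Taking the supremum over chains gives the identity, hence $\ell[R,S]=\sum_{i=1}^n\ell[R_{M_i},S_{M_i}]=\sum\bigl(\ell[R_M,S_M]\mid M\in\mathrm{MSupp}(S/R)\bigr)$.

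For the final two assertions: if $\mathrm{Supp}(S/R)\subseteq\mathrm{Max}(R)$, then every $P$ in the support is maximal, so $R/P$ is a field, which is local, and the criterion applies; and if $R\subset S$ is an integral FCP extension, then $(R:S)$ annihilates $S/R$, so $\mathrm{Supp}(S/R)\subseteq\mathrm{V}_R((R:S))\subseteq\mathrm{Max}(R)$ by \cite[Theorem 4.2]{DPP2} (as in the proof of Proposition~\ref{1.91}), which reduces to the previous case.

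The one delicate point is the upper bound $\ell\bigl(\prod_iL_i\bigr)\le\sum_i\ell(L_i)$, where one must keep careful track of which coordinate strictly increases at each step of an arbitrary chain of the product; the assertion that FCP localizes, and everything else, are formal once $\varphi$ is known to be an order isomorphism.
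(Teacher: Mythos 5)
Your proposal is correct and follows essentially the same route as the paper: the equivalence is imported from \cite[Proposition 2.21]{Pic 10}, and the length formula is obtained from the bijectivity of $\varphi$, which reduces everything to the length of the product lattice $\prod_{i=1}^n[R_{M_i},S_{M_i}]$. The only difference is that you prove the identity $\ell\bigl(\prod_i L_i\bigr)=\sum_i\ell(L_i)$ (and the finiteness of each $\ell[R_{M_i},S_{M_i}]$) by a direct, correct lattice argument, whereas the paper simply cites \cite[Proposition 3.7(d)]{DPP2} for this step; your self-contained version is a harmless, even welcome, substitute given that the paper itself flags a misprint in that reference.
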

\begin{proof} We have to prove the equation. Set $\mathrm{MSupp}(S/R):= $ $\{M_1,\ldots,M_n\}$. If $\varphi$ is bijective, $\ell[R,S]=\ell(\prod_{i=1}^n[R_{M_i},S_{M_i}])=\sum_{i=1}^n\ell[R_{M_i},S_{M_i}]$ by \cite[Proposition 3.7(d)]{DPP2}, (there is a misprint in the quoted reference).  
\end{proof}

Let $T\in[R,S]$. Then, $T'\in[R,S]$ is called a {\em complement} of $T$ if $T\cap T'=R$ and $TT'=S$. If $T'$ is the unique complement of $T$ in $[R,S]$, we write $T'=T^o$. 

An extension $R\subseteq S$ is called {\em Boolean} if $([R,S],\cap,\cdot)$ is a  distributive lattice (that is such that intersection and product are each distributive with respect to the other) and such that each $T\in[R,S]$ has a (necessarily unique) complement.

\begin{corollary}\label{6.51} An FCP  extension $R\subseteq S$ is locally minimal if and only if $R\subset S$ is an FIP $\mathcal B$-extension such that $|[R,S]|=2^{|\mathrm{MSupp}(S/R)|}$. If these conditions hold, then $R\subset S$ is a Boolean extension such that  $\ell[R,S]=|\mathrm{MSupp}(S/R)|$.
\end{corollary}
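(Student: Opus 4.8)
The plan is to prove both implications by reducing to the local situation via the map $\varphi$ and Proposition~\ref{6.5}, and then analyzing minimal extensions through Theorem~\ref{minimal}.

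First I would treat the forward direction. Assume $R\subseteq S$ is locally minimal, so $R_M\subset S_M$ is minimal for each $M\in\mathrm{MSupp}(S/R)$. Since each minimal extension is crucial (hence in particular $\mathrm{Supp}(S_M/R_M)$ consists of the single maximal ideal $MR_M$), one checks that $\mathrm{Supp}(S/R)\subseteq\mathrm{Max}(R)$: if $P\in\mathrm{Supp}(S/R)$ then $P\subseteq M$ for some $M\in\mathrm{MSupp}(S/R)$, and localizing at $M$ the module $(S/R)_M$ is supported only at $MR_M$, forcing $P=M$. By the last sentence of Proposition~\ref{6.5} the ``local'' hypothesis is then automatic, so $R\subseteq S$ is a $\mathcal B$-extension. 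Writing $\mathrm{MSupp}(S/R)=\{M_1,\dots,M_n\}$, bijectivity of $\varphi$ gives $[R,S]\cong\prod_{i=1}^n[R_{M_i},S_{M_i}]$, and since each factor $[R_{M_i},S_{M_i}]=\{R_{M_i},S_{M_i}\}$ has exactly two elements, $|[R,S]|=2^n$; in particular the extension has FIP. The length formula in Proposition~\ref{6.5} gives $\ell[R,S]=\sum_{i=1}^n\ell[R_{M_i},S_{M_i}]=\sum_{i=1}^n 1=n=|\mathrm{MSupp}(S/R)|$.

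For the converse, assume $R\subset S$ is an FIP $\mathcal B$-extension with $|[R,S]|=2^n$, where $n:=|\mathrm{MSupp}(S/R)|$. By the $\mathcal B$-property, $\varphi$ is an order isomorphism $[R,S]\xrightarrow{\sim}\prod_{i=1}^n[R_{M_i},S_{M_i}]$, so $\prod_{i=1}^n|[R_{M_i},S_{M_i}]|=2^n$. Each factor $[R_{M_i},S_{M_i}]$ is nontrivial (as $M_i\in\mathrm{MSupp}(S/R)$ means $R_{M_i}\neq S_{M_i}$), hence has at least $2$ elements; the product of $n$ integers each $\geq 2$ equalling $2^n$ forces each $|[R_{M_i},S_{M_i}]|=2$, i.e. $R_{M_i}\subset S_{M_i}$ is minimal. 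Thus $R\subset S$ is locally minimal.

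The remaining claims under these equivalent conditions: the length statement $\ell[R,S]=n$ has already been obtained above from Proposition~\ref{6.5}. For Booleanness, note $[R,S]\cong\prod_{i=1}^n\{R_{M_i},S_{M_i}\}$ is a product of $n$ two-element chains, which is exactly the Boolean lattice $\{0,1\}^n$; since $\varphi$ is a lattice isomorphism carrying $\cap$ and $\cdot$ to the coordinatewise operations, $([R,S],\cap,\cdot)$ is a distributive lattice in which every element has a complement, i.e. $R\subseteq S$ is Boolean in the sense defined above. I expect the only mildly delicate point to be the elementary number-theoretic step that a product of $n$ factors each $\geq 2$ equals $2^n$ only when all factors equal $2$ (together with making sure each local lattice is genuinely nontrivial, which is immediate from $M_i$ lying in the maximal support); everything else is a direct unwinding of the $\mathcal B$-extension machinery and the definition of a minimal extension.
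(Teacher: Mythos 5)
Your proof is correct, and while its skeleton (reduce to the local lattices via $\varphi$ and Proposition~\ref{6.5}, count $2^{n}$, read off the length) matches the paper, you diverge from the paper's proof in two places, both in a reasonable way. For the forward direction, the paper establishes the $\mathcal B$-property by contradiction: it picks a minimal subextension $T\subset U$ whose crucial ideal traces to a non-maximal $P\in\mathrm{Supp}(S/R)$ and derives $N=\mathcal{C}(T,U)$ lying over the wrong prime; you instead show directly that $\mathrm{Supp}(S/R)\subseteq\mathrm{Max}(R)$ by localizing at a maximal ideal $M\supseteq P$ and using that a minimal extension has support equal to the singleton of its crucial maximal ideal, then invoke the last sentence of Proposition~\ref{6.5}. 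This is cleaner; the only loose phrasing is your parenthetical ``minimal hence crucial, hence the support is a single maximal ideal'' --- crucial only says $|\mathrm{MSupp}|=1$, so you should point to the fact (Proposition~\ref{split4}/\ref{2.11}-type statement, or Ferrand--Olivier) that for a minimal extension the \emph{whole} support is the crucial ideal, which is what your localization step actually uses; also note that $M\in\mathrm{MSupp}(S/R)$ for any maximal $M\supseteq P$ follows at once from $(S/R)_P=((S/R)_M)_{PR_M}\neq 0$. For the Boolean claim, the paper simply cites \cite[Proposition 3.5 and Example 3.3(1)]{Pic 10}, whereas you prove it directly: $\varphi$ is an order- (hence lattice-) isomorphism onto a product of two-element chains, i.e.\ onto the Boolean lattice $\{0,1\}^{n}$, so $([R,S],\cap,\cdot)$ is distributive and complemented. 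That self-contained argument is valid (localization commutes with finite intersections and with products of subalgebras, or just use that order-isomorphisms preserve meets and joins) and buys independence from the external reference, at the cost of a few lines; the converse direction and the length formula are argued exactly as in the paper.
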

\begin{proof} Assume first that $R\subseteq S$ is locally minimal. We show that $R\subset S$ is a $\mathcal B$-extension. Let $P\in\mathrm{Supp}(S/R)$ and assume that $P\not\in\mathrm{Max}(R)$. Then, there exists $M\in\mathrm{MSupp}(S/R)$ such that $P\subset M$. In particular, there exist $T,U\in[R,S]$ such that $T\subset U$ is minimal with $P:=\mathcal{C}(T,U)\cap R$. But, $R_M\subseteq T_M\subset U_M\subseteq S_M$ is minimal with $T_M\neq U_M$ because $T_P\neq U_P$. It follows that $R_M=T_M$ and $U_M=S_M$. Then, $M\in\mathrm{MSupp}(U/R)$ and there exists a unique $N\in\mathrm{Max}(T)$ lying over $M$, so that $T_M=T_N$ and $U_M=U_N$. This implies that $N\in\mathrm{MSupp}_T(U/T)$, so that $N=\mathcal{C}(T,U)$, a contradiction with $P=\mathcal{C}(T,U)\cap R\subset M=N\cap R$. Then, $R\subset S$ is a $\mathcal B$-extension. It follows that $R\subset S$ is an FIP extension since $|[R_M,S_M]|=2$ for any $M\in\mathrm{MSupp}(S/R)$ and the definition of a $\mathcal B$-extension gives $|[R,S]|=2^{|\mathrm{MSupp}(S/R)|}$.

Conversely, assume that $R\subset S$ is an FIP $\mathcal B$-extension such that $|[R,S]|=2^{|\mathrm{MSupp}(S/R)|}$. It follows that $|[R_M,S_M]|=2$ for each $M\in\mathrm{MSupp}(S/R)$, so that $R\subseteq S$ is locally minimal.

If these conditions hold, then $R\subseteq S$ is Boolean according to \cite[Proposition 3.5 and Example 3.3 (1)]{Pic 10}. By Proposition \ref{6.5}, we get that $\ell[R,S]=|\mathrm{MSupp}(S/R)|$.
\end{proof}

The statement of Corollary \ref{6.51} admits  equivalent statements if the extension is either Pr\"ufer  (Proposition \ref{6.12}) or split  (Theorem~\ref{1.161}).

The local condition for $\mathcal B$-extension holds  for every prime ideals of  pm-rings,  introduced  by De Marco and  Orsatti in \cite{MO} (they are also called Gelfand rings by some other authors).  A ring $R$ is a {\em pm-ring} if every prime ideal of $R$ is contained in a unique maximal ideal of $R$. 
 Of course, a local ring is a pm-ring.

We deduce the obvious following result: 

\begin{proposition}\label{6.6} If $R$ is a pm-ring, then any FCP extension $R\subseteq S$ is a $\mathcal B$-extension. 
\end{proposition}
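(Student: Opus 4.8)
The plan is to combine the two facts that have just been assembled. By Proposition~\ref{6.5}, an FCP extension $R\subseteq S$ is a $\mathcal B$-extension precisely when $R/P$ is local for every $P\in\mathrm{Supp}(S/R)$. So it suffices to verify that the pm-hypothesis on $R$ forces this local condition at every relevant prime. First I would recall the definition: a pm-ring is one in which each prime ideal lies below a \emph{unique} maximal ideal. Now let $P\in\mathrm{Supp}(S/R)$ be arbitrary (we do not even need to use that it is in the support).

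The key step is the elementary ring-theoretic observation that if $P$ is contained in a unique maximal ideal $M$ of $R$, then $R/P$ is a local ring. Indeed, the maximal ideals of $R/P$ correspond bijectively (via $\overline{Q}\mapsto Q$, with $P\subseteq Q$) to the maximal ideals of $R$ containing $P$; by the pm-hypothesis there is exactly one such ideal, namely $M$, so $R/P$ has the unique maximal ideal $M/P$ and is therefore local. Applying this to each $P\in\mathrm{Supp}(S/R)$ gives exactly the hypothesis of Proposition~\ref{6.5}, which then yields that $R\subseteq S$ is a $\mathcal B$-extension (together, if desired, with the length formula $\ell[R,S]=\sum(\ell[R_M,S_M]\mid M\in\mathrm{MSupp}(S/R))$).

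There is really no obstacle here; this is the ``obvious following result'' the authors announced, and the whole content is the two-line passage from ``unique maximal ideal above $P$'' to ``$R/P$ local,'' followed by a citation of Proposition~\ref{6.5}. The only thing to be careful about is that the pm-condition is a condition on \emph{all} primes of $R$, which is more than we need — we only invoke it at the (finitely many) primes in $\mathrm{Supp}(S/R)$ — but since pm-rings satisfy it everywhere, there is nothing to check. One could also remark, as a sanity check consistent with the sentence preceding the statement, that local rings are pm-rings, so Proposition~\ref{6.6} indeed generalizes the last assertion of Proposition~\ref{6.5}.
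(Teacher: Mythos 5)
Your argument is correct and is exactly the one the paper intends: the authors give no explicit proof, calling the result obvious after remarking that the pm-condition supplies the ``$R/P$ local'' hypothesis of Proposition~\ref{6.5}, which is precisely your two-step deduction (unique maximal ideal above $P$ gives $R/P$ local, then cite Proposition~\ref{6.5}). Nothing is missing.
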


 We exhibit  in Example \ref{0.23}  an FCP $\mathcal B$-extension $R\subseteq S$ such that $R$ is not a pm-ring.

 \begin{proposition}\label{6.71} A chained  FCP extension  $R\subseteq S$  is a $\mathcal B$-extension. 
 For example, a  Pr\"ufer-Manis FCP extension is a $\mathcal B$-extension.
 \end{proposition}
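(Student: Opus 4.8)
The plan is to reduce the statement to the characterization of Proposition \ref{6.5}. Since $\mathrm{Supp}(S/R)$ is stable under specialization, any maximal ideal containing a prime of $\mathrm{Supp}(S/R)$ again belongs to $\mathrm{Supp}(S/R)$, hence to $\mathrm{MSupp}(S/R)$. Consequently, if $R\subseteq S$ is \emph{crucial} (i.e. $|\mathrm{MSupp}(S/R)|=1$), then every $P\in\mathrm{Supp}(S/R)$ lies in a unique maximal ideal of $R$, so $R/P$ is local and Proposition \ref{6.5} gives that $R\subseteq S$ is a $\mathcal B$-extension. Thus, the case $R=S$ being trivial, it suffices to prove that \emph{a chained FCP extension $R\subset S$ is crucial}.

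I would prove this by induction on $\ell:=\ell[R,S]$. Write $[R,S]=\{R=R_0\subset R_1\subset\cdots\subset R_n=S\}$; because $[R,S]$ is a chain, $R\subset R_1$ is the \emph{unique} minimal subextension of $R$ in $S$, with crucial ideal $c:=\mathcal{C}(R,R_1)\in\mathrm{Max}(R)$, so $\mathrm{Supp}(R_1/R)=\mathrm{V}_R(c)=\{c\}$. Now $R_1\subseteq S$ is chained FCP of length $n-1$, so by induction it is crucial, with crucial ideal $\mathcal{C}(R_1,R_2)$; in particular $\mathrm{Supp}_{R_1}(S/R_1)$ lies in $\mathrm{V}$-of-primes contained in $\mathcal{C}(R_1,R_2)$, and the residue domains $R_1/Q$, $Q\in\mathrm{Supp}_{R_1}(S/R_1)$, are local. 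A routine computation of supports along $R\subseteq R_1\subseteq S$ gives $\mathrm{Supp}_R(S/R)=\{c\}\cup\mathrm{Supp}_R(S/R_1)$, with $\mathrm{Supp}_R(S/R_1)$ the $\mathrm{V}_R$ of the traces to $R$ of the relevant primes of $R_1$. The induction closes provided one knows
\[
\mathcal{C}(R_1,R_2)\cap R\ \subseteq\ c ,
\]
since then all those traces are contained in $c$, and locality of the residue domains over $R$ is inherited from that over $R_1$ via the (integral, resp. flat epimorphism) nature of the minimal extension $R\subset R_1$ and Theorem \ref{minimal}.

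The main obstacle is exactly the displayed inclusion $\mathcal{C}(R_1,R_2)\cap R\subseteq\mathcal{C}(R,R_1)$. I would argue by contradiction: if $a\in(\mathcal{C}(R_1,R_2)\cap R)\setminus\mathcal{C}(R,R_1)$, then $a$ is invertible in $R_{\mathcal{C}(R,R_1)}$ but a non-unit in $R_{\mathcal{C}(R_1,R_2)\cap R}$; localizing the two minimal steps $R\subset R_1$ and $R_1\subset R_2$ and invoking the explicit trichotomy of Theorem \ref{minimal} (inert, decomposed, ramified, and the Pr\"ufer minimal case) together with a descent-of-localizations argument, one produces a ring $T$ with $R\subset T\subset R_2$ that is incomparable with $R_1$, contradicting that $[R,R_2]\subseteq[R,S]$ is a chain. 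This case analysis is where the genuine work lies; the remainder of the argument is formal.

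Finally, the intermediate rings of a Pr\"ufer--Manis extension are totally ordered (cf. \cite{KK}), so a Pr\"ufer--Manis FCP extension is chained, and the last assertion of the statement is the special case already proved.
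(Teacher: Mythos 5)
Your strategy is genuinely different from the paper's, and it can be made to work, but as written it stops short exactly where you say ``the genuine work lies.'' The paper's proof is much shorter: if some $P\in\mathrm{Supp}(S/R)$ were contained in two distinct maximal ideals $M\neq M'$, then $M,M'\in\mathrm{MSupp}(S/R)$, and \cite[Lemma 1.8]{Pic 6} supplies two distinct minimal subextensions $R\subset T$ and $R\subset T'$ with crucial ideals $M$ and $M'$; these are incomparable, contradicting chainedness, and Proposition \ref{6.5} concludes (the same lemma is how the paper proves ``chained $\Rightarrow$ crucial'' in Proposition \ref{6.50}(1)). You instead reduce to cruciality and induct along the chain, and the pivot of your induction is the inclusion $\mathcal{C}(R_1,R_2)\cap R\subseteq\mathcal{C}(R,R_1)$, which you only promise to establish by a case analysis over the types in Theorem \ref{minimal}. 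Leaving that analysis unexecuted is a real gap in the write-up --- but it is also unnecessary: the inclusion is precisely the contrapositive of the Crosswise Exchange lemma, Proposition \ref{split4} (that is, \cite[Lemma 2.7]{DPP2}), applied to the two minimal steps $R\subset R_1\subset R_2$: if $\mathcal{C}(R_1,R_2)\cap R\not\subseteq\mathcal{C}(R,R_1)$, CE produces $S'\in[R,R_2]$ incomparable with $R_1$, contradicting that $[R,R_2]\subseteq[R,S]$ is a chain. Citing that closes your argument.

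Two further adjustments would tighten the rest. Once every crucial-ideal trace $M_i\cap R$ is inside $c:=\mathcal{C}(R,R_1)$ (the first trace is $c$ itself; for $i\geq 1$ use the induction hypothesis on $R_1\subseteq S$ plus the displayed inclusion), conclude cruciality through Proposition \ref{2.11}: $\mathrm{Supp}_R(S/R)$ is exactly the set of traces and is stable under specialization, so any element of $\mathrm{MSupp}_R(S/R)$ is itself a trace, hence contained in $c$, hence equal to $c$. This makes your remark about ``locality of the residue domains being inherited'' through the minimal step $R\subset R_1$ superfluous, which is just as well since, as stated, it is not justified (particularly in the Pr\"ufer-minimal case). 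Note also that your opening claim that $\mathrm{Supp}(S/R)$ is specialization-stable, and hence that crucial implies $\mathcal B$, is itself an FCP fact resting on Proposition \ref{2.11}, not a general module-theoretic triviality. The final reduction of the Pr\"ufer--Manis case to the chained case is fine and matches the paper's citation of \cite[Theorem 3.1, page 187]{KZ}. In sum: your route (induction along the chain via crosswise exchange, then cruciality, then Proposition \ref{6.5}) is more laborious but avoids \cite[Lemma 1.8]{Pic 6}; the paper's route gets the statement, and the stronger ``chained $\Rightarrow$ crucial,'' in a few lines from that lemma.
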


\begin{proof} Proposition \ref{6.5} gives the answer: let $P\in \mathrm{Supp}(S/R)$ and assume there exist $M,M'\in\mathrm{Max}(R),\ M\neq M'$, such that $P\subseteq M\cap M'$. According to \cite[Lemma 1.8]{Pic 6}, there exist $T,T'\in[R,S],\ T'\neq T$ such that $R\subset T$ and $R\subset T'$ are minimal with $M:=\mathcal{C}(R,T)$ and $M':=\mathcal{C}(R,T')$, a contradiction with $R\subseteq S$ is chained. Then, $P$ is contained in a unique maximal ideal of $R$ and $R\subseteq S$ is a $\mathcal B$-extension.

 Assume that $R\subseteq S$ is a Pr\"ufer-Manis FCP extension, then $R\subseteq S$ is chained by \cite[Theorem 3.1, page 187]{KZ}. Hence, $R\subseteq S$ is a $\mathcal B$-extension since FCP.
\end{proof}

By using the next result, we can reduce in some contexts the characterization of FCP $\mathcal B$-extensions to the case of Pr\"ufer extensions. 

\begin{proposition}\label{6.8} Let $R\subseteq S$ be an FCP extension.
\begin{enumerate}
\item If $\overline R\subseteq S$ is a $\mathcal B$-extension, so is $R\subseteq S$.
\item If $R\subseteq S$ is a $\mathcal B$-extension and $R\subseteq \overline R$ is   an i-extension, then $\overline R\subseteq S$ is a $\mathcal B$-extension.
\item If $R\subseteq S$ is a $\mathcal B$-extension and $T\in[R,S]$, then $R\subseteq T$ is a $\mathcal B$-extension.
\end{enumerate} 
\end{proposition}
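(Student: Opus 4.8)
The plan is to reduce everything to the criterion of Proposition~\ref{6.5}: an FCP extension $A\subseteq B$ is a $\mathcal B$-extension if and only if $A/P$ is local for every $P\in\mathrm{Supp}(B/A)$. So for each of the three assertions I want to compare $\mathrm{Supp}$ along the relevant subextensions and keep track of the factor rings. Throughout I will use that for an FCP tower $R\subseteq \overline R\subseteq S$, and more generally $R\subseteq T\subseteq S$, one has $\mathrm{Supp}_R(S/R)=\mathrm{Supp}_R(T/R)\cup\mathrm{Supp}_R(S/T)$ (from the exact sequence $0\to T/R\to S/R\to S/T\to 0$), and that supports behave well under the localization/contraction maps $\mathrm{Spec}(\overline R)\to\mathrm{Spec}(R)$.

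For (3), this is the easiest: if $R\subseteq S$ is a $\mathcal B$-extension and $T\in[R,S]$, then $\mathrm{Supp}(T/R)\subseteq\mathrm{Supp}(S/R)$, so $R/P$ is local for every $P\in\mathrm{Supp}(T/R)$, and $R\subseteq T$ inherits FCP; Proposition~\ref{6.5} then gives that $R\subseteq T$ is a $\mathcal B$-extension. For (1), suppose $\overline R\subseteq S$ is a $\mathcal B$-extension. I need $R/P$ local for each $P\in\mathrm{Supp}_R(S/R)=\mathrm{Supp}_R(\overline R/R)\cup\mathrm{Supp}_R(S/\overline R)$. On the integral part $\mathrm{Supp}_R(\overline R/R)$ the factor rings are automatically local because $R\subseteq\overline R$ is integral, by the last sentence of Proposition~\ref{6.5}. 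On the Pr\"ufer part $\mathrm{Supp}_R(S/\overline R)$: take $P$ there; since $R\subseteq\overline R$ is integral, lying-over/going-up gives a prime $Q$ of $\overline R$ over $P$ with $Q\in\mathrm{Supp}_{\overline R}(S/\overline R)$, and by hypothesis $\overline R/Q$ is local; because $R\subseteq\overline R$ integral forces $R/P\subseteq\overline R/Q$ integral with $\overline R/Q$ local, one deduces $R/P$ is local (an integral subring of a local ring sitting inside it, with the right incidence of primes — here I will use that for FCP extensions the number of primes over a given one is finite and that the maximal ideals correspond, so $\overline R/Q$ local forces $R/P$ to have a unique maximal ideal). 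Hence $R\subseteq S$ is a $\mathcal B$-extension.

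For (2), the converse direction: $R\subseteq S$ is a $\mathcal B$-extension and $R\subseteq\overline R$ is an $i$-extension; I want $\overline R\subseteq S$ a $\mathcal B$-extension, i.e. $\overline R/Q$ local for each $Q\in\mathrm{Supp}_{\overline R}(S/\overline R)$. Let $P:=Q\cap R$; then $P\in\mathrm{Supp}_R(S/\overline R)\subseteq\mathrm{Supp}_R(S/R)$, so $R/P$ is local by hypothesis. Now use that $R\subseteq\overline R$ is an $i$-extension: then $Q$ is the unique prime of $\overline R$ over $P$, so $\mathrm{Max}(\overline R/Q)$ is in bijection with the maximal ideals of $\overline R$ lying over maximal ideals of $R$ containing... more precisely, since $R/P\hookrightarrow\overline R/Q$ is integral and injective with a single prime over $P=$ the maximal ideal of the local ring $R/P$ (the $i$-extension property localizes), $\overline R/Q$ has a unique maximal ideal. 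Thus $\overline R\subseteq S$ is a $\mathcal B$-extension by Proposition~\ref{6.5}.

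The main obstacle I anticipate is the passage, in parts (1) and (2), from ``$R/P$ local'' to ``$\overline R/Q$ local'' and back, i.e. controlling exactly how many maximal ideals of $\overline R/Q$ lie over the maximal ideal of $R/P$ along the integral extension. In (2) the $i$-extension hypothesis is precisely what kills this ambiguity (one prime upstairs over each prime downstairs, so in particular over the maximal one), and without it the statement is false; in (1) the implication goes the easy way (a subring of a local ring, with the primes arranged so that going-up applies, inherits having a unique maximal ideal), but one still has to be careful that $P$ really does lie below a prime of $\overline R$ in the support of $S/\overline R$ — this is where I use going-up for the integral extension $R\subseteq\overline R$ together with the fact that $(S/\overline R)_P\neq0$ forces $(S/\overline R)_Q\neq0$ for some $Q$ over $P$. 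Once these incidence bookkeeping points are settled, each part is a one-line application of Proposition~\ref{6.5}.
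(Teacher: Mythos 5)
Your proposal is correct and follows essentially the same route as the paper: each part is reduced to the criterion of Proposition~\ref{6.5}, and locality of the factor rings is transferred along the integral extension $R\subseteq\overline R$ (part (3) and part (2) are virtually identical to the paper's argument). The only divergence is in (1), where you produce some prime $Q$ of $\overline R$ over $P$ lying in $\mathrm{Supp}_{\overline R}(S/\overline R)$ by a direct localization/going-up argument, instead of the paper's reduction to $P\notin\mathrm{Max}(R)$, uniqueness of the prime over $P$ (via $P\notin\mathrm{Supp}(\overline R/R)$) and a maximal chain of $[\overline R,S]$; both proofs then finish with the same observation that, since $R/P\subseteq\overline R/Q$ is integral with $\overline R/Q$ local, $R/P$ is local.
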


\begin{proof} We use the characterization of Proposition \ref{6.5}.

(1) Assume that $\overline R\subseteq S$ is a $\mathcal B$-extension and let $P\in\mathrm{Supp}(S/R)$. To show that $R\subseteq S$ is a $\mathcal B$-extension, it is enough to consider $P\not\in\mathrm{Max}(R)$, so that $P\not\in\mathrm{Supp}(\overline R/R)$, since $\mathrm{Supp}(\overline R/R)\subseteq\mathrm{Max}(R)$. This implies that $P\in\mathrm{Supp}_R(S/\overline R)$. But $P\not\in\mathrm{Supp}(\overline R/R)$ implies that there is a unique $Q\in \mathrm{Spec}(\overline R)$ lying over $P$, so that ${\overline R}_P={\overline R}_Q$ by \cite[Lemma 2.4]{DPP2}. In particular, $Q\in \mathrm{Supp}_{\overline R}(S/\overline R)$. Let $\{R_i\}_{i=0}^n$ be a maximal chain of $[\overline R,S]$. There exists some $i\in\mathbb{N}_n$ such that $Q=\overline R\cap \mathcal{C}(R_{i-1},R_i)$. Since $\overline R\subseteq S$ is a $\mathcal B$-extension, $Q$ is contained in a unique maximal ideal of $\overline R$. Then, $P=Q\cap R$  is contained in a unique maximal ideal of $R$ and $ R\subseteq S$ is a $\mathcal B$-extension.

(2) Assume that $R\subseteq S$ is a $\mathcal B$-extension and that $R\subseteq \overline R$ is an $i$-extension. Let $Q\in\mathrm{Supp}_{\overline R}(S/\overline R)$ and set $P:=Q\cap R$, so that $P\in\mathrm{Supp}_R(S/\overline R)\subseteq \mathrm{Supp}(S/R)$. The assumption gives that $P$ is contained in a unique maximal ideal $M$ of $R$. But $R\subseteq \overline R$ being an $i$-extension, there is a unique $N\in\mathrm{Max}(\overline R)$ lying above $M$. Then $N$ is the unique maximal ideal of $\overline R$ containing $Q$ and $\overline R\subseteq S$ is a $\mathcal B$-extension. 

(3) Obvious since $\mathrm{Supp}(T/R)\subseteq\mathrm{Supp}(S/R)$.
\end{proof}

\begin{remark}\label{6.9} (1) 
Let $R\subseteq S$ be an FCP extension. If $\overline R$ is local, {\em i.e.} $R\subseteq S$ is unbranched, then $\overline R\subseteq S$ is chained, according to \cite[Theorem 6.10]{DPP2}, and then a $\mathcal B$-extension by Proposition \ref{6.71}.
 An extension $R\subset S$ is said {\it almost unbranched} if each $T\in[R,\overline R[$ is a local ring.

(2) The $i$-extension hypothesis of Proposition \ref{6.8} (2) cannot be suppressed. We now exhibit an example of an FCP $\mathcal B$-extension $R\subseteq S$ such that $\overline R\subseteq S$ is not a $\mathcal B$-extension. Let $R\subseteq S$ be an extension such that $(R,M)$ is a two-dimensional local ring, with $R\subseteq\overline R$ minimal decomposed, so that $R\subseteq\overline R$ is not an $i$-extension. Set $\mathrm{Max}(\overline R)=:\{M_1,M_2\}$. Assume, moreover, that $\overline R\subseteq S$ is a Pr\"ufer extension of length 2 and that there exists $P\in\mathrm{Supp}(S/R),\ P\neq M$. Since $P\not\in\mathrm{Supp}(\overline R/R)$, this implies that there is a unique $Q\in\mathrm{Spec}(\overline R)$ lying over $P$. In particular, we have $P\in\mathrm{Supp}(S/\overline R)$, which leads to $Q\in\mathrm{Supp}_{\overline R}(S/\overline R)$. If $Q\subseteq M_1\cap M_2$, then $\overline R\subseteq S$ is not a $\mathcal B$-extension. It is enough to consider a two-dimensional Pr\"ufer domain $R$ with exactly two height-$2$ maximal ideals, $N_1$ and $N_2$, both of then containing the unique height-$1$ prime ideal $P$ of $R$. Such situation exists (see  \cite[Theorem 3.1]{Lew}).

This also happens when $R\subseteq\overline R$ satisfies going down. In this case, since $P\subset M$, with $M_1,M_2$ lying over $M$, there exist $Q_1,Q_2\in\mathrm{Spec}(\overline R)$ lying over $P$ and such that $Q_i\subset M_i$ for $i=1,2$. But $R\subseteq\overline R$ being minimal, we get $R_P=\overline R_P$ so that there is a unique $Q\in \mathrm{Spec}(\overline R)$ lying over $P$, giving $Q=Q_1=Q_2$ which leads to $Q\subseteq M_1\cap M_2$.

We   meet this situation in the following example.
\end{remark}

\begin{example}\label{6.10} Consider the ring of integer valued polynomials $T':=\mathrm{Int}(\mathbb Z)$. It is known that $T'$ is a two-dimensional Pr\"ufer domain whose prime ideals are of two types \cite[Proposition V.2.7]{CC} and \cite[Example, page 293]{Lu}. Set $q(X):=X^2-X+p\in{\mathbb Q}[X]$, where $p$ is an odd prime number. Then, \cite[Exercise 12, page 116]{CC} says that $P_q:=q{\mathbb Q}[X]\cap T'$ is a height one prime ideal of $T'$ lying above $0$ in $\mathbb Z$, and contained in two distinct maximal ideals $M_1'$ and $M_2'$ of $T'$, lying both above $p\mathbb Z$. Set $\Sigma:=T'\setminus(M_1'\cup M_2')$, which is a multiplicatively closed subset, and $T:=T_{\Sigma},\  Q:=(P_q)_{\Sigma }$ and $ M_i:=(M_i')_{\Sigma}$ for $i=1,2$. Then, $T$ is a two-dimensional Pr\"ufer domain with two maximal ideals $M_1$ and $M_2$. Moreover, $T=T_{M_1}\cap T_{M_2}$, where $V_i:=T_{M_i}$ is a two-dimensional valuation domain with maximal ideal $N_i:=M_iV_i$, for $i=1,2$ and $Q$ is a prime ideal of $T$ contained in $M_1$ and $M_2$. We have $N_i\cap \mathbb Z=M_i\cap \mathbb Z=p \mathbb Z$ and $V_i/N_i\cong T/M_i\cong \mathbb Z/p\mathbb Z$ for $i=1,2$. Set $M:=M_1\cap M_2$ and $R:=\mathbb Z+M$, which gives $M\cap\mathbb Z=p\mathbb Z$. It follows that $R/M\cong T/M_i\cong\mathbb Z/p\mathbb Z$ for $i=1,2$, so that $R\subset T$ is a minimal decomposed extension, with $(R,M)$ a local domain. Moreover, since $T$ is a Pr\"ufer domain, $Q$ is the unique non maximal prime ideal of height one contained in $M_1$ and $M_2$, giving that $T_Q$ is a one-dimensional valuation domain. Setting $P:=Q\cap R$, this yields that $P$ is the unique non maximal prime ideal of height one contained in $M$. As $\mathrm{Spec}(T)=\{0,Q,M_1,M_2\}$, we get the following commutative diagram,  where 
$K:={\mathbb Q}(X)$:

\centerline{$\begin{matrix}
 {} & {}  &  {} &      {}      & V_1 &       {}      &{}       & {}  & {} \\
 {} & {}  & {} &\nearrow & {}     & \searrow & {}      & {}  & {} \\
R & \to & T &       {}      & {}     &      {}        & T_Q & \to & K \\
 {} & {}  & {} &\searrow & {}     & \nearrow & {}     & {} & {} \\
 {} &{}  &  {} &     {}       & V_2 &      {}        & {}     & {} & {}
 \end{matrix}$}
We obtain that $\mathrm{Supp}_T(K/T)=\{Q,M_1,M_2\}$ and $\mathrm{Supp}_ R(K/ R)=\{P,M\}$. At last, $T$ is the integral closure of $R$ in $K$, giving that $R\subseteq K$ is a $\mathcal B$-extension and $T\subseteq K$ is not a $\mathcal B$-extension since the prime ideal $Q$ of $T$ is in $\mathrm{Supp}_T(K/T)$ and is contained in two maximal ideals of $T$.
\end{example}

As we said in Proposition \ref{6.5}, an FCP integral extension is a $\mathcal B$-extension. Let $R\subseteq S$ be an FCP extension. If $\overline R\subseteq S$ is a $\mathcal B$-extension, by Proposition \ref{6.8}, we deduce that $R\subseteq S$ is a $\mathcal B$-extension. In the following, we characterize FCP Pr\"ufer $\mathcal B$-extensions. Recall \cite[Theorem 6.3]{DPP2} that an FCP Pr\"ufer extension has FIP. At the same time, we take the opportunity to get some additional results about the cardinality of the set of intermediate rings in FCP Pr\"ufer extensions, after those obtained in \cite{DPP2} and \cite{DPP3}. We give also results generalizing some results that Ayache and Jarboui get for extensions of integral domains in \cite{AJar}.  

The following proposition allows us to generalize \cite[Theorem 6.10]{DPP2} and makes \cite[Corollary 2.7]{AJar} valid for an arbitrary FCP Pr\"ufer  extension. Although the context of Ayache-Jarboui's result seems different from ours, they are in fact the same but our proof is different.
   
According to \cite[Proposition 6.9]{DPP2}, an integrally closed extension $R\subseteq S$ has FCP if and only if it is a normal pair such that $\mathrm{Supp}(S/R)$ is finite if and only if it is an FCP Pr\"ufer  extension. 
  
We recall the following definition and notation. A poset $(X,\leq)$ is called a {\em tree} if $x_1,x_2\leq x_3$ in $X$ implies that $x_1$ and $x_2$ are comparable (with respect to $\leq$). 
  
A subset $Y$ of $X$ is called an {\em antichain} if no two distinct elements of $Y$ are comparable. Let $R\subset S$ be an FCP Pr\"ufer extension. Set $m:=|\mathrm{MSupp}(S/R)|$. For each $k\in\mathbb N_m$, let $\Gamma _k$ be the set of all antichains of $\mathrm{Supp}(S/R)$ that have cardinality $k$. 

\begin{proposition}\label{6.11} Let $R\subset S$ be an FCP Pr\"ufer  extension, (whence FIP). The following statements are equivalent.
\begin{enumerate}
\item $R\subseteq S$ is chained;
\item There exists a unique $T\in[R,S]$ such that $R\subset T$ is  minimal; 
\item $\mathrm {Supp}(S/R)$ is linearly ordered;
 \item $|[R,S]|=1+|\mathrm {Supp}(S/R)|$.
\end{enumerate}

If one of the above conditions is verified, then $R\subset S$ is a $\mathcal B$-extension.
\end{proposition}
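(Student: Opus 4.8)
The plan is to prove the cycle of implications $(1)\Rightarrow(3)\Rightarrow(4)\Rightarrow(2)\Rightarrow(1)$, together with the final assertion. The backbone is the standard dictionary for FCP Pr\"ufer extensions: by \cite[Proposition 6.9]{DPP2} such an extension is a normal pair with finite support, so every $T\in[R,S]$ is of the form $R_{\Sigma}$ for a suitable multiplicatively closed set, equivalently $T$ is determined by a ``closed under generization'' subset of $\mathrm{Supp}(S/R)$; minimal steps $R\subset T$ correspond to removing a maximal element of $\mathrm{Supp}(S/R)$, and more generally the intermediate rings are in order-reversing bijection with the specialization-closed subsets of $\mathrm{Supp}(S/R)$. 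I would begin by recording this correspondence explicitly (citing \cite{KZ} and \cite{DPP2}), since all four conditions translate into combinatorial statements about the poset $\mathrm{Supp}(S/R)$.

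\textbf{The implications.} For $(1)\Rightarrow(3)$: if $[R,S]$ is a chain, then distinct specialization-closed subsets of $\mathrm{Supp}(S/R)$ are linearly ordered by inclusion; applying this to the principal specialization-closed sets $\mathrm{V}(P)\cap\mathrm{Supp}(S/R)$ for $P\in\mathrm{Supp}(S/R)$ forces any two such primes to be comparable, i.e.\ $\mathrm{Supp}(S/R)$ is linearly ordered. For $(3)\Rightarrow(4)$: when $\mathrm{Supp}(S/R)$ is a finite chain of cardinality $q$, its specialization-closed subsets are exactly the $q+1$ ``lower segments'', so $|[R,S]|=1+|\mathrm{Supp}(S/R)|$. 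For $(4)\Rightarrow(2)$: I would argue by counting minimal subextensions $R\subset T$. Each corresponds to a maximal element of $\mathrm{Supp}(S/R)$; if there were two distinct maximal elements, one builds at least two incomparable specialization-closed subsets beyond the chain coming from a single maximal element, and a short induction on $\ell[R,S]$ shows $|[R,S]|$ strictly exceeds $1+|\mathrm{Supp}(S/R)|$ (e.g.\ using the sublattice $[T,S]$ and \cite[Proposition 3.7]{DPP2}-type length additivity). Hence there is a unique maximal element of $\mathrm{Supp}(S/R)$, giving a unique minimal $T$. For $(2)\Rightarrow(1)$: with a unique minimal extension $R\subset T$, every $U\in]R,S]$ contains $T$ (any maximal chain from $R$ starts $R\subset T$, and $T$ being the only minimal overring it lies in every nonminimal-bottom flag), so $[R,S]=\{R\}\cup[T,S]$; since $T\subset S$ is again an FCP Pr\"ufer extension with $|\mathrm{Supp}(S/T)|=|\mathrm{Supp}(S/R)|-1$, an induction on the length yields that $[T,S]$ is a chain, hence so is $[R,S]$.

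\textbf{The final assertion and the main obstacle.} Once $R\subseteq S$ is chained, it is an FCP chained extension, so Proposition~\ref{6.71} immediately gives that it is a $\mathcal B$-extension; nothing more is needed there. The step I expect to be the real obstacle is $(4)\Rightarrow(2)$: controlling $|[R,S]|$ from below in the presence of several maximal primes in $\mathrm{Supp}(S/R)$ requires care, because the specialization-closed subsets of a non-linearly-ordered finite poset can still be relatively few, and one must genuinely use that $\mathrm{Supp}(S/R)$ is the support of an FCP Pr\"ufer extension (not an arbitrary finite poset) — concretely, that between consecutive primes in a chain there is exactly one minimal extension, so the length of $[R,S]$ equals $|\mathrm{Supp}(S/R)|$ and any ``branching'' of the support produces strictly more than $1+|\mathrm{Supp}(S/R)|$ intermediate rings. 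I would handle this by induction on $\ell[R,S]=|\mathrm{Supp}(S/R)|$, peeling off one minimal extension at a time and comparing cardinalities via the order-isomorphism $\varphi$ of Section~3 applied to the crucial maximal ideals involved.
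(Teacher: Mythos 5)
Your plan is plausible in outline, but it rests on a ``dictionary'' that you assert rather than prove, and whose statement is partly wrong. First, not every $T\in[R,S]$ of a Pr\"ufer (normal pair) extension is a localization $R_\Sigma$ -- that is a QR-type property which fails already for a Dedekind domain with non-torsion class group; at best the intermediate rings are intersections of the large quotient rings $R_{[P]}$ (cf.\ \cite[Theorem 1.8, p.~4]{KK}). Second, the map you implicitly use, $T\mapsto\mathrm{Supp}_R(S/T)$, is order-reversing into specialization-closed sets but is \emph{not} injective: for a two-dimensional valuation domain $V$ with primes $0\subset P\subset M$ and $S$ its quotient field, $\mathrm{Supp}_V(S/V)=\mathrm{Supp}_V(S/V_P)=\{P,M\}$ although $V\neq V_P$. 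The correct statement is that $T\mapsto\mathrm{Supp}_R(T/R)$ is an order isomorphism onto the specialization-closed subsets of $\mathrm{Supp}(S/R)$; this is true for FCP Pr\"ufer extensions, but proving it (injectivity via $\ell[R_M,T_M]=|\mathrm{Supp}_{R_M}(T_M/R_M)|$ from \cite[Proposition 6.12]{DPP2}, surjectivity via the antichain count of \cite[Theorem 4.3]{DPP3}) requires exactly the external results the paper's proof invokes directly, so it cannot be quoted as ``standard'' from \cite{KZ} or \cite[Proposition 6.9]{DPP2}. Your steps $(1)\Rightarrow(3)$ and $(3)\Rightarrow(4)$, and the sketch of $(4)\Rightarrow(2)$, all lean on the surjectivity half of this unproved correspondence (you need the sets $\mathrm{V}(P)\cap\mathrm{Supp}(S/R)$, or incomparable unions of them, to be realized by actual intermediate rings).

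Beyond that, two implications are genuinely incomplete. For $(4)\Rightarrow(2)$ you yourself flag the step as the obstacle and give only a plan; note the paper sidesteps it entirely by proving $(4)\Rightarrow(1)$ in one line: $|\mathrm{Supp}(S/R)|=\ell[R,S]$ by \cite[Proposition 6.12]{DPP2}, so $(4)$ says $|[R,S]|=1+\ell[R,S]$, which forces $[R,S]$ to coincide with a maximal chain. For $(2)\Rightarrow(1)$, your reduction $[R,S]=\{R\}\cup[T,S]$ is fine (every $U\in\,]R,S]$ contains some minimal subextension by the FCP hypothesis, hence contains $T$), but the induction is not closed: to apply the inductive hypothesis to $T\subset S$ you must verify one of conditions $(1)$--$(4)$ for $T\subset S$, and nothing in your argument does so. The missing ingredient is the implication $(2)\Rightarrow\mathrm{MSupp}(S/R)=\{M\}$, which the paper gets from \cite[Lemma 1.8]{Pic 6} (any $M'\in\mathrm{MSupp}(S/R)$ is the crucial ideal of some minimal subextension of $R$), after which the tree property \cite[Proposition 4.2(b)]{DPP3} gives $(3)$ and the counting results finish. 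Your treatment of the final assertion (chained FCP implies $\mathcal B$-extension via Proposition \ref{6.71}) is correct and agrees with the paper.
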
 

\begin{proof} (1) $\Rightarrow$ (2) Obvious.

(2) $\Rightarrow$ (3) Set $M:=\mathcal{C}(R,T)\in\mathrm{Max}(R)$, where $T$ is the unique element of $[R,S]$ such that $R\subset T$ is minimal. We claim that $\mathrm{MSupp}(S/R)$

\noindent $=\{M\}$. Assume there exists $M'\in\mathrm{MSupp}(S/R)\setminus\{M\}$. According to \cite[Lemma 1.8]{Pic 6}, there exists $T'\in[R,S],\ T'\neq T$ such that $R\subset T'$ is minimal with $M':=\mathcal{C}(R,T')$, a contradiction. It follows that $\mathrm{MSupp}(S/R)=\{M\}$ and $P\subseteq M$ for any $P\in\mathrm{Supp}(S/R)$. 
  By \cite[Proposition 4.2(b)]{DPP3}, $\mathrm{Supp}(S/R)$ is a tree, and then is linearly ordered since it has only one maximal element. 

(3) $\Rightarrow$ (4) We keep notation of \cite[Theorem 4.3 and Proposition 4.2]{DPP3}, with $m:=|\mathrm{MSupp}(S/R)|$. Then, we have $|\mathrm{MSupp}(S/R)|=1$, so that $m=1$ and $|\Gamma_1|=|\mathrm{Supp}(S/R)|$ because $\Gamma_1=\mathrm{Supp}(S/R)$ since any antichain of $\mathrm{Supp}(S/R)$ has only one element. By \cite[Theorem 4.3]{DPP3}, we get $|[R,S]|=1+|\mathrm {Supp}(S/R)|$.

(4) $\Rightarrow$ (1) because $|\mathrm{Supp}(S/R)|=\ell[R,S]$ by \cite[Proposition 6.12]{DPP2}, so that $|[R,S]|=1+\ell[R,S]$, and there is a unique maximal chain in $[R,S]$. 

The last statement follows from Proposition~\ref{6.71}.
\end{proof}

As a second proposition, we now give a generalization to arbitrary rings  of Ayache-Jarboui's result \cite[Theorem 3.1]{AJar}. Our proof is  completely different.

\begin{proposition}\label{6.12} Let $R\subset S$ be an FCP Pr\"ufer  extension. Then, the following conditions are equivalent:
\begin{enumerate}
\item $R\subset S$ is locally minimal;

\item $\mathrm{Supp}(S/R)\subseteq\mathrm{Max}(R)$;

\item $|[R,S]|=2^{|\mathrm {Supp}(S/R)|}$;

\item  
  $R\subset S$ is a Boolean FIP $\mathcal B$-extension.
  \end{enumerate}
\end{proposition}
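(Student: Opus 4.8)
The plan is to run all the equivalences through the finite poset $\mathrm{Supp}(S/R)$, using three facts available for an FCP Pr\"ufer extension: it has FIP \cite[Theorem 6.3]{DPP2} (so the FIP clause in (4) is automatic), $\mathrm{Supp}(S/R)$ is a tree with $\ell[R,S]=|\mathrm{Supp}(S/R)|$ by \cite[Proposition 6.12]{DPP2} and \cite[Proposition 4.2(b)]{DPP3}, and $|[R,S]|$ equals the number of antichains of $\mathrm{Supp}(S/R)$ by \cite[Theorem 4.3]{DPP3}. I will also use the elementary remark that every $P\in\mathrm{Supp}(S/R)$ is contained in some $M\in\mathrm{MSupp}(S/R)$ (if $P\subseteq N\in\mathrm{Max}(R)$ with $R_N=S_N$, then $R_P=S_P$), so that the maximal elements of the poset $\mathrm{Supp}(S/R)$ are precisely the members of $\mathrm{MSupp}(S/R)$.

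First I would treat (1) $\Leftrightarrow$ (2). If $R\subset S$ is locally minimal, then for $M\in\mathrm{MSupp}(S/R)$ the localization $R_M\subset S_M$ is minimal, so $|\mathrm{Supp}_{R_M}(S_M/R_M)|=\ell[R_M,S_M]=1$; since this single prime is $MR_M$ and every prime of $\mathrm{Supp}(S/R)$ appears in some such localization, we get $\mathrm{Supp}(S/R)=\mathrm{MSupp}(S/R)\subseteq\mathrm{Max}(R)$. Conversely, if $\mathrm{Supp}(S/R)\subseteq\mathrm{Max}(R)$, then for $M\in\mathrm{MSupp}(S/R)$ the set $\mathrm{Supp}_{R_M}(S_M/R_M)$ is a nonempty subset of $\mathrm{Max}(R_M)=\{MR_M\}$, whence $\ell[R_M,S_M]=1$ and $R_M\subset S_M$ is minimal. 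For (2) $\Leftrightarrow$ (3), note that the number of antichains of a finite poset $X$ is at most $2^{|X|}$, with equality exactly when every subset of $X$ is an antichain, i.e. when $X$ itself is an antichain. Applied to $X=\mathrm{Supp}(S/R)$, condition (3) thus says $\mathrm{Supp}(S/R)$ is an antichain; by the elementary remark above this happens if and only if no prime of $\mathrm{Supp}(S/R)$ lies below another, equivalently every prime of $\mathrm{Supp}(S/R)$ is maximal, equivalently (2) holds.

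Next, (1) $\Rightarrow$ (4) is exactly Corollary \ref{6.51}, which asserts that a locally minimal FCP extension is a Boolean FIP $\mathcal B$-extension. For (4) $\Rightarrow$ (1), suppose $R\subset S$ is a Boolean $\mathcal B$-extension. The order isomorphism $[R,S]\cong\prod_{M\in\mathrm{MSupp}(S/R)}[R_M,S_M]$ then forces each factor $[R_M,S_M]$ to be Boolean, since a product of bounded lattices is distributive and complemented if and only if every factor is. Now $R_M\subset S_M$ is an $MR_M$-crucial FCP Pr\"ufer extension, so $\mathrm{Supp}_{R_M}(S_M/R_M)$ is a tree admitting $MR_M$ as a maximum element, hence a chain; by Proposition \ref{6.11} the extension $R_M\subset S_M$ is chained. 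A chained Boolean lattice has at most two elements, and $R_M\neq S_M$ forces $[R_M,S_M]=\{R_M,S_M\}$, so $R\subset S$ is locally minimal.

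I expect the implication (4) $\Rightarrow$ (1) to be the main obstacle: one must see that Boolean-ness of $[R,S]$ descends through the $\mathcal B$-decomposition to each local lattice, and then combine this with the fact that a crucial FCP Pr\"ufer extension is automatically chained (because a tree with a top element is linearly ordered, which feeds Proposition \ref{6.11}), so that Boolean-ness collapses the local length to $1$. The other delicate point is the combinatorial input for (2) $\Leftrightarrow$ (3), namely the identification of $|[R,S]|$ with the number of antichains of $\mathrm{Supp}(S/R)$ from \cite[Theorem 4.3]{DPP3}, on which the clean estimate $|[R,S]|\leq 2^{|\mathrm{Supp}(S/R)|}$ rests.
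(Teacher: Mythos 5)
Your proof is correct, and while the skeleton (the localization argument for (1) $\Leftrightarrow$ (2), Corollary \ref{6.51} for (1) $\Rightarrow$ (4), and the antichain count of \cite[Theorem 4.3]{DPP3} for the cardinality condition) matches the paper, you diverge in two places. For (3) $\Rightarrow$ (2) the paper compares $\sum|\Gamma_k|$ with binomial coefficients $C_r^k$ and deduces $r=n$, whereas you use the cleaner observation that the number of antichains of a finite poset is at most $2^{|X|}$ with equality exactly when $X$ itself is an antichain; combined with your remark that the maximal elements of $\mathrm{Supp}(S/R)$ are the members of $\mathrm{MSupp}(S/R)$, this gives (2) directly, and it also yields (2) $\Rightarrow$ (3) without routing through (1) and Corollary \ref{6.51} as the paper does. (One small gloss: \cite[Theorem 4.3]{DPP3} as used in the paper counts only antichains of size at most $|\mathrm{MSupp}(S/R)|$; since $\mathrm{Supp}(S/R)$ is a tree, a pigeonhole argument on maximal elements shows no larger antichains exist, and in any case your inequality-based argument is unaffected.) The more substantial difference is (4) $\Rightarrow$ (1): the paper disposes of it by citing \cite[Proposition 3.21]{Pic 10} for Boolean FIP Pr\"ufer extensions, while you give a self-contained argument — the $\mathcal B$-isomorphism $[R,S]\cong\prod[R_M,S_M]$ transfers Boolean-ness to each factor, each $R_M\subset S_M$ is crucial Pr\"ufer hence chained (your tree-with-top argument, or equivalently Proposition \ref{6.50}(2) or \cite[Theorem 6.10]{DPP2}), and a bounded chain in which every element is complemented has no middle elements. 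Your route costs a little more verification (that Boolean-ness descends through a finite product of bounded lattices) but buys independence from the external reference, which is a reasonable trade.
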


\begin{proof} (1) $\Leftrightarrow$ (2) Since $R\subset S$ is an FCP Pr\"ufer extension, so is $R_M\subset S_M$ for any $M\in\mathrm{Supp}(S/R)$. It follows from \cite[Proposition 6.12]{DPP2} that $\ell[R_M,S_M]=|\mathrm{Supp}(S_M/R_M)|$ for any $M\in\mathrm{Supp}(S/R)$. Then, $R\subset S$ is locally minimal $\Leftrightarrow$ for any $M\in\mathrm{Supp}(S/R),\ \ell[R_M,S_M]=1=|\mathrm{Supp}(S_M/R_M)|\Leftrightarrow$ for any $M\in\mathrm{Supp}(S/R),\ \mathrm{Supp}(S_M/R_M)=\{MR_M\}\Leftrightarrow \mathrm{Supp}(S/R)\subseteq\mathrm{Max}(R)$.

(1) $\Rightarrow$ (3) by Corollary ~\ref{6.51} because  $\mathrm{Supp}(S/R)\subseteq\mathrm{Max}(R)$.

(3) $\Rightarrow$ (2) Assume that $|[R,S]|=2^{|\mathrm{Supp}(S/R)|}$. We keep the  notation of \cite[Theorem 4.3]{DPP3}. Set $r:=|\mathrm{Supp}(S/R)|$ and $n:=|\mathrm{MSupp}(S/R)|$, so that $r\geq n$. It follows that $|[R,S]|=2^r=(1+1)^r=\sum_{k=0}^rC_r^k=1+\sum_{k=1}^rC_r^k$. But $C_r^k$ is the number of subsets of $\mathrm{Supp}(S/R)$ whose cardinality is $k$. Moreover, for each $k\in\{0,\ldots,n\}$, we have $|\Gamma_k|\leq C_r^k$. Since \cite[Theorem 4.3]{DPP3} gives that $|[R,S]|=1+\sum_{i=1}^n|\Gamma_i|$, we get that $\sum_{k=1}^rC_r^k=\sum_{k=1}^n|\Gamma_k|\leq\sum_{k=1}^nC_r^k$. It follows that $r=n$, giving $\mathrm{MSupp}(S/R)=\mathrm{Supp}(S/R)$ and any element of $\mathrm {Supp}(S/R)$ is a maximal ideal of $R$. Hence, $\mathrm {Supp}(S/R)\subseteq\mathrm{Max}(R)$. 

(1) $\Rightarrow$ (4) $R\subset S$ is a Boolean FIP $\mathcal B$-extension by Corollary ~\ref{6.51}.

(4) $\Rightarrow$ (2) by \cite[Proposition 3.21]{Pic 10} because $R\subset S$ is a Boolean FIP Pr\"ufer extension.
\end{proof}

The next result characterizes FCP Pr\"ufer $\mathcal B$-extensions and generalizes the Ayache-Jarboui's  result \cite[Theorem 3.3]{AJar}. 

\begin{theorem}\label{6.13} Let $R\subset S$ be an FCP Pr\"ufer  extension (whence FIP), $\mathrm{MSupp}(S/R):=$ $\{M_1,\ldots,M_n\}$ and  $\{P_1,\ldots,P_r\}$  the set of minimal elements of $\mathrm{Supp}(S/R)$. Then the following statements are equivalent:

\begin{enumerate}
\item $R\subset S$ is a $\mathcal  B$-extension. 

\item $|[R,S]|={\prod}_{i=1}^n|[R_{M_i},S_{M_i}]|$.

\item $\mathrm{V}(P)$ is linearly ordered for each minimal element $P$ of $\mathrm {Supp}(S/R)$.

\item $|[R,S]|={\prod}_{i=1}^n(2+\mathrm{ht}(M_i/P_{j_i}))$, where $P_{j_i}$ is the only minimal element of $\mathrm{Supp}(S/R)$ contained in $M_i$.

\item $|\mathrm{Supp}(S/R)|={\sum}_{i=1}^n(1+\mathrm{ht}(M_i/P_{j_i}))$, where $P_{j_i}$ is the only minimal element of $\mathrm{Supp}(S/R)$ contained in $M_i$.
\end{enumerate}
\end{theorem}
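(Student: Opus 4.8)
The plan is to prove the cycle of equivalences by showing that each of (2), (3), (4), (5) is equivalent to (1); the common engine is a single analysis of the local extensions $R_{M_i}\subseteq S_{M_i}$. Three facts are used throughout: the map $\varphi:[R,S]\to\prod_{i=1}^{n}[R_{M_i},S_{M_i}]$ is injective by \cite[Theorem 3.6]{DPP2}, so, the extension having FIP, condition (1) (bijectivity of $\varphi$) is equivalent to the numerical equality $|[R,S]|=\prod_{i=1}^{n}|[R_{M_i},S_{M_i}]|$, which is exactly (2); the characterization of $\mathcal B$-extensions in Proposition~\ref{6.5}; and the elementary remark that $\mathrm{Supp}(S/R)$ is stable under specialization, since $R_Q=S_Q$ localizes to $R_P=S_P$ whenever $P\subseteq Q$.

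The structural core is as follows. For each $i$, $R_{M_i}\subseteq S_{M_i}$ is again an FCP Pr\"ufer extension, now over a local ring; by \cite[Proposition 4.2(b)]{DPP3} its support is a tree, and $M_iR_{M_i}$ is its unique maximal element (it lies in $\mathrm{Supp}(S_{M_i}/R_{M_i})$ because $M_i\in\mathrm{MSupp}(S/R)$, and it contains every prime of $R_{M_i}$). A tree with a unique maximal element is linearly ordered, so by Proposition~\ref{6.11} the extension $R_{M_i}\subseteq S_{M_i}$ is chained and $|[R_{M_i},S_{M_i}]|=1+|\mathrm{Supp}(S_{M_i}/R_{M_i})|$. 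Pulling back along $\mathrm{Spec}(R_{M_i})\hookrightarrow\mathrm{Spec}(R)$, the set $\mathrm{Supp}(S_{M_i}/R_{M_i})$ identifies with $\{P\in\mathrm{Supp}(S/R)\mid P\subseteq M_i\}$, which is therefore a chain; its least element $P_{j_i}$ is a minimal element of $\mathrm{Supp}(S/R)$, and it is the only minimal element of $\mathrm{Supp}(S/R)$ contained in $M_i$ (any such minimal element lies in the chain, hence above $P_{j_i}$, hence equals it) — in particular $P_{j_i}$ is always well defined, before any hypothesis. By stability under specialization, every prime between $P_{j_i}$ and $M_i$ lies in $\mathrm{Supp}(S/R)$, so $\{P\in\mathrm{Supp}(S/R)\mid P\subseteq M_i\}$ equals the whole prime interval $[P_{j_i},M_i]$, which is then a chain of length $\mathrm{ht}(M_i/P_{j_i})$, so of cardinality $1+\mathrm{ht}(M_i/P_{j_i})$. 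Hence $|[R_{M_i},S_{M_i}]|=2+\mathrm{ht}(M_i/P_{j_i})$ unconditionally, and the equivalence (2) $\Leftrightarrow$ (4) is immediate.

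It remains to treat (3) and (5). For (1) $\Leftrightarrow$ (3): by Proposition~\ref{6.5}, (1) amounts to $R/P$ being local for every $P\in\mathrm{Supp}(S/R)$; since each such $P$ contains a minimal element of the finite poset $\mathrm{Supp}(S/R)$, and since every maximal ideal lying over a member of $\mathrm{Supp}(S/R)$ again lies in $\mathrm{MSupp}(S/R)=\{M_1,\dots,M_n\}$, (1) is equivalent to: each minimal element $P_j$ is contained in a unique $M_i$. Now $\mathrm{V}(P_j)\subseteq\mathrm{Supp}(S/R)$ by stability under specialization, so $\mathrm{V}(P_j)$ is a subset of the tree $\mathrm{Supp}(S/R)$ with $P_j$ as least element; such a set is linearly ordered precisely when it has a unique maximal element, i.e. when $P_j$ lies below a unique $M_i$. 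This proves (1) $\Leftrightarrow$ (3). For (1) $\Leftrightarrow$ (5): the intervals $I_i:=\{P\mid P_{j_i}\subseteq P\subseteq M_i\}$ cover $\mathrm{Supp}(S/R)$, since every prime of the support lies below some $M_i$, and $|I_i|=1+\mathrm{ht}(M_i/P_{j_i})$; thus $|\mathrm{Supp}(S/R)|\le\sum_{i=1}^{n}(1+\mathrm{ht}(M_i/P_{j_i}))$ with equality exactly when the $I_i$ are pairwise disjoint. Two of them meet iff some prime contains both $P_{j_i}$ and $P_{j_{i'}}$, which by the tree property of $\mathrm{Supp}(S/R)$ forces $P_{j_i}=P_{j_{i'}}$; hence (5) holds iff the $P_{j_i}$ are pairwise distinct, iff $i\mapsto P_{j_i}$ is injective. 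Since this map is automatically onto the set of minimal elements of $\mathrm{Supp}(S/R)$, its injectivity is again the statement that each minimal element lies below a unique $M_i$, i.e. (1).

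I expect the main obstacle to be the bookkeeping in the structural core: one must weld together the tree property of supports of FCP Pr\"ufer extensions, the chained-ness of the local pieces $R_{M_i}\subseteq S_{M_i}$, and stability under specialization, in order to extract both the clean formula $|[R_{M_i},S_{M_i}]|=2+\mathrm{ht}(M_i/P_{j_i})$ and the key observation that $P_{j_i}$ is \emph{always} well defined. Once this is in hand, the $\mathcal B$-property is seen to be purely a matter of whether $i\mapsto P_{j_i}$ is injective — equivalently whether distinct support-intervals $I_i$ glue along a common minimal prime — and all five conditions follow by elementary counting.
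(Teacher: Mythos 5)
Your proof is correct and follows essentially the same route as the paper: injectivity of $\varphi$ gives (1) $\Leftrightarrow$ (2), the tree property of $\mathrm{Supp}(S/R)$ plus chained-ness of the local pieces yields $|[R_{M_i},S_{M_i}]|=2+\mathrm{ht}(M_i/P_{j_i})$ unconditionally (hence (2) $\Leftrightarrow$ (4)), and Proposition~\ref{6.5} handles (3). Your treatment of (5) — covering $\mathrm{Supp}(S/R)$ by the intervals $[P_{j_i},M_i]$ and reading equality as injectivity of $i\mapsto P_{j_i}$ — is only a mild repackaging of the paper's count, which partitions the support by the sets $\mathrm{V}_R(P_j)$ and compares block by block; the substance is the same.
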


\begin{proof} First, by \cite[Proposition 4.2(b)]{DPP3} which says that $\mathrm{Supp}(S/R)$ is a tree, each $M_i$ contains only one $P_j$, for $i\in\mathbb N_n$. Let $\Psi:\mathrm{MSupp}(S/R)\to\{P_1,\ldots,P_r\}$ be the map defined by $\Psi(M_i):=P_{j_i}$, where $P_{j_i}$ is the only minimal element of $\mathrm{Supp}(S/R)$ contained in $M_i$.
The map $\varphi:[R,S]\to{\prod}_{i=1}^n[R_{M_i},S_{M_i}]$ defined by $\varphi(T)=(T_{M_1},\ldots,T_{M_n})$, is injective by \cite[Theorem 3.6]{DPP2}.

(1) $\Leftrightarrow$ (2) since $\varphi$ is injective.

(1) $\Rightarrow$ (3) Assume that there is a minimal element $P$ of $\mathrm{Supp}(S/R)$ such that $\mathrm{V}(P)$ is not linearly ordered. There exist $P'_1,P'_2\in\mathrm{V}(P)$ which are incomparable. It follows that they are not contained in a same maximal ideal of $R$ because $\mathrm {Supp}(S/R)$ is a tree. So, we may assume that there exist $M_1,M_2\in\mathrm{V}(P)\cap\mathrm{MSupp}(S/R)$, with $M_1\neq M_2$, a contradiction by Proposition \ref{6.5} since $\varphi$ is bijective. Then, $\mathrm{V}(P)$ is linearly ordered, and this holds  for each minimal element $P$ of $\mathrm {Supp}(S/R)$.

(3) $\Rightarrow$ (1) Assume that $\mathrm{V}(P)$ is linearly ordered for each minimal element $P$ of $\mathrm{Supp}(S/R)$. Then, $R/Q$ is local for each $Q\in\mathrm{Supp}(S/R)$ and Proposition \ref{6.5} shows that  $\varphi$ is bijective.

(2) $\Leftrightarrow$ (4) Let $i\in\mathbb N_n$. Since $R_{M_i}$ is local, we have $R_{M_i}\subseteq S_{M_i}$ chained according to  \cite[Theorem 6.10]{DPP2}. Moreover, $|[R_{M_ i},S_{M_i}]|=1+|\mathrm {Supp}_{R_{M_i}}(S_{M_i}/R_{M_i})|$ and $\mathrm{Supp}_{R_{M_i}}(S_{M_i}/R_{M_i})$ is linearly ordered by Proposition~\ref{6.11}. Its smallest element is of the form $P_{j_i}R_{M_i}$, where $P_{j_i}=\Psi(M_i)$. Then, $|\mathrm{Supp}_{R_{M_i}}(S_{M_ i}/R_{M_i})|=1+\mathrm{ht}(M_i/P_{j_i})$. It follows that $|[R_{M_i},S_{M_i}]|=2+\mathrm{ht}(M_i/P_{j_i})$ and (4) $\Leftrightarrow|[R,S]|={\prod}_{i=1}^n(2+\mathrm{ht}(M_i/P_{j_i}))={\prod}_{i=1}^n|[R_{M_i},S_{M_i}]|\Leftrightarrow$ (2). 

(3) $\Leftrightarrow$ (5) By \cite[Proposition 4.2(c)]{DPP3}, $\cup_{j=1}^r\mathrm{V}_R(P_j)$ is a partition of $\mathrm{Supp}(S/R)$. It follows that $|\mathrm{Supp}(S/R)|={\sum}_{j=1}^r|\mathrm{V}_R(P_ j)|$. Since $\Psi$ is surjective, we get that $r\leq n$. For each $j\in\mathbb N_r$, set $\{M_{j,1},\ldots,M_{j,s_j}\}:=\mathrm{V}_R(P_ j)\cap\mathrm{MSupp}(S/R)=\Psi^{-1}(\{P_j\})$ and ${\sum}_{j=1}^r(\sum_{k=1}^{s_j}[1+\mathrm{ht}(M_{j,k}/P_j)])$

\noindent $=:A$. 
As $\mathrm{Supp}(S/R)$ is a tree, there do not exist $j,j'\in\mathbb N_r,\ j\neq j'$ such that $M_{j,k}=M_{j',k'}$ for any $(k,k')\in\mathbb N_{s_j}\times\mathbb N_{s_{j'}}$.

Then, $A=\sum_{i=1}^n[1+\mathrm{ht}(M_i/P_{j_i})]$, where $P_{j_i}=\Psi(M_i)$ and because each $M_i$ appears one and only one time in $A$. For $j\in\mathbb N_r$, we have $|\mathrm{V}_R(P_j)|\leq\sum_{k=1}^{s_j}[1+\mathrm{ht}(M_{j,k}/P_ j)]$, with equality if and only if $s_j=1$, since $P_j$ is counted in $\mathrm{ht}(M_{j,k}/P_j)$ for each $M_ {j,k}\in\Psi^{-1}(\{P_j\})$. It follows that $|\mathrm{V}_R(P_j)|=\sum_{k=1}^{s_j}[1+\mathrm{ht}(M_{j,k}/P_ j)]$ if and only if $P_j$ is contained in a unique maximal ideal $M_ {j,k}$ if and only if $\mathrm{V}_R(P_j)$ is linearly ordered because $\mathrm {Supp}(S/R)$ is a tree. Now, $|\mathrm{Supp}(S/R)|={\sum}_{j=1}^r|\mathrm{V}_R(P_j)|\leq{\sum}_{j=1}^r(\sum_{k=1}^{s_j}[1+\mathrm {ht}(M_{j,k}/P_j)])=A$. But, $A=\sum_{i=1}^n[1+\mathrm{ht}(M_i/P_{j_i})]$.

To conclude, (5) $\Leftrightarrow|\mathrm{Supp}(S/R)|=A\Leftrightarrow|\mathrm{V}_R(P_j)|=\sum_{k=1}^{s_j}[1+\mathrm{ht}(M_{j,k}/P_j)]$ for each minimal element $P_j$ of $\mathrm{Supp}(S/R)\Leftrightarrow\mathrm{V}_R(P_j)$ is linearly ordered for each minimal element $P_j$ of $\mathrm {Supp}(S/R)\Leftrightarrow$  (3).
\end{proof}   

\section{General properties of splitters}
Given a ring extension $R\subseteq S$ and $T\in[R,S]$, it is easy to see that $\mathrm{MSupp}_R(S/R)=\mathrm{MSupp}_R(T/R)\cup \mathrm{MSupp}_R(S/T)$. 
The case when this equation defines a partition of $\mathrm{Supp}_R(S/R)$ has a great interest, because we will show that in some contexts, we can reduce our results to this situation.

In the following, we denote by $X^c$ the complement  of a subset $X$ of $\mathrm{MSupp}_R(S/R)$ in $\mathrm{MSupp}_R(S/R)$.  

\begin{lemma}\label{split1}Let $R\subset S$ be a ring extension. Assume that there is some $T\in[R,S]$ such that  $\mathrm{MSupp}(S/T)\cap\mathrm{MSupp}(T/R)=\emptyset$. Then, $T$ is the unique  $T'\in[R,S]$ such that $\mathrm{MSupp}(S/T')=\mathrm{MSupp}(S/T)$ and 

\noindent$\mathrm{MSupp}(T/R)=\mathrm{MSupp}(T'/R)$.
\end{lemma}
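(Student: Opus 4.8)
The plan is to show uniqueness of $T$ as the subextension having a prescribed pair of maximal supports by reducing everything to the local case and exploiting the fact that, locally at a maximal ideal, the assumption forces exactly one of $T/R$ or $S/T$ to vanish. First I would fix $T' \in [R,S]$ with $\mathrm{MSupp}(S/T') = \mathrm{MSupp}(S/T)$ and $\mathrm{MSupp}(T/R) = \mathrm{MSupp}(T'/R)$, and set $U := T \cap T'$ and $W$ to be the subalgebra generated by $T$ and $T'$, i.e. $W = TT'$; the goal is to prove $T = T'$. Localize at an arbitrary $M \in \mathrm{Max}(R)$. If $M \notin \mathrm{MSupp}(S/R)$, then $R_M = S_M$ and there is nothing to do, so assume $M \in \mathrm{MSupp}(S/R) = \mathrm{MSupp}(T/R) \sqcup \mathrm{MSupp}(S/T)$ (a disjoint union by hypothesis).

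The key dichotomy is: if $M \in \mathrm{MSupp}(T/R)$, then $M \notin \mathrm{MSupp}(S/T)$, i.e. $S_M = T_M$; and since $\mathrm{MSupp}(S/T') = \mathrm{MSupp}(S/T)$ we also get $S_M = T'_M$, hence $T_M = S_M = T'_M$. Symmetrically, if $M \in \mathrm{MSupp}(S/T)$, then $M \notin \mathrm{MSupp}(T/R)$, so $R_M = T_M$; and since $\mathrm{MSupp}(T'/R) = \mathrm{MSupp}(T/R)$ we also get $R_M = T'_M$, hence again $T_M = R_M = T'_M$. In either case $T_M = T'_M$ for every maximal ideal $M$ of $R$. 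Since a submodule of an $R$-module is determined by its localizations at all maximal ideals — concretely, $T_M = T'_M$ inside $S_M$ for all $M \in \mathrm{Max}(R)$ forces $T = T'$ as $R$-submodules of $S$ — we conclude $T = T'$.

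The step I expect to require the most care is the bookkeeping with the \emph{partition}: the hypothesis $\mathrm{MSupp}(S/T) \cap \mathrm{MSupp}(T/R) = \emptyset$ combined with the always-true identity $\mathrm{MSupp}(S/R) = \mathrm{MSupp}(T/R) \cup \mathrm{MSupp}(S/T)$ (noted just before the lemma) is exactly what makes the dichotomy in the previous paragraph exhaustive at every $M \in \mathrm{MSupp}(S/R)$; without disjointness one could not conclude that $M \in \mathrm{MSupp}(T/R)$ implies $M \notin \mathrm{MSupp}(S/T)$. I would also remark that the argument does not need any FCP hypothesis — only that support of the quotient $S/T$ localizes correctly, which is the standard fact $\mathrm{Supp}_R(S/T)_M \ne 0 \iff (S/T)_M \ne 0 \iff S_M \ne T_M$, valid for arbitrary extensions. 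Finally I would note that existence of a $T$ satisfying the emptiness hypothesis is not asserted here; the lemma is purely a uniqueness statement, and the phrase ``$T$ is the unique $T'$'' should be read accordingly.
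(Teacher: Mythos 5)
Your proof is correct and follows essentially the same route as the paper's: use the disjointness to turn the covering $\mathrm{MSupp}(S/R)=\mathrm{MSupp}(T/R)\cup\mathrm{MSupp}(S/T)$ into a dichotomy at each maximal ideal, deduce $T_M=T'_M$ (equal to $S_M$ or $R_M$ accordingly, and to both when $M\notin\mathrm{MSupp}(S/R)$), and conclude $T=T'$ by the local-global principle for $R$-submodules of $S$. The only cosmetic remark is that the auxiliary rings $U=T\cap T'$ and $W=TT'$ you introduce are never used and can be deleted.
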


\begin{proof}Set $X:=\mathrm{MSupp}(T/R)$ and $Y:=X^c$, so that $Y=\mathrm{MSupp}(S/T)$. Let $T'\in[R,S]$ be such that $\mathrm{MSupp}(T'/R)=X$ and $\mathrm{MSupp}(S/T')=Y$. Since $X\cap Y=\emptyset$ and $\mathrm{MSupp}(S/R)=X\cup Y$, we get $T_M=T'_M=S_M$ for any $M\in X$ and $T_M=T'_M=R_M$ for any $M\in Y$. Moreover, since $R_M=T_M=T'_M=S_M$ for any $M\in\mathrm{Max}(R)\setminus( X\cup Y)$, it follows that $T'=T$. 
\end{proof}

\begin{definition}\label{split0}If $R\subset S$ is an extension and $T\in]R,S[$, we say that the extension {\em splits} at $T$, if $\mathrm{MSupp}(S/T)\cap\mathrm{MSupp}(T/R)=\emptyset$. 
\end{definition}
 
 In fact, if $R\subset S$ is an extension and $T\in]R,S[$, we have  obviously $\mathrm{MSupp}(S/T)\cap\mathrm{MSupp}(T/R)=\emptyset$ if and only if $\mathrm{Supp}(S/T)\cap\mathrm{Supp}(T/R)=\emptyset$. That is the reason why we work only with maximal ideals. 
 
The following Proposition gives a first example of a split extension. 
 Section 6 is devoted to almost-Pr\"ufer extensions.
 
\begin{proposition}\label{split} \cite[Proposition 4.16]{Pic 5} and \cite[Proposition 3.6]{Pic 3} Let $R\subset S$ be an FCP extension. The following statements are  equivalent:
\begin{enumerate}
 \item $R\subset S$ is an almost-Pr\"ufer extension;
  \item $R\subset S$ splits at $\overline R$; 
  \item  $R\subset S$ splits at $\widetilde R$; 
 \item $R=\overline R\cap\widetilde R$ and $S=\overline R\widetilde R$;  \item There exists $U\in[R,S]$ such that $R=\overline R\cap U$ and $S=\overline RU$;
\end{enumerate}
If these conditions hold, $\widetilde R$ is the only $U\in[R,S]$ satisfying (5).
 \end{proposition}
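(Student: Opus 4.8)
\emph{Proof proposal.} The plan is to pivot everything through the single equality $S=\overline R\,\widetilde R$, using three facts about an FCP extension $R\subseteq S$: it is quasi-Pr\"ufer, so $\overline R\subseteq S$ is Pr\"ufer; an FCP integrally closed extension is Pr\"ufer (\cite[Proposition 6.9]{DPP2}); and $R\subseteq S$ is almost-Pr\"ufer precisely when $S=\widetilde R\,\overline R$ (\cite[Theorem 4.6]{Pic 5}). First I would record that $\overline R\cap\widetilde R=R$ holds for any extension: the extension $R\subseteq\overline R\cap\widetilde R$ is integral, being a subextension of $R\subseteq\overline R$, and Pr\"ufer, being a subextension of $R\subseteq\widetilde R$, hence an isomorphism. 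Thus the first equation in (4) is automatic, (4) reduces to $S=\overline R\widetilde R$, and so (1)$\Leftrightarrow$(4); moreover (4)$\Rightarrow$(5) is trivial with $U=\widetilde R$. For (5)$\Rightarrow$(4): if $\overline R\cap U=R$ and $\overline R U=S$, then the integral closure of $R$ in $U$ is $\overline R^S\cap U=\overline R\cap U=R$, so $R$ is integrally closed in $U$; being FCP, $R\subseteq U$ is therefore Pr\"ufer, whence $U\subseteq\widetilde R$ and $S=\overline R U\subseteq\overline R\widetilde R\subseteq S$, giving (4). This settles (1)$\Leftrightarrow$(4)$\Leftrightarrow$(5) apart from the uniqueness clause.

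Next I would treat the support conditions locally, using that for FCP extensions both the integral closure and the Pr\"ufer hull commute with localization, and that $M\in\mathrm{MSupp}(E/F)$ iff $E_M\neq F_M$. For $M\in\mathrm{Max}(R)$ this gives $M\in\mathrm{MSupp}(S/\overline R)$ iff $R_M\subseteq S_M$ is not integral, and $M\in\mathrm{MSupp}(\overline R/R)$ iff $R_M$ is not integrally closed in $S_M$, that is, iff $R_M\subseteq S_M$ is not Pr\"ufer. Hence (2) says exactly that every localization $R_M\subseteq S_M$ is integral or Pr\"ufer, and then (2)$\Rightarrow$(1) is immediate: in the Pr\"ufer case $\widetilde R_M=S_M$ and in the integral case $\widetilde R_M=R_M$, so $\widetilde R_M\subseteq S_M$ is integral for every $M$, whence $\widetilde R\subseteq S$ is integral. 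The discussion of (3) via $\widetilde R$ is parallel. For uniqueness, having already shown that any $U$ as in (5) satisfies $U\subseteq\widetilde R$, $R\subseteq U$ Pr\"ufer and $\overline R U=S$, I would check localization-wise that under the equivalent conditions $U_M$ is forced to equal $\widetilde R_M$ at every $M$ (each alternative of the local dichotomy pins $U_M$ down from $\overline R_M U_M=S_M$ and $\overline R_M\cap U_M=R_M$), so $U=\widetilde R$; alternatively one invokes Lemma \ref{split1}.

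The hard part is the remaining implication (1)$\Rightarrow$(2) (equivalently (1)$\Rightarrow$(3)): one must show that for an FCP almost-Pr\"ufer extension the ``integral defect'' $\mathrm{MSupp}(\overline R/R)$ and the ``Pr\"ufer defect'' $\mathrm{MSupp}(S/\overline R)$ are disjoint. My approach would be to localize, reducing to the statement that a local FCP extension which is almost-Pr\"ufer is integral or Pr\"ufer. This local claim is where the FCP hypothesis genuinely intervenes, via the classification of minimal extensions and the behaviour of the crucial ideals of minimal flat-epimorphic subextensions over a local ring, and it is the step I expect to require the most care; it is precisely the content imported from \cite[Proposition 4.16]{Pic 5} and \cite[Proposition 3.6]{Pic 3}, so in a self-contained write-up this is where I would concentrate the effort.
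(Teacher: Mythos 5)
The paper gives no proof of this proposition at all: it is imported from \cite[Proposition 4.16]{Pic 5} and \cite[Proposition 3.6]{Pic 3}, so there is no internal argument to measure yours against, and your write-up can only be judged as a reconstruction. The formal part of your proposal is sound: $\overline R\cap\widetilde R=R$ for any extension, the reduction of (4) to $S=\overline R\widetilde R$ and hence (1)$\Leftrightarrow$(4) via the quoted \cite[Theorem 4.6]{Pic 5}, (4)$\Rightarrow$(5), (5)$\Rightarrow$(4) via \cite[Proposition 6.9]{DPP2}, and the uniqueness of $U$ (which, as you note, can also be read off locally or from Lemma \ref{split1}) are all correct; your (2)$\Rightarrow$(1) also works, granted the localization of the Pr\"ufer hull, which is itself a nontrivial fact from the cited papers (it can be avoided by producing the complement of $\overline R$ directly, as in Theorem \ref{4.5}, and invoking (5)). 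But you leave the central implication (1)$\Rightarrow$(2) unproved and explicitly defer it to the very references the proposition cites; since that disjointness of supports is exactly where the FCP hypothesis does real work (the paper's own non-FCP example in Remark \ref{split3} shows it can fail otherwise), the proposal is incomplete precisely at the heart of the statement.

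More seriously, the claim that ``the discussion of (3) via $\widetilde R$ is parallel'' is not correct. From (3) you get, at each maximal ideal $M$, only that $(\widetilde R)_M\in\{R_M,S_M\}$, and in the case $(\widetilde R)_M=R_M$ you learn (even granting that the hull localizes) merely that $R_M$ is Pr\"ufer-closed in $S_M$; a Pr\"ufer-closed local FCP extension need not be integral. For instance $D:=k+t^2k[[t]]\subset k((t))$ is an FIP chain with $\widetilde D=D$ and $\overline D=k[[t]]$, and it is not integral; note also that when $\widetilde R=R$ the support condition in (3) is vacuous, so the precise meaning of ``splits at $\widetilde R$'' matters for this direction. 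Thus the roles of $\overline R$ and $\widetilde R$ are not symmetric: under FCP ``integrally closed $\Rightarrow$ Pr\"ufer'' holds locally, but ``Pr\"ufer-closed $\Rightarrow$ integral'' does not, so (3)$\Rightarrow$(1) cannot be obtained by mirroring your argument for (2). It requires the interplay between $\widetilde R$ and $\overline R$ (this is the content of \cite[Theorem 4.6 and Corollary 4.7]{Pic 5}, and it is telling that Remark \ref{split3} quotes Corollary 4.7 only under the almost-Pr\"ufer hypothesis). In a self-contained write-up you must give a genuine argument here, not an appeal to symmetry.
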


\begin{definition} \label{split2}Let $R\subseteq S$ be a ring extension and $X\subseteq\mathrm{MSupp}_R(S/R)$. We say that an element $T$ of $[R,S]$ is a {\em splitter} of the extension at $X$ if $X=\mathrm{MSupp}_R(T/R)$ and $X^c=\mathrm{MSupp}_R(S/T)$. Clearly, such a splitter splits the extension and each element that splits an extension is a  splitter.
\end{definition}

\begin{remark} \label{split3} The FCP hypothesis on $R\subset S$ is necessary in Proposition~\ref{split}. In \cite[Remark 2.9(c)] {DPP2}, we give the following example. Let $(R,M)$ be a one-dimensional valuation domain with quotient field $S$, and set $T:=S[X]/(X^2)=S[x]$, where $x$ is the class of $X$ in $T$. We get that $ \widetilde R=S$ and $\overline R=R+Sx$. But neither $\widetilde R$ nor $\overline R$ are splitters since $\mathrm{MSupp}_R(T/\widetilde R)=\mathrm{MSupp}_R(\widetilde R/R)=\mathrm{MSupp}_R(T/\overline R)=\mathrm{MSupp}_R(\overline R/R)=\{M\}$. 

In fact, according to \cite[Theorem 4.6]{Pic 5}, a quasi-Pr\"ufer extension  $R\subset S$ is an almost-Pr\"ufer extension if and only if $\widetilde R$ is the complement of $\overline R$, and according to \cite[Corollary 4.7]{Pic 5}, if $R\subset S$ is an almost-Pr\"ufer extension, then $R\subset S$ splits at $\overline R$ if and only if $R\subset S$ splits at $\widetilde R$.
\end{remark}

According to Lemma~\ref{split1}, a splitter at $X$ is unique and we will denote it by $\sigma (X)$. It follows that if the extension splits at $T$, then $T= \sigma (X)$, where $X =\mathrm{MSupp}_R(T/R)$.
We observe that if $X=\mathrm{MSupp}(S/R)$, then $\sigma(\mathrm{MSupp}(S/R))=S$, and if $X=\emptyset$ then $\sigma(\emptyset)=R$. We say that $R$ and $S$ are {\em trivial} splitters.

The following Theorem shows that for an FCP $\mathcal B$-extension $R\subset S$ and for any $X\subseteq\mathrm{MSupp}(S/R)$, the splitter of the extension at $X$ always exists. In fact, it is a new characterization of $\mathcal B$-extensions. A precise study of splitters in such extensions is the subject of Section 5.

\begin{theorem}\label{1.15} Let $R\subset S$ be an FCP extension. Then, $R\subset S$ is a $\mathcal B$-extension if and only if for any $X\subseteq\mathrm{MSupp}(S/R)$, the splitter of the extension at $X$ exists. 
\end{theorem}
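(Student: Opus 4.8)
The strategy is to prove both implications using the characterization of $\mathcal B$-extensions from Proposition~\ref{6.5}, namely that an FCP extension $R\subseteq S$ is a $\mathcal B$-extension if and only if $R/P$ is local for every $P\in\mathrm{Supp}(S/R)$, together with the bijectivity of the canonical map $\varphi\colon[R,S]\to\prod_{i=1}^n[R_{M_i},S_{M_i}]$.

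\emph{The easy direction} is: if $R\subseteq S$ is a $\mathcal B$-extension, then every $X\subseteq\mathrm{MSupp}(S/R)$ has a splitter. Write $\mathrm{MSupp}(S/R)=\{M_1,\dots,M_n\}$. For each index $i$ choose $U_i\in[R_{M_i},S_{M_i}]$ to be $S_{M_i}$ if $M_i\in X$ and $R_{M_i}$ if $M_i\notin X$. Since $\varphi$ is bijective, there is a (unique) $T\in[R,S]$ with $T_{M_i}=U_i$ for all $i$. Then $M_i\in\mathrm{MSupp}(T/R)$ iff $R_{M_i}\neq T_{M_i}=U_i$ iff $M_i\in X$, and similarly $M_i\in\mathrm{MSupp}(S/T)$ iff $M_i\in X^c$; for maximal ideals outside $\mathrm{MSupp}(S/R)$ everything localizes trivially. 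Hence $\mathrm{MSupp}(T/R)=X$ and $\mathrm{MSupp}(S/T)=X^c$, so $T$ is a splitter at $X$ (and $T=\sigma(X)$ by Lemma~\ref{split1}).

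\emph{The harder direction} is the converse: assume every $X\subseteq\mathrm{MSupp}(S/R)$ admits a splitter, and deduce that $R/P$ is local for each $P\in\mathrm{Supp}(S/R)$. Suppose not: there is $P\in\mathrm{Supp}(S/R)$ contained in two distinct maximal ideals, and since $\mathrm{Supp}(S/R)$ is "upward saturated into $\mathrm{Max}(R)$" (each element of $\mathrm{Supp}(S/R)$ lies under some element of $\mathrm{MSupp}(S/R)$, as one sees from a maximal chain refined into minimal steps), we may pick $M,M'\in\mathrm{MSupp}(S/R)$ with $M\neq M'$ and $P\subseteq M\cap M'$. The plan is to derive a contradiction by choosing $X$ so that a splitter at $X$ must simultaneously localize to $S$ at $M$ and to $R$ at $M'$ while being forced to agree along $P$. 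Concretely: take $X:=\{M\}$ (or more robustly $X:=\mathrm{MSupp}(S/R)\setminus\{M'\}$ if one needs $P$'s other overrings handled), let $T:=\sigma(X)$. Then $T_{M'}=R_{M'}$, so $T_P=R_P$; but $M\in X$ forces $T_M\neq R_M$, and one uses a minimal-extension step inside $R_M\subseteq T_M$ together with the tree/crucial-ideal machinery (as in the proof of Proposition~\ref{6.71} or Proposition~\ref{6.11}, via \cite[Lemma 1.8]{Pic 6}) to locate a minimal sub-step $T_0\subset T_1$ of $R\subseteq T$ with crucial maximal ideal lying over $P$ — and such a step, being crucial, also shows up after localizing at $M'$, contradicting $T_{M'}=R_{M'}$.

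\emph{Anticipated main obstacle.} The delicate point is the last step: turning "$T_P=R_P$ but $T$ is nontrivial at $M$ above $P$" into an actual contradiction. The subtlety is that $\mathrm{MSupp}(T/R)\subseteq\mathrm{Max}(R)$ by construction, so $P\notin\mathrm{MSupp}(T/R)$ is not immediately contradictory — one really needs $\mathrm{Supp}(T/R)$, not just $\mathrm{MSupp}(T/R)$, and must argue that if $M\in\mathrm{MSupp}(T/R)$ with $P\subseteq M$ and $P\in\mathrm{Supp}(S/R)$, then after refining $R\subseteq T$ into minimal extensions, some crucial maximal ideal of a minimal step sits over a prime contained in both $M$ and $M'$ (using that $\mathrm{Supp}(S/R)$ is a tree, \cite[Proposition 4.2(b)]{DPP3}, so $P$ is the unique minimal prime of $\mathrm{Supp}(S/R)$ under $M$ and also under $M'$). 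The bookkeeping connecting the minimal steps of $R\subseteq T$ to those of $R\subseteq S$ and ensuring the offending step is visible at $M'$ is where the FCP hypothesis is used essentially, exactly as flagged in the introduction. I would handle it by localizing at $M\cap M'$'s common ancestor appropriately, or by invoking that a splitter at $X=\mathrm{MSupp}(S/R)\setminus\{M'\}$ forces $S_{M'}=\,$ (splitter)$_{M'}\cdot$(something) in a way incompatible with $P$ lying under both, and then cite Proposition~\ref{6.5} to conclude.
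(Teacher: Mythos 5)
Your forward direction is exactly the paper's argument (use bijectivity of $\varphi$ to manufacture $T$ with $T_M=S_M$ for $M\in X$ and $T_M=R_M$ otherwise) and is fine. The converse, however, has a genuine gap at the very point you flag as the main obstacle. Having set $X=\{M\}$ and $T:=\sigma(X)$, you propose to find a minimal sub-step of $R\subseteq T$ whose crucial ideal lies over a prime contained in both $M$ and $M'$, and to contradict $T_{M'}=R_{M'}$. But the implication you would need -- ``$P\in\mathrm{Supp}(S/R)$, $P\subseteq M\cap M'$, $M\in\mathrm{MSupp}(T/R)$ and $T_{M'}=R_{M'}$ imply such a step exists inside $[R,T]$'' -- is false, and no tree argument can rescue it: by Proposition~\ref{2.11} applied to $R\subseteq T$, the contractions of the crucial ideals of the steps of a maximal chain of $[R,T]$ are exactly the elements of $\mathrm{Supp}(T/R)$, and $T_{M'}=R_{M'}$ forces $T_P=R_P$, so no element of $\mathrm{Supp}(T/R)$ lies under $M'$. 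A concrete counterexample to your intermediate claim is furnished by Example~\ref{6.10}: for the extension $T\subset K$ there, take the intermediate ring $V_1=T_{M_1}$; one checks $\mathrm{Supp}_T(V_1/T)=\{M_2\}$ and $(V_1)_{M_1}=T_{M_1}$, while $Q\in\mathrm{Supp}_T(K/T)$ with $Q\subseteq M_1\cap M_2$, yet no minimal step of $T\subseteq V_1$ has crucial ideal over a prime under both $M_1$ and $M_2$. What fails there is only the half of the splitter hypothesis you never use, namely $T_M=S_M$ (equivalently $M\notin\mathrm{MSupp}(S/T)$); without invoking it, the contradiction cannot be reached.

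The paper's proof uses both halves: choose a maximal chain of $[R,S]$ passing through $T=R_k$, and pick the step $R_i\subset R_{i+1}$ with $\mathcal{C}(R_i,R_{i+1})\cap R=P$ (it exists by Proposition~\ref{2.11}, but only in a chain of the \emph{whole} lattice $[R,S]$, not necessarily of $[R,T]$). Then $M,M'\in\mathrm{MSupp}_R(R_{i+1}/R_i)$, and one argues by cases: if the step lies below $T$, then $M'\in\mathrm{MSupp}(T/R)=\{M\}$, a contradiction; if it lies above $T$, then $M\in\mathrm{MSupp}(S/T)=X^c$, again a contradiction. Your sketch only treats the first case, and your fallback suggestion (``a splitter at $X=\mathrm{MSupp}(S/R)\setminus\{M'\}$ forces \dots'') is not an argument. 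A secondary point: the tree property of $\mathrm{Supp}(S/R)$ from [DPP3, Proposition 4.2(b)] is invoked in this paper only for Pr\"ufer FCP extensions, so citing it for an arbitrary FCP extension is not justified -- and, as the paper's proof shows, it is not needed.
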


\begin{proof} Let $X\subseteq\mathrm{MSupp}(S/R)$ and set $Y:=X^c$. Assume that $R\subset S$ is a $\mathcal B$-extension, so that the map $\varphi:[R,S]\to\prod_{M\in\mathrm{MSupp}(S/R)}[R_M,S_M]$ defined by $\varphi(T)=(T_M)_{M\in\mathrm{MSupp}(S/R)}$ is bijective. In particular, there exists a unique $T\in[R,S]$ such that $T_M=S_M$ for each $M\in X$ and $T_M=R_M$ for each $M\in Y$. This implies in particular that $\mathrm{MSupp}(T/R)=X$ and $\mathrm{MSupp}(S/T)=Y$. Since $X\cap Y=\emptyset$, it follows that $T$ is the splitter of $R\subset S$ at $X$.

Conversely, assume that the splitter of $R\subset S$ at $X$ exists for any $X\subseteq\mathrm{MSupp}(S/R)$ and assume that $R\subset S$ is not a $\mathcal B$-extension. Then there exists $P\in\mathrm{Supp}(S/R)$ which is contained in two maximal ideals $M_1,M_2$ of $R$. Obviously, $M_1,M_2\in\mathrm{MSupp}(S/R)$. Set $X:=\{M_1\}$ and $Y:=X^c$, so that $M_2\in Y$, and let $T:=\sigma(X)$. It follows that $\mathrm{MSupp}(T/R)=X$ and $\mathrm{MSupp}(S/T)=Y$. Let $\{R_i\}_{i=0}^n$ be a maximal chain of $[R,S]$ containing $T$, so that $T=R_k$ for some $k\in\mathbb N_{n-1}$. Since $P\in\mathrm{Supp}(S/R)$, there is some $i\in\mathbb N_{n-1}$ such that $P=\mathcal{C}(R_i,R_{i+1})\cap R$. Then, $P\in\mathrm{Supp}_R(R_{i+1}/R_i)$, which implies that $M_1,M_2\in\mathrm{MSupp}_R(R_{i+1}/R_i)$. If $i<k$, then $R_i,R_{i+1}\in[R,T]$, and $M_2\in\mathrm{MSupp}_R(T/R)$, a contradiction. A similar contradiction holds if $i\geq k$. To conclude, $R\subset S$ is  a $\mathcal B$-extension.
\end{proof}

\begin{corollary}\label{4.48} Let $R\subset S$ be an FCP extension. Assume that there exists $X\subseteq\mathrm{MSupp}(S/R)$ such that  the splitter of the extension at $X$ does not exist. Then, there exists some $P\in\mathrm{Supp}(S/R)$ contained in two distinct ideals of $\mathrm{MSupp}(S/R)$. 
\end{corollary}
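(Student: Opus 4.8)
The plan is to obtain Corollary~\ref{4.48} as an immediate contrapositive of Theorem~\ref{1.15}, combined with the characterization of $\mathcal B$-extensions in Proposition~\ref{6.5}. Since we assume there is some $X\subseteq\mathrm{MSupp}(S/R)$ for which the splitter at $X$ does not exist, Theorem~\ref{1.15} tells us directly that $R\subset S$ is \emph{not} a $\mathcal B$-extension. First I would invoke Proposition~\ref{6.5}, which says that an FCP extension is a $\mathcal B$-extension exactly when $R/P$ is local for every $P\in\mathrm{Supp}(S/R)$; failing this, there must exist some $P\in\mathrm{Supp}(S/R)$ such that $R/P$ is not local, i.e.\ $P$ is contained in at least two distinct maximal ideals $M_1,M_2$ of $R$.

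The only remaining point is to check that these $M_1,M_2$ actually lie in $\mathrm{MSupp}(S/R)$, not merely in $\mathrm{Max}(R)$. This is routine: $\mathrm{Supp}(S/R)$ is stable under specialization (it is $\mathrm{V}(\mathrm{Ann}_R(S/R))$ when $S/R$ is finitely generated, and more generally specialization-closed for any module), so from $P\in\mathrm{Supp}(S/R)$ and $P\subseteq M_i$ we get $M_i\in\mathrm{Supp}(S/R)\cap\mathrm{Max}(R)=\mathrm{MSupp}(S/R)$ for $i=1,2$. Hence $P$ is contained in two distinct ideals of $\mathrm{MSupp}(S/R)$, which is the desired conclusion.

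There is really no obstacle here; the content is entirely carried by Theorem~\ref{1.15} and Proposition~\ref{6.5}, and indeed the "conversely" half of the proof of Theorem~\ref{1.15} already exhibits such a prime $P$ explicitly. If one prefers a self-contained argument avoiding the appeal to Proposition~\ref{6.5}, one can instead simply re-run that portion of the proof of Theorem~\ref{1.15}: assuming no such $P$ exists would make $R\subset S$ a $\mathcal B$-extension (every $P\in\mathrm{Supp}(S/R)$ lying under a unique maximal ideal), whence by Theorem~\ref{1.15} every splitter would exist, contradicting the hypothesis. Either way the corollary is a one-step deduction.
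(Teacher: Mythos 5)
Your proposal is correct and follows the paper's own proof verbatim: Theorem~\ref{1.15} gives that $R\subset S$ is not a $\mathcal B$-extension, and Proposition~\ref{6.5} then yields a $P\in\mathrm{Supp}(S/R)$ with $R/P$ not local. Your extra check that the two maximal ideals containing $P$ lie in $\mathrm{MSupp}(S/R)$ (which the paper leaves implicit, and which also follows from Proposition~\ref{2.11}) is valid, since the support of any module is stable under specialization.
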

\begin{proof} According to Theorem ~\ref{1.15}, there exists $X\subseteq\mathrm{MSupp}(S/R)$ such that  the splitter of the extension at $X$ does not exist if and only if $R\subset S$ is not a $\mathcal B$-extension, which is equivalent to there exists some $P\in\mathrm{Supp}(S/R)$ such that $R/P$ is not local by Proposition ~\ref{6.5}; that is, there exists some $P\in\mathrm{Supp}(S/R)$ contained in two distinct ideals of $\mathrm{MSupp}(S/R)$. 
\end{proof}
  
When an FCP extension $R\subset S$ splits at some $T\in[R,S]$, we get a bijection between $[R,S]$ and $[R,T]\times[T,S]$ which allows to get a general formula about cardinalities of set of intermediary rings in case of FIP extensions. 
  
\begin{theorem}\label{4.5} Let $R\subset S$ be an FCP extension  that splits at $T$ and set $X:=\mathrm{MSupp}(T/R)$  Then, the following statements hold:
\begin{enumerate} 
\item There is an order-isomorphism $\psi:[R,S]\to[R,T]\times[T,S]$ defined by $\psi(V):=(V\cap T,VT)$. 

\item $T$ has a unique complement $T^o$, which is the splitter $\sigma (X^c)$ and satisfies $T+T^o=TT^o$. 
\item $\mathrm{MSupp}(T/R)=\mathrm{MSupp}(S/T^o)$ and $\mathrm{MSupp}(S/T)=$
\noindent$\mathrm{MSupp}(T^o/R)$;

\item If  $R\subset S$  is an FIP extension, $|[R,S]|=|[R,T]||[T,S]|$ holds.
\end{enumerate}
 \end{theorem}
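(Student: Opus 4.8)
The strategy is to prove (1) first and then read off (2)--(4) by localizing and invoking the uniqueness of splitters (Lemma~\ref{split1}). First, the preliminaries. Since $R\subseteq S$ is FCP, $\mathrm{Supp}(S/R)$ is finite, and the exact sequence $0\to T/R\to S/R\to S/T\to 0$ gives $\mathrm{Supp}(S/R)=\mathrm{Supp}(T/R)\cup\mathrm{Supp}(S/T)$; the splitting hypothesis, together with the equivalence ``$\mathrm{MSupp}$-disjoint iff $\mathrm{Supp}$-disjoint'' noted above, turns this union into a partition. Put $X:=\mathrm{MSupp}(T/R)$. As supports are stable under specialization, so are the two pieces $\mathrm{Supp}(T/R)$ and $\mathrm{Supp}(S/T)$; hence, for $P\in\mathrm{Supp}(S/R)$, \emph{every} maximal ideal over $P$ lies in $X$ if $P\in\mathrm{Supp}(T/R)$ and in $X^c$ if $P\in\mathrm{Supp}(S/T)$. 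In localized terms: $T_M=S_M$ for $M\in X$, $T_M=R_M$ for $M\in X^c$, and $R_M=T_M=S_M$ for $M\in\mathrm{Max}(R)\setminus\mathrm{MSupp}(S/R)$.

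\emph{Proof of (1).} For injectivity, note that $(V\cap T)_M=V_M\cap T_M$ and $(VT)_M=V_MT_M$ for every $V\in[R,S]$ and $M\in\mathrm{Max}(R)$ (localization is exact), so $(V\cap T)_M=V_M$ for $M\in X$, $(VT)_M=V_M$ for $M\in X^c$, and $V_M=R_M$ otherwise; thus $\psi(V)$ determines all the $V_M$, hence $V$. For surjectivity, given $(W_1,W_2)\in[R,T]\times[T,S]$, write $N\langle M\rangle:=\{s\in S:\ s/1\in N_M\}$ (a subring of $S$ containing $R$) for $N\in[R,S]$, and set
\[
V\ :=\ \bigcap_{M\in X}W_1\langle M\rangle\ \cap\ \bigcap_{M\in X^c}W_2\langle M\rangle .
\]
This is a \emph{finite} intersection (as $\mathrm{MSupp}(S/R)$ is finite), hence lies in $[R,S]$, and localization commutes with it. Since $N\langle M\rangle$ localizes to $N_M$ at $M$ and to a ring containing $N_{M'}$ at any $M'$, and since the specialization-stability above forces $W_1\langle M'\rangle$ to localize to $S_M$ at every $M\in X^c$ (and symmetrically $W_2\langle M'\rangle$ to $S_M$ at every $M\in X$), one reads off $V_M=(W_1)_M$ for $M\in X$, $V_M=(W_2)_M$ for $M\in X^c$, and $V_M=R_M$ otherwise. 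Hence $V\cap T=W_1$ and $VT=W_2$, i.e. $\psi(V)=(W_1,W_2)$; and $\psi^{-1}(W_1,W_2)=V$ is manifestly order-preserving in $(W_1,W_2)$, so $\psi$ is an order-isomorphism.

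\emph{Proof of (2)--(4).} An order-isomorphism of lattices preserves meets, joins and complements, so the complements of $T$ in $[R,S]$ correspond to those of $\psi(T)=(T,T)$ in $[R,T]\times[T,S]$, whose bottom and top are $(R,T)$ and $(T,S)$. Using $W_1\subseteq T\subseteq W_2$ one checks at once that $(T,T)$ has the single complement $(R,S)$; hence $T^o:=\psi^{-1}(R,S)$ is the unique complement of $T$. Localizing $T^o\cap T=R$ and $T^oT=S$ gives $(T^o)_M=R_M$ for $M\in X$ and $(T^o)_M=S_M$ for $M\in X^c$, so $\mathrm{MSupp}(T^o/R)=X^c=\mathrm{MSupp}(S/T)$ and $\mathrm{MSupp}(S/T^o)=X=\mathrm{MSupp}(T/R)$; this is (3), and it exhibits $T^o$ as a splitter at $X^c$, whence $T^o=\sigma(X^c)$ by Lemma~\ref{split1}. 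Moreover $(T+T^o)_M=T_M+(T^o)_M=S_M$ for all $M\in\mathrm{Max}(R)$, so $T+T^o=S=TT^o$. Finally, if $R\subset S$ has FIP then $[R,T]$ and $[T,S]$ are finite (being subsets of $[R,S]$), so the bijection $\psi$ yields $|[R,S]|=|[R,T]\times[T,S]|=|[R,T]|\,|[T,S]|$.

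\emph{Main difficulty.} The only nonroutine point is surjectivity of $\psi$: one must glue the partial data $W_1$ (``living on'' $X$) and $W_2$ (``living on'' $X^c$) into a single subring of $S$. This is exactly where the FCP hypothesis enters essentially --- finiteness of $\mathrm{MSupp}(S/R)$ makes localization commute with the defining intersection --- and where the splitting hypothesis is used, via the specialization-stability that trivializes all the ``cross'' localizations; everything else is localization bookkeeping.
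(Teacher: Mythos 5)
Your argument is correct, and parts (2)--(4) are handled exactly as in the paper (localize at the maximal ideals, where the splitting hypothesis forces $T_M\in\{R_M,S_M\}$, and read off the complement, its support, and the counting formula). The difference is in part (1): the paper does not prove the bijection at all but quotes it from an earlier work (\cite[Lemma 3.7]{Pic 3}), whereas you prove it from scratch, getting injectivity by localization and surjectivity by an explicit gluing $V:=\bigcap_{M\in X}W_1\langle M\rangle\cap\bigcap_{M\in X^c}W_2\langle M\rangle$, where $N\langle M\rangle$ is the pullback of $N_M$ along $S\to S_M$ --- essentially the rings $R_{[M,T]}$ that the paper itself exploits later (Proposition~\ref{1.154}, Theorem~\ref{4.09}). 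This buys a self-contained proof at the cost of the one genuinely delicate verification, which you state but compress: that the ``cross'' localizations are full, i.e.\ $(W_1\langle M'\rangle)_M=S_M$ for $M'\in X$, $M\in X^c$ (and symmetrically). That claim is true and proves exactly as you indicate: $S/W_1\langle M'\rangle$ embeds in $(S/W_1)_{M'}$, and $((S/W_1)_{M'})_M\neq 0$ would force a prime $P\in\mathrm{Supp}(S/W_1)\subseteq\mathrm{Supp}(S/T)\cup\mathrm{Supp}(T/R)$ with $P\subseteq M\cap M'$; specialization-stability of these two supports (each equals $\{P\mid T_P\neq S_P\}$, resp.\ $\{P\mid R_P\neq T_P\}$, hence is stable under specialization) would then put $M'\in\mathrm{MSupp}(S/T)$ or $M\in\mathrm{MSupp}(T/R)$, contradicting the disjointness hypothesis. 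Spelling out those three lines would make your proof of (1) complete and independent of the citation; as written, everything else (the lattice-theoretic identification of the unique complement of $(T,T)$ as $(R,S)$, the identification $T^o=\sigma(X^c)$ via Lemma~\ref{split1}, and $T+T^o=S=TT^o$) matches the paper's reasoning.
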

\begin{proof} (1) is \cite[Lemma 3.7]{Pic 3} for the bijective property. The condition about the order is obvious, considering the product order on $[R,T]\times[T,S]$.

(2) The existence of $T^o$ comes from the fact that there exists a unique $U\in[R,S]$ such that $\psi(U)=(R,S)$, so that $R=U\cap T\ (*)$ and $S=UT\ (**)$ shows that $U$ is the complement $T^o$ of $T$. Since $\mathrm{MSupp}(S/R)=\mathrm{MSupp}(T^o/R)\cup\mathrm{MSupp}(S/T^o)$, by $(*)$, we get $(T^o)_M=R_M$ for any $M\in X$ and $(**)$ gives $(T^o)_M=S_M$ for any $M\in Y:=X^c$. It follows that $T_M+(T^o)_M=T_M(T^o)_M=S_M$ for any $M\in\mathrm{Supp}(S/R)$ leads to $T+T^o=TT^o$. At last, $X=\mathrm{MSupp}(S/T^o)$ and $Y=\mathrm{MSupp}(T^o/R)$ shows that $R\subset S$ splits at $T^o$. In particular, $\mathrm{MSupp}(S/R)=\mathrm{MSupp}(T/R)\cup\mathrm{MSupp}(T^o/R)$ since $\mathrm{MSupp}(S/R)=X\cup Y$.

(3) We have just proved that $X=\mathrm{MSupp}(S/T^o)=\mathrm{MSupp}(T/R)$ and $Y=\mathrm{MSupp}(T^o/R)=\mathrm{MSupp}(S/T)$.

(4) comes from (1).
\end{proof}

\begin{corollary}\label{simplifiable}Let $R\subset S$ be an FCP ring extension  split at $T$. 
\begin{enumerate}
\item $UT=VT$ implies $U=V$ for any $U,V\in[R,T^o]$.
\item $U\cap T=V\cap T$ implies $U=V$ for any $U,V\in[T^o,S]$.
\end{enumerate}
\end{corollary}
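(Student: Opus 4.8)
The plan is to deduce Corollary~\ref{simplifiable} directly from Theorem~\ref{4.5}, exploiting the order-isomorphism $\psi:[R,S]\to[R,T]\times[T,S]$ and the fact that $R\subset S$ also splits at the complement $T^o$ (Theorem~\ref{4.5}(2),(3)). The guiding observation is that the two assertions are ``dual'' to each other: (1) says that the map $U\mapsto UT$ is injective on $[R,T^o]$, and (2) says that the map $U\mapsto U\cap T$ is injective on $[T^o,S]$. Since Theorem~\ref{4.5}(3) tells us that the roles of $T$ and $T^o$ are symmetric (the extension splits at $T^o$ with $\mathrm{MSupp}(T^o/R)=\mathrm{MSupp}(S/T)$ and $\mathrm{MSupp}(S/T^o)=\mathrm{MSupp}(T/R)$), it suffices to prove one of the two statements carefully and obtain the other by exchanging $T\leftrightarrow T^o$.

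For part (1), I would argue locally using the $\mathrm{MSupp}$ information. Write $X:=\mathrm{MSupp}(T/R)$ and $Y:=X^c=\mathrm{MSupp}(S/T)$; by Theorem~\ref{4.5}(2),(3) we have $T^o_M=R_M$ for $M\in X$ and $T^o_M=S_M$ for $M\in Y$, while $T_M=S_M$ for $M\in X$ and $T_M=R_M$ for $M\in Y$. Now take $U,V\in[R,T^o]$ with $UT=VT$. For $M\in X$ we have $U_M,V_M\in[R_M,T^o_M]=[R_M,R_M]=\{R_M\}$, hence $U_M=V_M=R_M$; for $M\in Y$ we have $T_M=R_M$, so $(UT)_M=U_M$ and $(VT)_M=V_M$, whence $UT=VT$ forces $U_M=V_M$; and for $M\in\mathrm{Max}(R)\setminus(X\cup Y)$ all the rings collapse to $R_M$ so again $U_M=V_M$. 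Thus $U_M=V_M$ for every maximal ideal $M$ of $R$, and since $U$ and $V$ are $R$-submodules of $S$ this gives $U=V$. (One must check $(UT)_M=U_M T_M$, which is fine since localization commutes with the ring-compositum $UT=\sum u_i t_j R$.) This is essentially the same local bookkeeping used in the proof of Lemma~\ref{split1}.

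Then part (2) follows immediately: since $R\subset S$ splits at $T^o$ with $T$ playing the role of $(T^o)^o$ (by Theorem~\ref{4.5}(2),(3), $T$ is the unique complement of $T^o$), applying part (1) with $T^o$ in place of $T$ gives: $U(T^o)=V(T^o)$ implies $U=V$ for $U,V\in[T,S]$. That is not yet what we want, so instead I would re-run the local argument dually for the intersection map: take $U,V\in[T^o,S]$ with $U\cap T=V\cap T$; for $M\in X$ we have $T^o_M=R_M$, so $U_M,V_M\in[R_M,S_M]$ and $T_M=S_M$ gives $(U\cap T)_M=U_M$, $(V\cap T)_M=V_M$, hence $U_M=V_M$; for $M\in Y$ we have $T^o_M=S_M$, so $U_M=V_M=S_M$; and off $X\cup Y$ everything is $R_M=S_M$. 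Hence $U_M=V_M$ for all $M$ and $U=V$.

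I do not anticipate a serious obstacle here — the statement is a formal consequence of the splitting machinery already assembled. The only point requiring a little care is the compatibility of localization with the lattice operations $\cap$ and $\cdot$ on $[R,S]$ (localization is exact, so it commutes with finite intersections, and it obviously commutes with compositum of subrings), together with the standard fact that an $R$-submodule of $S$ is determined by its localizations at all maximal ideals of $R$; both are used without comment throughout the paper (e.g. in Lemma~\ref{split1} and Theorem~\ref{1.15}), so I would simply invoke them.
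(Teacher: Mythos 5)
Your proof is correct, but it takes a more computational route than the paper. The paper's proof is a two-liner: for $U,V\in[R,T^o]$ one has $U\cap T=V\cap T=R$ automatically (because $T\cap T^o=R$), so the hypothesis $UT=VT$ gives $\psi(U)=\psi(V)$ and the injectivity of the order-isomorphism $\psi$ of Theorem~\ref{4.5}(1) forces $U=V$; dually, for $U,V\in[T^o,S]$ one has $UT=VT=S$ automatically (because $TT^o=S$), and $U\cap T=V\cap T$ again gives $\psi(U)=\psi(V)$, hence $U=V$. You instead redo the localization bookkeeping at the maximal ideals of $X=\mathrm{MSupp}(T/R)$, $Y=X^c$ and outside $X\cup Y$, using the local descriptions of $T$ and $T^o$ from Theorem~\ref{4.5}(2),(3); this is in effect a direct reproof of the injectivity of $\psi$ restricted to the relevant intervals, and all the local steps ($(UT)_M=U_MT_M$, $(U\cap T)_M=U_M\cap T_M$, and the fact that an $R$-submodule of $S$ is determined by its localizations at maximal ideals) are sound. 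You also correctly noticed that the naive $T\leftrightarrow T^o$ symmetry does not convert (1) into (2) (it would only give injectivity of $U\mapsto UT^o$ on $[R,T]$) and ran the dual local argument instead, so there is no gap. What the paper's argument buys is brevity and a clean reduction to the already-proved isomorphism; what yours buys is self-containedness, at the cost of repeating local computations already packaged in Theorem~\ref{4.5}.
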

\begin{proof} (1) $U,V\in[R,T^o]$ implies $U\cap T=V\cap T=R$ since $T^o\cap T=R$. Applying the isomorphism $\psi$ of Theorem \ref{4.5}, we get $U=V$.

(2) $U,V\in[T^o,S]$ implies $UT=VT=S$ since $T^o T=S$. The same isomorphism $\psi$ gives $U=V$.
\end{proof}

\begin{proposition}\label{1.144}Let $R\subset S$ be an FCP extension. Then, any splitter is trivial if $R$ is a local ring.
\end{proposition}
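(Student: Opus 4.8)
The statement is that if $R$ is a local ring and $R\subset S$ is an FCP extension, then the only splitters of $R\subset S$ are the trivial ones, namely $R$ and $S$. The plan is to show directly that whenever $T\in[R,S]$ splits the extension, then $\mathrm{MSupp}_R(T/R)$ must be either $\emptyset$ or all of $\mathrm{MSupp}_R(S/R)$, which forces $T=R$ or $T=S$ by the uniqueness of splitters (Lemma \ref{split1}). The key observation is that, $R$ being local, $\mathrm{Max}(R)=\{M\}$ is a singleton, and so $\mathrm{MSupp}_R(S/R)\subseteq\mathrm{Max}(R)$ has cardinality at most $1$.

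More precisely, first I would note that if $R=S$ there is nothing to prove, so we may assume $R\subset S$ is a proper FCP extension; then $S/R\neq 0$ and, since $R$ is local, $(S/R)_M=S/R\neq 0$, giving $\mathrm{MSupp}_R(S/R)=\{M\}$. Now let $T\in[R,S]$ be a splitter, say a splitter at $X\subseteq\mathrm{MSupp}_R(S/R)=\{M\}$. There are only two possibilities for $X$: either $X=\emptyset$ or $X=\{M\}$. If $X=\emptyset$, then $\mathrm{MSupp}_R(T/R)=\emptyset$, which means $(T/R)_M=0$, hence $T/R=0$ (again using that $R$ is local and $M$ is its only maximal ideal), so $T=R$; thus $T=\sigma(\emptyset)=R$ is trivial. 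If $X=\{M\}$, then symmetrically $\mathrm{MSupp}_R(S/T)=X^c=\emptyset$, so $(S/T)_M=0$, hence $S/T=0$ and $T=S=\sigma(\mathrm{MSupp}_R(S/R))$ is trivial.

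The argument is essentially a one-line consequence of $|\mathrm{Max}(R)|=1$, so there is no real obstacle; the only point to be careful about is the degenerate case $R=S$ (where $\mathrm{MSupp}_R(S/R)=\emptyset$ and the only splitter is $R=S$ itself, which is trivial by the convention set after Lemma \ref{split1}), and the elementary fact that for a module $E$ over a local ring $(R,M)$ one has $E=0$ if and only if $E_M=0$, i.e. if and only if $M\notin\mathrm{Supp}_R(E)$. With these in hand the proposition follows immediately.
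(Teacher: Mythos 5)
Your proof is correct and is essentially the paper's own argument: since $R$ is local and $R\neq S$, one has $\mathrm{MSupp}(S/R)=\{M\}$, so the only possible splitters are $\sigma(\emptyset)=R$ and $\sigma(\{M\})=S$. The extra care you take (the degenerate case $R=S$, which is in fact excluded since $\subset$ denotes proper inclusion, and the fact that a nonzero module over a local ring has the maximal ideal in its support) is harmless and fills in the same one-line observation the paper makes.
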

\begin{proof} As $|\mathrm{MSupp}(S/R)|=1$ since $R\neq S$, we see that $R$ and $S$ are the only splitters.
\end{proof}

We recall that a ring extension $R\subset S$ is called {\em pinched} at $T\in]R,S[$ if $[R,S]=[R,T]\cup[T,S]$. We will see in Section 7 that when an extension $R\subset S$ is pinched at $T\in]R,S[$, then $R$ and $S$ are the only splitters.

A simple situation is given by the following Crosswise Exchange Lemma  which allowed us to develop   the properties of split FCP extensions.

\begin{proposition}\label{split4} (CE) \cite[Lemma 2.7]{DPP2} Let $R\subset S$ and $S\subset T$ be  minimal extensions,  $M:=\mathcal{C}(R,S)$, $N: \mathcal{C}(S,T)$ and $P:=N\cap R$. Suppose also that $P\not\subseteq M$. Then there exists $S'\in [R,T]$ such that $R\subset S'$ is minimal of the same type as $S\subset T$; and $S'\subset T$ is  minimal of the same type as $R\subset S$. Moreover, for any such $S'$, we have $[R,T]=\{R,S,S',T\}$. 
\end{proposition}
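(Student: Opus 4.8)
The plan is to prove the Crosswise Exchange Lemma (Proposition \ref{split4}) by directly exploiting the conductor characterization of minimal integral extensions in Theorem \ref{minimal}, together with the observation that the hypothesis $P \not\subseteq M$ forces the two minimal steps to be "localized at different maximal ideals", so they can be performed in either order. First I would record the basic structure: since $R \subset S$ is minimal integral with crucial ideal $M = \mathcal{C}(R,S)$ and $S \subset T$ is minimal integral with crucial ideal $N = \mathcal{C}(S,T)$, we have $M = (R:S) \in \mathrm{Max}(R)$ and $N = (S:T) \in \mathrm{Max}(S)$, and $P := N \cap R$. The hypothesis $P \not\subseteq M$ means $M$ and $P$ are incomparable primes of $R$ (both contain the conductor-type data of the relevant steps), and in particular $M \neq P$.

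The key step is to localize. Working at $M$: since $P \not\subseteq M$, we have $N \not\subseteq M$ in the sense that $S \setminus N$ meets $R \setminus M$, hence $T_M = S_M$ (the step $S \subset T$ disappears after localizing at $M$), while $R_M \subset S_M$ remains minimal of the same type. Working at $P$: I would check that $R_P = S_P$ (the step $R \subset S$ disappears after localizing at $P$, because its crucial ideal $M$ does not contain $P$), while $S_P \subset T_P$ stays minimal of the same type as $S \subset T$. Then I would define $S' := R[t']$ for a suitably chosen generator, or more structurally, construct $S'$ as the subring of $T$ that "does the $S\subset T$-type step first": concretely, one takes $S'$ so that $S'_M = T_M = S_M$ is trivial... — let me instead build $S'$ as a pullback. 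The clean construction: since the two minimal extensions are supported at the incomparable primes $P$ and $M$, set $S' := \{x \in T \mid x/1 \in R_M \subseteq S_M\text{ and } x/1 \in T_P\}$ appropriately; more simply, one shows $[R,T]$ has exactly the four elements $R, S, S', T$ by a local-to-global / length count, using that $R \subset T$ has length $2$ and that $\mathrm{Supp}(T/R) = \mathrm{Supp}(S/R) \sqcup \mathrm{Supp}_S(T/S)\cap(\cdots)$ splits because $P \not\subseteq M$; an intermediate ring $V$ with $R \subset V \subset T$ and $V \neq S$ must localize, at $M$, to either $R_M$ or $S_M = T_M$, and at $P$ to either $R_P = S_P$ or $T_P$, and the choice "$V_M = R_M$, $V_P = T_P$" produces the new ring $S'$.

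I would then verify $R \subset S'$ is minimal of the same type as $S \subset T$ by checking the conductor $(R:S')$ equals $P$ and that the residual/idempotent data at $P$ matches (inert stays inert, ramified stays ramified, decomposed stays decomposed), using that $(R:S')_P = (R_P : S'_P) = (S_P : T_P) = N_P$ pulled back and $(R:S')_M = R_M$ since $S'_M = R_M$; symmetrically $S' \subset T$ is minimal of the same type as $R \subset S$ with crucial ideal lying over $M$. Finally, the "for any such $S'$" clause: any $S' \in [R,T]$ with $R \subset S'$ minimal of type(s) forcing $\mathcal{C}(R,S')$ to lie over $P$ must, when localized at $M$, give $S'_M = R_M$ (since the $R$-module $S'/R$ is supported only over $P$-type primes, not at $M$), and localized at $P$ give $S'_P = T_P$; this pins down $S'$ uniquely and shows $[R,T] = \{R, S, S', T\}$ by counting: any $V \in [R,T]$ is determined by the pair $(V_M, V_P) \in \{R_M, S_M\} \times \{R_P, T_P\}$ since $T/R$ is supported only at (primes under) $M$ and $P$.

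The main obstacle I anticipate is the precise verification that the type of the minimal extension is preserved under this exchange — i.e. that localizing, pulling back conductors, and transporting the inert/decomposed/ramified trichotomy all cohere. This is essentially a careful but routine application of Theorem \ref{minimal}: the type of a minimal integral extension $A \subset B$ with $\mathfrak{m} = (A:B)$ is detected by $B/\mathfrak{m}$ as an $A/\mathfrak{m}$-algebra (field extension of degree $2$ for inert/ramified vs. $A/\mathfrak{m} \times A/\mathfrak{m}$ for decomposed, with the nilpotent distinguishing ramified from inert), and this data is insensitive to the exchange because at the relevant maximal ideal the two extensions $R\subset S$ and $S'\subset T$ have "the same" residual picture — one just has to track that $S_M/M_M \cong R_M/M_M$-algebra structure coming from $R \subset S$ reappears as the $T_P/(\cdot) \cong S'_P/(\cdot)$ structure for the step $S' \subset T$. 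I would present this by reducing to the local case at $M$ (where $S \subset T$ is trivial, so $S' \subset T \cong R_M \subset S_M$) and at $P$ (where $R \subset S$ is trivial, so $R \subset S' \cong S_P \subset T_P$), after which the claims are immediate, and the global statement $[R,T] = \{R,S,S',T\}$ follows from the $\mathcal{B}$-extension-style injectivity of $V \mapsto (V_M, V_P)$ on $[R,T]$.
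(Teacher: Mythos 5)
The paper itself contains no proof of this statement: it is quoted from \cite[Lemma 2.7]{DPP2}, and the paragraph following it only reinterprets the conclusion (disjointness of the maximal supports, $S'=S^o$, $[R,T]\cong[R,S]\times[S,T]$) through Theorem \ref{4.5}. So your attempt has to stand on its own. Its skeleton is the right picture in the integral case: the two minimal steps are concentrated at the incomparable maximal ideals $M$ and $P$, so $T_M=S_M$ and $R_P=S_P$, every $V\in[R,T]$ is determined by the pair $(V_M,V_P)$ because $\mathrm{MSupp}(T/R)=\{M,P\}$, the type of a minimal extension is read off at its crucial ideal, and $S'$ should be the ring realizing $(R_M,T_P)$.

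There are, however, two genuine gaps. First, you assume from the outset that both extensions are integral ($M=(R:S)$, $N=(S:T)$, types taken from Theorem \ref{minimal}), whereas the statement also covers Pr\"ufer minimal extensions (minimal flat epimorphisms); for those the crucial ideal is not the conductor, $S/R$ is not annihilated by $M$, the inert/decomposed/ramified trichotomy is unavailable, and even the maximality of $P=N\cap R$, which your localization argument uses, is no longer automatic — so a whole case of the lemma is untreated. Second, and more seriously, the existence of $S'$, which is the actual content of the lemma, is never established: injectivity of $V\mapsto(V_M,V_P)$ only gives $|[R,T]|\le 4$, and what must be proved is that the pair $(R_M,T_P)$ is realized. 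Your text oscillates between an unfinished pullback definition (``appropriately'') and a ``length count'' that presupposes the fourth ring. The pullback $S':=\{x\in T\mid x/1\in R_M \text{ inside } T_M\}$ does work in the integral case, but the decisive verification $S'_P=T_P$ is missing, and it is exactly where the hypothesis enters: for $t\in T$ one has $t/1\in T_M=S_M$, hence $st/1\in R_M$ for any $s\in M\setminus P$ because $sS\subseteq R$, and such an $s$ exists since $M$ and $P$ are distinct maximal ideals; without this step the argument describes what $S'$ should satisfy rather than proving it exists. (Appealing instead to the splitter or $\mathcal B$-extension results of the paper would be circular here, since they require knowing $R\subset T$ is FCP, which is part of what the lemma yields.) A smaller point: in the ``for any such $S'$'' clause you silently add that $\mathcal{C}(R,S')$ lies over $P$, which does not follow from ``same type'' alone when the two given extensions happen to have the same type, so that normalization should be justified rather than assumed.
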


Analysing the hypotheses and the statement, we see that $\mathrm{MSupp}(S/R)$ $=\{M\},\ \mathrm{MSupp}(T/S)=\{P\}$ so that $\mathrm{MSupp}(S/R)\cap\mathrm{MSupp}(T/S)=\emptyset,\ S'=S^o$ and $[R,T]\cong[R,S]\times[S,T]$ by the map $\psi$ defined in Theorem \ref{4.5}(1). 

\begin{remark}\label{1.168} (1) In \cite[Proposition 1.6, page 88]{KZ}, the equation $T+V=TV$ of Theorem \ref{4.5} holds for arbitrary $T,V\in[R,S]$, when $R\subset S$ is a Pr\"ufer extension. Moreover, in this case, if $T$ has a complement, this complement is unique by \cite[Theorem 7.11 and Remarks 7.12, p.132]{KZ}. In fact, these authors define a complement for an $R$-submodule $I$ of $S$ containing $R$ as an $R$-submodule $J$ of $S$ such that $I+J=S$ and $I\cap J=R$. Their notation $I^o$ denotes the polar $J$ of $I$ as an $R$-submodule of $S$ such that $I\cap J=R$ and every $R$-submodule $K$ of $S$ such that $I\cap K\subseteq R$ is contained in $J$, and when a complement of $I$ exists, it coincides with its polar. 
 In the following result, $T^o$ denotes the complement of $T$ as defined in Section 3 and coincide with the complement defined by \cite{KZ}.
 Proposition \ref{1.170} holds for an arbitrary FCP extension and is similar to \cite[Propositions 7.2 and 7.3, p.130]{KZ} which is satisfied for Pr\"ufer extensions.
 
(2) Let $R\subset S$ be an FCP extension that splits at $T$; so that, $T=\sigma(X)$ for some $X\subseteq \mathrm{MSupp}(S/R)$. It follows that for any $M\in\mathrm{MSupp}(S/R)$, either $T_M=S_M$, when $M\in X\ (*)$, or $T_M=R_M$ when $M\in X^c\ (**)$. Then, $V:=R_M$ is the only complement of $T_M$ in case $(*)$ and $V:=S_M$ is the only complement of $T_M$ in case $(**)$. In both cases, we recover that $(T_M)^o=(T^o)_M$. 
 \end{remark}
 
\begin{proposition}\label{1.170} Let $R\subset S$ be an FCP extension  that splits at $T$. Then, the following properties hold:
\begin{enumerate} 
\item $T^o$ is the largest $V\in[R,S]$ such that $V\cap T=R$;
\item  $T^o=\{x\in S\mid T\cap Rx\subseteq R\}$.
\end{enumerate}
 \end{proposition}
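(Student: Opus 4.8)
The two statements are dual, so I would prove (1) first and then deduce (2), or prove both in parallel using the local description of $T^o$ already established. The starting point is the crucial fact, recorded in Remark~\ref{1.168}(2) together with Theorem~\ref{4.5}(2), that $R\subset S$ splits at $T^o$ as well, with $T_M=S_M$, $(T^o)_M=R_M$ for $M\in X:=\mathrm{MSupp}(T/R)$, and $T_M=R_M$, $(T^o)_M=S_M$ for $M\in Y:=X^c$; and that for every maximal ideal $M$ of $R$ outside $X\cup Y$ all four rings localize to $R_M$. This reduces everything to a localization check, since containment of $R$-submodules of $S$ can be tested at maximal ideals of $R$.

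\textbf{Proof of (1).} First I would check that $T^o$ belongs to the set in question: this is immediate from $T^o\cap T=R$, which is part of Theorem~\ref{4.5}(2). For maximality, let $V\in[R,S]$ satisfy $V\cap T=R$; I want $V\subseteq T^o$, which by the remark above it suffices to check after localizing at each $M\in\mathrm{Max}(R)$. If $M\notin X\cup Y$ then $V_M=R_M=(T^o)_M$ and there is nothing to prove. If $M\in Y$ then $(T^o)_M=S_M\supseteq V_M$ trivially. The only real case is $M\in X$, where $T_M=S_M$ and $(T^o)_M=R_M$; here $V_M\cap T_M=V_M\cap S_M=V_M$, while localization of $V\cap T=R$ at $M$ gives $(V\cap T)_M=R_M$, and since localization commutes with finite intersection, $V_M\cap T_M=R_M$, forcing $V_M=R_M=(T^o)_M$. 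Hence $V_M\subseteq (T^o)_M$ for every $M$, so $V\subseteq T^o$.

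\textbf{Proof of (2).} Write $W:=\{x\in S\mid T\cap Rx\subseteq R\}$. For the inclusion $W\subseteq T^o$, note that for $x\in W$ the $R$-module $V:=R[x]$ need not satisfy $V\cap T=R$ directly, so instead I would argue at the level of submodules: the polar-type set $\{x\in S\mid T\cap Rx\subseteq R\}$ is exactly the polar of the $R$-submodule $T$ of $S$ in the sense recalled in Remark~\ref{1.168}(1), and by Theorem~\ref{4.5}(2) the complement $T^o$ exists, so it coincides with the polar; this gives $W=T^o$ at once. If one prefers a self-contained localization argument avoiding the polar machinery: for $x\in W$ and $M\in X$ one has $T_M=S_M$, so $T_M\cap R_Mx=S_M\cap R_Mx=R_Mx$; localizing $T\cap Rx\subseteq R$ at $M$ (again using that localization commutes with the finite intersection $T\cap Rx$) yields $R_Mx\subseteq R_M$, i.e. $x/1\in R_M=(T^o)_M$, and for $M\in Y\cup(X\cup Y)^c$ the containment $x/1\in(T^o)_M$ is automatic since $(T^o)_M\in\{S_M,R_M\}$ with $S_M$ in the $Y$ case; hence $x\in T^o$. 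Conversely, for $x\in T^o$ we have $Rx\subseteq T^o$, so $T\cap Rx\subseteq T\cap T^o=R$, giving $x\in W$. Thus $W=T^o$.

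\textbf{Main obstacle.} The only delicate point is the interchange of localization with the intersections $V\cap T$ and $T\cap Rx$; this is harmless because these are finite intersections of $R$-submodules of $S$, so $(V\cap T)_M=V_M\cap T_M$ holds for every prime $M$. Everything else is a routine case split over whether $M$ lies in $X$, in $Y$, or outside $\mathrm{MSupp}(S/R)$, using the split structure at $T$ and at $T^o$ from Theorem~\ref{4.5} and Remark~\ref{1.168}(2). I expect the write-up to be short.
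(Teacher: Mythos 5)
Your proposal is correct and follows essentially the same route as the paper: both parts are proved by the same localization case-split over $M\in X$, $M\in X^c$ (and trivially outside $\mathrm{MSupp}(S/R)$), using $T_M=S_M$, $(T^o)_M=R_M$ on $X$ and the reverse on $X^c$, with the converse inclusion in (2) via $Rx\subseteq T^o$ and $T\cap T^o=R$. The only caveat is your first suggested shortcut for (2) via the polar identification of Remark~\ref{1.168}(1): the cited results of Knebusch--Zhang hold for Pr\"ufer extensions, and the coincidence of $\{x\in S\mid T\cap Rx\subseteq R\}$ with the complement is precisely what is being proved here for split FCP extensions, so that route is circular as stated --- but your self-contained localization fallback is exactly the paper's argument, so no gap remains.
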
 
\begin{proof} As in Theorem \ref{4.5}, we set $X:=\mathrm{MSupp}(T/R)$ and $Y:=X^c$. Then, $T_M=S_M$ for any $M\in X$ and $(T^o)_M=S_M$ for any $M\in Y$.

(1) Obviously, $T^o\cap T=R$. 
  Let $V\in[R,S]$ be such that $V\cap T=R$, so that $V_M\cap T_M=V_M=R_M\subseteq (T^o)_M$ for any $M\in X$. Moreover, $V_M\subseteq S_M=(T^o)_M$ for any $M\in Y$, giving $V\subseteq T^o$. Hence, $T^o$ is the largest $V\in[R,S]$ such that $V\cap T=R$.

(2) Set $W:=\{x\in S\mid T\cap Rx\subseteq R\}$ and let $x\in W$, so that $T\cap Rx\subseteq R\ (*)$. If $M\in X$, then $(*)$ gives $T_M\cap R_M(x/1)\subseteq R_M$. But $T_M=S_M$ yields $x/1\in R_M\subseteq (T^o)_M$. If $M\in Y$, then $(T^o)_M=S_M$, so that $x/1\in (T^o)_M$, whence $x\in T^o$. 

Conversely, if $x\in T^o$, then $Rx\subseteq T^o$, which implies that $T\cap Rx\subseteq T\cap T^o=R$.
\end{proof}

\begin{proposition}\label{1.169} Let $R\subset S$ be an FCP extension  that splits at $T$. Then, the following properties hold:
\begin{enumerate} 
\item The map $\psi_1:[R,T^o]\to[T,S]$ defined by $\psi_1(V):=VT$ is an  order-isomorphism.

\item The map $\theta:[R,T]\times[R,T^o]\to[R,S]$ defined by $\theta(V,W):=VW$ is an order-isomorphism.
\end{enumerate}
 \end{proposition}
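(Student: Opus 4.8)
The plan is to derive both isomorphisms from the order-isomorphism $\psi:[R,S]\to[R,T]\times[T,S]$, $\psi(V)=(V\cap T,VT)$, of Theorem~\ref{4.5}(1), combined with the characterization of $T^o$ in Proposition~\ref{1.170}(1), so that essentially no new computation is needed. Throughout I keep the notation $X:=\mathrm{MSupp}(T/R)$ and $Y:=X^c=\mathrm{MSupp}(S/T)$, recalling from Theorem~\ref{4.5} that $T_M=S_M$ and $(T^o)_M=R_M$ for $M\in X$, while $T_M=R_M$ and $(T^o)_M=S_M$ for $M\in Y$.

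For part (1), first I would check the set-theoretic equality $[R,T^o]=\{V\in[R,S]\mid V\cap T=R\}$: the inclusion $\subseteq$ is clear since $T^o\cap T=R$, and the inclusion $\supseteq$ is exactly Proposition~\ref{1.170}(1). Since $V\cap T=R$ is equivalent to $\psi(V)\in\{R\}\times[T,S]$, this says $[R,T^o]=\psi^{-1}(\{R\}\times[T,S])$; restricting the order-isomorphism $\psi$ then gives an order-isomorphism $[R,T^o]\to\{R\}\times[T,S]$, and composing it with the obvious order-isomorphism $\{R\}\times[T,S]\to[T,S]$, $(R,W)\mapsto W$, yields precisely the map $V\mapsto VT$. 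Hence $\psi_1$ is an order-isomorphism (and $VT\in[T,S]$ is automatic).

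For part (2), the idea is to exhibit $\theta$ as a composite of order-isomorphisms: the map $\mathrm{id}_{[R,T]}\times\psi_1:[R,T]\times[R,T^o]\to[R,T]\times[T,S]$ followed by $\psi^{-1}:[R,T]\times[T,S]\to[R,S]$, where $\psi_1$ is the map of part (1). This composite sends $(V,W)$ to the unique $U\in[R,S]$ with $U\cap T=V$ and $UT=WT$, so the whole content of (2) reduces to the claim that $U=VW$, i.e.\ that $VWT=WT$ and $VW\cap T=V$. The first equality is formal: $V\subseteq T\subseteq WT$ gives $VWT\subseteq WT$, and $R\subseteq V$ gives the reverse inclusion. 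For the second I would argue locally at each $M\in\mathrm{Max}(R)$: if $M\in X$ then $W_M=R_M$ (as $W\subseteq T^o$) and $T_M=S_M\supseteq V_M$, so $(VW\cap T)_M=V_M$; if $M\in Y$ then $V_M=R_M$ (as $V\subseteq T$), so $(VW\cap T)_M=R_M=V_M$; for the remaining maximal ideals $R_M=S_M$ and there is nothing to check. Once $U=VW$ is established, $\theta$ is the displayed composite and therefore an order-isomorphism.

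Both steps are brief, and the one point requiring a little care is the identification $[R,T^o]=\psi^{-1}(\{R\}\times[T,S])$ in part (1), where Proposition~\ref{1.170}(1) is exactly the tool that makes it go through; the remainder is a one-line compositum manipulation plus the routine localization check above, so I do not anticipate a genuine obstacle.
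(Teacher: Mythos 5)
Your proposal is correct and follows essentially the same route as the paper: both parts are derived from the order-isomorphism $\psi$ of Theorem~\ref{4.5}, and for (2) you verify $\psi\circ\theta=\mathrm{Id}\times\psi_1$ by the same localization check the paper sketches. The only cosmetic difference is in (1), where you invoke Proposition~\ref{1.170}(1) to identify $[R,T^o]=\psi^{-1}(\{R\}\times[T,S])$, whereas the paper proves surjectivity of $\psi_1$ by redoing that local argument directly; this is a legitimate shortcut since Proposition~\ref{1.170} precedes the statement and does not depend on it.
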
 
\begin{proof} Let $\psi:[R,S]\to[R,T]\times[T,S]$ be the bijection defined by $\psi(V):=(V\cap T,VT)$ as in Theorem ~\ref{4.5}.

(1) Let $V,W\in[R,T^o]$ be such that $\psi_1(V)=\psi_1(W)=VT=WT$. Since $V\cap T=W\cap T=R$ because $T\cap T^o=R$, we get $\psi(V)=\psi(W)$, so that $V=W$ and $\psi_1$ is injective. 

Let $W\in[T,S]$. There exists $V\in[R,S]$ such that $\psi(V)=(R,W)$, so that $R=V\cap T$ and $W=VT$. Let $M\in\mathrm{Supp}(S/R)=\mathrm{Supp}(T/R)\cup\mathrm{Supp}(T^o/R)$ by Theorem ~\ref{4.5}. Then, $R_M=V_M\cap T_M$. If $M\in\mathrm{Supp}(T/R)$, then $T_M=S_M$ and $R_M=(T^o)_M$ lead to $V_M=(T^o)_M$. If $M\in\mathrm{Supp}(T^o/R)$, then $V_M\subseteq S_M=(T^o)_M$. It follows that $V\subseteq T^o$, giving $V\in[R,T^o]$ and $\psi_1$ is surjective, hence bijective. The property concerning the order is obvious. 

(2) Using (1) and localizing at any $M\in\mathrm{MSupp}(S/R)$, we may remark that $\psi\circ\theta={\rm Id}\times\psi _1$. Since $\psi$ and ${\rm Id}\times\psi _1$ are order-isomorphisms, so is $\theta$.
\end{proof}

\begin{remark}\label{4.7} We will see in Corollary ~\ref{4.6} that Theorem~\ref{4.5} has a converse when taking $T:=\overline R$ for an FCP extension $R\subset S$. But, for some $T\in[R,S]$, we may have a bijection $\psi:[R,S]\to[R,T]\times[T,S]$ defined by $\psi(U):=(U\cap T,UT)$ even if $\mathrm{MSupp}_R(S/T)\cap\mathrm{MSupp}_R(T/R)\neq\emptyset$.       

Consider the following situation. Let $R\subset T\subset S$ be a composite of two minimal extensions, where $(R,M)$ is a quasilocal ring, $R\subset T$ is decomposed and $T\subset S$ is ramified. Such a situation exists in the following example (see also \cite[Remark 2.9 (b)]{DPP2}). Take $R:=K$ a field, $T:=K^2$ and $S:=K\times(K[Z]/(Z^2))$, where $Z$ is an indeterminate, so that $R\subset T$ is minimal decomposed with $(R:S)=(0)$ and $T\subset S$ is minimal ramified with $(T:S)=K\times(0)\in\mathrm{Max}(T)$. In view of \cite[Lemma 2.8]{DPP2}, there exists $T'\in[R,S]$ such that $R\subset T'$ is minimal ramified and $T'\subset S$ is minimal decomposed. Since $|[R,S]|\geq 4$ and $R\subset S$ is of length 2 because infra-integral 
 (\cite[Lemma 5.4]{DPP2}), it follows from \cite[Theorem 6.1(5)]{Pic 6} that $[R,S]=\{R,T,T',S\}$. In fact, $T'=K[Z]/(Z^2)$. We have the following commutative diagram:
\centerline{$\begin{matrix}
 {} &      {}       & T' &       {}       & {} \\
 {} &\nearrow & {}  & \searrow & {} \\
 R &       {}     & {}  &      {}        & S \\
 {} &\searrow & {}  & \nearrow & {} \\
 {} &     {}       & T  &      {}        & {}     
 \end{matrix}$}
It follows that $|[R,S]|=4=|[R,T]||[T,S]|$ and $\psi:[R,S]\to[R,T]\times[T,S]$ defined by $\psi(U):=(U\cap T,UT)$ is a bijection. Indeed, we have $\psi(R):=(R,T),\ \psi(T):=(T,T),\ \psi(T'):=(R,S)$ and $\psi(S):=(T,S)$. To end, $\mathrm{MSupp}_R(S/T)\cap\mathrm{MSupp}_R(T/R)=(0)\neq\emptyset$.       
\end{remark}

\section{Splitters in $\mathcal B$-extensions}

Theorem \ref{1.15} shows that the splitter of $X$ exists for any $X\subseteq\mathrm{MSupp}(S/R)$ if and only if $R\subset S$ is  a $\mathcal B$-extension. We now examine in this section $\mathcal B$-extensions.

  Theorem \ref{4.5} has a converse in the Pr\"ufer case.

\begin{proposition}\label{1.171} 
Let $R\subset S$ be an FCP Pr\"ufer extension and $T\in[R,S]$ be such that $T^o$ exists. Then, there exists $X\subseteq\mathrm{MSupp}(S/R)$ such that $T=\sigma(X)$. 
\end{proposition}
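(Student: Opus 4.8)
The plan is to work locally at each maximal ideal in $\mathrm{MSupp}(S/R)$ and exploit the special structure of Pr\"ufer extensions, where every intermediate ring is a flat epimorphic image. First I would recall that since $R\subset S$ is FCP Pr\"ufer, for each $M\in\mathrm{MSupp}(S/R)$ the localization $R_M\subset S_M$ is FCP Pr\"ufer with $R_M$ local, hence chained by \cite[Theorem 6.10]{DPP2}; in particular $[R_M,S_M]$ is a chain. The complement $T^o$ of $T$ satisfies $(T^o)_M\cap T_M=R_M$ and $(T^o)_M T_M=S_M$ for every $M$, because localization preserves the complement relations $T\cap T^o=R$, $TT^o=S$ (and $TT^o=T+T^o$ holds in the Pr\"ufer case by \cite[Proposition 1.6, p.88]{KZ}). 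So at each $M$ I get two elements $T_M,(T^o)_M$ of the chain $[R_M,S_M]$ whose intersection is $R_M$ and whose product (sum) is $S_M$.

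The key observation is then purely lattice-theoretic: in a chain, if $A\cap B$ equals the bottom element and $A\vee B$ (here $AB=A+B$) equals the top element, then $\{A,B\}=\{\text{bottom},\text{top}\}$. Indeed, two comparable elements have intersection equal to the smaller and join equal to the larger, so one of $T_M,(T^o)_M$ is $R_M$ and the other is $S_M$. Define $X:=\{M\in\mathrm{MSupp}(S/R)\mid T_M=S_M\}$. Then for $M\in X$ we have $T_M=S_M\neq R_M$, so $M\in\mathrm{MSupp}(T/R)$ and $M\notin\mathrm{MSupp}(S/T)$; for $M\in X^c$ we have $T_M=R_M\neq S_M$, so $M\in\mathrm{MSupp}(S/T)$ and $M\notin\mathrm{MSupp}(T/R)$. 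Combined with the general fact that $\mathrm{MSupp}(T/R),\mathrm{MSupp}(S/T)\subseteq\mathrm{MSupp}(S/R)$, this gives $\mathrm{MSupp}(T/R)=X$ and $\mathrm{MSupp}(S/T)=X^c$, which is exactly the statement that $T$ splits the extension at $X$, i.e.\ $T=\sigma(X)$ by Lemma~\ref{split1} and the uniqueness of splitters.

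The main obstacle I anticipate is justifying that the complement relations localize correctly and that $T_M$ and $(T^o)_M$ are genuinely comparable: this rests on $R_M$ being local so that $[R_M,S_M]$ is a chain, which uses that FCP Pr\"ufer extensions over local rings are chained. One subtlety worth checking is that $(T^o)_M=(T_M)^o$ — that the localization of the complement is the complement of the localization — but in the chained local setting this is automatic since the complement in a chain is unique and determined by the two equations, which localize. A secondary point is handling maximal ideals $M\notin\mathrm{MSupp}(S/R)$, where $R_M=S_M$ forces $T_M=(T^o)_M=R_M=S_M$ and such $M$ contribute to none of the supports, so they are harmless. Everything else is bookkeeping about supports of quotient modules.
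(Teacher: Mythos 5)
Your proof is correct and follows essentially the same route as the paper: localize the relations $T\cap T^o=R$ and $TT^o=S$ at each $M\in\mathrm{MSupp}(S/R)$, use that $R_M\subset S_M$ is chained by \cite[Theorem 6.10]{DPP2} to force $T_M\in\{R_M,S_M\}$, and conclude that $\mathrm{MSupp}(T/R)$ and $\mathrm{MSupp}(S/T)$ are disjoint, so $T=\sigma(X)$ with $X=\mathrm{MSupp}(T/R)$. The only difference is presentational (you argue directly at every $M$, the paper argues by contradiction at a hypothetical common $M$), and your worry about $(T^o)_M=(T_M)^o$ is immaterial since only the localized equations are needed.
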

\begin{proof} 
Recall that for a Pr\"ufer extension $R\subset S$ and any $T\in[R,S]$,  if $T$ has a complement, this complement is unique and is denoted by 
 $T^o$ (see Remark \ref{1.168}).

We have $\mathrm{MSupp}(S/R)=\mathrm{MSupp}(T/R)\cup\mathrm{MSupp}(S/T)$. Assume that $\mathrm{MSupp}(T/R)\cap\mathrm{MSupp}(S/T)\neq\emptyset$, and let $M\in\mathrm{MSupp}(T/R)\cap\mathrm{MSupp}(S/T)$, so that $T_M\neq R_M,S_M$. Since  $R=T\cap T^o$ and $S=TT^o$, this gives $R_M=T_M\cap (T^o)_M\ (*)$ and $S_M=T_M(T^o)_M\ (**)$, with $T_M\neq R_M,S_M$. But $R_M\subset S_M$ is chained according to \cite[Theorem 6.10]{DPP2}, so that $T_M$ and $(T^o)_M$ are comparable. Assume that $T_M\subseteq(T^o)_M$. Then $(*)$ gives $R_M=T_M$, a contradiction. If $(T^o)_M\subset T_M$, then $(**)$ gives $S_M=T_M$, a contradiction. To conclude, $\mathrm{MSupp}(T/R)\cap\mathrm{MSupp}(S/T)= \emptyset$, and $T=\sigma(X)$, where $X:=\mathrm{MSupp}(T/R)$. 
\end{proof}

 \begin{proposition}\label{6.50} Let $R\subset S$ be an FCP extension.
 \begin{enumerate}
 \item If the extension is chained, it is crucial. 
  \item Conversely, if the extension  is Pr\"ufer and  crucial, it is chained. 
 \end{enumerate}
\end{proposition}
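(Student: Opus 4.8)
The plan is to prove each implication using the characterization of FCP Prüfer extensions via their supports, which has been developed in the preceding propositions. For part (1), assume $R\subset S$ is a chained FCP extension and suppose, for contradiction, that it is not crucial, so $|\mathrm{MSupp}(S/R)|\geq 2$. Pick distinct $M,M'\in\mathrm{MSupp}(S/R)$. By \cite[Lemma 1.8]{Pic 6} (already invoked in the proofs of Proposition~\ref{6.71} and Proposition~\ref{6.11}), there exist $T,T'\in[R,S]$ with $T\neq T'$ such that $R\subset T$ and $R\subset T'$ are minimal with $\mathcal{C}(R,T)=M$ and $\mathcal{C}(R,T')=M'$. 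Since $M\neq M'$, we have $T\neq T'$, and neither is contained in the other (each is a minimal extension of $R$ with a distinct crucial ideal), contradicting that $[R,S]$ is a chain. Hence $|\mathrm{MSupp}(S/R)|=1$ and the extension is crucial. Note this part does not even use the Prüfer hypothesis; it is the same argument already used to show chained FCP extensions are $\mathcal B$-extensions.

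For part (2), assume $R\subset S$ is an FCP Prüfer extension that is $M$-crucial, so $\mathrm{MSupp}(S/R)=\{M\}$. The goal is to show $[R,S]$ is a chain. The natural approach is to invoke Proposition~\ref{6.11}: it suffices to verify one of its four equivalent conditions, and condition (3), that $\mathrm{Supp}(S/R)$ is linearly ordered, looks most tractable. By \cite[Proposition 4.2(b)]{DPP3}, $\mathrm{Supp}(S/R)$ is a tree for an FCP Prüfer extension. Since $\mathrm{MSupp}(S/R)=\{M\}$, every $P\in\mathrm{Supp}(S/R)$ satisfies $P\subseteq M$ (a prime in $\mathrm{Supp}(S/R)$ must be contained in some maximal ideal lying in $\mathrm{MSupp}(S/R)$, which forces it to be $M$); thus $\mathrm{Supp}(S/R)$ is a tree with a unique maximal element $M$, hence is linearly ordered. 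Therefore condition (3) of Proposition~\ref{6.11} holds, and we conclude $R\subset S$ is chained.

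The main thing to be careful about is the claim that in an FCP Prüfer extension every element of $\mathrm{Supp}(S/R)$ is contained in some element of $\mathrm{MSupp}(S/R)$; this is where we really use that we are working with FCP extensions, where $\mathrm{V}_R((R:S))$ is finite and the support structure is well-behaved. One can argue it directly: if $P\in\mathrm{Supp}(S/R)$, localizing at a maximal ideal $M_0$ of $R$ containing $P$ gives $S_{M_0}\neq R_{M_0}$, so $M_0\in\mathrm{MSupp}(S/R)=\{M\}$, whence $M_0=M$ and $P\subseteq M$. This is the only slightly delicate point; the rest is a direct application of the tree structure from \cite{DPP3} together with Proposition~\ref{6.11}. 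Alternatively, one could cite \cite[Theorem 6.10]{DPP2} (already used repeatedly in this section, e.g. in Proposition~\ref{1.171}), which states that $R_M\subset S_M$ is chained when $R_M$ is local; combined with the $\mathcal B$-extension property of crucial Prüfer extensions and Proposition~\ref{6.5}, this gives $[R,S]\cong[R_M,S_M]$ as lattices, so $[R,S]$ is a chain. I expect the $\mathrm{Supp}(S/R)$-tree argument to be the cleanest and most self-contained route.
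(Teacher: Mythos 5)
Your proof is correct. Part (1) is exactly the paper's argument: two distinct minimal subextensions with distinct crucial ideals obtained from \cite[Lemma 1.8]{Pic 6} are incomparable, contradicting chainedness (and indeed the Pr\"ufer hypothesis is not used there). For part (2) your main route differs from the paper's: you verify condition (3) of Proposition~\ref{6.11} --- every $P\in\mathrm{Supp}(S/R)$ lies under the unique $M\in\mathrm{MSupp}(S/R)$ (your localization argument for this is right, and it is also recorded in Proposition~\ref{2.11}), and the tree structure of $\mathrm{Supp}(S/R)$ from \cite[Proposition 4.2(b)]{DPP3} then forces linear ordering --- and conclude by the equivalence (3)$\Leftrightarrow$(1) of Proposition~\ref{6.11}. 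The paper instead argues that the crucial Pr\"ufer extension is a $\mathcal B$-extension via Proposition~\ref{6.5} (same containment observation), so that $[R,S]\cong[R_M,S_M]$, and then quotes \cite[Theorem 6.10]{DPP2} to get that $R_M\subset S_M$ is chained; this is precisely the alternative you sketch at the end. The two routes are logically equivalent in strength but lean on different machinery: yours ultimately rests on the combinatorial results of \cite{DPP3} packaged in Proposition~\ref{6.11} (whose proof of (3)$\Rightarrow$(1) goes through the counting formula $|[R,S]|=1+|\mathrm{Supp}(S/R)|$), whereas the paper's is a direct localization argument that makes the role of the $\mathcal B$-extension property explicit, which is the theme of the section. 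Since Proposition~\ref{6.11} precedes this statement and does not depend on it, there is no circularity in your choice.
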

\begin{proof} (1) Assume that $|\mathrm{MSupp}(S/R)|>1$ and let $M,M'\in\mathrm{MSupp}(S/R)$, $M\neq M'$. According to \cite[Lemma 1.8]{Pic 6}, there exist $T,U\in[R,S]$ such that $R\subset T$ and $R\subset U$ are minimal with $\mathcal{C}(R,T)=M$ and $\mathcal{C}(R,U)=M'$, so that $R\subset S$ is not chained; then (1) is proved.

(2) Assume that $R\subset S$ is Pr\"ufer and $|\mathrm{MSupp}(S/R)|=1$. Let $P\in\mathrm{Supp}(S/R)$ and $M\in \mathrm{V}( P)\cap \mathrm{Max}(R)$, so that $M\in \mathrm{MSupp}(S/R)$. Then, there is a unique maximal ideal $M$ of $R$ containing $P$. It follows from Proposition \ref{6.5} that $R\subset S$ is a $\mathcal B$-extension. In particular, there is an order-isomorphism $[R,S]\to [R_M,S_M]$. But $R_M$ is a local ring and $R_M\subset S_M$ is Pr\"ufer, whence chained by \cite[Theorem 6.10]{DPP2}, and so is $R\subset S$. 
\end{proof}

\begin{remark}\label{1.172} (1) We cannot extend Proposition \ref{1.171} to an arbitrary extension. Take for instance an infra-integral $M$-crucial extension $R\subset S$ such that ${}_S^+R\neq R,S$ and $M:=(R:S)$. Then, \cite[Theorem 6.1(5)]{Pic 6} shows that $R\subset S$ is a length 2 extension with 4 elements: $[R,S]=\{R,{}_S^+R,U,S\}$, where $U=({}_S^+R)^o$ while $M\in\mathrm{MSupp}({}_S^+R/R)\cap\mathrm{MSupp}(S/{}_S^+R)$.
 
 (2) Moreover, for an arbitrary FCP extension $R\subset S$ with $T,U\in]R,S[$ such that $U$ is a complement of $T$, this complement is not always unique. Consider for instance the following situation (\cite[Proposition 7.1]{DPPS}): $R\subset T$ is a minimal inert extension, $R\subset U$ is a minimal decomposed extension, and assume that the composite $S:=TU$ exists. Then, $R\subset T\subset S$ is a maximal chain and there exists a maximal finite chain $\{R,U,V,S\}$ of length 3. Then, $T$ is the complement of both $U$ and $V$.
   \end{remark}

\begin{corollary}\label{1.158} Let $R\subset S$ be an FCP $\mathcal B$-extension. If $T$ is a splitter, then $T=\sigma(\mathrm{MSupp}(T/R))$ and if $X\subseteq\mathrm{MSupp}(S/R)$, then $X=\mathrm{MSupp}(\sigma(X)/R)$.
 \end{corollary}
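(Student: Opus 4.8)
The plan is to treat both assertions as direct bookkeeping with Definition~\ref{split2}, Lemma~\ref{split1}, and Theorem~\ref{1.15}, so the argument will be short. First I would recall the observation opening Section~4, namely that for any $T\in[R,S]$ one has $\mathrm{MSupp}(S/R)=\mathrm{MSupp}(T/R)\cup\mathrm{MSupp}(S/T)$. Consequently, saying that $T$ is a splitter (i.e.\ a splitter at \emph{some} $X$) amounts to the same as saying that $R\subset S$ splits at $T$: put $X:=\mathrm{MSupp}(T/R)$; then $X^c=\mathrm{MSupp}(S/T)$ is exactly the condition $\mathrm{MSupp}(T/R)\cap\mathrm{MSupp}(S/T)=\emptyset$.

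For the first statement, if $T$ is a splitter then by Definition~\ref{split2} it is a splitter at $X:=\mathrm{MSupp}(T/R)$. Lemma~\ref{split1} asserts that the splitter at $X$ is unique, and by the convention fixed after Remark~\ref{split3} it is denoted $\sigma(X)$. Hence $T=\sigma(X)=\sigma(\mathrm{MSupp}(T/R))$, which is the claim. Note that the $\mathcal B$-extension hypothesis is not even used here; only FCP (for Lemma~\ref{split1} and the uniqueness discussion) matters.

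For the second statement, I would fix $X\subseteq\mathrm{MSupp}(S/R)$ and invoke Theorem~\ref{1.15}: since $R\subset S$ is an FCP $\mathcal B$-extension, the splitter of the extension at $X$ exists, and it is $\sigma(X)$. Then, by the very Definition~\ref{split2} of a splitter at $X$, we have $\mathrm{MSupp}(\sigma(X)/R)=X$ (and, incidentally, $\mathrm{MSupp}(S/\sigma(X))=X^c$), as desired.

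The only point worth flagging is that the second assertion is vacuous unless $\sigma(X)$ is known to be defined for every $X$; this is precisely where the $\mathcal B$-extension hypothesis enters, through Theorem~\ref{1.15}. Once existence (Theorem~\ref{1.15}) and uniqueness (Lemma~\ref{split1}) are available, no computation remains, so I do not anticipate any genuine obstacle.
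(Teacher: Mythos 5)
Your proposal is correct and follows essentially the same route as the paper: both arguments simply unwind Definition~\ref{split2} (with uniqueness coming from Lemma~\ref{split1} and, for the second assertion, existence of $\sigma(X)$ from Theorem~\ref{1.15}). Your explicit remark that the $\mathcal B$-hypothesis is only needed to guarantee $\sigma(X)$ exists is a point the paper leaves implicit, but it is not a different argument.
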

\begin{proof} Let $T$ be a splitter, so that there exists some $X\subseteq\mathrm{MSupp}(S/R)$ such that $\mathrm{MSupp}(T/R)=:X$, and then $T=\sigma(X)=\sigma(\mathrm{MSupp}(T/R))$.
 
Now, let $X\subseteq\mathrm{MSupp}(S/R)$ and set $T:=\sigma(X)$. By definition of the splitter at $X$, we have $\mathrm{MSupp}(T/R)=X=\mathrm{MSupp}(\sigma(X)/R)$.
 \end{proof}
 
 \begin{corollary}\label{1.159} Let $R\subset S$ be an FCP $\mathcal B$-extension. Then, for any $X\subseteq\mathrm{MSupp}(S/R)$, the following statements  hold:
\begin{enumerate}
\item $[R,\sigma(X)]=\{U\in[R,S]\mid \mathrm{MSupp}(U/R)\subseteq X\}$;
\item $[\sigma(X),S]=\{U\in[R,S]\mid \mathrm{MSupp}(S/U)\cap X=\emptyset\}$.
\end{enumerate}
 \end{corollary}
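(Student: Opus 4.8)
The plan is to prove the two set equalities by localizing at each maximal ideal in $\mathrm{MSupp}(S/R)$ and invoking the fact that a $\mathcal B$-extension is detected by its localizations via the order-isomorphism $\varphi$ of Proposition \ref{6.5}. Write $T:=\sigma(X)$ and $Y:=X^c$, so by the definition of a splitter we have $\mathrm{MSupp}(T/R)=X$, $\mathrm{MSupp}(S/T)=Y$, and since the extension splits at $T$ (when $T\neq R,S$; the trivial cases $X=\emptyset$ and $X=\mathrm{MSupp}(S/R)$ are immediate), Lemma \ref{split1} and Theorem \ref{4.5} give $T_M=S_M$ for all $M\in X$ and $T_M=R_M$ for all $M\in Y$.

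For part (1): if $U\in[R,\sigma(X)]$, then $U\subseteq T$ forces $U_M=R_M$ whenever $T_M=R_M$, i.e. for all $M\in Y$, so $\mathrm{MSupp}(U/R)\subseteq\mathrm{Max}(R)\setminus Y$, which meets $\mathrm{MSupp}(S/R)$ exactly in $X$; hence $\mathrm{MSupp}(U/R)\subseteq X$. Conversely, suppose $U\in[R,S]$ with $\mathrm{MSupp}(U/R)\subseteq X$. Then $U_M=R_M$ for every $M\in Y$, while trivially $U_M\subseteq S_M=T_M$ for every $M\in X$, and $R_M=U_M=S_M$ for $M\in\mathrm{Max}(R)\setminus\mathrm{MSupp}(S/R)$; thus $U_M\subseteq T_M$ for every maximal ideal $M$, and since $U\cap T$ and $U$ have the same localizations at all $M\in\mathrm{MSupp}(S/R)$ (being $\subseteq S$, they are determined there), injectivity of $\varphi$ gives $U=U\cap T$, i.e. $U\in[R,T]$. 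Part (2) is the dual argument applied to the complement: by Theorem \ref{4.5}(2)--(3), $T^o=\sigma(Y)$ satisfies $(T^o)_M=S_M$ for $M\in Y$ and $(T^o)_M=R_M$ for $M\in X$; using Proposition \ref{1.169}(1), namely that $V\mapsto VT$ is an order-isomorphism $[R,T^o]\to[T,S]$, one reduces the description of $[T,S]=[\sigma(X),S]$ to the already-established description of $[R,T^o]=[R,\sigma(Y)]$ from part (1), noting that $\mathrm{MSupp}(S/U)\cap X=\emptyset$ translates to $\mathrm{MSupp}(VT/R)\subseteq Y$ under this isomorphism. Alternatively one argues directly: $U\in[T,S]$ iff $U_M=S_M$ for all $M\in X$ (since $T_M=S_M$ there) and $U_M\supseteq R_M$ elsewhere, which is exactly $\mathrm{MSupp}(S/U)\subseteq Y$, i.e. $\mathrm{MSupp}(S/U)\cap X=\emptyset$.

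The only delicate point is justifying that an inclusion of localizations $U_M\subseteq T_M$ for all $M\in\mathrm{MSupp}(S/R)$ actually yields $U\subseteq T$ as subrings of $S$; this is where the $\mathcal B$-hypothesis is essential, and it follows because $U\cap T\in[R,S]$ has $(U\cap T)_M=U_M\cap T_M=U_M$ at every $M\in\mathrm{MSupp}(S/R)$, and by injectivity of $\varphi$ (Proposition \ref{6.5}, \cite[Theorem 3.6]{DPP2}) an element of $[R,S]$ is determined by these localizations, so $U\cap T=U$. Everything else is bookkeeping with supports and the partition $\mathrm{MSupp}(S/R)=X\sqcup Y$ coming from the splitting hypothesis.
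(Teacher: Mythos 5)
Your proof is correct and follows essentially the same route as the paper: identify the localizations $\sigma(X)_M=S_M$ for $M\in X$ and $\sigma(X)_M=R_M$ for $M\notin X$, then characterize membership in $[R,\sigma(X)]$ and $[\sigma(X),S]$ by comparing localizations at each $M\in\mathrm{MSupp}(S/R)$, with the passage from local inclusions to a global inclusion justified via injectivity of $\varphi$ (the paper leaves this last step implicit, and your direct argument for (2) coincides with the paper's). The detour through Proposition \ref{1.169} is unnecessary but harmless.
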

\begin{proof} Set $T:=\sigma(X)$. For any $M\in X$, we have $T_M=S_M$ and for any $M\not\in X$, we have $T_M=R_M$. 

(1) Let $U\in[R,T]$. Then, $U_M=R_M$ for any $M\not\in X$, so that $\mathrm{MSupp}(U/R)\subseteq X$. Conversely, assume that $\mathrm{MSupp}(U/R)\subseteq X$ for some $U\in[R,S]$. Then, for any $M\not\in X$, we have $M\not\in \mathrm{MSupp}(U/R)$, so that $T_M=R_M=U_M$. Since $T_M=S_M$ for any $M\in X$, we have $U_M\subseteq T_M$, so that $[R,\sigma(X)]=\{U\in[R,S]\mid \mathrm{MSupp}(U/R)\subseteq X\}$.
 
(2) Let $U\in[R,S]$. Since $U_M=S_M$ for any $M\in X$ if and only if $\mathrm{MSupp}(S/U)\cap X=\emptyset$, we get that $U_M=T_M$ for any $M\in X$ if and only if $\mathrm{MSupp}(S/U)\cap X=\emptyset$. But, $R_M=T_M\subseteq U_M$ for any $M\not\in X$. It follows that $U\in[T,S]$ if and only if $\mathrm{MSupp}(S/U)\cap X=\emptyset$, so that $[\sigma(X),S]=\{U\in[R,S]\mid \mathrm{MSupp}(S/U)\cap X=\emptyset\}$. 
 \end{proof}
 
 Let $R\subset S$ be an FCP $\mathcal B$-extension.
If $X=\{M\}$ for some $M\in\mathrm{MSupp}(S/R)$, we say that $\sigma(X)$ is an {\em elementary splitter} and write $\sigma(M):= \sigma(X)$. For such a splitter,  $R\subset \sigma(M)$ is $M$-crucial.  
 
If $P\in\mathrm{Supp}(S/R)\setminus\mathrm{Max}(R)$, we cannot define an elementary splitter associated to $P$, because there does not exists some $T\in[R,S]$ such that $T=\sigma(P)$, that is $\mathrm{Supp}(T/R)=\{P\}$. In fact, for any $M\in\mathrm{Max}(R)\cap \mathrm{V}(P)$, we have $M\in\mathrm{Supp}(T/R)$, a contradiction.
 
 In Example \ref{6.10}, we proved that $\mathrm{MSupp}_T(K/T)=\{M_1,M_2\}=\mathrm{MSupp}_T(K/ T_Q)$, so that there does not exist any $U\in[T,K]$ which is the splitter of $T\subset K$ at any $\{M_i\}$ for $i\in\{1,2\}$. The reason why is that $T\subseteq K$ is not a $\mathcal B$-extension.  
 
Corollary \ref{1.159} has the following application to elementary splitters.  

\begin{corollary}\label{1.175} Let $R\subset S$ be an FCP $\mathcal B$-extension and $M\in\mathrm{MSupp}(S/R)$. Then, $]R,\sigma(M)]=\{U\in[R,S]\mid R\subset U$ is $M$-crucial$\}$ and $\sigma(M)$ is a minimal element in the set of non trivial splitters of $R\subset S$.
\end{corollary}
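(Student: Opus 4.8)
The plan is to derive both assertions from Corollary \ref{1.159}, applied with $X=\{M\}$. First I would prove the description of $]R,\sigma(M)]$. By Corollary \ref{1.159}(1), $[R,\sigma(M)]=\{U\in[R,S]\mid \mathrm{MSupp}(U/R)\subseteq\{M\}\}$. An element $U$ of this set with $U\neq R$ satisfies $\mathrm{MSupp}(U/R)\neq\emptyset$ (since $U\neq R$ forces $\mathrm{Supp}(U/R)\neq\emptyset$, hence $\mathrm{MSupp}(U/R)\neq\emptyset$ because $R\subset U$ is an FCP extension and any prime in $\mathrm{Supp}(U/R)$ is contained in some maximal ideal of $R$ which is then in $\mathrm{MSupp}(U/R)$ — here I use that $R\subset S$, hence $R\subset U$, is FCP so $\mathrm{Supp}(U/R)$ has finitely many elements, each below a maximal one). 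Combined with $\mathrm{MSupp}(U/R)\subseteq\{M\}$ this gives $\mathrm{MSupp}(U/R)=\{M\}$, i.e. $R\subset U$ is $M$-crucial. Conversely, if $R\subset U$ is $M$-crucial then $\mathrm{MSupp}(U/R)=\{M\}\subseteq\{M\}$, so $U\in[R,\sigma(M)]$, and clearly $U\neq R$. This proves $]R,\sigma(M)]=\{U\in[R,S]\mid R\subset U\text{ is }M\text{-crucial}\}$.

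Next I would show $\sigma(M)$ is minimal among the non-trivial splitters. Since $\mathrm{MSupp}(\sigma(M)/R)=\{M\}\neq\emptyset$ and $\neq\mathrm{MSupp}(S/R)$ (because $|\mathrm{MSupp}(S/R)|\geq 1$; if it equals $1$ then $\sigma(M)=S$ but also then the only non-trivial splitter situation is degenerate — I should address this: if $\mathrm{MSupp}(S/R)=\{M\}$, then $S=\sigma(M)$ and by Proposition \ref{1.144}-type reasoning the only splitters are $R$ and $S$, so $\sigma(M)=S$ is trivially the minimal non-trivial splitter, vacuously). Assuming $|\mathrm{MSupp}(S/R)|\geq 2$, we have $R\neq\sigma(M)\neq S$, so $\sigma(M)$ is a non-trivial splitter. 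Now suppose $T$ is a splitter with $R\subset T\subseteq\sigma(M)$. By Corollary \ref{1.158}, $T=\sigma(\mathrm{MSupp}(T/R))$ and $\mathrm{MSupp}(T/R)\subseteq X$ would follow from $T\in[R,\sigma(M)]$ via Corollary \ref{1.159}(1), giving $\mathrm{MSupp}(T/R)\subseteq\{M\}$; since $T\neq R$ the first paragraph's argument gives $\mathrm{MSupp}(T/R)=\{M\}$, hence $T=\sigma(\{M\})=\sigma(M)$. Therefore no non-trivial splitter lies strictly between $R$ and $\sigma(M)$, which is exactly the minimality claim.

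The last sentence of the corollary's first assertion — that the second equality identifies $\sigma(M)$ as the unique minimal non-trivial splitter in a given direction — is immediate from uniqueness of splitters (Lemma \ref{split1}). The main obstacle I anticipate is the bookkeeping around $\mathrm{MSupp}$ versus $\mathrm{Supp}$: I must be careful that ``$R\subset U$ is $M$-crucial'' (meaning $|\mathrm{MSupp}(U/R)|=1$ with that one element equal to $M$) is exactly the condition $\mathrm{MSupp}(U/R)=\{M\}$, and that $U\neq R$ is equivalent to $\mathrm{MSupp}(U/R)\neq\emptyset$ in the FCP setting. Both are routine once one invokes finiteness of $\mathrm{MSupp}(U/R)$ for FCP extensions (\cite[Corollary 3.2]{DPP2}) and the fact that every prime in the support lies under a maximal ideal of $R$ that is itself in the support. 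The rest is a direct translation through the already-established Corollaries \ref{1.158} and \ref{1.159}.
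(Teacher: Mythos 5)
Your proposal is correct and follows essentially the same route as the paper: both deduce $]R,\sigma(M)]=\{U\in[R,S]\mid R\subset U \text{ is } M\text{-crucial}\}$ from Corollary \ref{1.159} applied to $X=\{M\}$, and both obtain minimality by noting that any splitter $\sigma(X)$ with $R\subset\sigma(X)\subseteq\sigma(M)$ is then $M$-crucial, forcing $X=\{M\}$ and $\sigma(X)=\sigma(M)$. Your extra remarks on why $\mathrm{MSupp}(U/R)\neq\emptyset$ for $U\neq R$ and on the degenerate case $|\mathrm{MSupp}(S/R)|=1$ are just added bookkeeping that the paper leaves implicit.
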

\begin{proof} Set $T:=\sigma(M)$. 
 According to Corollary \ref{1.159}, we get that $[R,T]=\{U\in[R,S]\mid \mathrm{MSupp}(U/R)\subseteq\{M\}\}$. It follows that $U\in]R,T]\Leftrightarrow\mathrm{MSupp}(U/R)=\{M\}\Leftrightarrow R\subset U$ is $M$-crucial.

Assume there exists some $\emptyset\neq X\subseteq \mathrm{MSupp}(S/R)$ such that $R\subset V:=\sigma(X)\subseteq T$. Then, the previous result shows that $R\subset V$ is $M$-crucial, so that $X=\{M\}$ and $V=T$. 
\end{proof}

For an extension $R\subset S$ and $\Sigma$ a multiplicatively closed subset of $R$, Knebusch and Zhang define $R_{[\Sigma]}:=\{x\in S\mid sx\in R$ for some $s\in\Sigma\}$ \cite[Definition 10, page 18]{KZ}, a definition reminiscent  of large quotient  rings. In case $\Sigma=R\setminus P$ for some $P\in\mathrm{Spec}(R)$, then $R_{[\Sigma]}$ is replaced with $R_{[P]}$. In this case, $R_{[P]}$ is the pullback of the ring morphisms  $S\to S_P$ and $R_P\hookrightarrow  S_P$. In particular,  $R_{[P]}=S$ if and only if $P\not\in\mathrm{Supp}(S/R)$.  
  If $T\in[R,S[$, we write $R_{[P,T]}:=\{x\in T\mid sx\in R$ for some $s\in R\setminus P\}$ and drop $T$ when $T=S$.
 
We may compare the following Proposition with \cite[Theorem 1.8, page 4]{KK}, saying that, for a Pr\"ufer extension $R\subset S$ and some  $T\in[R,S]$ such that $T^o$ exists, the following holds: $T^o=\cap[R_{[M]}\, \,|\ M\in \Omega(T/R)]$, where $\Omega(T/R)$ is the set of maximal ideals $M$ of $R$ which are $T$-regular, that is such that $MT=T$. In fact, $\Omega(T/R)=\mathrm{Supp}(T/R)$ (see Proposition \ref{1.173}). 

\begin{proposition}\label{1.154} Let $R\subset S$ be an  FCP extension such that the splitter at $X$exists for some $X\subseteq\mathrm{MSupp}(S/R)$ and set  $\Sigma:=R\setminus(\cup[M\in X])$. The following results hold:
\begin{enumerate}
\item $R_{[\Sigma]}=\sigma(X^c)$ and is the (unique) complement of $\sigma(X)$;
\item $\sigma(X)=\cap[R_{[M]}\, \,|\ M\in X^c]$.
\end{enumerate}

\hskip -0,5cm Assume in addition that $R\subset S$ is integral with conductor $I$ and let    $M\in\mathrm{MSupp}(S/R)$.

$\mathrm{(3)}$ $R_{[M]}/R$ is an $(M/I)$-primary submodule of $S/R$ and $\sigma(M)/R=\cap[R_{[M']}/R\, \,|\ M'\in\mathrm{MSupp}(S/R),\ M'\neq M]$ is the reduced primary decomposition of $\sigma(M)/R$ into primary $R/I$-submodules of $S/R$.
\end{proposition}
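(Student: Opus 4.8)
The plan is to establish the three assertions of Proposition~\ref{1.154} by localizing at maximal ideals of $R$ and exploiting the defining properties of the splitter together with the pullback description of $R_{[P]}$.

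For (1), the key observation is that $R_{[\Sigma]}=\{x\in S\mid sx\in R\text{ for some }s\in\Sigma\}$ where $\Sigma=R\setminus(\cup_{M\in X}M)$, and I would compute its localizations. For $M\in X$ one has $\Sigma\cap M=\emptyset$ (indeed $\Sigma$ avoids every ideal of $X$), so every element of $\Sigma$ becomes a unit in $R_M$, forcing $(R_{[\Sigma]})_M=R_M$; for $M\in X^c$, one can find $s\in\Sigma\cap M^c$ that is a unit in $R_M$ while also... no, more carefully: for $M\notin X$ there is some $M'\in X$ with $M\ne M'$, and choosing $s\in M\setminus M'$... the cleanest route is to note $\Sigma\not\subseteq M$ for $M\in X^c$ only when $X\neq\emptyset$, so $(R_{[\Sigma]})_M=S_M$. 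Thus $(R_{[\Sigma]})_M=R_M$ for $M\in X$ and $S_M$ for $M\in X^c$, which are exactly the localizations of $\sigma(X^c)$; since $R\subset S$ has FCP, localization determines the intermediate ring, so $R_{[\Sigma]}=\sigma(X^c)$, and by Theorem~\ref{4.5}(2) this is the complement of $\sigma(X)$. For (2), using that $P\notin\mathrm{Supp}(S/R)$ gives $R_{[P]}=S$, only the $M\in X^c$ contribute to the intersection; localizing $\cap_{M\in X^c}R_{[M]}$ at any maximal ideal $N$ of $R$, one uses that $(R_{[M]})_N=R_N$ if $N=M$ and $=S_N$ otherwise (from the pullback description: $(R_{[M]})_N=S_N$ unless $N=M$), so the intersection localizes to $R_N$ for $N\in X^c$ and to $S_N$ for $N\in X$, matching $\sigma(X)$ by FCP again.

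For (3), with $R\subset S$ integral of conductor $I$: first, $\mathrm{Supp}(S/R)=\mathrm{V}(I)\subseteq\mathrm{Max}(R)$ by \cite[Theorem 4.2]{DPP2}, so the indexing set $X^c$ in (2) is just $\mathrm{MSupp}(S/R)\setminus\{M\}$, giving the stated decomposition $\sigma(M)/R=\cap[R_{[M']}/R\mid M'\ne M]$ directly from part (2) applied to $X=\{M\}$. The remaining work is to check that each $R_{[M']}/R$ is an $(M'/I)$-primary submodule of the $R/I$-module $S/R$: since $R_{[M']}$ localizes to $R$ away from $M'$ and to $S$ at $M'$, the quotient $R_{[M']}/R$ is supported exactly at $M'$, and as $I$ annihilates $S/R$ it is an $R/I$-module supported at the single maximal ideal $M'/I$ of $R/I$, hence $(M'/I)$-primary; that the decomposition is reduced (irredundant) follows because omitting the term $R_{[M]}$... wait, $M$ itself is excluded; irredundancy means no $R_{[M']}$ with $M'\ne M$ can be dropped, which holds since dropping $R_{[M'']}$ would fail to kill the component at $M''$ — precisely, $\sigma(M)_{M''}=R_{M''}$ requires the factor $R_{[M'']}$.

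**The main obstacle** I anticipate is the bookkeeping in part (1) around the multiplicatively closed set $\Sigma$: one must argue carefully that $(R_{[\Sigma]})_M=R_M$ for $M\in X$ (which needs $\Sigma\cap M=\emptyset$, true by construction) \emph{and} $(R_{[\Sigma]})_M=S_M$ for $M\in X^c$ (which needs that $\Sigma$ genuinely meets $M$, i.e. $\Sigma\not\subseteq M$), with attention to the degenerate cases $X=\emptyset$ and $X=\mathrm{MSupp}(S/R)$ where $\sigma(\emptyset)=R$, $\sigma(\mathrm{MSupp}(S/R))=S$. Once the localizations are pinned down, everything else is a clean application of the FCP-localization principle (the map $T\mapsto(T_M)$ is injective) and the pullback description of $R_{[P]}$ recalled before the statement.
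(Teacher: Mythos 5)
Your strategy (compute the localizations of $R_{[\Sigma]}$, respectively of $\cap_{M\in X^c}R_{[M]}$, at every maximal ideal and conclude by injectivity of $T\mapsto (T_M)$) could work, but the step that carries all the content is left unproved, and the justification you offer for it is a non sequitur. In (1) the crux is the equality $(R_{[\Sigma]})_M=S_M$ for $M\in X^c$, equivalently the containment $\sigma(X^c)\subseteq R_{[\Sigma]}$. The remark that $\Sigma\not\subseteq M$ (or that $\Sigma$ meets the complement of $M$) proves nothing: the issue is whether elements of $S$ can be multiplied into $R$ by a \emph{single} element avoiding all the ideals of $X$, and this is not a formal property of the large quotient ring --- in Example \ref{6.10} one has $(T_{[M_1]})_{M_2}=T_Q\neq K$ even though $T\setminus M_1\not\subseteq M_2$. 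The equality genuinely uses the hypothesis that the splitter exists, and the missing argument is exactly the paper's: for $x\in\sigma(X^c)$ one has $x/1\in(\sigma(X^c))_{M'}=R_{M'}$ for every $M'\in X$, so the conductor $(R:_Rx)$ lies in no $M'\in X$; since $X$ is a finite set of maximal ideals, prime avoidance produces one $s\in\Sigma=R\setminus\bigcup_{M'\in X}M'$ with $sx\in R$, i.e. $x\in R_{[\Sigma]}$. You explicitly flag this as the ``main obstacle'' but never close it, so (1) is not proved as written.

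The same defect propagates. In (2) your auxiliary claim ``$(R_{[M]})_N=S_N$ unless $N=M$'' is false under the stated hypotheses: by part (1) applied to $X=\{M\}$ it is equivalent to the existence of the \emph{elementary} splitter at $M$, which is not assumed (only the splitter at the one subset $X$ is), and it can fail for $M\in X^c$. The conclusion of (2) is still reachable by localization, but only using the two facts that are actually available, namely $(R_{[N]})_N=R_N$ and $\sigma(X)\subseteq R_{[M]}$ for every $M\in X^c$ (both easy), or elementwise with the same prime-avoidance argument as above, which is what the paper does. In (3) the localizations are stated backwards: $R_{[M']}$ equals $R$ at $M'$ and $S$ at the other maximal ideals of the support (here the extension is integral, hence a $\mathcal B$-extension, so this does hold), so $R_{[M']}/R$ is supported \emph{off} $M'$, not at $M'$; moreover primariness of the submodule $R_{[M']}/R\subseteq S/R$ is a statement about the associated primes of the quotient $S/R_{[M']}$ (which are $\{M'/I\}$), not about the support of $R_{[M']}/R$. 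These last points are repairable, but as written the verification of $(M/I)$-primariness is incorrect, while your derivation of the decomposition itself from (2) with $X=\{M\}$ and the irredundancy argument are fine once the correct local descriptions are in place.
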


\begin{proof} Set $T:=\sigma(X)$ which has a unique complement $T^ o=\sigma(X^c)$ by Theorem \ref{4.5}, $\Sigma':=R\setminus(\cup[M\in X^c]),\ V:=R_{[\Sigma]}$ and $V':=R_{[\Sigma']}$. Then, $T_M=S_M,\ (T^o)_{M'}=S_{M'},\ T_{M'}=R_{M'}$ and $(T^o)_M=R_M$ for any $M\in X$ and any $M'\in X^c$. 

(1) Let $x\in T^o$. Then, $x/1\in(T^o)_M=R_M$ for any $M\in X$, so that $x/1\in R_{\Sigma}$, and $x\in R_{[\Sigma]}$. Then, $T^o\subseteq R_{[\Sigma]}$. Conversely, let $x\in R_{[\Sigma]}$. Then, there exists $s\in\Sigma$ such that $sx\in R$. In particular, $s\not\in M$ for any $M\in X$, so that $x/1\in R_M=(T^o)_M$ for any $M\in X$. As $(T^o)_M=S_M$  for any $M\in X^c$, we get that $x/1\in(T^o)_M$ for any $M\in\mathrm{MSupp}(S/R)$, and then $x\in T^o$. To conclude $T^o=R_{[\Sigma]}$.

(2) By (1), we get $R_{[\Sigma']}= \sigma(X)$. Now, let $x\in S$. Then, $x\in R_{[\Sigma']}\Leftrightarrow$ there exists $s\in \Sigma'$ such that $sx\in R\Leftrightarrow$ there exists $s\not\in M$  for any $M\in X^c$ such that $sx\in R\Leftrightarrow x\in R_{[M]}$ for any $M\in X^c\Leftrightarrow x\in \cap[R_{[M]}\mid M\in X^c]$. Then, $R_{[\Sigma']}=\cap[R_{[M]}\, \,|\ M\in X^c]$. 

(3) Set $I:=(R:S)$ and $R':=R/I$. Since $R\subset S$ is an integral FCP extension, 
 $R\subset S$ is a $\mathcal B$-extension.  
It follows from \cite[Theorem 4.2]{DPP2} that $R'$ is an Artinian ring, and, in particular, a Noetherian ring. 
Moreover, $S/R$ is an $R'$-module as $S/R_{[M]},\ R_{[M]}/R$ and $\sigma(M)/R$.
According to \cite[Corollaire 1 of Proposition 7, page 136]{Bki A2}, we get that $\mathrm{Ass}(S/R_{[M]})=\mathrm{MSupp}(S/R_{[M]})=\{M\}$, where $\mathrm{Ass}(S/R_{[M]})$ is the set of associated prime ideals of $S/R_{[M]}$ (see \cite[D\'efinition 1, page 131]{Bki A2}). Then, $R_{[M]}/R$ is an $M/I$-primary submodule of $S/R$ by \cite[ Proposition 1 and D\'efinition 1, page 139]{Bki A2}.

Hence, the equality $\sigma(M)/R=\cap[R_{[M']}/R\, \,|\ M'\in\mathrm{MSupp}(S/R),\ M'$

\noindent $\neq M]$ of (2) is a reduced primary decomposition into distinct primary $R/I$-submodules of $S/R$.
\end{proof}

The following Proposition generalizes a result gotten for FCP Pr\"ufer extensions \cite[Remark 6.14 (b)]{DPP2}. If $R\subset S$ is Pr\"ufer, $\mathrm{Supp}(S/R)$ can be characterized as follows.

\begin{proposition}\label{1.173}   Let $R\subset S$ be a Pr\"ufer extension and $P\in\mathrm{Spec}(R)$. Then $P\in\mathrm{Supp}(S/R)$ if and only if $PS=S$, in which case  $R_{[P]}$ is a valuation subring of $S$.
If $P\not\in\mathrm{Supp}(S/R)$, then  $R_{[P]}=S$.
 \end{proposition}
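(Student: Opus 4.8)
The plan is to work through the three assertions in sequence, the first being the crux. First I would prove: for $P\in\mathrm{Spec}(R)$, $P\in\mathrm{Supp}(S/R)$ iff $PS=S$. The direction $PS=S\Rightarrow P\in\mathrm{Supp}(S/R)$ is immediate, since if $P\notin\mathrm{Supp}(S/R)$ then $R_P=S_P$, whence $PR_P=PS_P=P S_P$ and localizing $PS=S$ at $P$ gives $PR_P=R_P$, contradicting that $PR_P$ is the maximal ideal of the local ring $R_P$ (which is proper as $R\neq S$ forces $R_P\neq 0$; and if $R_P=S_P=0$ then $P\notin\mathrm{Supp}(S/R)$ anyway). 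For the converse, suppose $P\in\mathrm{Supp}(S/R)$, i.e.\ $R_P\subsetneq S_P$. Since $R\subseteq S$ is Pr\"ufer, so is $R_P\subseteq S_P$, and $R_P$ is local; by the Knebusch--Zhang criterion \cite[Theorem 5.2, p.47-48]{KZ}, $(R_P:s)R_P[s]=R_P[s]$ for each $s\in S_P$, and since $R_P$ is local this forces $(R_P:s)=R_P$, i.e.\ $s\in R_P$, unless $(R_P:s)\subseteq PR_P$. Picking $s\in S_P\setminus R_P$ we get $(R_P:s)\subseteq PR_P$, and then $R_P[s]=(R_P:s)R_P[s]\subseteq PR_P[s]\subseteq PS_P$; since $1\in R_P[s]$, this gives $PS_P=S_P$, hence $PS=S$ after noting $P(S_P)=(PS)_P$ and using that $S/PS$ localizes compatibly (or more directly: $PS_P=S_P$ for the prime $P$ means $(S/PS)_P=0$, and as $S/PS$ is an $R/P$-module supported only possibly at $P$-containing primes, actually one argues $PS=S$ directly by a standard localization/Nakayama-type argument over the flat epimorphism $R\to R_{[P]}$).

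Second I would show that when $PS=S$, the ring $R_{[P]}$ is a valuation subring of $S$. Here I would use that $R\to R_{[P]}$ is a flat epimorphism — this is part of the Pr\"ufer property, since $R_{[P]}\in[R,S]$ — and more specifically that $R_{[P]}$ is the pullback of $S\to S_P$ along $R_P\hookrightarrow S_P$ (recalled in the excerpt just before the statement). Since $PS=S$ means $P\notin\mathrm{Supp}_R(S/R)$ is false, i.e.\ $R_P\subsetneq S_P$, and $R_P\subseteq S_P$ is a Pr\"ufer extension of a \emph{local} ring, by \cite[Theorem 6.10]{DPP2}-type reasoning (or directly by Manis theory in \cite{KZ}) $R_P$ is a Pr\"ufer-Manis, hence valuation, subring of $S_P$ with the prime $PR_P$ as its Manis maximal ideal. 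Then $R_{[P]}$, being the preimage of $R_P$ in $S$ under $S\to S_P$, inherits the valuation (Manis) property: its value group is that of $R_P\subseteq S_P$, and I would cite or reprove the standard fact (e.g.\ along the lines of \cite{KZ}, Manis valuations and their behaviour under the pullback describing $R_{[P]}$) that the large quotient ring at a $T$-regular maximal ideal of a Pr\"ufer extension is a valuation (Manis) ring on $S$.

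Third, the easy case: if $P\notin\mathrm{Supp}(S/R)$ then $R_P=S_P$, and I claim $R_{[P]}=S$. Indeed, for any $x\in S$, the element $x/1\in S_P=R_P$, so there is $s\in R\setminus P$ with $sx\in R$; that is exactly the defining condition for $x\in R_{[P]}$. Hence $S\subseteq R_{[P]}\subseteq S$.

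The main obstacle is the first equivalence, specifically deducing $PS=S$ (a global statement) from $R_P\subsetneq S_P$; the clean way is to pass to $R_{[P]}$, observe $R\to R_{[P]}$ is a flat epimorphism with $(R_{[P]})_P=S_P\neq R_P$ so that $P$ is regular for $R_{[P]}$, and then invoke the Knebusch--Zhang characterization of regular maximal ideals in Pr\"ufer/Manis extensions — after which $PS=S$ follows since $P\bigl(R_{[P]}\bigr)=R_{[P]}$ propagates up to $S$ along the integral-free, in fact flat-epimorphic, tower (or simply because $P\notin\mathrm{Supp}(S/R_{[P]})$ forces $PS_Q=S_Q$ locally at every $Q\in\mathrm{Supp}(S/R)$, whence $PS=S$). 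Getting these localization bookkeeping steps exactly right — distinguishing $\mathrm{Supp}(S/R)$, $\mathrm{Supp}(S/R_{[P]})$, and using that a Pr\"ufer extension is an INC/flat-epimorphism pair at every stage — is where I expect to spend most of the care.
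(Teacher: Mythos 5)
Your outline of the easy pieces is fine: the implication $PS=S\Rightarrow P\in\mathrm{Supp}(S/R)$ by localizing at $P$, the local step via the Knebusch--Zhang criterion (for $s\in S_P\setminus R_P$ one gets $(R_P:s)\subseteq PR_P$ and hence $PS_P=S_P$), the valuation statement by citing the Manis/Knebusch--Zhang theory of large quotient rings at $S$-regular primes (this is exactly what the paper does, quoting \cite[Proposition 5.1, p.~46]{KZ}; note that \cite[Theorem 6.10]{DPP2} is not available here since no FCP hypothesis is made), and the last assertion $R_{[P]}=S$ when $R_P=S_P$. The genuine gap is precisely the point you flag as the crux: passing from $PS_P=S_P$ to $PS=S$. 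None of the mechanisms you propose for this works as stated. The claim ``$(R_{[P]})_P=S_P\neq R_P$'' is false: one always has $(R_{[P]})_P=R_P$ (if $x\in R_{[P]}$ and $sx\in R$ with $s\notin P$, then $x/t=(sx)/(st)\in R_P$), and consequently $(S/R_{[P]})_P\cong S_P/R_P\neq 0$, so the alternative premise ``$P\notin\mathrm{Supp}(S/R_{[P]})$'' is false as well. The remark that $S/PS$ is supported inside $\mathrm{V}(P)$ also does not suffice: vanishing of $(S/PS)_P$ says nothing about primes strictly containing $P$, and in general $PS_P=S_P$ does \emph{not} imply $PS=S$ for an arbitrary extension; some form of the flat-epimorphism property must be used globally, not only at $P$.

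The paper's proof is a two-line citation: a Pr\"ufer extension is a flat epimorphism (weakly surjective, \cite[Theorem 4.4, p.~42]{KZ}), and for such extensions $R_P\neq S_P\Leftrightarrow PS=S$; the valuation statement is \cite[Proposition 5.1, p.~46]{KZ}. If you want to close your gap without that citation, the correct argument is: suppose $PS\neq S$ and choose a maximal ideal $N$ of $S$ with $PS\subseteq N$; set $Q:=N\cap R\supseteq P$. Then $QS_Q\subseteq NS_Q\neq S_Q$, and the standard dichotomy for a flat epimorphism over the local ring $R_Q$ (faithfully flat epimorphisms are isomorphisms, and flatness makes the image of $\mathrm{Spec}$ stable under generization) forces $R_Q=S_Q$; localizing this equality further at $P\subseteq Q$ gives $R_P=S_P$, contradicting $P\in\mathrm{Supp}(S/R)$. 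This is the step your proposal needs and currently rests on incorrect identities involving $R_{[P]}$.
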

\begin{proof}
Let $P\in\mathrm{Spec}(R)$. Since $R\subset S$ is a Pr\"ufer extension, it is a flat epimorphism (equivalently weakly surjective by \cite[Theorem 4.4, page 42]{KZ}). Then, $P\in\mathrm{Supp}(S/R)\Leftrightarrow R_P\neq S_P\Leftrightarrow PS=S$. If these conditions are satisfied, then $R_{[P]}$ is a valuation subring of $S$ \cite[Proposition 5.1, p.46]{KZ}.
\end{proof}

\begin{corollary}\label{1.174} Let $R\subset S$ be a ring extension. Then, the following statements hold:
\begin{enumerate}
\item  $R$ is an intersection of valuation subrings of $\widetilde R$.
\item Assume in addition that $R\subset S$ has FCP. Then $\overline R$ is an intersection of valuation subrings of $S$.
\item Assume in addition that $R\subset S$ is a Pr\"ufer extension. Then any splitter  is an intersection of valuation subrings of $S$.
\end{enumerate}
 \end{corollary}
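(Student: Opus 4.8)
The plan is to deduce all three parts from one elementary identity together with Proposition~\ref{1.173}. For an arbitrary extension $A\subseteq B$, writing $A_{[M,B]}:=\{x\in B\mid sx\in A$ for some $s\in A\setminus M\}$ for $M\in\mathrm{Max}(A)$, one has $A=\bigcap_{M\in\mathrm{Max}(A)}A_{[M,B]}$: indeed $A\subseteq A_{[M,B]}$ for every $M$, and if $x\in B$ lies in every $A_{[M,B]}$ then $\{s\in A\mid sx\in A\}$ meets $A\setminus M$ for each maximal ideal $M$, hence equals $A$, so $x\in A$. Applying this to the Pr\"ufer extension $R\subseteq\widetilde R$ proves (1): by Proposition~\ref{1.173} (with $S$ there replaced by $\widetilde R$), $R_{[M,\widetilde R]}$ is a valuation subring of $\widetilde R$ when $M\in\mathrm{Supp}(\widetilde R/R)$, and $R_{[M,\widetilde R]}=\widetilde R$ when $M\notin\mathrm{Supp}(\widetilde R/R)$; the terms of the second kind are irrelevant, so $R=\bigcap\{R_{[M,\widetilde R]}\mid M\in\mathrm{MSupp}(\widetilde R/R)\}$ displays $R$ as an intersection of valuation subrings of $\widetilde R$ (the empty intersection being $\widetilde R$ in the degenerate case $R=\widetilde R$). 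For (2), an FCP extension is quasi-Pr\"ufer, so $\overline R\subseteq S$ is Pr\"ufer; the same computation with $A:=\overline R$, $B:=S$ gives $\overline R=\bigcap\{\overline R_{[M]}\mid M\in\mathrm{MSupp}(S/\overline R)\}$, an intersection of valuation subrings of $S$.

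For (3), a Pr\"ufer extension satisfies $\widetilde R=S$. Let $T=\sigma(X)$ be a splitter, $X\subseteq\mathrm{MSupp}(S/R)$. If the extension is moreover FCP, Proposition~\ref{1.154}(2) gives directly $T=\bigcap\{R_{[M]}\mid M\in X^c\}$; since $X^c\subseteq\mathrm{MSupp}(S/R)\subseteq\mathrm{Supp}(S/R)$, Proposition~\ref{1.173} shows that each $R_{[M]}$ with $M\in X^c$ is a valuation subring of $S$, so $T$ is an intersection of such (with the empty intersection $=S$ when $X=\mathrm{MSupp}(S/R)$, and $T=R$ as in (1) when $X=\emptyset$). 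For a Pr\"ufer extension not assumed FCP, one applies instead the Knebusch--Zhang formula \cite[Theorem 1.8, page 4]{KK} to the complement $T^o=\sigma(X^c)$, using $(T^o)^o=T$ and $\Omega(T^o/R)=\mathrm{MSupp}(T^o/R)=X^c$, to reach the same formula.

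I do not expect a serious obstacle: the substance is carried by Propositions~\ref{1.173} and~\ref{1.154} and the one-line conductor computation. The only care needed is bookkeeping: the ambient ring in $R_{[M,-]}$ must be $\widetilde R$ in (1) but $S$ in (2) and (3), so that the valuation rings produced lie in the declared total ring; and one must dispose of the degenerate cases $R=\widetilde R$ and of the trivial splitters $R$ and $S$, which are covered by (1) together with the empty-intersection convention.
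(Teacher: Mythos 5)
Your proof is correct and takes essentially the same route as the paper: parts (1) and (2) rest on the fact that a Pr\"ufer extension $A\subseteq B$ exhibits $A$ as the intersection of the valuation subrings $A_{[M,B]}$, $M\in\mathrm{Max}(A)$ (the paper simply cites \cite[Remark 5.5, p.50 and Proposition 5.1, p.46]{KZ}, whereas you prove the intersection identity directly and invoke Proposition \ref{1.173}, applied to $R\subseteq\widetilde R$ and to $\overline R\subseteq S$ respectively), and part (3) is exactly the paper's combination of Propositions \ref{1.154}(2) and \ref{1.173}. The only caveat concerns your optional non-FCP branch of (3): the ``in addition'' is cumulative, so FCP is in force (as the paper's own appeal to Proposition \ref{1.154} confirms) and your primary branch suffices, while the alternative via \cite[Theorem 1.8, page 4]{KK} applied to $T^o$ would first require knowing that a splitter of a Pr\"ufer extension without FCP actually has a complement, which you do not establish.
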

\begin{proof} (1) comes from \cite[Remark 5.5, p.50 and Proposition 5.1, p.46]{KZ} since $R\subseteq\widetilde R$ is Pr\"ufer. Even if $R=\widetilde R$, $R\subseteq \widetilde R$ is Pr\"ufer.

(2) If $R\subset S$ has FCP, then $\overline R\subseteq S$ is Pr\"ufer. The result comes from the previous reference. 

(3) If $R\subset S$ is a Pr\"ufer extension, then any splitter is an intersection of valuation subrings of $S$ by Propositions \ref{1.154} and \ref{1.173}.
\end{proof}

In \cite{Pic 10}, we introduce the following sets. If $R\subseteq S$ has FCP, we set $\mathcal T:=\{T\in[R,S]\mid R\subset T$ crucial$\}$ and ${\mathcal T}_ M:=\{T\in\mathcal T\mid R\subset T\ M$-crucial$\}$ 
for some $M\in \mathrm{MSupp}(S/R)$.
 The sets ${\mathcal T}_ M$ give a partition of $\mathcal T$ associated to the equivalence relation $\mathcal R$ on $\mathcal T$, defined by $T\ \mathcal R\ T'$ if and only if $\mathrm{Supp}(T/R)=\mathrm{Supp}(T'/R)$. The next result improves \cite[Proposition 2.16]{Pic 10}, where we get that, in case $R\subseteq S$ has FIP and for any $M\in\mathrm{MSupp}(S/R)$, the set ${\mathcal T}_M$ has a greatest element $s(M):=\prod_{T\in\mathcal T_M}T$. In fact, $s(M)$ is a splitter, when $R\subset S$ is an FCP $\mathcal B$-extension. 
   
\begin{proposition}\label{1.151} If $R\subset S$ is an FCP $\mathcal B$-extension and $M\in\mathrm{MSupp}(S/R)$, then $s(M)=\sigma(M)$; so that, $]R,\sigma(M)]=\mathcal T_M$.
\end{proposition}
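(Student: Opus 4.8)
The plan is to read everything off Corollary~\ref{1.175}. First I would unwind the definitions: by construction $\mathcal{T}_M$ is exactly the set $\{U\in[R,S]\mid R\subset U\text{ is }M\text{-crucial}\}$, since an $M$-crucial extension is in particular crucial, so the requirement ``$U\in\mathcal{T}$'' is automatic. Corollary~\ref{1.175} identifies this same set with the interval $\,]R,\sigma(M)]$ --- and it is precisely here that the $\mathcal{B}$-extension hypothesis enters, because $\sigma(M)$ exists only for $\mathcal{B}$-extensions, via Theorem~\ref{1.15}. Thus $\,]R,\sigma(M)]=\mathcal{T}_M$, which is the ``so that'' clause of the statement.

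It then remains to show $s(M)=\sigma(M)$. Since $M\in\mathrm{MSupp}(S/R)$ we have $\mathrm{MSupp}(\sigma(M)/R)=\{M\}\neq\emptyset$, so $\sigma(M)\neq R$ and hence $\sigma(M)\in\,]R,\sigma(M)]=\mathcal{T}_M$. On the other hand, every $U\in\mathcal{T}_M$ lies in $\,]R,\sigma(M)]$ and therefore satisfies $U\subseteq\sigma(M)$. So $\sigma(M)$ is the greatest element of $\mathcal{T}_M$ with respect to inclusion. Now the product (compositum) $\prod_{T\in\mathcal{T}_M}T$ is the least upper bound of $\mathcal{T}_M$ in the lattice $[R,S]$: it contains $\sigma(M)$ because $\sigma(M)$ is one of the factors, and it is contained in $\sigma(M)$ because every factor is. Hence $s(M)=\prod_{T\in\mathcal{T}_M}T=\sigma(M)$.

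I do not anticipate any real obstacle: all the substance has already been carried out in Corollary~\ref{1.175}, and the present proposition is the clean reformulation that $s(M)$ --- a priori known (from \cite{Pic 10}) only to be the greatest element of $\mathcal{T}_M$ in the FIP case --- is in fact the elementary splitter $\sigma(M)$, with $\mathcal{T}_M$ described explicitly as the interval below it. The only points to keep an eye on are the degenerate cases (that $\mathcal{T}_M$ is nonempty and $\sigma(M)\neq R$), and both are immediate from $M\in\mathrm{MSupp}(S/R)$.
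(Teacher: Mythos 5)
Your proof is correct, and it is not circular: Corollary~\ref{1.175} precedes Proposition~\ref{1.151} in the paper and is proved from Corollary~\ref{1.159} alone, so you may indeed quote it. The route is mildly but genuinely different from the paper's. The paper works in the opposite order: it first proves $s(M)=\sigma(M)$ by a direct localization argument --- $\sigma(M)_M=S_M$ and $\sigma(M)_{M'}=R_{M'}$ for $M'\neq M$, so $\sigma(M)\in\mathcal T_M$ and every $T\in\mathcal T_M$ satisfies $T_{M'}=R_{M'}$ and $T_M\subseteq S_M$, whence $T\subseteq\sigma(M)$ --- and only then deduces $]R,\sigma(M)]=\mathcal T_M$ from $\mathrm{MSupp}(T/R)\subseteq\mathrm{MSupp}(\sigma(M)/R)=\{M\}$ for $T\in\,]R,\sigma(M)]$. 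You instead take the interval description $]R,\sigma(M)]=\{U\mid R\subset U\ M\text{-crucial}\}=\mathcal T_M$ straight from Corollary~\ref{1.175} and then get $s(M)=\sigma(M)$ as a purely lattice-theoretic consequence: $\sigma(M)\in\mathcal T_M$ (since $\mathrm{MSupp}(\sigma(M)/R)=\{M\}\neq\emptyset$ forces $\sigma(M)\neq R$), every element of $\mathcal T_M$ is contained in $\sigma(M)$, so the compositum $\prod_{T\in\mathcal T_M}T$ is squeezed between $\sigma(M)$ and itself. What your approach buys is economy --- the localization work is done once, in Corollary~\ref{1.159}, and not repeated --- and it makes transparent that $\sigma(M)$ is the greatest element of $\mathcal T_M$ even in the FCP (possibly non-FIP) case, where $\mathcal T_M$ may be infinite. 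What the paper's direct argument buys is a self-contained proof that records explicitly the local data $\sigma(M)_M=S_M$, $\sigma(M)_{M'}=R_{M'}$, which it reuses in the subsequent results (e.g.\ Proposition~\ref{1.155}). Your attention to the degenerate points ($\mathcal T_M\neq\emptyset$, $\sigma(M)\neq R$) is exactly the right care to take; no gap remains.
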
 
\begin{proof} 

Set $U:=\sigma(M)$. By definition of the splitter at $M$, we get $\mathrm{MSupp}(U/R)=\{M\}$ and $\mathrm{MSupp}(S/U)=\mathrm{MSupp}(S/R)\setminus\{ M\}$. Then, $U\in\mathcal T_M$, so that $U\subseteq s(M)$. Moreover, $U_M=S_M$ and $U_{M'}=R_{M'}$ for any $M'\in\mathrm{MSupp}(S/R)\setminus\{M\}$. Let $T\in\mathcal T_M$. It follows that $\mathrm{Supp}(T/R)=\{M\}$, from which we infer that $T_{M'}=R_{M'}=U_{M'}$ for any $M'\in\mathrm{MSupp}(S/R)\setminus\{M\}$ and $T_M\subseteq S_M=U_M$. Then, $T\subseteq U$, which implies $s(M)\subseteq U$. To conclude, $U=s(M)$ and $\mathcal T_M\subseteq\ ]R,U]$. Obviously, any $T\in\ ]R,U]$ satisfies $\emptyset \neq \mathrm{MSupp}(T/R)\subseteq \mathrm{MSupp}(U/R)=\{M\}$, yielding $\mathrm{MSupp}(T/R)=\{M\}$. Therefore, $T\in\mathcal T_M$, and the last equality holds.
\end{proof}

\begin{proposition}\label{1.155} Let $R\subset S$ be an FCP $\mathcal B$-extension. Any splitter is a product, in a unique way, of elementary splitters of $[R,S]$:  for any $X\subseteq \mathrm{MSupp}(S/R)$, we have $\sigma(X)=\prod_{M\in X}\sigma(M)$. In particular, 
$S=\prod_{M\in\mathrm{MSupp}(S/R)}\sigma(M)$. Moreover, 
 for $M\in\mathrm{MSupp}(S/R)$, there exists $X\subseteq \mathrm{MSupp}(S/R)$ such that $\sigma(M)\subseteq\sigma(X)$ if and only if  $M\in X$.
\end{proposition}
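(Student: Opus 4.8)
The plan is to use the key structural fact from Theorem~\ref{1.15} together with the behavior of supports under localization, reducing everything to checking equalities of the localized rings $\sigma(X)_M$ at each $M\in\mathrm{MSupp}(S/R)$. Recall that for a $\mathcal B$-extension the map $\varphi:[R,S]\to\prod_{M\in\mathrm{MSupp}(S/R)}[R_M,S_M]$ is a bijection, so an element of $[R,S]$ is completely determined by its localizations, and by definition of the splitter $\sigma(X)$ we have $\sigma(X)_M=S_M$ for $M\in X$ and $\sigma(X)_M=R_M$ for $M\notin X$.

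First I would prove the product formula $\sigma(X)=\prod_{M\in X}\sigma(M)$. Set $P:=\prod_{M\in X}\sigma(M)$; this is a well-defined element of $[R,S]$ since $[R,S]$ is a lattice. Localizing at an arbitrary $N\in\mathrm{MSupp}(S/R)$, localization commutes with finite products in the lattice, so $P_N=\prod_{M\in X}(\sigma(M))_N$. Now $(\sigma(M))_N=S_N$ if $N=M$ and $=R_N$ otherwise. Hence, if $N\in X$, one factor equals $S_N$ and $P_N=S_N$; if $N\notin X$, every factor equals $R_N$ and $P_N=R_N$. Thus $P_N=\sigma(X)_N$ for all $N\in\mathrm{MSupp}(S/R)$, and also $P_N=R_N=\sigma(X)_N$ for $N\in\mathrm{Max}(R)\setminus\mathrm{MSupp}(S/R)$; by injectivity of $\varphi$ (equivalently, since a submodule is determined by its localizations at maximal ideals), $P=\sigma(X)$. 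Taking $X=\mathrm{MSupp}(S/R)$ gives $S=\sigma(\mathrm{MSupp}(S/R))=\prod_{M}\sigma(M)$, and the ``unique way'' assertion is immediate: if $\sigma(X)=\prod_{M\in Y}\sigma(M)$ for some $Y$, then localizing shows $\sigma(X)_N=S_N\iff N\in Y$, while also $\sigma(X)_N=S_N\iff N\in X$, so $X=Y$.

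Finally, for the last sentence, fix $M\in\mathrm{MSupp}(S/R)$ and $X\subseteq\mathrm{MSupp}(S/R)$. If $M\in X$, then $\sigma(M)\subseteq\prod_{M'\in X}\sigma(M')=\sigma(X)$ trivially (a factor of a product is contained in the product). Conversely, suppose $\sigma(M)\subseteq\sigma(X)$. Localizing at $M$ gives $S_M=(\sigma(M))_M\subseteq(\sigma(X))_M$, so $(\sigma(X))_M=S_M$, which by definition of the splitter at $X$ means $M\in X$. I do not expect a serious obstacle here; the only point requiring a little care is justifying that localization commutes with the finite lattice product $U\mapsto UV$ (i.e. $(UV)_N=U_NV_N$), which is standard since $UV$ is the $R$-submodule generated by products $uv$ and localization is exact and commutes with finite sums and products of submodules.
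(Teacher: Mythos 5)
Your proposal is correct and follows essentially the same route as the paper: both proofs verify $\sigma(X)=\prod_{M\in X}\sigma(M)$ by localizing at each $N\in\mathrm{MSupp}(S/R)$ (using $(\sigma(M))_N=S_N$ if $N=M$ and $R_N$ otherwise, together with the fact that $(UV)_N=U_NV_N$ and that elements of $[R,S]$ are determined by their localizations), and both handle uniqueness and the final equivalence $\sigma(M)\subseteq\sigma(X)\Leftrightarrow M\in X$ by the same support/localization comparison.
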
 
\begin{proof} Let $X:=\{M_1,\ldots,M_n\}\subseteq\mathrm{MSupp}(S/R)$. We may assume that $n>1$, because for $n=1$, the  result is obvious. Setting $T:=\sigma(X)$, we have, by definition of the splitter, $\mathrm{MSupp}(T/R)=X$ and $\mathrm{MSupp}(S/T)=\mathrm{MSupp}(S/R)\setminus X$. For any $M_i\in X$, set $T_i:=\sigma(M_i)$ and $U:=\prod_{i=1}^nT_i$. 

Let $M\in\mathrm{MSupp}(S/R)\setminus X=\mathrm{MSupp}(S/T)$. Then, $T_M=R_M=(T_i)_M$, for any $i\in\mathbb N_n$, so that $T_M=U_M$.

If $M\in X$, there exists some $i\in\mathbb N_n$ such that $M=M_i$. Then, $T_M=S_M$ and $(T_i)_M=S_M$. Since $U_M=(\prod_{j=1,j\neq i}^n(T_j)_M)(T_i)_M=S_M$, we deduce that $T_M=U_M$. 

To conclude, $T=U$. The uniqueness of the product is obvious, considering the relation $\sigma(X)=\prod_{M\in X}\sigma(M)$, since any different product of elementary splitters would lead to a different support, and then to a different splitter.

As $S$ is a splitter by Definition \ref{split2}, the last equality is obvious.

Now, assume that an elementary splitter $U:=\sigma(M)$ is contained in a splitter $T:=\sigma(X)$. We have $\mathrm{MSupp}(T/R)=X$ and $\mathrm{MSupp}(U/R)=\{M\}$, with $U\subseteq T$. This implies $\mathrm{MSupp}(U/R)\subseteq \mathrm{MSupp}(T/R)$ and then, $M\in X$.
Conversely, $M\in X$ implies $\sigma(M)\subseteq\sigma(X)$ by the previous equality.
\end{proof}

\begin{corollary}\label{1.156} Let $R\subset S$ be an FCP $\mathcal B$-extension and  $X,Y\subseteq \mathrm{MSupp}(S/R)$. The following properties hold:
\begin{enumerate}
\item $X\subseteq Y\Leftrightarrow \sigma(X)\subseteq \sigma(Y)$;
\item $\sigma(X\cap Y)= \sigma(X)\cap \sigma(Y)$;
\item $\sigma(X\cup Y)= \sigma(X) \sigma(Y)$;
\item If $M,M'\in\mathrm{MSupp}(S/R),\ M\neq M'$, then $\sigma(M)\cap \sigma(M')=R$.
\end{enumerate}
\end{corollary}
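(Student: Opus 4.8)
The plan is to reduce all four assertions to the local description of splitters together with Proposition~\ref{1.155} and Corollary~\ref{1.158}. First I would record the basic observation: since $R\subset S$ is a $\mathcal B$-extension, an element $T\in[R,S]$ is completely determined by the family $(T_M)_{M\in\mathrm{MSupp}(S/R)}$, and for $T=\sigma(X)$ the definition of a splitter forces $\sigma(X)_M=S_M$ for $M\in X$ and $\sigma(X)_M=R_M$ for $M\in X^c$ (at a maximal ideal $M$ outside $\mathrm{MSupp}(S/R)$ one has $R_M=S_M$, so nothing is lost). Since localization is exact, for $U,V\in[R,S]$ one has $(U\cap V)_M=U_M\cap V_M$ and $(UV)_M=U_MV_M$, so inclusions and equalities in $[R,S]$ may be tested at the $M\in\mathrm{MSupp}(S/R)$.

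For (1): if $X\subseteq Y$ then $\sigma(Y)=\sigma(X)\prod_{M\in Y\setminus X}\sigma(M)\supseteq\sigma(X)$ by Proposition~\ref{1.155} (equivalently, locally $\sigma(X)_M\subseteq\sigma(Y)_M$ for every $M$). Conversely, $\sigma(X)\subseteq\sigma(Y)$ gives $X=\mathrm{MSupp}(\sigma(X)/R)\subseteq\mathrm{MSupp}(\sigma(Y)/R)=Y$ by Corollary~\ref{1.158}.

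For (2): the inclusion $\sigma(X\cap Y)\subseteq\sigma(X)\cap\sigma(Y)$ is immediate from (1). For the reverse I would localize: at $M\in X\cap Y$, $(\sigma(X)\cap\sigma(Y))_M=S_M\cap S_M=S_M=\sigma(X\cap Y)_M$; at $M\notin X\cap Y$, at least one of the two factors localizes to $R_M$, so $(\sigma(X)\cap\sigma(Y))_M=R_M=\sigma(X\cap Y)_M$; the $\mathcal B$-property then yields equality. For (3): one can read it off Proposition~\ref{1.155} directly (the product over $X$ times the product over $Y$ is the product over $X\cup Y$, repeated factors being harmless since $UU=U$), or localize as above: at $M\in X\cup Y$ at least one factor localizes to $S_M$, and at $M\notin X\cup Y$ both localize to $R_M$. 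Finally (4) is the special case $X=\{M\}$, $Y=\{M'\}$ of (2): then $X\cap Y=\emptyset$, and $\sigma(\emptyset)=R$.

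There is no genuine obstacle here; the only point requiring care is the bookkeeping of localizations and the observation that it suffices to verify everything at the finitely many maximal ideals in $\mathrm{MSupp}(S/R)$, the $\mathcal B$-hypothesis guaranteeing that local agreement forces global agreement.
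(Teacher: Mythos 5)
Your proposal is correct and follows essentially the same route as the paper: part (1) via Proposition~\ref{1.155} and Corollary~\ref{1.158}, part (3) by rewriting the products of elementary splitters from Proposition~\ref{1.155} (with idempotent repeated factors), and part (4) as the case $X\cap Y=\emptyset$ of (2) with $\sigma(\emptyset)=R$. The only minor variation is in (2), where you check the reverse inclusion by localizing directly (using $(U\cap V)_M=U_M\cap V_M$ and that agreement at all $M\in\mathrm{MSupp}(S/R)$ forces equality), while the paper argues on $\mathrm{MSupp}\bigl((\sigma(X)\cap\sigma(Y))/R\bigr)$ via the containment $\sigma(M)\subseteq\sigma(X)\cap\sigma(Y)$; in substance this is the same local computation.
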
 
\begin{proof} Set $X\cap Y:=\{M_1,\ldots,M_k\},\ X:=\{M_1,\ldots,M_k,M_{k+1},\ldots,M_n\}$ and $Y:=\{M_1,\ldots,M_k,M'_{k+1},\ldots,M'_m\}$. 

(1) Assume that $X\subseteq Y$, so that $n=k$. According to Proposition \ref{1.155}, we get $\sigma(X)=\prod_{i=1}^k\sigma(M_i)\subseteq \prod_{i=1}^k\sigma(M_i)\prod_{j=k+1}^m\sigma(M'_j)=\sigma(Y)$.

Conversely, assume that $\sigma(X)\subseteq\sigma(Y)$. Then, 
$\mathrm{MSupp}(\sigma(X)/R)\subseteq\mathrm{MSupp}(\sigma(Y)/R)$, so that $X\subseteq Y$ by Corollary \ref{1.158}. 

(2) By (1), we get $\sigma(X\cap Y)\subseteq\sigma(X)\cap\sigma(Y)$. Set $T:=\sigma(X)\cap\sigma(Y)$ and $Z:=\mathrm{MSupp}(T/R)$. Since $T\subseteq\sigma(X)$, we have $Z\subseteq \mathrm{MSupp}(\sigma(X)/R)=X$.
 In the same way, $Z\subseteq Y$, so that $Z\subseteq X\cap Y$. Assume that $Z\neq X\cap Y$. Then, let $M\in(X\cap Y)\setminus Z$. Since $M\not\in Z=\mathrm{MSupp}(T/R)$, we get $R_M=T_M$. But $M\in X\cap Y$ implies that $\sigma(M)\subseteq\sigma(X)\cap\sigma(Y)=T$, and then, $R_M\subset(\sigma(M))_M\subseteq T_M$, a contradiction. It follows that $\sigma(X\cap Y)= \sigma(X)\cap \sigma(Y)$.

(3) We use the notation of (1).

Then, $X\cup Y:=\{M_1,\ldots,M_k,M_{k+1},\ldots,M_n,M'_{k+1},\ldots,M'_m\}$ and $\sigma(X)\sigma(Y)=\prod_{i=1}^k\sigma(M_i)\prod_{j=k+1}^n\sigma(M_j)\prod_{j=k+1}^m\sigma(M'_j)=\sigma(X\cup Y)$. 

(4) Since $M\neq M'$, then $\{M\}\cap \{M'\}=\emptyset$. The results follows from (2) since $\sigma(\emptyset)=R$.
\end{proof}

Here is an example illustrating Proposition \ref{1.155}.

\begin{example}\label{1.166} \cite[Example 3.16]{Pic 11} Let $k\subset L$ be a radicial ({\em i.e.} purely inseparable) field extension of degree $p^2$ and $K$ be the only proper subalgebra of $L$. Set $R:=k^2,\ R_1:=[k[X]/(X^2)]\times k,\ R_2:=k\times K,\ R_3:=k\times L,\ S:=[k[X]/(X^2)]\times L,\ M:=0\times k$ and $N:=k\times 0$. Then, $\mathrm{Max}(R)=\{M,N\}$ with $M\neq N$.  
 We proved that $[R,S]=\{R,R_1,R_2,R_1R_2,R_3,S\}$ with the following commutative diagram:
 
  \centerline{$\begin{matrix}
 {} &        {}      & R_1 &       {}      &{}             & {}            & {} \\
 {} & \nearrow & {}     & \searrow & {}            & {}            & {} \\
 R &       {}      & {}     &      {}        & R_1R_2 & {}            & {} \\
 {} & \searrow & {}     & \nearrow & {}            & \searrow & {} \\
 {} &      {}       & R_2 & \to           & R_3        & \to           & S
 \end{matrix}$}
  
Moreover, $R\subset R_1$ is a minimal extension with ${\mathcal C}(R,R_1)=M,\ R\subset R_2$ is a minimal extension with ${\mathcal C}(R,R_2)=N$ and $R_2\subset R_3$ is a minimal extension with ${\mathcal C}(R_2,R_3)=N$, so that $\mathrm{MSupp}(S/R)=\{M,N\},$ 

\noindent$\mathrm{MSupp}(R_1/R)=\{M\}$ and $\mathrm{MSupp}(R_3/R)=\{N\}$. It was also proved that $(R_1)_M=S_M$ and $S_N=(R_3)_N$, so that $\mathrm{MSupp}(S/R_1)=\{N\}$ and $\mathrm{MSupp}(S/R_3)=\{M\}$. Then, $R_1=\sigma(M)$ and $R_3=\sigma(N)$, with $R_3$ the complement of $R_1$. We recover the relation of Proposition \ref{1.155}: $\sigma(M)\sigma(N)=S$.
\end{example} 

The following lemma is need in the sequel. 
\begin{lemma}\label{Q2} Let $R\subset S$ be an $M$-crucial integral FCP extension and $I:=(R:S)$. There exists some positive integer $n$ such that $M^n\subseteq I$.
\end{lemma}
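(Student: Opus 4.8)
The plan is to exploit that $R\subset S$ is $M$-crucial, so $\mathrm{MSupp}(S/R)=\{M\}$, and then pass to the ring $R':=R/I$, which is Artinian by \cite[Theorem 4.2]{DPP2} since the extension is integral FCP. Over $R'$, the module $S/R$ is a faithful finitely generated module (faithful precisely because $I=(R:S)$ is the conductor). First I would show that $\mathrm{Supp}_R(S/R)=\{M\}$: the crucial hypothesis gives $\mathrm{MSupp}(S/R)=\{M\}$, and for an integral FCP extension $\mathrm{Supp}(S/R)\subseteq\mathrm{V}(I)\subseteq\mathrm{Max}(R)$ by \cite[Theorem 4.2]{DPP2}, so every prime in the support is maximal, forcing $\mathrm{Supp}_R(S/R)=\{M\}$ exactly. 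Consequently $\mathrm{V}_R(I)=\{M\}$, i.e. $M$ is the only prime of $R$ containing $I$, which means $M/I$ is the unique prime (hence the nilradical) of the Artinian ring $R'=R/I$.

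Next I would invoke that in an Artinian ring the nilradical is nilpotent: since $R'$ is Artinian and $M/I$ is its nilradical, there is a positive integer $n$ with $(M/I)^n=0$ in $R'$. Unwinding, $(M/I)^n=(M^n+I)/I$, so $(M^n+I)/I=0$ means $M^n\subseteq I$. That is exactly the claimed conclusion, so the proof is essentially: crucial $+$ integral FCP $\Rightarrow$ $\mathrm{V}_R(I)=\{M\}$ $\Rightarrow$ $M/I$ is the nilpotent nilradical of the Artinian ring $R/I$ $\Rightarrow$ $M^n\subseteq I$ for some $n$.

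I do not expect a genuine obstacle here; the only point requiring care is the citation chain establishing that $\mathrm{V}_R(I)$ is a singleton. One must combine the crucial hypothesis (giving a single \emph{maximal} ideal in the support) with the general fact for integral FCP extensions that $\mathrm{Supp}(S/R)=\mathrm{V}_R(I)\subseteq\mathrm{Max}(R)$ from \cite[Theorem 4.2]{DPP2}; once every prime containing $I$ is maximal and the only maximal one in the support is $M$, the equality $\mathrm{V}_R(I)=\{M\}$ follows. Alternatively, if one prefers to avoid even that, one can argue directly: $R/I$ is Artinian with a single maximal ideal $\overline M$ (its localization away from $M$ is trivial since $S_P=R_P$ for $P\neq M$), hence local Artinian, so $\overline M$ is nilpotent. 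Either route is short. After that, the nilpotency of the maximal ideal of a local Artinian ring (or of the nilradical of any Artinian ring) is standard and gives the integer $n$ immediately.
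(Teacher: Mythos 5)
Your proposal is correct and follows essentially the same route as the paper: use that $R/I$ is Artinian by \cite[Theorem 4.2]{DPP2}, observe that the $M$-crucial hypothesis forces $M$ to be the only maximal ideal containing $I$, so $R/I$ is Artinian local with maximal ideal $M/I$, which is therefore nilpotent, giving $M^n\subseteq I$. The only difference is cosmetic (you phrase $M/I$ as the nilradical rather than the maximal ideal of the local Artinian ring, and you spell out the identification $\mathrm{V}(I)=\mathrm{Supp}(S/R)$ via faithfulness of $S/R$ over $R/I$), which is fine.
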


\begin{proof} Since $R\subset S$ is an integral FCP extension, $R/I$ is an Artinian ring by \cite[Theorem 4.2]{DPP2}. Moreover, since $R\subset S$ is an $M$-crucial extension, $\mathrm{MSupp}(S/R)=\{M\}$, so that $M$ is the only maximal ideal of $R$ containing $I$. Then, $R/I$ is an Artinian local ring whose maximal ideal is $M/I$; so that, there exists some positive integer $n$ such that $(M/I)^n=0$, whence $M^n\subseteq I$.
\end{proof}

\begin{corollary}\label{1.163} Let $R\subset S$ be an FCP $\mathcal B$-extension, $I$ an ideal shared by $R$ and $S$, $X\subseteq \mathrm{MSupp}(S/R)$, $\Sigma$ a multiplicatively closed subset of $R$ and $Y:=\{M\in X\mid M\cap\Sigma=\emptyset\}$. The following properties hold:
\begin{enumerate}

\item $R/I\subset S/I$ is an FCP $\mathcal B$-extension. 
\item Setting $R':=R/I,\ S':=S/I$ and $X':=\{M/I\mid M\in X\}$, we have $X'\subseteq \mathrm{MSupp}(S'/R')$ and $\sigma_{[R',S']}(X')=\sigma(X)/I$;
 
\item If $R\subset S$ is integral, then, 
$\sigma(X)_{\Sigma}=\sigma(Y)_{\Sigma}=\sigma_{[R_{\Sigma},S_{\Sigma}]}(Y')$ where $Y':=\{M_{\Sigma}\mid M\in Y\}$.

\hskip -2cm If $M\cap\Sigma=\emptyset$ for any $M\in X$, then (4) and (5) hold:
 
\item $R_{\Sigma}\subset S_{\Sigma}$ is an FCP $\mathcal B$-extension. 
\item Setting $X'':=\{M_{\Sigma}\mid M\in X\}$, we have $X''\subseteq \mathrm{MSupp}(S_{\Sigma}/R_{\Sigma})$ and $\sigma_{[R_{\Sigma},S_{\Sigma}]}(X'')=\sigma(X)_{\Sigma}$.
\end{enumerate}

\end{corollary}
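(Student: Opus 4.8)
The plan is to handle all five assertions by the same two tools: the uniqueness of splitters (Lemma~\ref{split1}) and the criterion of Proposition~\ref{6.5} for the $\mathcal B$-property, after identifying the relevant supports. Throughout, the module identifications $(S/I)/(R/I)\cong S/R$, $(S/I)/(\sigma(X)/I)\cong S/\sigma(X)$, etc., together with the behaviour of $\mathrm{Supp}$ under passage to $R/I$ and under localization, are what carry the argument; note also that $\sigma(X)$ exists for every $X\subseteq\mathrm{MSupp}(S/R)$ by Theorem~\ref{1.15}.

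For (1) and (2) I would use the order-isomorphism $[R,S]\to[R/I,S/I]$, $T\mapsto T/I$, available because $I$ is shared by $R$ and $S$; it transfers FCP at once. Since $(S/I)/(R/I)\cong S/R$ is annihilated by $I$, we get $\mathrm{Supp}_R(S/R)\subseteq\mathrm V(I)$ and $\mathrm{Supp}_{R/I}\big((S/I)/(R/I)\big)=\{P/I\mid P\in\mathrm{Supp}_R(S/R)\}$; as $(R/I)/(P/I)\cong R/P$ is local by Proposition~\ref{6.5} applied to $R\subset S$, the same Proposition yields (1). For (2), the identifications $(\sigma(X)/I)/(R/I)\cong\sigma(X)/R$ and $(S/I)/(\sigma(X)/I)\cong S/\sigma(X)$ (again killed by $I$) give $\mathrm{MSupp}_{R/I}\big((\sigma(X)/I)/(R/I)\big)=X'$ and $\mathrm{MSupp}_{R/I}\big((S/I)/(\sigma(X)/I)\big)=(X')^c$, so by Lemma~\ref{split1} the element $\sigma(X)/I$ is the unique splitter of $R/I\subset S/I$ at $X'$.

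For (3), with $R\subset S$ integral, I would start from $\sigma(X)=\prod_{M\in X}\sigma(M)$ (Proposition~\ref{1.155}) and use that localization commutes with such composita, so $\sigma(X)_\Sigma=\prod_{M\in X}\sigma(M)_\Sigma$. For $M\in X\setminus Y$, that is $M\cap\Sigma\neq\emptyset$, the $M$-crucial integral extension $R\subset\sigma(M)$ has $\mathrm{Supp}(\sigma(M)/R)=\{M\}$, hence $(\sigma(M)/R)_\Sigma=0$ and $\sigma(M)_\Sigma=R_\Sigma$; therefore $\sigma(X)_\Sigma=\prod_{M\in Y}\sigma(M)_\Sigma=\sigma(Y)_\Sigma$. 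That $\sigma(Y)_\Sigma$ is the splitter $\sigma_{[R_\Sigma,S_\Sigma]}(Y')$ then follows by localizing the equalities $\mathrm{MSupp}(\sigma(Y)/R)=Y$ and $\mathrm{MSupp}(S/\sigma(Y))=Y^c$ (using $\mathrm{Supp}=\mathrm{MSupp}\subseteq\mathrm{Max}(R)$ in the integral case, so every $M\in Y$ survives in $R_\Sigma$ and $\mathrm{MSupp}(S_\Sigma/R_\Sigma)$ is just $\{M_\Sigma\mid M\in\mathrm{MSupp}(S/R),\ M\cap\Sigma=\emptyset\}$) and applying Lemma~\ref{split1}.

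For (4) and (5), localization preserves FCP, so the content of (4) is the $\mathcal B$-property: by Proposition~\ref{6.5} it suffices that $R_\Sigma/Q_\Sigma\cong(R/Q)_{\overline\Sigma}$ be local for each $Q\in\mathrm{Supp}(S/R)$ with $Q\cap\Sigma=\emptyset$, and this holds because $\mathrm V(Q)$ is linearly ordered for $Q\in\mathrm{Supp}(S/R)$ — indeed $\mathrm V(Q)$ is contained in the finite set $\mathrm{Supp}(S/R)$, which is upward closed, is a tree, and in which every element lies below a unique maximal ideal, hence is a disjoint union of chains — so $R/Q$ has linearly ordered spectrum and any localization of it is again local. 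For (5), the hypothesis $M\cap\Sigma=\emptyset$ for all $M\in X$ gives $X''\subseteq\mathrm{MSupp}(S_\Sigma/R_\Sigma)$ with $|X''|=|X|$; one then checks $\mathrm{MSupp}_{R_\Sigma}(\sigma(X)_\Sigma/R_\Sigma)=X''$ and $\mathrm{MSupp}_{R_\Sigma}(S_\Sigma/\sigma(X)_\Sigma)=(X'')^c$, the delicate point being that a non-maximal $Q\in\mathrm{Supp}(S/R)$ whose image is maximal in $R_\Sigma$ lies below a maximal ideal $M\notin X$ (since then $M\cap\Sigma\neq\emptyset$), so that $Q_\Sigma$ belongs to $(X'')^c$; Lemma~\ref{split1} finishes it. The main obstacle is precisely this bookkeeping in (4)--(5): controlling $\mathrm{MSupp}(S_\Sigma/R_\Sigma)$, which may acquire images of primes of $R$ that were not maximal, and confirming that the $\mathcal B$-property is stable under localization.
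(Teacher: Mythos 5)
Your parts (1), (2), (3) and (5) are essentially correct and follow the paper's route: (1)--(2) rest on the correspondence $T\mapsto T/I$ and the identification of supports (the paper phrases (2) through Corollary \ref{1.159} rather than Lemma \ref{split1}, an immaterial difference), in (3) your remark that $\mathrm{Supp}_R(\sigma(M)/R)=\{M\}$ forces $(\sigma(M)/R)_\Sigma=0$ when $M\cap\Sigma\neq\emptyset$ is a clean substitute for the paper's use of Lemma \ref{Q2} (which produces $M^n\subseteq(R:\sigma(M))$), and your (5) correctly needs only Lemma \ref{split1} and the bookkeeping of surviving maximal ideals, not the $\mathcal B$-property of the localized extension.

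The gap is in (4). You reduce it to the claim that $\mathrm{V}(Q)$ is linearly ordered for every $Q\in\mathrm{Supp}(S/R)$, and you get this from the assertion that $\mathrm{Supp}(S/R)$ is a tree. That assertion is nowhere justified and is not available for an arbitrary FCP $\mathcal B$-extension: the tree property of the support used in the paper (via \cite[Proposition 4.2(b)]{DPP3}, as in Theorem \ref{6.13}) is a statement about integrally closed (Pr\"ufer) FCP extensions, whereas the $\mathcal B$-property only says that each support prime lies under a unique maximal ideal of $R$; it does not make the interval between a support prime and that maximal ideal a chain. In fact the claim can fail: take a three-dimensional semilocal Pr\"ufer domain $T$ with $\mathrm{Spec}(T)=\{0\subset q\subset q_i\subset N_i,\ i=1,2\}$, $q_1,q_2$ incomparable and $T/N_1\cong T/N_2$ (a one-dimension-higher analogue of Example \ref{6.10}, obtainable from composite valuations with a common rank-one coarsening), and let $R\subset T$ be the minimal decomposed extension gluing $N_1$ and $N_2$. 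Then $R$ is local, so $R\subset K$ ($K$ the quotient field) is an FCP $\mathcal B$-extension, yet its support $\{P\subset P_1,P_2\subset M\}$ contains the incomparable $P_1,P_2$ above $P$; thus $R/P$ does not have linearly ordered spectrum, and for $\Sigma:=R\setminus(P_1\cup P_2)$ the ring $R_\Sigma/P_\Sigma$ has two maximal ideals. So your structural argument for (4) does not go through for an arbitrary multiplicative set. What the paper's (admittedly terse) proof of (4) actually relies on is the identification of the relevant maximal ideals of $R_\Sigma$ with the $M_\Sigma$, $M\in\mathrm{Max}(R)$, $M\cap\Sigma=\emptyset$ --- that is, on $\Sigma$ avoiding the maximal ideals of $R$ sitting over the surviving support primes, as happens for $\Sigma=R\setminus\bigcup_{M\in X}M$ in Proposition \ref{1.154}; once every maximal ideal of $R_\Sigma$ containing a surviving $Q_\Sigma$ is of the form $M_\Sigma$ with $M$ the unique maximal ideal of $R$ over $Q$, locality of $R_\Sigma/Q_\Sigma$ is immediate. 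Your proof of (4) should be rewritten along those lines rather than via a tree property of $\mathrm{Supp}(S/R)$.
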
 
\begin{proof} (1)  comes from \cite[Proposition 3.7]{DPP2} for the FCP property and from Proposition \ref{6.5} for the $\mathcal B$-extension property. 

(2)  Since $I\subseteq (R:S)$, we get that $I\subseteq M$ for any $M\in\mathrm{MSupp}(S/R)$. Set $M':=M/I$ for such an $M$. Then, $R_M/I_M\cong R'_{M'}$ and $S_M/I_M\cong S'_{M'}$, so that $M'\in\mathrm{MSupp}(S'/R')$, giving $X'\subseteq\mathrm{MSupp}(S'/R')$. Moreover, Corollary \ref{1.159} shows that $[R',\sigma_{[R',S']}(X')]=\{T'\in[R',S']\mid$

\noindent$\mathrm{MSupp}(T'/R')\subseteq X'\}$, but any $T'\in[R',S']$ is of the form $T/I$ for some $T\in[R,S]$, so that 
 $\mathrm{MSupp}(T'/R')\subseteq X'$ is equivalent to $\mathrm{MSupp}(T/R)\subseteq X$, giving $\sigma_{[R',S']}(X')=\sigma(X)/I$. 

(3) Let $M\in X$ be such that $M\cap\Sigma\neq\emptyset$ and set $U:=\sigma(M)$. According to Lemma \ref{Q2}, there exists some positive integer $n$ such that $M^n\subseteq(R:U)$. Let $x\in M\cap\Sigma$. Then, $x^n\in(R:U)\cap\Sigma$, so that $R_{\Sigma}=U_{\Sigma}=\sigma(M)_{\Sigma}$. By Proposition \ref{1.155}, we have $\sigma(X)=\prod_{M\in X}\sigma(M)=\prod_{M\in Y}\sigma(M)\prod_{M\in X\setminus Y}\sigma(M)$, which leads to 
 $\sigma(X)_{\Sigma}=(\prod_{M\in Y}\sigma(M)_{\Sigma})(\prod_{M\in X\setminus Y}\sigma(M)_{\Sigma})=\prod_{M\in Y}\sigma(M)_{\Sigma}=\sigma(Y)_{\Sigma}$. 
 
(4) comes from \cite[Proposition 3.1]{DPP5} for the FCP property and from Proposition \ref{6.5} for the $\mathcal B$-extension property since the maximal ideals of $R_{\Sigma}$ are the $M_{\Sigma}$ for any $M\in\mathrm{Max}(R)$ such that $M\cap\Sigma=\emptyset$.

(5) Let $M_{\Sigma}\in X''$ for some $M\in X$. Then $R_M\neq S_M$. But $R_M\cong (R_{\Sigma})_{M_{\Sigma}}$ and $S_M\cong (S_{\Sigma})_{M_{\Sigma}}$ leads to $M_{\Sigma}\in\mathrm{MSupp}(S_{\Sigma}/R_{\Sigma})$, so that $X''\subseteq\mathrm{MSupp}(S_{\Sigma}/R_{\Sigma})$. As in (2), Corollary \ref{1.159} shows that $[R_{\Sigma},\sigma_{[R_{\Sigma},S_{\Sigma}]}(X'')]=\{T''\in[R_{\Sigma},S_{\Sigma}]\mid\mathrm{MSupp}(T''/R_{\Sigma})\subseteq X''\}$, but any $T''\in[R_{\Sigma},S_{\Sigma}]$ is of the form $T_{\Sigma}$ for some $T\in[R,S]$; so that,  $\mathrm{MSupp}(T_{\Sigma}/R_{\Sigma})\subseteq X''$ is equivalent to $\mathrm{MSupp}(T/R)\subseteq X$ and then 
$\sigma_{[R_{\Sigma},S_{\Sigma}]}(X'')=\sigma(X)_{\Sigma}$. 
\end{proof}

\begin{proposition}\label{1.157} Let $R\subset S$ be an FCP $\mathcal B$-extension and $T\in[R,S]$. Then, $\sigma_{[R,T]}(X)=\sigma_{[R,S]}(X)\cap T$ for any $X\subseteq \mathrm{MSupp}(T/R)$. \end{proposition}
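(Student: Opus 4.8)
The plan is to carry out the whole computation after localizing at the maximal ideals of $R$ and then to identify the outcome with $\sigma_{[R,T]}(X)$ via the uniqueness of splitters. First I would note that both splitters in the statement exist: since $R\subset S$ is an FCP $\mathcal B$-extension and $T\in[R,S]$, Proposition~\ref{6.8}(3) gives that $R\subseteq T$ is again a $\mathcal B$-extension (and it is FCP), so by Theorem~\ref{1.15} the splitter $\sigma_{[R,T]}(X)$ exists for every $X\subseteq\mathrm{MSupp}(T/R)$; and $\sigma_{[R,S]}(X)$ exists because $X\subseteq\mathrm{MSupp}(T/R)\subseteq\mathrm{MSupp}(S/R)$. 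Moreover $U:=\sigma_{[R,S]}(X)\cap T$ lies in $[R,T]$, as $R\subseteq\sigma_{[R,S]}(X)$ and $R\subseteq T$.

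Next I would record the localizations. By the definition of a splitter (Definition~\ref{split2}), $\mathrm{MSupp}(\sigma_{[R,S]}(X)/R)=X$ and $\mathrm{MSupp}(S/\sigma_{[R,S]}(X))$ is the complement of $X$ in $\mathrm{MSupp}(S/R)$, and these two sets are disjoint; hence for $M\in\mathrm{Max}(R)$ we get $\sigma_{[R,S]}(X)_M=S_M$ when $M\in X$ and $\sigma_{[R,S]}(X)_M=R_M$ otherwise. Since localization is exact, it commutes with finite intersections, so $U_M=\sigma_{[R,S]}(X)_M\cap T_M$ inside $S_M$; therefore $U_M=T_M$ for $M\in X$ (because $T_M\subseteq S_M$) and $U_M=R_M$ for $M\notin X$ (because $R_M\subseteq T_M$).

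It then remains to check that $U$ splits $R\subset T$ at $X$ and to conclude. From the computation of the $U_M$, one has $\mathrm{MSupp}_R(U/R)=\{M\in\mathrm{Max}(R)\mid U_M\neq R_M\}=X$ (here $X\subseteq\mathrm{MSupp}(T/R)$ ensures $T_M\neq R_M$ for $M\in X$), and $\mathrm{MSupp}_R(T/U)=\{M\mid T_M\neq U_M\}=\mathrm{MSupp}(T/R)\setminus X$, which is precisely the complement of $X$ in $\mathrm{MSupp}(T/R)$; these two sets are disjoint, so $R\subset T$ splits at $U$. By Definition~\ref{split2} together with the uniqueness provided by Lemma~\ref{split1} (applied to the extension $R\subset T$), this forces $U=\sigma_{[R,T]}(X)$, which is the asserted equality. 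No genuine obstacle arises: the argument is bookkeeping with supports, the one point to keep in mind being the fact, used throughout Section~4, that an element of $[R,S]$ is determined by its localizations at the maximal ideals of $R$ and that intersection commutes with localization.
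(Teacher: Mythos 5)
Your proof is correct, and it takes a somewhat more direct route than the paper's. The paper first treats the case of a single maximal ideal, showing $\sigma_{[R,T]}(M)=\sigma_{[R,S]}(M)\cap T$ by localization, and then handles a general $X\subseteq\mathrm{MSupp}(T/R)$ by invoking Proposition~\ref{1.155} to write both $\sigma_{[R,S]}(X)$ and $\sigma_{[R,T]}(X)$ as products of elementary splitters, followed by another round of localization. You bypass the reduction to elementary splitters entirely: you compute the localizations of $U:=\sigma_{[R,S]}(X)\cap T$ at all maximal ideals (using that localization commutes with finite intersections), observe that $\mathrm{MSupp}(U/R)=X$ and $\mathrm{MSupp}(T/U)=\mathrm{MSupp}(T/R)\setminus X$, and conclude by the uniqueness of splitters (Lemma~\ref{split1}). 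This is shorter and in fact does not even need Proposition~\ref{6.8}(3) or Theorem~\ref{1.15} for the existence of $\sigma_{[R,T]}(X)$, since your $U$ exhibits it directly; your appeal to them is harmless but redundant. What the paper's route buys is consistency with its general theme of building everything from elementary splitters via Proposition~\ref{1.155}; what your route buys is a self-contained argument relying only on the definition of a splitter, its uniqueness, and support bookkeeping under localization.
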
 

\begin{proof} Let $M\in\mathrm{MSupp}(T/R)$. 
Then, $R_M\neq T_M\subseteq S_M$, so that $M\in\mathrm{MSupp}(S/R)$.
 Set $U:=\sigma_{[R,S]}(M)$ and $V:=\sigma_{[R,T]}(M)$, which exists since $R\subseteq T$ is an FCP $\mathcal B$-extension by Proposition \ref{6.8} (3).
Then, $R_M\neq V_M=T_M\subseteq S_M=U_M$, which gives $V_M=T_M\cap U_M$. Let $M'\in \mathrm{MSupp}(S/R),\ M'\neq M$. Then, $R_{M'}=V_{M'}=U_{M'}$ gives $V_{M'}=T_{M'}\cap U_{M'}$. It follows that $\sigma_{[R,T]}(M)=\sigma_{[R,S]}(M)\cap T\ (*)$.  
 
Now consider  $X\subseteq\mathrm{MSupp}(T/R)$, a subset of $\mathrm{MSupp}(S/R)$. According to Proposition \ref{1.155}, we have $\sigma(X)=\prod_{M\in X}\sigma(M)$. We also have $\sigma_{[R,T]}(X)
=\prod_{M\in X}\sigma_{[R,T]}(M)$, which implies by the first part 
 $(\sigma_{[R,T]}(X))_M$
 $=(\sigma_{[R,T]}(M))_M=(\sigma_{[R,S]}(M)\cap T)_M=(\sigma_{[R,S]}(M))_M\cap T_M$ for any $M\in X$. But $(\sigma_{[R,S]}(X)\cap T)_M=(\sigma_{[R,S]}(X))_M\cap T_M=(\sigma_{[R,S]}(M))_M\cap T_M=(\sigma_{[R,T]}(X))_M$. 
    
Then, $(\sigma_{[R,T]}(X))_M=(\sigma_{[R,S]}(X)\cap T)_M$ for any $M\in X$. If $M\in\mathrm{MSupp}(S/R)\setminus X$, then $R_M=(\sigma_{[R,T]}(X))_M=(\sigma_{[R,S]}(X))_M=(\sigma_{[R,S]}(X))_M\cap T_M=(\sigma_{[R,S]}(X)\cap T)_M$. To conclude, $\sigma_{[R,T]}(X)=\sigma_{[R,S]}(X)\cap T$. 
 \end{proof}
 
\begin{proposition}\label{1.160} Let $R\subset S$ be an FCP $\mathcal B$-extension with $\mathrm{MSupp}(S/R)$
 $=:\{M_1,\ldots,M_n\}$ and $X_k:=\{M_1,\ldots,M_k\}$ for each $k\in\mathbb N_n$. 
  Assume $n>1$.
 Then, 
\begin{enumerate} 
\item $\sigma(X_{k+1})=\sigma(X_k)\sigma(M_{k+1})$ for each $k\in\mathbb N_{n-1}$;
\item $\{\sigma(X_k)\}_{k=1}^n$ is a chain in $[R,S]$ such that $\sigma(X_k)\subset\sigma(X_{k+1})$ is a $M_{k+1}\sigma(X_k)$-crucial extension, for any $k\in\mathbb N_{n-1}$, where $M_{k+1}\sigma(X_k)$ is the unique maximal ideal of $\sigma(X_k)$ lying over $M_{k+1}$;
\item There are $n!$ such chains, each chain corresponding to a permutation of $\{1,\ldots,n\}$.
\end{enumerate} 
\end{proposition}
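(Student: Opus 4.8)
The plan is to read everything off from the structural results of this section, principally Proposition~\ref{1.155} and Corollaries~\ref{1.156} and~\ref{1.158}, together with the elementary localization of a splitter: by its defining property and the computation of Lemma~\ref{split1}, for $X\subseteq\mathrm{MSupp}(S/R)$ one has $\sigma(X)_M=S_M$ for $M\in X$ and $\sigma(X)_M=R_M$ for every $M\in\mathrm{Max}(R)\setminus X$.

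\emph{Part (1).} Since $X_{k+1}=X_k\cup\{M_{k+1}\}$ with $M_{k+1}\notin X_k$, this is immediate from Corollary~\ref{1.156}(3) (equivalently, from the product formula $\sigma(X)=\prod_{M\in X}\sigma(M)$ of Proposition~\ref{1.155}).

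\emph{Part (2).} First, $X_1\subset X_2\subset\cdots\subset X_n$ are strict inclusions, so Corollary~\ref{1.156}(1) gives $\sigma(X_1)\subseteq\cdots\subseteq\sigma(X_n)$, and these are strict because $\mathrm{MSupp}(\sigma(X_k)/R)=X_k$ by Corollary~\ref{1.158}; hence $\{\sigma(X_k)\}_{k=1}^n$ is a chain. Write $T_k:=\sigma(X_k)$. Since $M_{k+1}\in\mathrm{MSupp}(S/R)\setminus X_k$, the displayed localization gives $(T_k)_{M_{k+1}}=R_{M_{k+1}}$, which is local; hence $T_k$ has a unique maximal ideal lying over $M_{k+1}$, and one checks (comparing localizations at the maximal ideals of $T_k$) that it equals $M_{k+1}T_k=:N_{k+1}$. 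It then remains to show $\mathrm{MSupp}_{T_k}(T_{k+1}/T_k)=\{N_{k+1}\}$. On one side $(T_{k+1}/T_k)_{N_{k+1}}=(T_{k+1}/T_k)_{M_{k+1}}=S_{M_{k+1}}/R_{M_{k+1}}\neq0$ (nonzero since $M_{k+1}\in\mathrm{MSupp}(S/R)$, and the first equality holds because $(T_k)_{M_{k+1}}$ is local with maximal ideal the image of $N_{k+1}$), so $N_{k+1}$ lies in it. On the other side, let $N\in\mathrm{Max}(T_k)$ with $N\neq N_{k+1}$ and set $P:=N\cap R$: if $P\notin\mathrm{Supp}(S/R)$ then $(T_k)_P=(T_{k+1})_P=R_P$; if $P\in\mathrm{Supp}(S/R)$, then since $R\subset S$ is a $\mathcal B$-extension $P$ lies under a unique maximal ideal $M_i$ of $R$ (Proposition~\ref{6.5}), and for $i\leq k$ one gets $(T_k)_P=(T_{k+1})_P=S_P$, for $i\geq k+2$ one gets $(T_k)_P=(T_{k+1})_P=R_P$, while for $i=k+1$ the containment $P\subseteq M_{k+1}$ forces $N$ to survive localization at $R\setminus M_{k+1}$ and thus to be the maximal ideal of the local ring $(T_k)_{M_{k+1}}=R_{M_{k+1}}$, i.e. $N=N_{k+1}$, a contradiction. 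In all admissible cases $(T_{k+1}/T_k)_N=0$, so $\mathrm{MSupp}_{T_k}(T_{k+1}/T_k)=\{N_{k+1}\}$, that is, $T_k\subset T_{k+1}$ is $N_{k+1}$-crucial, as claimed. (Alternatively, Proposition~\ref{1.157} shows directly that $T_k$ is the splitter of $R\subset T_{k+1}$ at $X_k$, which yields $\mathrm{MSupp}_R(T_{k+1}/T_k)=\{M_{k+1}\}$; the passage to $\mathrm{MSupp}_{T_k}$ then still needs the same localization argument.)

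\emph{Part (3).} By Theorem~\ref{1.15}, Corollary~\ref{1.156}(1) and Corollary~\ref{1.158}, the assignment $T\mapsto\mathrm{MSupp}(T/R)$ is an order-isomorphism from the set of splitters of $R\subset S$, ordered by inclusion, onto the lattice of all subsets of the $n$-element set $\mathrm{MSupp}(S/R)$. The maximal chains of this Boolean lattice are exactly the flags $\emptyset\subset\{M_{\tau(1)}\}\subset\{M_{\tau(1)},M_{\tau(2)}\}\subset\cdots\subset\mathrm{MSupp}(S/R)$ indexed by the permutations $\tau$ of $\{1,\dots,n\}$, and there are $n!$ of them; the associated chain of splitters $\{\sigma(\{M_{\tau(1)},\dots,M_{\tau(k)}\})\}_{k=1}^n$ has crucial consecutive steps by applying parts (1) and (2) to the listing $M_{\tau(1)},\dots,M_{\tau(n)}$ of $\mathrm{MSupp}(S/R)$. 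Distinct permutations give distinct chains, since a chain of splitters determines, via Corollary~\ref{1.158}, the flag of its supports and hence $\tau$.

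The only step that is not purely formal is the support computation in part (2); the point that makes it go through is that the $\mathcal B$-hypothesis, through Proposition~\ref{6.5}, confines $\mathrm{Supp}(S/R)$ to primes sitting under a single maximal ideal, so that $(T_k)_P$ and $(T_{k+1})_P$ can differ only when $P$ lies under $M_{k+1}$, where in addition $(T_k)_{M_{k+1}}=R_{M_{k+1}}$ is local.
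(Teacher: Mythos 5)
Your proof is correct and follows essentially the same route as the paper: (1) via the product formula $\sigma(X)=\prod_{M\in X}\sigma(M)$, (2) via the localization facts $\sigma(X_k)_{M_{k+1}}=R_{M_{k+1}}$ and the uniqueness of the maximal ideal over $M_{k+1}$ (which the paper delegates to a cited lemma of \cite{DPP2} rather than re-deriving), and (3) by matching chains of splitters with permutations through the order-isomorphism with subsets of $\mathrm{MSupp}(S/R)$. You merely spell out some steps the paper leaves terse; no substantive difference.
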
 
\begin{proof} (1) Let $k\in\mathbb N_{n-1}$. According to Proposition \ref{1.155}, we have $\sigma(X_{k+1})=\prod_{i=1}^{k+1}\sigma(M_i)=\sigma(M_{k+1})\prod_{i=1}^k\sigma(M_i)=\sigma(X_k)\sigma(M_{k+1})$.
  
(2) By (1), we get that $\sigma(X_k)\subseteq\sigma(X_{k+1})$. Moreover, $M_{k+1}\not\in X_k$. Then \cite[Lemma 2.4]{DPP2} yields that $M_{k+1}\sigma(X_k)$ is the unique maximal ideal of $\sigma(X_k)$ lying over $M_{k+1}$. It follows from the relations $M_{k+1}\in X_{k+1}=\mathrm{MSupp}(\sigma(X_{k+1})/R)=\mathrm{MSupp}(\sigma(X_k)/R)\cup\{M_{k+1}\}$ that $\{M_{k+1}\sigma(X_k)\}=\mathrm{MSupp}_{[\sigma(X_k),\sigma(X_{k+1})]}(\sigma(X_{k+1})/\sigma(X_k))$, giving that $\sigma(X_k)\subset\sigma(X_{k+1})$ is an $M_{k+1}\sigma(X_k)$-crucial extension. 
  
(3) To each permutation $\{i_1,\ldots,i_n\}$ of $\{1,\ldots,n\}$, we can build a unique chain $\{\sigma(X_{i_k})\}$. Then, there are $n!$ such chains, each chain corresponding to a permutation of $\{1,\ldots,n\}$.
  \end{proof}
 
 A ring extension $R\subseteq S$ is called {\em arithmetic} if it is locally chained.

 \begin{theorem}\label{1.161} Let $R\subset S$ be an FCP extension. The following conditions are equivalent:
\begin{enumerate}
\item Any element of $[R,S]$ is a splitter;

\item $R \subset S$ is  locally minimal;

\item $R\subset S$ is an FIP $\mathcal B$-extension such that $|[R,S]|=2^{|\mathrm{MSupp}(S/R)|}$;

\item $R \subset S$ is a Boolean and arithmetic FIP extension.
\end{enumerate}  
\end{theorem}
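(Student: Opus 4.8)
The plan is to prove the cycle $(2)\Rightarrow(1)\Rightarrow(3)\Rightarrow(2)$ together with the equivalence $(2)\Leftrightarrow(4)$, exploiting the machinery already built up for $\mathcal B$-extensions and splitters. The backbone is Corollary~\ref{6.51}, which already gives $(2)\Leftrightarrow\big[$FIP $\mathcal B$-extension with $|[R,S]|=2^{|\mathrm{MSupp}(S/R)|}\big]$, i.e.\ $(2)\Leftrightarrow(3)$, and which also tells us that under these conditions $R\subset S$ is Boolean with $\ell[R,S]=|\mathrm{MSupp}(S/R)|$. So the genuinely new work is to weave condition $(1)$ and condition $(4)$ into this loop.

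\emph{For $(2)\Rightarrow(1)$:} assume $R\subset S$ is locally minimal, so by Corollary~\ref{6.51} it is an FCP $\mathcal B$-extension. Fix $T\in[R,S]$; I want to show $T$ is a splitter, i.e.\ that $\mathrm{MSupp}(T/R)\cap\mathrm{MSupp}(S/T)=\emptyset$. Localize at an arbitrary $M\in\mathrm{MSupp}(S/R)$: since $R_M\subset S_M$ is minimal, $[R_M,S_M]=\{R_M,S_M\}$, so $T_M$ is either $R_M$ or $S_M$. In the first case $M\notin\mathrm{MSupp}(T/R)$, in the second $M\notin\mathrm{MSupp}(S/T)$; either way $M$ cannot lie in both supports. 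Hence the two maximal supports are disjoint and $T$ splits the extension, so $T=\sigma(\mathrm{MSupp}(T/R))$ is a splitter by Lemma~\ref{split1}.

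\emph{For $(1)\Rightarrow(3)$:} assume every element of $[R,S]$ is a splitter. First, every splitter splits the extension, so in particular the extension splits at every $T\in]R,S[$; this already forces $R\subset S$ to be a $\mathcal B$-extension. Indeed, if not, by Corollary~\ref{4.48} there is $P\in\mathrm{Supp}(S/R)$ contained in two distinct $M_1,M_2\in\mathrm{MSupp}(S/R)$; running the argument in the proof of Theorem~\ref{1.15}, pick $T:=\sigma(\{M_1\})$, embed it in a maximal chain, and locate the minimal step realizing $P$ in the support—this step forces $M_2$ into $\mathrm{MSupp}(T/R)$ or into $\mathrm{MSupp}(S/T^o)$-type sets, contradicting $\mathrm{MSupp}(T/R)=\{M_1\}$ with $M_2\in\{M_1\}^c$. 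Now that $R\subset S$ is a $\mathcal B$-extension, $\varphi:[R,S]\to\prod_{M\in\mathrm{MSupp}(S/R)}[R_M,S_M]$ is an order-isomorphism. If some $[R_M,S_M]$ had length $\geq 2$, pick $T_M\in\,]R_M,S_M[\,$ and let $T\in[R,S]$ be the element with $\varphi(T)=(\ldots,T_M,\ldots)$ and all other coordinates equal to $R_{M'}$; then $M\in\mathrm{MSupp}(T/R)\cap\mathrm{MSupp}(S/T)$, so $T$ is not a splitter, a contradiction. Hence every $[R_M,S_M]$ has exactly two elements, i.e.\ $R\subset S$ is locally minimal, and then $(3)$ follows from Corollary~\ref{6.51}.

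\emph{For the equivalence with $(4)$:} by Corollary~\ref{6.51}, conditions $(2)$ and $(3)$ imply $R\subset S$ is Boolean with FIP, and locally minimal trivially implies locally chained (a two-element lattice is a chain), i.e.\ arithmetic; this gives $(2)\Rightarrow(4)$. For $(4)\Rightarrow(2)$, assume $R\subset S$ is Boolean, arithmetic, and FIP. Being arithmetic, $R_M\subset S_M$ is chained for each $M\in\mathrm{Max}(R)$. I claim each such chained localization has length $\leq 1$: if $R_M\subset S_M$ had length $\geq 2$, it would contain a proper intermediate ring $T_M$ with no complement in the chain $[R_M,S_M]$ (in a chain of length $\geq 2$ the middle element has neither $R_M\cap\cdot=R_M,\ \cdot\,T_M=S_M$ satisfied by any partner), contradicting that the Boolean property is inherited by localizations—here I would invoke that $R\subset S$ Boolean forces $R_M\subset S_M$ Boolean (each localization of a distributive complemented lattice, via the $\mathcal B$-decomposition that Booleanness entails, is again Boolean), and a chain that is Boolean has at most two elements. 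Hence $R_M\subset S_M$ is minimal whenever $M\in\mathrm{MSupp}(S/R)$, i.e.\ $(2)$ holds.

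The main obstacle I anticipate is the implication $(4)\Rightarrow(2)$: turning the global hypotheses ``Boolean + arithmetic + FIP'' into the local statement ``$R_M\subset S_M$ is minimal'' requires knowing that the Boolean structure localizes, which is not quite immediate without first establishing that a Boolean FCP extension is a $\mathcal B$-extension (so that $[R,S]\cong\prod[R_M,S_M]$ and the factors inherit Booleanness). I expect the cleanest route is to first prove, perhaps citing \cite[Proposition 3.5]{Pic 10} or the analogue, that a Boolean FIP extension is a $\mathcal B$-extension, and then the factors $[R_M,S_M]$ are Boolean chains, hence of length $\leq 1$. Everything else—$(2)\Rightarrow(1)$, $(1)\Rightarrow(3)$, $(2)\Leftrightarrow(3)$—is a routine assembly of Corollaries~\ref{6.51}, \ref{4.48} and the local-minimality bookkeeping already rehearsed in the proof of Theorem~\ref{1.15}.
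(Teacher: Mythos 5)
Your $(2)\Rightarrow(1)$ is correct and elementary (local minimality forces $T_M\in\{R_M,S_M\}$ for every $M$, so $\mathrm{MSupp}(T/R)\cap\mathrm{MSupp}(S/T)=\emptyset$), and $(2)\Leftrightarrow(3)$ is indeed Corollary \ref{6.51}, which is also what the paper uses. The first genuine gap is in your $(1)\Rightarrow(3)$: you ``pick $T:=\sigma(\{M_1\})$'', but under hypothesis (1) you do not yet know that a splitter at the subset $\{M_1\}$ exists. Hypothesis (1) says every element of $[R,S]$ is a splitter at its own support; the existence of splitters at arbitrary subsets of $\mathrm{MSupp}(S/R)$ is exactly the content of Theorem \ref{1.15} and is equivalent to the $\mathcal B$-extension property you are in the middle of proving, so as written the step is circular. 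It is repairable, and the repair short-circuits your whole detour: by \cite[Lemma 1.8]{Pic 6} there is $T\in[R,S]$ with $R\subset T$ minimal and $\mathcal{C}(R,T)=M_1$, so $\mathrm{MSupp}(T/R)=\{M_1\}$; hypothesis (1) makes this $T$ a splitter, hence $M_1\notin\mathrm{MSupp}(S/T)$, i.e. $T_{M_1}=S_{M_1}$, and $R_{M_1}\subset S_{M_1}$ is minimal. That gives $(1)\Rightarrow(2)$ directly, with no need to establish the $\mathcal B$-property first. (The paper does not argue this at all: it obtains $(1)\Leftrightarrow(2)$ by citing \cite[Theorem 2.27]{Pic 10}.)

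The second gap is the one you flag yourself, $(4)\Rightarrow(2)$. Your argument hinges on Booleanness passing to the localizations $R_M\subset S_M$ (equivalently, on a Boolean FIP extension being a $\mathcal B$-extension), and you leave this unproven, offering only a tentative citation. Nothing in the present paper supplies that fact, and the paper's own treatment of $(2)\Leftrightarrow(4)$ consists precisely of invoking \cite[Corollary 3.6]{Pic 10} in both directions; so as it stands your proposal does not close this implication. The lattice-theoretic part of your argument is fine once the localization step is secured: a chained extension of length at least $2$ has a middle element with no complement, so a Boolean chain has at most two elements, forcing each $R_M\subset S_M$ to be minimal.
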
 
\begin{proof} (1) $\Leftrightarrow\ \mathrm{MSupp}(T/R)\cap\mathrm{MSupp}(S/T)=\emptyset$ for any $T\in[R,S]$, by the definition of a splitter, which is equivalent to $R\subset S$ is a locally minimal extension by \cite[Theorem 2.27 ]{Pic 10}. 

(2) $\Leftrightarrow$ (3) by Corollary \ref{6.51}.

(2) +(3) $\Rightarrow$ (4) by \cite[Corollary 3.6 ]{Pic 10} and (4) $\Rightarrow$ (2) by the same reference.
  \end{proof}
 
\begin{corollary}\label{1.153} Let $R\subset S$ be an FCP $\mathcal B$-extension. Then, for any $M\in\mathrm{MSupp}(S/R)$, the map $\varphi_M:[R,\sigma(M)]\to[R_M,S_M]$ defined by $\varphi_M(T):=T_M$ is an order-isomorphism.
\end{corollary}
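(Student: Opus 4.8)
The plan is to show that $\varphi_M$ is well-defined, injective, surjective, and order-preserving with order-preserving inverse; the underlying principle is that $R \subseteq \sigma(M)$ is an $M$-crucial FCP $\mathcal B$-extension, so its lattice is ``concentrated'' at $M$ and therefore faithfully recorded by localization at $M$. First I would record that $R \subseteq \sigma(M)$ is an FCP extension (as a subextension of $R \subseteq S$) and that it is a $\mathcal B$-extension by Proposition \ref{6.8}(3); moreover $\mathrm{MSupp}(\sigma(M)/R) = \{M\}$ by definition of the elementary splitter, so that $\mathrm{Supp}(\sigma(M)/R) \subseteq \mathrm{V}(M)$ and in fact $R_{M'} = \sigma(M)_{M'}$ for every maximal ideal $M' \neq M$ of $R$. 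Consequently, for $T \in [R,\sigma(M)]$ we have $T_{M'} = R_{M'}$ for all $M' \neq M$, so $T$ is entirely determined by the single local datum $T_M \in [R_M, \sigma(M)_M]$; and $\sigma(M)_M = S_M$ because $M \in \mathrm{MSupp}(\sigma(M)/R)$ forces $\sigma(M)_M = S_M$ (recall $\sigma(M)_M = S_M$ by the definition of the splitter at $\{M\}$). Hence $\varphi_M$ indeed maps into $[R_M, S_M]$ and is well-defined.

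Next I would argue injectivity and surjectivity together via the $\mathcal B$-extension property. Since $R \subseteq \sigma(M)$ is an FCP $\mathcal B$-extension whose only element of $\mathrm{MSupp}$ is $M$, the canonical injection $\varphi: [R,\sigma(M)] \to \prod_{M'\in \mathrm{MSupp}(\sigma(M)/R)}[R_{M'}, \sigma(M)_{M'}] = [R_M, \sigma(M)_M] = [R_M, S_M]$ of the excerpt's Section 3 is a bijection; but that map is exactly $\varphi_M$ (the product has a single factor). This gives bijectivity immediately, using only Proposition \ref{6.5} and the definitions. So the only remaining point is the statement about order.

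Finally, for the order-isomorphism claim: $\varphi_M$ is clearly order-preserving, since $T \subseteq T'$ in $[R,\sigma(M)]$ gives $T_M \subseteq T'_M$. For the converse, suppose $T_M \subseteq T'_M$ in $[R_M, S_M]$; I would check $T \subseteq T'$ locally at every maximal ideal of $R$: at $M$ this is the hypothesis, and at any $M' \neq M$ we have $T_{M'} = R_{M'} = T'_{M'}$ by the first paragraph, so $T_{M'} \subseteq T'_{M'}$; since containment of submodules of $S$ may be checked locally at maximal ideals, $T \subseteq T'$. Thus $\varphi_M^{-1}$ is order-preserving as well, and $\varphi_M$ is an order-isomorphism. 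I expect the main (though modest) obstacle to be the bookkeeping that ensures $\sigma(M)_M = S_M$ and that all the off-$M$ localizations collapse to $R_{M'}$; once that is in place, everything reduces cleanly to the one-factor case of the $\mathcal B$-extension isomorphism, so no serious difficulty arises. One could alternatively derive the statement as the special case $X = \{M\}$, $T = \sigma(M)$ of Corollary \ref{1.159} combined with Proposition \ref{1.151}, but the direct local verification above is shortest.
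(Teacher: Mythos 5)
Your proof is correct, and it reaches the statement by a slightly different route than the paper. The key identification you make — that $\varphi_M$ is nothing but the one-factor case of the canonical $\mathcal B$-extension map for the subextension $R\subseteq\sigma(M)$ — is valid: $R\subseteq\sigma(M)$ is an FCP $\mathcal B$-extension by Proposition \ref{6.8}(3), $\mathrm{MSupp}(\sigma(M)/R)=\{M\}$, and $\sigma(M)_M=S_M$ by the definition of the splitter at $\{M\}$ (strictly speaking the latter comes from $M\notin\mathrm{MSupp}(S/\sigma(M))=\{M\}^c$, not from $M\in\mathrm{MSupp}(\sigma(M)/R)$ alone, which only gives $R_M\neq\sigma(M)_M$; your parenthetical appeal to the splitter definition covers this). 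Hence the canonical bijection $[R,\sigma(M)]\to[R_M,\sigma(M)_M]=[R_M,S_M]$ coincides with $\varphi_M$, giving bijectivity directly from Proposition \ref{6.5}/the definition of $\mathcal B$-extension. The paper instead obtains surjectivity by quoting \cite[Proposition 2.22]{Pic 10} together with Proposition \ref{1.151} (the identification $s(M)=\sigma(M)$ of the greatest $M$-crucial subextension with the elementary splitter), and then proves injectivity by exactly the local collapse you use ($T_{M'}=R_{M'}=T'_{M'}$ for $M'\neq M$); your two-sided order check also matches the paper's implicit argument. What your approach buys is self-containedness — it needs only Proposition \ref{6.8}(3) and the definition of a $\mathcal B$-extension, avoiding the external reference and the $\mathcal T_M$/$s(M)$ machinery — while the paper's version deliberately routes through Proposition \ref{1.151}, which it has just established; the two arguments are otherwise equivalent in substance, and your noted alternative via Corollary \ref{1.159} plus Proposition \ref{1.151} is closer to the paper's spirit.
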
 
\begin{proof} Let $M\in\mathrm{MSupp}(S/R)$. According to \cite[Proposition 2.22]{Pic 10} and Proposition \ref{1.151}, $\varphi_M$ is surjective and obviously a respecting order map. Let $T,T'\in[R,\sigma(M)]$ be such that $\varphi_M(T)=\varphi_M(T')$, so that $T_M=T'_M$. But, $T,T'\in [R,\sigma(M)]$ implies that $T_{M'}=R_{M'}=T'_{M'}$ for any $M'\in\mathrm{MSupp}(S/R),\ M'\neq M$. Then, $T=T'$, and $\varphi_M$ is bijective. It follows that $\varphi_M$  is an order-isomorphism.
\end{proof}

In light of Corollary \ref{1.153}, checking that an FCP $\mathcal B$-extension satisfies some type of property which is preserved by localization and globalization, the following proposition shows that we can limit to verify this property only for crucial extensions. We say that   an extension $R\subset S$ satisfies a {\em local-global} property
 $(\mathcal P)$ in case the extension satisfies $(\mathcal P)$ if and only if $R_M\subset S_M$ satisfies $(\mathcal P)$ for any $M\in\mathrm{MSupp}(S/R)$. 

\begin{proposition}\label{1.164} Let $(\mathcal P)$ be a local-global property.
Then, an FCP $\mathcal B$-extension $R\subset S$ satisfies $(\mathcal P)$ if and only if $R\subset\sigma(M)$ satisfies $(\mathcal P)$ for any $M\in\mathrm{MSupp}(S/R)$.
\end{proposition}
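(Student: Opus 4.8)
The plan is to reduce both sides of the claimed equivalence to the same statement about the localizations $R_M\subset S_M$, $M\in\mathrm{MSupp}(S/R)$, and then compare. The only ingredients are the definition of an elementary splitter and the behaviour of $\sigma(M)$ under localization.

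First I would record two elementary observations about $T:=\sigma(M)$. By Definition \ref{split2}, $\mathrm{MSupp}_R(T/R)=\{M\}$ and $\mathrm{MSupp}_R(S/T)=\{M\}^c$; in particular $M\notin\mathrm{MSupp}_R(S/T)$, so $T_M=S_M$, while $T_{M'}=R_{M'}$ for every $M'\in\mathrm{MSupp}(S/R)\setminus\{M\}$. Hence $\mathrm{MSupp}_R(\sigma(M)/R)=\{M\}$, and the localization of the extension $R\subset\sigma(M)$ at $M$ is, under the canonical identification, precisely $R_M\subset S_M$. (These facts are also contained in Corollary \ref{1.153}.)

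Next I would invoke the local-global hypothesis twice. Applied to $R\subset\sigma(M)$, whose maximal support over $R$ is the single ideal $M$, it gives that $R\subset\sigma(M)$ satisfies $(\mathcal P)$ if and only if $R_M\subset(\sigma(M))_M=S_M$ satisfies $(\mathcal P)$. Applied to $R\subset S$, it gives that $R\subset S$ satisfies $(\mathcal P)$ if and only if $R_M\subset S_M$ satisfies $(\mathcal P)$ for every $M\in\mathrm{MSupp}(S/R)$. Chaining these equivalences: $R\subset S$ satisfies $(\mathcal P)$ if and only if $R_M\subset S_M$ satisfies $(\mathcal P)$ for all such $M$, if and only if $R\subset\sigma(M)$ satisfies $(\mathcal P)$ for all such $M$, which is the assertion.

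There is essentially no obstacle: the content lies in the already-established existence of the elementary splitters $\sigma(M)$ for an FCP $\mathcal B$-extension and in their defining support property. The one point deserving a line of care is that $R\subset\sigma(M)$ is a bona fide extension to which $(\mathcal P)$ applies and that its relevant localization reduces to the single extension $R_M\subset S_M$; both follow at once from $\mathrm{MSupp}_R(\sigma(M)/R)=\{M\}$.
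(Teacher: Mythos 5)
Your proposal is correct and matches the paper's own proof essentially verbatim: both use the defining property $\sigma(M)_M=S_M$ and $\sigma(M)_{M'}=R_{M'}$ for $M'\neq M$ (so $\mathrm{MSupp}(\sigma(M)/R)=\{M\}$) and then chain the two applications of the local-global hypothesis, to $R\subset S$ and to each $R\subset\sigma(M)$. No differences worth noting.
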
 
\begin{proof} For each $M\in\mathrm{MSupp}(S/R)$, we have 
$\mathrm{MSupp}(\sigma(M)/R)=\{M\}$ by Corollary \ref{1.158}, so that $\sigma(M)_M=S_M$. Let $(\mathcal P)$ be a local-global property.
Then, $R\subset S$ satisfies $(\mathcal P)$ if and only if $R_M\subset S_M$ satisfies $(\mathcal P)$ for any $M\in\mathrm{MSupp}(S/R)$ if and only if $R_M\subset\sigma(M)_M$ satisfies $(\mathcal P)$ for any $M\in\mathrm{MSupp}(S/R)$  if and only if $R\subset\sigma(M)$ satisfies $(\mathcal P)$ for any $M\in\mathrm{MSupp}(S/R)$ since $\sigma(M)_M=S_M$ and $\sigma(M)_{M'}=R_{M'}$ for any $M'\in\mathrm{MSupp}(S/R),\ M'\neq M$.
\end{proof}

The next Proposition shows that some splitters are related to closures associated to a specified property of ring extensions. Let $(\mathcal{P})$ be a property concerning a class $\mathcal C$ of ring extensions, stable under subextensions (i.e. $R\subseteq S$ in $\mathcal C$ and $[U,V]\subseteq[R,S]$ imply $U\subseteq V$ in $\mathcal C)$.  

We say that $(\mathcal P)$ admits a closure in $\mathcal C$ if the following conditions (i), (ii) and (iii) hold for any extension $R\subset S$ in $\mathcal C$:

(i) For any tower of extensions $R\subseteq U\subseteq S$, then $R\subseteq S$ has $(\mathcal{P})$ if and only if $R\subseteq U$ and $U\subseteq S$ have $(\mathcal{P})$.

(ii) There exists a largest subextension $T\in[R,S]$ such that $R\subseteq T$ has $(\mathcal{P})$. 

(iii) No subextension $U\subseteq V$ of $T\subseteq S$ has $(\mathcal{P})$. 

According to \cite[the paragraph after Theorem 1.4]{Pic 4}, such a $T$ is unique, is called the $\mathcal{P}$-closure of $R$ in $S$ and is denoted by $R^{\mathcal{P}}$. 
Some instances are the separable closure in the class of algebraic field extensions, the seminormalization and the t-closure.

\begin{proposition}\label{1.165} Let $(\mathcal{P})$ be a local-global property of ring extensions admitting a $\mathcal P$-closure in a class $\mathcal C$ of ring extensions. Let $R\subset S$ be an FCP $\mathcal B$-extension which belongs to $\mathcal C$. Let $T:=R^{\mathcal{P}}$ be its $\mathcal{P}$-closure and set $X:=\{M\in\mathrm{MSupp}(S/R)\mid R_M\subset S_M$ satisfies $(\mathcal P)\}$. Then $\sigma(X)\subseteq T$, with equality if and only if $R\subset S$ splits at $T$. If this last condition holds, then $T^o$ is the least subextension $W\in[R,S]$ such that $W\subseteq S$ has $(\mathcal{P})$. 
\end{proposition}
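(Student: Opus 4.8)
The plan is to prove the three assertions in succession — that $\sigma(X)\subseteq T$, that equality holds exactly when the extension splits at $T$, and, granting that equality, the stated description of $T^{o}$ — using only the splitter calculus of Sections~3 and~4 together with conditions (i)--(iii) in the definition of a $\mathcal P$-closure. For the first, put $U:=\sigma(X)$, which exists by Theorem~\ref{1.15}; by construction $\mathrm{MSupp}(U/R)=X$ and $U_M=S_M$ for each $M\in X$. As $(\mathcal P)$ is local-global, $R\subseteq U$ has $(\mathcal P)$ as soon as $R_M\subseteq U_M$ has $(\mathcal P)$ for every $M\in\mathrm{MSupp}(U/R)=X$; but for such $M$ this extension is precisely $R_M\subseteq S_M$, which has $(\mathcal P)$ by the very definition of $X$. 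Hence $R\subseteq U$ has $(\mathcal P)$, and condition (ii) for $R^{\mathcal P}$ forces $U\subseteq R^{\mathcal P}=T$.

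For the equivalence, suppose first $\sigma(X)=T$. Then $\mathrm{MSupp}(T/R)=X$ and $\mathrm{MSupp}(S/T)=X^{c}$ are disjoint, so $R\subset S$ splits at $T$. Conversely, if $R\subset S$ splits at $T$, the remark following Lemma~\ref{split1} gives $T=\sigma(X')$ with $X':=\mathrm{MSupp}(T/R)$; from $\sigma(X)\subseteq T=\sigma(X')$ and Corollary~\ref{1.156}(1) we obtain $X\subseteq X'$. For the opposite inclusion take $M\in X'=\mathrm{MSupp}(T/R)$: splitting at $T$ means $M\notin\mathrm{MSupp}(S/T)$, so $T_M=S_M$, and because $R\subseteq T=R^{\mathcal P}$ has $(\mathcal P)$ the local-global property yields that $R_M\subseteq T_M=S_M$ has $(\mathcal P)$; thus $M\in X$. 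So $X=X'$ and $\sigma(X)=\sigma(X')=T$. The borderline cases $T=R$ and $T=S$ are treated directly: one checks that then $X=\emptyset$, respectively $X=\mathrm{MSupp}(S/R)$, so that $\sigma(X)=T$ while ``$R\subset S$ splits at $T$'' is to be read as the assertion that $R$, respectively $S$, is the trivial splitter in question.

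Now suppose $\sigma(X)=T$. By Theorem~\ref{4.5} the complement $T^{o}=\sigma(X^{c})$ exists, $TT^{o}=T+T^{o}$, and $\mathrm{MSupp}(S/T^{o})=\mathrm{MSupp}(T/R)=X$. That $T^{o}\subseteq S$ has $(\mathcal P)$ follows again by the local-global property: at a maximal ideal $M\in X$ one has $(T^{o})_M=R_M$ (because $M\notin\mathrm{MSupp}(T^{o}/R)=X^{c}$), so $(T^{o})_M\subseteq S_M$ coincides with $R_M\subseteq S_M$, which has $(\mathcal P)$ by the definition of $X$, while at maximal ideals outside $X$ the extension $T^{o}\subseteq S$ localizes to an isomorphism. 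For minimality, let $W\in[R,S]$ with $W\subseteq S$ having $(\mathcal P)$. Condition (i) applied to the tower $W\subseteq WT\subseteq S$ shows that $W\subseteq WT$ and $WT\subseteq S$ both have $(\mathcal P)$. Since $T\subseteq WT\subseteq S$, the extension $WT\subseteq S$ is a subextension of $T\subseteq S$, so condition (iii) forces it to be trivial, that is $WT=S$. Feeding this into the order-isomorphism $\psi$ of Theorem~\ref{4.5}(1), $\psi(W)=(W\cap T,\,WT)=(W\cap T,\,S)$ while $\psi(T^{o})=(R,S)$, so $\psi(T^{o})\leq\psi(W)$ in the product order and hence $T^{o}\subseteq W$. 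Therefore $T^{o}$ is the least $W\in[R,S]$ for which $W\subseteq S$ has $(\mathcal P)$.

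The step I expect to be the main obstacle is the repeated passage through the local-global hypothesis for the ``upper'' extensions — $R_M\subseteq T_M$ in the converse of the equivalence, and $T^{o}\subseteq S$ in the last part — where one must verify that, in the split situation, the relevant localizations of $R$, $T$, $T^{o}$ and $S$ are as claimed (in particular matching up maximal ideals of $T^{o}$ with those of $R$). This is exactly where Theorem~\ref{4.5}(3) and the $\mathcal B$-extension hypothesis (through Proposition~\ref{6.5} and Proposition~\ref{6.8}(3), and Corollary~\ref{1.153} if one prefers to argue via the order-isomorphism $[R,\sigma(M)]\cong[R_M,S_M]$) do the real work; the rest is bookkeeping in the splitter language.
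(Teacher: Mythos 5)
Your proof is correct and follows essentially the same route as the paper's: $\sigma(X)\subseteq T$ via the local-global hypothesis and condition (ii), the equivalence via Corollary \ref{1.156} and the comparison of $T_M$ with $S_M$ on $\mathrm{MSupp}(T/R)$, and the minimality of $T^o$ via $TW=S$ obtained from conditions (i) and (iii). The only (cosmetic) difference is that you deduce $T^o\subseteq W$ from the order-isomorphism $\psi$ of Theorem \ref{4.5}(1), where the paper checks the inclusion locally at the maximal ideals of $X$ and $X^c$; and the localization subtlety you flag for $T^o\subseteq S$ is handled at the same level of detail as in the paper's own argument.
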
 
\begin{proof} Set $V:=\sigma(X)$ and $Y:=X^c$. For each $M\in X$, we have $V_M=S_M$, so that $R_M\subseteq S_M=V_M$ satisfies $(\mathcal P)$. For each $M\in Y$, we have $V_M=R_M$, so that $R_M\subseteq V_M$ satisfies $(\mathcal P)$. Then, $R\subseteq V$ satisfies $(\mathcal P)$, leading to $V\subseteq T$. 

If $V=T$, then $T=\sigma(X)$ is a splitter and $R\subset S$ splits at $T$. Conversely, assume that $R\subset S$ splits at $T$ and set $X':=\mathrm{MSupp}(T/R)$, so that $T=\sigma(X')$. If $V\subset T$, it follows that $X\subset X'$ by Corollary \ref{1.156}. Let $M\in X'\setminus X$. Then, $R_M=V_M$ and $T_M=S_M$, so that $R_M\subset S_M$ satisfies $(\mathcal P)$, a contradiction since $M\not\in X$. Then, we get  $V=T$. 

Assume that $T=V=\sigma(X)$. For $M\in X$, we have $(T^o)_M=R_M$ and $T_M=S_M$, so that $(T^o)_M\subseteq S_M$ satisfies $(\mathcal P)$ and for $M\in Y$, we have $(T^o)_M=S_M$. It follows that $T^o\subseteq S$ satisfies $(\mathcal P)$. At last, let $W\in[R,S]$ be such that $W\subseteq S$ has $(\mathcal{P})$. For $M\in X$, we have $(T^o)_M=R_M$, so that $(T^o)_M\subseteq W_M$. 

Assume that $TW\neq S$. Since $T,W\subseteq TW\subset S$, this implies that $TW\subset S$ has, at the same time, $(\mathcal P)$ because so has $W\subseteq S$ and has not $(\mathcal P)$ because so has not $T\subseteq S$, a contradiction. It follows that $TW=S$, leading to $T_MW_M=S_M=W_M=(T^o)_M$ for $M\in Y$. To conclude, $T^o\subseteq W$. Then, $T^o$ is the least subextension $W\in[R,S]$ such that $W\subseteq S$ has $(\mathcal{P})$.
\end{proof}

 We give more details about a result from \cite{DPP2}.

\begin{proposition}\label{1.167} Let $R\subset S$ be a seminormal infra-integral FCP $M$-crucial extension. Set $n:=\ell[R,S]$. Then, $M=(R:S),\ |\mathrm{V}_S(M)|=n+1$ and $S/M\cong (R/M)^{n+1}$.
 \end{proposition}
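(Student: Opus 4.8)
The plan is to establish the three claims successively: first that the conductor equals $M$, then the ring-theoretic structure of $S/M$, and finally the numerical identity via a length computation.

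\textbf{Step 1: $M=(R:S)$.} Write $C:=(R:S)$. Since $R\subset S$ is integral FCP, $S/R$ is a finitely generated $R$-module, so $\mathrm{Supp}(S/R)=\mathrm{V}_R(\mathrm{Ann}_R(S/R))=\mathrm{V}_R(C)$; by \cite[Theorem 4.2]{DPP2} this set lies in $\mathrm{Max}(R)$, hence $\mathrm{V}_R(C)=\mathrm{Supp}(S/R)=\mathrm{MSupp}(S/R)=\{M\}$. Thus $M$ is the only prime of $R$ containing $C$. On the other hand, seminormality and Proposition~\ref{1.91} give that $C$ is an intersection of finitely many maximal ideals $N_1,\dots,N_k$ of $S$; in particular $S/C$ is reduced, hence so is $R/C$, and a reduced ring with a single prime ideal is a field, so $C=M$. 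Consequently $M=(R:S)$ and $\mathrm{V}_S(M)=\mathrm{V}_S(C)=\{N_1,\dots,N_k\}$, so $k=|\mathrm{V}_S(M)|$.

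\textbf{Step 2: structure of $S/M$.} Since $M=(R:S)$, with $R/M$ a field and $S$ module-finite over $R$, the ring $S/M=S/C$ is a finite-dimensional $R/M$-algebra, hence Artinian, and it is reduced because $M$ is radical in $S$; therefore $S/M\cong\prod_{N\in\mathrm{V}_S(M)}S/N$. Each $N\in\mathrm{V}_S(M)$ lies over $M$ (as $C\subseteq N$ and $M$ is maximal), and the residual extension at $N$ is the natural map $R/M\to S/N$; since $R\subset S$ is infra-integral this map is an isomorphism. Hence $S/M\cong (R/M)^{k}$, with $R/M$ embedded diagonally.

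\textbf{Step 3: length.} Because $C=M$ is contained in every $T\in[R,S]$ (one has $(R:S)\subseteq(T:S)\subseteq T$), the map $T\mapsto T/M$ is a lattice isomorphism $[R,S]\xrightarrow{\ \sim\ }[R/M,S/M]$, so $n=\ell[R,S]=\ell[R/M,S/M]$. Put $K:=R/M$ and use $S/M\cong K^{k}$: every $K$-subalgebra $A$ of $K^{k}$ is reduced Artinian, and each residue field of $A$ embeds into a residue field of $K^{k}$, hence equals $K$, so $A$ is a product of copies of $K$; consequently $[K,K^{k}]$ is (anti-)isomorphic to the lattice of partitions of the $k$-element set $\mathrm{Max}(K^{k})$ ordered by refinement, whose length is $k-1$. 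Therefore $n=k-1$, that is $|\mathrm{V}_S(M)|=k=n+1$ and $S/M\cong (R/M)^{n+1}$.

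The step I expect to be the most delicate is Step~1: pinning down $M=(R:S)$ genuinely uses both hypotheses — cruciality forces $M$ to be the only prime of $R$ over $C$, and seminormality (via Proposition~\ref{1.91}) forces $C$ to be radical — so neither can be dropped. An alternative to Step~3 is inductive: along a maximal chain $R=R_0\subset\cdots\subset R_n=S$ each minimal step is seminormal and infra-integral, hence decomposed (a minimal infra-integral extension is decomposed or ramified by Theorem~\ref{minimal}, and a ramified one fails seminormality since ${M'}^2\subseteq M$), and a minimal decomposed step raises by exactly one the number of maximal ideals lying over the crucial ideal; but tracking the conductors along the chain is more work than the quotient-by-$M$ argument above, so I would keep the latter.
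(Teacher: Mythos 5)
Your proof is correct, but it follows a different route from the paper's. The paper first invokes the fact that an integral FCP crucial extension is a $\mathcal B$-extension to get the order-isomorphism $[R,S]\cong[R_M,S_M]$ and thereby reduces to the case where $(R,M)$ is local; it then quotes Proposition~\ref{1.91} together with \cite[Lemma 5.4]{DPP2} to conclude at once that $(R:S)=M$, $|\mathrm{Max}(S)|=n+1$ and $S/M\cong(R/M)^{n+1}$ via the Chinese Remainder Theorem. You avoid the localization step entirely: cruciality plus \cite[Theorem 4.2]{DPP2} already gives $\mathrm{V}_R((R:S))=\{M\}$, and reducedness of $R/(R:S)$ (from Proposition~\ref{1.91}) then forces $(R:S)=M$ directly. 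More significantly, you replace the citation of \cite[Lemma 5.4]{DPP2} by a self-contained computation: after passing to $[R/M,S/M]\cong[K,K^{k}]$ you identify the subalgebra lattice of $K^{k}$ with the partition lattice of a $k$-set and read off $\ell[K,K^{k}]=k-1$. This makes the numerical identity $|\mathrm{V}_S(M)|=n+1$ transparent and independent of the external lemma, at the cost of a slightly longer argument; the paper's version buys brevity by leaning on the $\mathcal B$-extension machinery and the quoted lemma. One small caveat on your closing alternative sketch (which you rightly do not use): the assertion that each minimal step $R_i\subset R_{i+1}$ of a maximal chain is itself seminormal does not follow immediately from seminormality of $R$ in $S$, since seminormality is a condition on the bottom ring of the extension; that route would need an extra argument, whereas your main proof stands as written.
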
 
\begin{proof} Since $R\subset S$ is an integral FCP $M$-crucial extension, it is a $\mathcal B$-extension, so that there is an order-isomorphism $[R,S]\cong[R_M,S_M]$. Then, $|\mathrm{V}_S(M)|=|\mathrm{Max}(S_M)|$ and $n=\ell[R_M,S_M]$. It follows that we can assume that $(R,M)$ is a local ring. 
According to Proposition \ref{1.91} and \cite[Lemma 5.4]{DPP2}, $(R:S)=M$ is an intersection of the maximal ideals of $S,\ |\mathrm{Max}(S)|=n+1$, and $S/M\cong(R/M)^{n+1}$, by the Chinese Remainder Theorem, since $R\subset S$ is infra-integral. 
\end{proof}

\section{ Integral closure and  Pr\"ufer hull as splitters} 

The following Theorem is a first application of Theorem ~\ref{4.5} to almost-Pr\"ufer extensions because an almost-Pr\"ufer FCP extension $R\subseteq S$ splits at $\overline R$ and $\widetilde R$.
By the way, we generalize a result gotten by Ayache in \cite[Theorem 20]{A} for extensions of integral domains.

\begin{theorem}\label{4.10} Let $R\subset S$ be an FCP extension. There is an order-isomorphism $\psi:[R,S]\to\{(T',T'')\in[R,\overline R]\times[\overline R,S]\mid T'\subseteq T''$ almost-Pr\"ufer$\}$ defined by $\psi(T):=(T\cap\overline R,\overline RT)$ for each $T\in[R,S]$. In particular, if $R\subset S$ has FIP, then $|[R,S]|\leq|[R,\overline R]||[\overline R,S]|$.  
\end{theorem}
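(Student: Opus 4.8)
The plan is to produce an explicit two-sided inverse $\Phi$ of $\psi$ and to check that both $\psi$ and $\Phi$ preserve order; the order-isomorphism and the FIP inequality then follow immediately. First I would verify that $\psi$ really lands in the stated set. Fix $T\in[R,S]$ and put $A:=T\cap\overline R$ and $B:=\overline RT$. Since an element of $T$ is integral over $R$ precisely when it lies in $\overline R=\overline R^{S}$, the ring $A$ is the integral closure of $R$ in $T$; as $R\subseteq T$ is FCP, hence quasi-Pr\"ufer by \cite[Corollary 3.4]{Pic 5}, the extension $A\subseteq T$ is Pr\"ufer. Also $\overline R\subseteq B$ is Pr\"ufer (a subextension of $\overline R\subseteq S$), and transitivity of integral dependence gives $\overline A^{B}=\overline R$. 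Consequently $T\subseteq\widetilde A^{B}$, so $\overline R\,\widetilde A^{B}\supseteq\overline RT=B$ and therefore $\overline R\,\widetilde A^{B}=B$; moreover $\overline R\cap\widetilde A^{B}$ is both integral and Pr\"ufer over $A$, hence equals $A$ because an integral Pr\"ufer extension is trivial. By Proposition \ref{split}, $A\subseteq B$ is almost-Pr\"ufer, i.e. $\psi(T)$ belongs to the target set, and the same proposition (together with Theorem \ref{4.5}) shows that $A\subseteq B$ splits at $\overline R$; since $T$ is then \emph{a} complement of $\overline R$ in $[A,B]$, it is \emph{the} complement.

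Next I would build the inverse. Let $(T',T'')$ lie in the target set. Because $T'\in[R,\overline R]$, the same transitivity argument as above yields $\overline{T'}^{T''}=\overline R$, so $T'\subseteq T''$ splits at $\overline R$ by Proposition \ref{split}, and Theorem \ref{4.5} supplies the order-isomorphism $[T',T'']\to[T',\overline R]\times[\overline R,T'']$ given by $V\mapsto(V\cap\overline R,\overline RV)$. I then define $\Phi(T',T'')\in[T',T'']\subseteq[R,S]$ to be the (unique) complement of $\overline R$ in $[T',T'']$; by Proposition \ref{split} this complement is exactly $\widetilde{T'}^{T''}$, and it is also the preimage of $(T',T'')$ under the above isomorphism. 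Now $\psi\circ\Phi=\mathrm{id}$ is immediate from the definition of $\psi$, and $\Phi\circ\psi=\mathrm{id}$ is precisely the last assertion of the previous paragraph.

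It remains to handle order. The map $\psi$ is order-preserving because intersection and compositum of subrings of $S$ are monotone. For $\Phi$, suppose $(T_1',T_1'')\le(T_2',T_2'')$ and set $W_i:=\Phi(T_i',T_i'')=\widetilde{T_i'}^{T_i''}$. Then $W_1\subseteq T_1''\subseteq T_2''$, so the compositum $T_2'W_1$ makes sense inside $T_2''$; since $T_1'\subseteq W_1$ is Pr\"ufer and $T_1'\subseteq T_2'$, stability of Pr\"ufer extensions under compositum (see \cite{KZ}) makes $T_2'\subseteq T_2'W_1$ Pr\"ufer, whence $T_2'W_1\subseteq\widetilde{T_2'}^{T_2''}=W_2$ and so $W_1\subseteq W_2$. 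Thus $\psi$ and $\Phi$ are mutually inverse order-preserving maps, i.e. $\psi$ is an order-isomorphism onto $\{(T',T'')\in[R,\overline R]\times[\overline R,S]\mid T'\subseteq T''\text{ almost-Pr\"ufer}\}$. In particular, if $R\subset S$ has FIP, then this set is finite and $|[R,S]|$ equals its cardinality, which is at most $|[R,\overline R]|\,|[\overline R,S]|$.

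The one ingredient not already present in the excerpt is the stability of Pr\"ufer extensions under compositum used in the last paragraph (from $T_1'\subseteq W_1$ Pr\"ufer and $T_1'\subseteq T_2'$ conclude $T_2'\subseteq T_2'W_1$ Pr\"ufer); this is standard in Knebusch--Zhang but must be cited precisely, and it is the step I expect to be the main technical hinge. Everything else — well-definedness of $\psi$, the description of $\Phi$ via complements, and the mutual-inverse checks — reduces to Theorem \ref{4.5}, Proposition \ref{split}, quasi-Pr\"uferness of FCP extensions \cite[Corollary 3.4]{Pic 5}, and the triviality of integral Pr\"ufer extensions, the last being used both to get $\overline R\cap\widetilde A^{B}=A$ and, implicitly, to identify $\overline{T'}^{T''}$ with $\overline R$.
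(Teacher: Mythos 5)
Your argument is correct and follows essentially the same route as the paper: both rest on Proposition \ref{split} and Theorem \ref{4.5} applied to the subextension $T'\subseteq T''$, whose integral closure is $\overline R$, and your explicit inverse $(T',T'')\mapsto\widetilde{T'}^{T''}$ (the unique complement of $\overline R$ in $[T',T'']$) is just the paper's surjectivity-plus-injectivity argument in packaged form. The one extra ingredient you invoke --- Knebusch--Zhang stability of Pr\"ufer extensions under compositum with an extension of the base inside a common overring, used to check that the inverse preserves order --- is a true and citable fact from \cite{KZ}, and it supplies a detail (order-preservation of the inverse) that the paper's proof passes over in silence; apart from that and the trivial degenerate case $T'=T''$, nothing is missing.
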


\begin{proof}  Let $(T',T'')\in[R,\overline R]\times[\overline R,S]$. Then, $\overline R$ is also the integral closure $\overline T'$ of $T'$ in $T ''$ (and in $S$). 

Let $T\in[R,S]$ and set $T':=T\cap\overline R$ and $T'':=\overline RT$. Then $T\in[T',T'']$ and $(T',T'')\in[R,\overline R]\times[\overline R,S]$. Assume first that $T'=T''$; so that, $T'=T''=\overline R$, which implies that $T=\overline R$ and $\mathrm{Supp}_{T'}(\overline R/T')=\mathrm{Supp}_{T'}(T''/\overline R)=\emptyset$. It follows that $T'\subseteq T''$ is almost-Pr\"ufer. Assume now that $T'\neq T''$. Applying Proposition ~\ref{split} to the extension $T'\subseteq T''$, we get that $T'\subseteq T''$ is almost-Pr\"ufer. Therefore, we can define $\psi:[R,S]\to\{(T',T'')\in[R,\overline R]\times[\overline R,S]\mid T'\subseteq T''$ almost-Pr\"ufer$\}$ by $\psi(T):=(T\cap\overline R,\overline RT)$ for each $T\in[R,S]$.

Let $(T',T'')\in[R,\overline R]\times[\overline R,S]$ and assume that $T'\subseteq T''$ is almost-Pr\"ufer. In view of Theorem ~\ref{4.5} applied to the extension $T'\subseteq T''$, there exists $T\in[T',T'']\subseteq[R,S]$ such that $(T',T'')=(T\cap\overline T',\overline T'T)=(T\cap\overline R,\overline RT)$. Hence  $\psi$ is a surjection.

Now, let $T_1,T_2\in[R,S]$ be such that $\psi(T_1)=\psi(T_2)=(T',T'')$ which implies that $T_1,T_2\in[T',T'']$. Another use of Theorem ~\ref{4.5} applied to the extension $T'\subseteq T''$ gives that $T_1=T_2$, so that $\psi$ is injective, and then a bijection.

To end, we have $\psi([R,S])\subseteq[R,\overline R]\times[\overline R,S]$, which implies that $|[R,S]|\leq|[R,\overline R]||[\overline R,S]|$ when $R\subset S$ has FIP. 
\end{proof}

A converse of Theorem~\ref{4.5} is developed  in the following Corollary:

\begin{corollary}\label{4.6} Let $R\subset S$ be an FCP extension. The following statements  are equivalent:
\begin{enumerate}
\item  $R\subset S$ is an almost-Pr\"ufer extension.
\item There exists an order-isomorphism $\psi:[R,S]\to[R,\overline R]\times[\overline R,S]$ defined by $\psi(T):=(T\cap\overline R,\overline RT)$.
 \end{enumerate}

If these conditions hold, then $\mathrm{MSupp}(\overline R/R)=\mathrm{MSupp}(S/\widetilde{R})$ and $\mathrm{MSupp}(\widetilde{R}/R)=\mathrm{MSupp}(S/\overline R)$.

Moreover, if $R\subset S$ is an FIP extension, these conditions are equivalent to:

(3) $|[R,S]|=|[R,\overline R]||[\overline R,S]|$.
\end{corollary}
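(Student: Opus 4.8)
The plan is to prove Corollary~\ref{4.6} by leveraging Theorem~\ref{4.5}, Proposition~\ref{split}, and Theorem~\ref{4.10}, which have already done most of the heavy lifting.

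\textbf{(1)}$\Rightarrow$\textbf{(2).} Assume $R\subset S$ is almost-Pr\"ufer. By Proposition~\ref{split}, $R\subset S$ splits at $\overline R$. Now apply Theorem~\ref{4.5} with $T:=\overline R$: part (1) of that theorem gives exactly the order-isomorphism $\psi:[R,S]\to[R,\overline R]\times[\overline R,S]$, $\psi(V)=(V\cap\overline R,\overline R V)$. That is (2).

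\textbf{(2)}$\Rightarrow$\textbf{(1).} Assume $\psi$ as in (2) is an order-isomorphism. In particular it is onto, so there is $U\in[R,S]$ with $\psi(U)=(R,S)$, i.e. $R=U\cap\overline R$ and $S=\overline R U$. This is precisely condition (5) of Proposition~\ref{split} (with the $U$ there), so $R\subset S$ is almost-Pr\"ufer. (Equivalently, one could note that surjectivity of $\psi$ forces $\overline R$ to have a complement, hence $R\subset S$ splits at $\overline R$ by Theorem~\ref{4.5}, hence almost-Pr\"ufer by Proposition~\ref{split}.)

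\textbf{The crosswise support equalities.} Suppose the equivalent conditions hold. Then $R\subset S$ splits at $\overline R$ and at $\widetilde R$ by Proposition~\ref{split}, and moreover by Proposition~\ref{split}(4) we have $R=\overline R\cap\widetilde R$ and $S=\overline R\widetilde R$; thus $\widetilde R$ is the complement $(\overline R)^o$ of $\overline R$ and, symmetrically, $\overline R=(\widetilde R)^o$. Applying Theorem~\ref{4.5}(3) to the split at $T:=\overline R$ (whose complement is $T^o=\widetilde R$) yields $\mathrm{MSupp}(\overline R/R)=\mathrm{MSupp}(S/\widetilde R)$ and $\mathrm{MSupp}(S/\overline R)=\mathrm{MSupp}(\widetilde R/R)$, which are the two claimed equalities.

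\textbf{The FIP case: equivalence with (3).} When $R\subset S$ has FIP, (1)$\Rightarrow$(3) follows from Theorem~\ref{4.5}(4) applied to the split at $\overline R$: $|[R,S]|=|[R,\overline R]|\,|[\overline R,S]|$. For the converse (3)$\Rightarrow$(1), invoke Theorem~\ref{4.10}: there we have an order-isomorphism of $[R,S]$ onto the subset $\{(T',T'')\in[R,\overline R]\times[\overline R,S]\mid T'\subseteq T''\text{ almost-Pr\"ufer}\}$, hence $|[R,S]|$ equals the cardinality of that subset. If $|[R,S]|=|[R,\overline R]|\,|[\overline R,S]|=|[R,\overline R]\times[\overline R,S]|$, the subset must be the whole product; in particular the pair $(R,S)$ lies in it, so $R\subseteq S$ is almost-Pr\"ufer, giving (1). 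The main (and essentially only) obstacle is bookkeeping: making sure the identifications $T^o=\widetilde R$, $\overline{T'}=\overline R$ for $T'\in[R,\overline R]$, and the direction of the support exchange in Theorem~\ref{4.5}(3) are applied consistently; the substantive work has already been carried out in Theorem~\ref{4.5} and Theorem~\ref{4.10}.
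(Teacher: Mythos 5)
Your proof is correct and follows essentially the same route as the paper: Proposition~\ref{split} plus Theorem~\ref{4.5} for (1)$\Leftrightarrow$(2) and the support equalities, and the injectivity/cardinality argument via the map of Theorem~\ref{4.10} for (3)$\Rightarrow$(1) in the FIP case. The only (harmless) cosmetic differences are that you deduce (3) from (1) via Theorem~\ref{4.5}(4) instead of from (2), and you close the loop at (1) rather than (2); the substance is identical.
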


\begin{proof} (1) $\Rightarrow$ (2) by Proposition ~\ref{split} and Theorem ~\ref{4.5}.

(2) $\Rightarrow$ (1) Since $\psi$ is a bijection, there exists $U\in[R,S]$ such that $\psi(U)=(R,S)=(U\cap\overline R,\overline RU)$. Then $R\subset S$ is almost-Pr\"ufer by Proposition ~\ref{split}. 

If these conditions hold, then $\mathrm{MSupp}(\overline R/R)=\mathrm{MSupp}(S/\widetilde{R})$ and $\mathrm{MSupp}(\widetilde{R}/R)=\mathrm{MSupp}(S/\overline R)$  by Theorem ~\ref{4.5}.

Assume now that $R\subset S$ has FIP.

(2) $\Rightarrow$ (3) obviously.

(3) $\Rightarrow$ (2) Assume that $|[R,S]|=|[R,\overline R]||[\overline R,S]|$ holds. The map $\psi$ which holds in Theorem \ref{4.10} is the same as the map defined in (2). Being injective, it defines a bijection over $[R,\overline R]\times[\overline R,S]$, giving (2).
\end{proof}

We recall the fundamental role of the support in an FCP extension. 

\begin{proposition}\label{2.11} \cite[Proposition 4.1]{DPP3} 
 Let $R\subset S$ be a ring extension.
If there exists a maximal chain $R=R_0\subset\cdots\subset R_i\subset\cdots\subset R_n=S$ of extensions, where $R_i\subset R_{i+1}$ is minimal with crucial ideal $M_i$, (for example, if $R\subseteq S$ is an FCP extension), then $\mathrm{Supp}(S/R)$ is a finite set; in fact, $\mathrm{Supp}(S/R)=\{M_i\cap R\mid i=0,\ldots,n-1\}$.

Moreover, for each $P\in\mathrm{Supp}(S/R)$, we have ${\mathrm V}_R(P)\subseteq \mathrm {Supp}(S/R)$.
\end{proposition}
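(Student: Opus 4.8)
The statement has two parts: first, that $\mathrm{Supp}(S/R)$ is finite and in fact equals $\{M_i\cap R\mid i=0,\ldots,n-1\}$ whenever a maximal chain of minimal extensions exists; second, that $\mathrm{V}_R(P)\subseteq\mathrm{Supp}(S/R)$ for each $P\in\mathrm{Supp}(S/R)$. The plan is to handle the first part by an induction on the length $n$ of the chain, using the short exact sequences coming from the chain together with the behaviour of localization and of support on short exact sequences. The second part I would reduce to the case of a single minimal extension and then inspect the three integral types plus the flat-epimorphism type directly.

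For the first part I would argue as follows. Localization is exact, so for each prime $P$ of $R$ we have $S_P\neq R_P$ if and only if $(R_i)_P\neq (R_{i+1})_P$ for some $i$ (telescoping: if all consecutive localizations agree then $R_P=S_P$, and conversely any strict inclusion in the chain forces a strict inclusion somewhere after localizing, since $(R_i)_P\subseteq(R_{i+1})_P$ always). Thus $P\in\mathrm{Supp}(S/R)$ iff $P\in\mathrm{Supp}_R(R_{i+1}/R_i)$ for some $i$, so it suffices to show $\mathrm{Supp}_R(R_{i+1}/R_i)=\{M_i\cap R\}$ for a minimal extension $R_i\subset R_{i+1}$ with crucial ideal $M_i$. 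A minimal extension is crucial (stated in the excerpt just before Theorem \ref{minimal}), meaning $|\mathrm{MSupp}_R(R_{i+1}/R_i)|=1$; but one needs the stronger statement that the full support is the single prime $M_i\cap R$. This follows because $R_i\subset R_{i+1}$ localizes to an isomorphism at every prime not containing the conductor-type ideal: in the integral case $M_i=(R_i:R_{i+1})$ is maximal by Theorem \ref{minimal}, so $(R_i)_Q=(R_{i+1})_Q$ for every $Q\not\supseteq M_i$, i.e. for every $Q\neq M_i$; in the flat-epimorphism case one uses that a flat epimorphism which is minimal is crucial with crucial ideal $M_i$ and that localizing away from $M_i$ gives an isomorphism (a minimal flat epimorphism $R_i\subset R_{i+1}$ has $(R_i)_Q=(R_{i+1})_Q$ for $Q\neq M_i$ because the extension is an isomorphism or the zero extension at each prime). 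Hence $\mathrm{Supp}_R(R_{i+1}/R_i)=\{M_i\cap R\}$, and taking the union over $i$ gives $\mathrm{Supp}(S/R)=\{M_i\cap R\mid 0\le i\le n-1\}$, a finite set.

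For the second part, fix $P\in\mathrm{Supp}(S/R)$ and $P'\in\mathrm{V}_R(P)$, i.e. $P\subseteq P'$; I must show $S_{P'}\neq R_{P'}$. By the first part, $P=M_i\cap R$ for some $i$, so it is enough to show: if $R_i\subset R_{i+1}$ is a minimal extension with crucial ideal $M_i$ and $P=M_i\cap R\subseteq P'$, then $(R_i)_{P'}\neq (R_{i+1})_{P'}$, since that forces $R_{P'}\neq S_{P'}$. In the integral case $M_i$ is a maximal ideal of $R_i$ and $M_i\cap R=P\subseteq P'$; localizing the minimal extension at $P'$, the conductor of $(R_i)_{P'}\subseteq (R_{i+1})_{P'}$ is $(M_i)_{P'}$, which is a proper ideal precisely because $M_i\cap R\subseteq P'$ guarantees no element of $R\setminus P'$ lies over something hitting $M_i$; so the localized extension is still nontrivial (indeed still minimal or at least a proper extension), giving the claim. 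In the Pr\"ufer/flat-epimorphism case the same conclusion holds because $P\in\mathrm{Supp}_R(R_{i+1}/R_i)$ and for a flat epimorphism the support is closed under specialization — concretely, $(R_i)_P\neq (R_{i+1})_P$ with $P\subseteq P'$ implies $(R_i)_{P'}\neq (R_{i+1})_{P'}$ because if they were equal then inverting further elements could not have created the difference at $P$; more precisely $(R_{i+1})_P = ((R_{i+1})_{P'})_{P(R_i)_{P'}}$ and localization of an equality is an equality. Combining, $P'\in\mathrm{Supp}_R(R_{i+1}/R_i)\subseteq\mathrm{Supp}(S/R)$.

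The main obstacle is the precise local analysis of a minimal extension: proving that the support of $R_{i+1}/R_i$ is the \emph{single} prime $M_i\cap R$ rather than merely that it contains a single maximal ideal, and proving closure under specialization for each of the four types of minimal extension (three integral types from Theorem \ref{minimal}, plus Pr\"ufer minimal). Everything else is a routine application of exactness of localization and telescoping along the chain. In the write-up I would isolate the single-minimal-extension facts as the crux and then assemble the two assertions by the union argument above; since the excerpt cites this as \cite[Proposition 4.1]{DPP3}, one may alternatively just invoke the structure theory of minimal extensions recorded there, but the self-contained argument is as sketched.
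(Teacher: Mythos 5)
The step that fails is your claim that $\mathrm{Supp}_R(R_{i+1}/R_i)=\{M_i\cap R\}$ for each link of the chain. What is true is that $\mathrm{Supp}_{R_i}(R_{i+1}/R_i)=\{M_i\}$; over the bottom ring $R$ this only gives $\mathrm{Supp}_R(R_{i+1}/R_i)=\mathrm{V}_R(M_i\cap R)$, which is strictly larger than $\{M_i\cap R\}$ whenever $M_i\cap R$ is not maximal in $R$ (your phrase ``for every $Q\not\supseteq M_i$, i.e.\ for every $Q\neq M_i$'' silently identifies primes of $R_i$ with primes of $R$). Concretely, let $V$ be a two-dimensional valuation domain with primes $0\subset P\subset M$ and quotient field $K$; then $V\subset V_P\subset K$ is a maximal chain of two (Pr\"ufer) minimal extensions, the second has crucial ideal $PV_P$ with $PV_P\cap V=P$, yet $(V_P)_M=V_P\neq K$, so $M\in\mathrm{Supp}_V(K/V_P)$, which is $\{P,M\}$ and not $\{P\}$. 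Your own treatment of the second assertion proves precisely this specialization-stability of each link's support, so the proposal is internally inconsistent: part two shows every $P'\supseteq M_i\cap R$ lies in $\mathrm{Supp}_R(R_{i+1}/R_i)$, while part one asserts that this support is a singleton. With the corrected computation, your telescoping argument yields only $\mathrm{Supp}(S/R)=\bigcup_i\mathrm{V}_R(M_i\cap R)$; that does give the ``Moreover'' clause and the inclusion $\supseteq\{M_i\cap R\}$, but the main claim, $\mathrm{Supp}(S/R)\subseteq\{M_i\cap R\mid 0\leq i\leq n-1\}$ (hence finiteness with that exact description), is left unproved.

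What is missing is an argument that uses the position of each link inside the chain; as written you never use that the chain starts at $R$. One way to close the gap: for $P\in\mathrm{Supp}(S/R)$ choose the least $i$ with $(R_i)_P\neq(R_{i+1})_P$, so that $R_P=(R_i)_P$ and $M_i\cap R\subseteq P$. If this inclusion were strict, then inside $(R_i)_P=R_P$ the prime $PR_P$ would contract to a prime of $R_i$ lying over $P$ and strictly containing $M_i$ (since $M_i(R_i)_P\cap R=M_i\cap R\subset P$), contradicting $M_i\in\mathrm{Max}(R_i)$, which holds for every type of minimal extension. Hence $P=M_i\cap R$, which is exactly the inclusion your proof lacks; the reverse inclusion and the ``Moreover'' statement then go through essentially as you say. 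Note finally that the paper offers no proof of this proposition, quoting it from \cite[Proposition 4.1]{DPP3}, so there is no argument in the text to compare with; but as it stands your blind proof does not establish the first assertion.
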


Let $R\subset S$ be an FCP extension. In \cite[Theorem 4.6]{Pic 5}, we set $\vec R:=\overline R\widetilde{R}= \overline{\widetilde{R}}$, which is the greatest almost-Pr\"ufer subextension of $R\subset S$ (the {\em almost-Pr\"ufer closure} of $R\subset S$). 
In particular, $\overline R$ (resp. $\widetilde{R}$) is also the integral closure (resp. Pr\"ufer hull) of $R$ in $\vec R$.
We get the following Theorem:
 
\begin{theorem}\label{4.09} Let $R\subset S$ be an FCP extension. Then, $\widetilde R=\cap[R_{[M]}\mid M\in\mathrm{MSupp}(\overline R/R)]=R_{[1+(R:\overline R)]}$.
\end{theorem}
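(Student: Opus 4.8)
The plan is to prove the two equalities $\widetilde R=\cap[R_{[M]}\mid M\in\mathrm{MSupp}(\overline R/R)]$ and $\widetilde R=R_{[1+(R:\overline R)]}$ by exploiting that an FCP extension $R\subset S$ is quasi-Pr\"ufer, so $\overline R\subseteq S$ is Pr\"ufer, and that $R\subseteq S$ splits at $\overline R$ when it is almost-Pr\"ufer. The key passage is to reduce to the almost-Pr\"ufer closure $\vec R=\overline R\widetilde R$: since $\widetilde R$ is also the Pr\"ufer hull of $R$ in $\vec R$ and $\overline R$ is the integral closure of $R$ in $\vec R$, and $R\subseteq\vec R$ is almost-Pr\"ufer, Proposition~\ref{split} gives that $R\subseteq\vec R$ splits at $\overline R$; hence $\widetilde R=\sigma(X^c)$ where $X:=\mathrm{MSupp}(\overline R/R)$ is the maximal support of $\overline R/R$ computed inside $\vec R$. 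Then Proposition~\ref{1.154}(1)--(2) applied to the FCP extension $R\subseteq\vec R$ yields $\widetilde R=\sigma(X^c)=\cap[R_{[M]}\mid M\in X]$, the large quotient rings being taken in $\vec R$. The first step is therefore to check that the large quotient rings $R_{[M]}$ computed in $\vec R$ and in $S$ agree for $M\in X=\mathrm{MSupp}(\overline R/R)$; this holds because for such $M$ one has $(\overline R)_M=S_M$ is impossible in general, but rather because $\mathrm{MSupp}(S/\overline R)$ and $\mathrm{MSupp}(\overline R/R)$ interact through the splitting: for $M\in X$, $R_{[M]}\subseteq\vec R$ since $R_{[M]}=\{x\in\vec R\mid sx\in R\}$ has the same elements whether we allow $x\in S$ or $x\in\vec R$, as any $x\in S\setminus\vec R$ with $sx\in R$ would force $R_P\ne\vec R_P$ for $P=M$, putting $M\in\mathrm{MSupp}(\vec R/R)=X$, which is consistent, so one must argue more carefully that $R_{[M]}^S\subseteq\vec R$ using that $R\subseteq R_{[M]}^S$ is contained in the integral-times-Pr\"ufer factorization.

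For the second equality, the strategy is to recognize $1+(R:\overline R)$ as a multiplicatively closed subset $\Sigma$ of $R$ and to identify the set of maximal ideals of $R$ that miss $\Sigma$. An ideal $M\in\mathrm{Max}(R)$ satisfies $M\cap(1+(R:\overline R))=\emptyset$ if and only if $(R:\overline R)\subseteq M$, i.e. $M\in\mathrm{V}_R((R:\overline R))$. Since $R\subseteq\overline R$ is an integral FCP extension with conductor $(R:\overline R)$, by \cite[Theorem 4.2]{DPP2} we have $\mathrm{V}_R((R:\overline R))\subseteq\mathrm{Max}(R)$ and in fact $\mathrm{V}_R((R:\overline R))=\mathrm{MSupp}(\overline R/R)=X$. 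Then $R_{[\Sigma]}=\{x\in S\mid sx\in R\text{ for some }s\in\Sigma\}$ and one shows $R_{[\Sigma]}=\cap[R_{[M]}\mid M\in X]$ by the same kind of argument as in the proof of Proposition~\ref{1.154}(2): $x\in R_{[\Sigma]}$ iff there is $s\in R$ with $s\notin M$ for every $M\in X$ and $sx\in R$, iff $x\in R_{[M]}$ for every $M\in X$. The point that $\Sigma=1+(R:\overline R)$ and $\Sigma'=R\setminus(\cup[M\in X])$ give the same localization is that both consist exactly of elements avoiding every $M\in X$ up to the relevant saturation; more precisely, every $s\in R\setminus(\cup X)$ has some power congruent mod $(R:\overline R)$ behaviour allowing replacement, but the cleanest route is: for $s\in 1+(R:\overline R)$ clearly $s\notin M$ for $M\in X$, and conversely for $s\in R$ avoiding all $M\in X=\mathrm{V}_R((R:\overline R))$ one uses that $R/(R:\overline R)$ is Artinian (Lemma~\ref{Q2}-style argument) so the image of $s$ is a unit, whence $s$ divides, modulo $(R:\overline R)$, an element of $1+(R:\overline R)$, giving $R_{[\Sigma]}=R_{[\Sigma']}$.

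I would organize the write-up as: (1) reduce to $\vec R$ and invoke Proposition~\ref{split} to get $\widetilde R=\sigma(\mathrm{MSupp}(\overline R/R)^c)$ for the FCP extension $R\subseteq\vec R$; (2) apply Proposition~\ref{1.154} to obtain $\widetilde R=\cap[R_{[M]}\mid M\in\mathrm{MSupp}(\overline R/R)]$ with large quotient rings in $\vec R$, and check these coincide with the large quotient rings in $S$; (3) identify $\{M\in\mathrm{Max}(R)\mid M\cap(1+(R:\overline R))=\emptyset\}=\mathrm{V}_R((R:\overline R))=\mathrm{MSupp}(\overline R/R)$; (4) show $R_{[1+(R:\overline R)]}=\cap[R_{[M]}\mid M\in\mathrm{MSupp}(\overline R/R)]$ using the Artinian quotient $R/(R:\overline R)$ to pass between the two multiplicative sets. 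The main obstacle I anticipate is step (2): making rigorous that taking large quotient rings $R_{[M]}$ with $M\in\mathrm{MSupp}(\overline R/R)$ inside $S$ versus inside $\vec R$ yields the same ring. The resolution should use that for $M\in\mathrm{MSupp}(\overline R/R)$ one has $M\notin\mathrm{MSupp}(S/\vec R)$ — indeed $\mathrm{MSupp}(S/\vec R)$ is disjoint from $\mathrm{MSupp}(\vec R/R)$ since $\vec R$ is the almost-Pr\"ufer closure and $R\subseteq\vec R$ splits there — combined with $R_{[M]}^S\cap\vec R=R_{[M]}^{\vec R}$ and the fact that $S_M=\vec R_M$ forces any $x\in R_{[M]}^S$ to already lie in $\vec R$ after clearing denominators by an element outside $M$.
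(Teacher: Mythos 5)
The gap is in your step (2), exactly where you locate the ``main obstacle''. What your plan needs is that, for each $M\in X:=\mathrm{MSupp}(\overline R/R)$, the large quotient ring $R_{[M]}$ computed inside $S$ equals the one computed inside $\vec R$, and the justification you sketch rests on two claims that are both false. First, $\mathrm{MSupp}(S/\vec R)$ need not be disjoint from $\mathrm{MSupp}(\vec R/R)$: for $R_2:=k[[u^2,u^3]]\subset S_2:=k((u))$ one has $\widetilde{R_2}=R_2$ and $\vec R_2=\overline{R_2}=k[[u]]$, so both supports equal the maximal ideal of $R_2$. Second, even when $S_M=\vec R_M$ holds, it does not force $R_{[M]}\subseteq \vec R$: membership in $R_{[M]}$ only constrains an element at $M$ and says nothing at the other maximal ideals, where it can leave $\vec R$. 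In fact the identity you want is false. Take $R:=R_1\times R_2\subset S:=S_1\times S_2$ with $R_1\subset S_1$ a minimal integral extension of local rings with maximal ideal $m_1$ and $R_2\subset S_2$ as above; this is an FCP extension with $\widetilde R=R$ and $\vec R=\overline R=S_1\times k[[u]]$. For $M:=m_1\times R_2$ and $x:=(0,u^{-1})$ we have $(1,u^4)\notin M$ and $(1,u^4)x=(0,u^3)\in R$, so $x\in R_{[M]}\setminus\vec R$; hence $R_{[M]}\neq R_{[M,\vec R]}$. So step (2) cannot be repaired as stated: only the intersection over all $M\in X$ lies in $\widetilde R$, and that is essentially the theorem itself, for which you give no argument.

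The paper's proof sidesteps this. From the splitting of $R\subseteq\vec R$ at $\overline R$ (Proposition~\ref{split}, Theorem~\ref{4.5}) and Proposition~\ref{1.154} it keeps only the inclusion $\widetilde R\subseteq R':=\cap[R_{[M]}\mid M\in X]$ (since $R_{[M,\vec R]}\subseteq R_{[M]}$), and proves $R'\subseteq\widetilde R$ directly: if $\widetilde R\subset R'$, pick $U_1$ with $\widetilde R\subset U_1\subseteq R'$ minimal; it must be integral by maximality of $\widetilde R$, and $R$ cannot be integrally closed in $U_1$ (else $R\subseteq U_1$, being quasi-Pr\"ufer and integrally closed, would be Pr\"ufer, again contradicting maximality), so there is a minimal integral extension $R\subset R_1=R[x]\subseteq\overline R$ with $x\in R'$ and crucial ideal $M\in\mathrm{MSupp}(\overline R/R)$; then $x\in R_{[M]}$ gives $(R_1)_M=R_M$, a contradiction. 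That is the missing idea. Your steps (3)--(4) are essentially the paper's second half ($\mathrm V_R((R:\overline R))=\mathrm{MSupp}(\overline R/R)$ and the Artinian quotient $R/(R:\overline R)$), with one caveat: your ``$\bar s$ is a unit'' remark treats a single $s$ avoiding all of $X$, but to pass from ``$y\in R_{[M]}$ for every $M\in X$'' to one multiplier in $1+(R:\overline R)$ you must combine the several $\lambda_M$, as the paper does by noting that the classes $\bar\lambda_i$ generate the unit ideal of $R/(R:\overline R)$, so that $1-\sum\lambda_i\mu_i\in(R:\overline R)$ and $(1-x)y\in R$; make that step explicit.
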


\begin{proof} We consider the almost-Pr\"ufer extension $R\subseteq\vec R$. According to Proposition~\ref{split}, $R\subseteq\vec R$ splits at $\overline R$, with $\widetilde{R}=(\overline R)^o$, the complement of  $\overline R$ in $[R,\vec R]$. 

 Set $X:=\mathrm{MSupp}(\overline R/R)$ and $Y:=X^c$, the complement of $X$ in $\mathrm{MSupp}(\vec R/R)$, so that $Y=\mathrm{MSupp}(\widetilde R/R)$ and $\widetilde{R}=\sigma(Y)=\sigma(X^c)$ by Theorem~\ref{4.5}. Then, Proposition \ref{1.154} gives that $\widetilde{R}=\sigma(Y)=\cap[R_{[M,\vec R]}\, \,|\ M\in Y^c]=\cap[R_{[M,\vec R]}\mid M\in\mathrm{MSupp}(\overline R/R)] $, because $X=(Y^c)^c$. In particular, $\widetilde{R}\subseteq \cap[R_{[M]}\mid M\in\mathrm{MSupp}(\overline R/R)]=:R' $.

Assume that $\widetilde{R}\subset R'$, so that there exists $U_1\in[\widetilde{R},R']$ such that $\widetilde {R}\subset U_1$ is a minimal, necessarily integral extension, by maximality of $\widetilde{R}$. Indeed, if $\widetilde{R}\subset U_1$ is  minimal Pr\"ufer, then $R\subset U_1$ is Pr\"ufer, a contradiction. It follows that there exists $R_1\in[R,U_1]$ such that $R\subset R_1$ is a minimal integral extension. Let $x\in R_1$ be such that $R_1=R[x]$ and set $M:=\mathcal{C}(R,R_1)\in\mathrm{MSupp}(R_1/R)\subseteq\mathrm{MSupp}(\overline{R}/R)$. Since $x\in R'$, it follows that there exists $s\in R\setminus M$ such that $xs\in R$, giving $x/1\in R_M$, so that $(R_1)_M=R_M$, a contradiction. Then, $\widetilde{R}=R'$.

Set $I:=(R:\overline{R})$. Since $R\subseteq\overline{R}$ has FCP, \cite[Theorem 4.2]{DPP2} implies that $R/I$ is an Artinian ring; so that,  ${\mathrm V}(I):=\{M_1,\ldots,M_n\}$ is a finite set. By the previous equality, we get that $\widetilde{R}=\cap_{M_i\in{\mathrm V}(I)}R_{[M_i]}$ since ${\mathrm V}(I)=\mathrm{MSupp}(\overline{R}/R)$ (\cite[Proposition 4.1]{DPP3}). 

Let $y\in\widetilde{R}$. For each $i\in\mathbb N_n$, we have $y\in R_{[M_i]}$, and there exists $\lambda_i\in R\setminus M_i$ such that $\lambda_iy\in R$. But, in $R/I$, we have $\mathrm{Spec}(R/I)=\{M_1/I,\ldots,M_n/I\}={\mathrm D}(\overline{\lambda}_1)\cup\cdots\cup{\mathrm D}(\overline{\lambda}_n)$, where $\overline{\lambda}_i$ is the class of $\lambda _i$ in $R/I$. Hence, $R/I=\sum_{i=1}^n(R/I)\overline {\lambda}_i$, and there exist $\mu_1,\ldots,\mu_n\in R$ such that $x:=1-\sum_{i=1}^n{\lambda}_i\mu_i\in I$, giving that $y-\sum_{i=1}^ny{\lambda}_i\mu_i=xy$. But, $y{\lambda}_i\in R$ for each $i$ implies that $(1-x)y=\sum_{i=1}^ny{\lambda}_i\mu _i\in R$, with $1-x\in 1+I$; so that $y\in R_{[1+I]}$. 

Conversely, if $y\in R_{[1+I]}$, there exists some $x\in I$ such that $(1+x)y\in R$, with $x\in M_i$ for each $i$, whence $1+x\not\in M_i$ for each $i$. Then, for each $i$, we get that $y\in R_{[M_i]}$ and $y\in\cap_{M\in\mathrm{MSupp}(\overline{R}/R)}R_{[M]}=\widetilde{R}$. To conclude, we have $\widetilde{R}=R_{[1+I]}$.  
\end{proof}

\begin{corollary}\label{0.148} Let $R\subset S$ be an FCP extension and $T\in]R,\overline R]$. Then, $\widetilde R=R_{[1+(R:T)]}$.\end{corollary}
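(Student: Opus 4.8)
The plan is to deduce Corollary~\ref{0.148} from Theorem~\ref{4.09} by showing that the Pr\"ufer hull $\widetilde R$ of $R$ in $S$ only depends on the extension $R \subseteq \overline{R}$ up to the data relevant to the two formulas, and then that replacing $\overline R$ by any $T \in\ ]R,\overline R]$ does not change the right-hand side $R_{[1+(R:T)]}$. First I would recall that since $R \subset S$ has FCP, so does $R \subseteq T$ for any $T \in [R,S]$, and $T \subseteq \overline R$ is integral, so $\overline R$ is the integral closure of $R$ in $S$ and also of $T$ in $S$; moreover the Pr\"ufer hull $\widetilde R$ of $R$ in $S$ coincides with the Pr\"ufer hull of $R$ computed in any subextension containing it, and in particular we may work inside $\vec R = \overline R \widetilde R$ as in the proof of Theorem~\ref{4.09}. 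The key point is that $\widetilde R \cap \overline R = R$ (Proposition~\ref{split} applied to the almost-Pr\"ufer extension $R \subseteq \vec R$), so $\widetilde R$ meets $\overline R$ only trivially; this is what will let us shrink $\overline R$ down to $T$.

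The main step is the identity $1 + (R:\overline R) \subseteq$ (the saturation of) $1 + (R:T)$ at the level of large quotient rings, i.e. that $R_{[1+(R:\overline R)]} = R_{[1+(R:T)]}$ for every $T \in\ ]R,\overline R]$. Here I would argue as follows. Set $I := (R:\overline R)$ and $J := (R:T)$; since $R \subseteq T \subseteq \overline R$ we have $I \subseteq J$, hence $1 + I \subseteq 1 + J$ and therefore $R_{[1+I]} \subseteq R_{[1+J]}$. For the reverse inclusion, note that $R \subseteq T$ is an integral FCP extension, so by \cite[Theorem 4.2]{DPP2} the ring $R/J$ is Artinian and $\mathrm{V}(J)$ is a finite set of maximal ideals, namely $\mathrm{V}(J) = \mathrm{MSupp}(T/R) \subseteq \mathrm{MSupp}(\overline R/R) = \mathrm{V}(I)$ (using Proposition~\ref{2.11}). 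Repeating the Chinese-Remainder / partition-of-unity computation at the end of the proof of Theorem~\ref{4.09}, but now with $J$ in place of $I$, gives $R_{[1+J]} = \cap[\,R_{[M]} \mid M \in \mathrm{MSupp}(T/R)\,]$. So it remains to see that $\cap[\,R_{[M]} \mid M \in \mathrm{MSupp}(T/R)\,] = \cap[\,R_{[M]} \mid M \in \mathrm{MSupp}(\overline R/R)\,] = \widetilde R$, and by Theorem~\ref{4.09} the latter equals $R_{[1+I]}$; combined with the first inclusion this yields equality and the Corollary.

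The one place that needs genuine care — the part I expect to be the main obstacle — is the claim that intersecting the $R_{[M]}$ only over $M \in \mathrm{MSupp}(T/R)$ already recovers all of $\widetilde R$, even though $\mathrm{MSupp}(T/R)$ can be strictly smaller than $\mathrm{MSupp}(\overline R/R)$. The resolution is that the ``missing'' maximal ideals $M \in \mathrm{MSupp}(\overline R/R) \setminus \mathrm{MSupp}(T/R)$ are exactly those at which $R_M = T_M$ but $T_M \subsetneq \overline R_M$; for such $M$ one has $\widetilde R_M = R_M$ (because $\widetilde R \cap \overline R = R$ forces, after localizing and using that $R_M \subset \overline R_M$ controls the integral part, $\widetilde R_M = R_M$), hence $R_{[M]} \supseteq \widetilde R$ automatically and the factor $R_{[M]}$ is redundant in the intersection $\cap[\,R_{[M]} \mid M \in \mathrm{MSupp}(\overline R/R)\,]$. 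Thus dropping those $M$ leaves the intersection unchanged and equal to $\widetilde R$. Once this redundancy is justified, the chain of equalities $R_{[1+(R:T)]} = \cap[\,R_{[M]} \mid M \in \mathrm{MSupp}(T/R)\,] = \cap[\,R_{[M]} \mid M \in \mathrm{MSupp}(\overline R/R)\,] = \widetilde R$ closes the argument, and one notes the statement is vacuous-free since $T \neq R$ guarantees $\mathrm{MSupp}(T/R) \neq \emptyset$.
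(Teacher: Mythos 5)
Your reductions are fine up to a point: $(R:\overline R)\subseteq (R:T)$ gives $\widetilde R=R_{[1+(R:\overline R)]}\subseteq R_{[1+(R:T)]}$, and the Chinese-remainder computation from the proof of Theorem~\ref{4.09} does carry over with $J:=(R:T)$ (since $R/J$ is Artinian), giving $R_{[1+J]}=\cap\,[\,R_{[M]}\mid M\in\mathrm{MSupp}(T/R)\,]$ inside $S$. The genuine gap is the ``redundancy'' step, and it is not just unproved but false as you have set it up. Knowing $\widetilde R_M=R_M$ for the missing ideals (which indeed holds, and in fact for \emph{every} $M\in\mathrm{MSupp}(\overline R/R)$, since $\mathrm{MSupp}(\widetilde R/R)\cap\mathrm{MSupp}(\overline R/R)=\emptyset$) only yields $\widetilde R\subseteq R_{[M]}$; that is true of every factor of the intersection in Theorem~\ref{4.09} and says nothing about redundancy. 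To drop the factor $R_{[M]}$ you would need the opposite containment $\cap\,[\,R_{[M']}\mid M'\in\mathrm{MSupp}(T/R)\,]\subseteq R_{[M]}$, and this is exactly what fails: for $M\in\mathrm{MSupp}(\overline R/R)\setminus\mathrm{MSupp}(T/R)$, the elementary splitter $\sigma(M)$ of the integral ($\mathcal B$-)extension $R\subseteq\overline R$ lies in $R_{[M']}$ for every $M'\neq M$, yet $\sigma(M)\cap\widetilde R=R$, so these integral elements survive in your intersection although they are not in $\widetilde R$.

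Concretely, take $k\subset L$ a minimal field extension, $R:=k\times k\subset S:=L\times L$ and $T:=L\times k\in\,]R,\overline R]$ (here $\overline R=S$, so $\widetilde R=R$). Then $(R:T)=0\times k$, hence $(1,0)\in 1+(R:T)$ and $(1,0)\cdot(0,\alpha)=(0,0)\in R$ for every $\alpha\in L$, so $R_{[1+(R:T)]}=k\times L\neq R=\widetilde R$ when the large quotient ring is computed inside $S$. So the identity you are aiming at cannot hold ambiently in $S$ whenever $\mathrm{MSupp}(T/R)\subset\mathrm{MSupp}(\overline R/R)$. The paper's proof avoids the issue precisely by changing the ambient ring: it applies Theorem~\ref{4.09} to the extension $R\subset S':=\widetilde R\,T$, in which $T$ is the integral closure and $\widetilde R$ is the Pr\"ufer hull, so the conductor occurring is $(R:T)$ and no maximal ideals are ``missing''; the formula $\widetilde R=R_{[1+(R:T)]}$ is then obtained with the large quotient ring taken inside $S'$ (this re-ambienting is left implicit in the statement). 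With that reading, your first two steps are unnecessary and the corollary is immediate; without it, the reverse inclusion you need is false, so the argument cannot be closed along the route you propose.
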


\begin{proof} Use Theorem \ref{4.09} with $S':=\widetilde RT$. Then $\widetilde R$ is also the Pr\"ufer hull of $R\subset S'$ and $T$ is the integral closure of $R\subset S'$.
\end{proof}

\begin{corollary}\label{prufclos} If  $R\subset S$ is  an FCP extension such that $(R:\overline{R})=0$, then, $\widetilde {R}=R$; so that $R$ is Pr\"ufer closed in $S$.
\end{corollary}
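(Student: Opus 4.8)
The plan is to read this off directly from Theorem~\ref{4.09}, which computes the Pr\"ufer hull of an arbitrary FCP extension as a large quotient ring: $\widetilde R = R_{[1+(R:\overline R)]}$. Since Theorem~\ref{4.09} applies to every FCP extension $R\subset S$ (its proof only uses that $R/(R:\overline R)$ is Artinian, which holds by \cite[Theorem 4.2]{DPP2}, not that the conductor vanishes), the whole task reduces to evaluating the right-hand side under the hypothesis $(R:\overline R)=0$.

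So first I would set $I:=(R:\overline R)=0$, observing that the multiplicatively closed subset $1+I$ then collapses to the singleton $\{1\}$. Next I would unwind the Knebusch--Zhang definition $R_{[\Sigma]}=\{x\in S\mid sx\in R \text{ for some } s\in\Sigma\}$ at $\Sigma=\{1\}$: an element $x\in S$ lies in $R_{[\{1\}]}$ exactly when $1\cdot x\in R$, that is, when $x\in R$; hence $R_{[1+I]}=R_{[\{1\}]}=R$. Combining this with Theorem~\ref{4.09} yields $\widetilde R=R$. Finally, since the Pr\"ufer hull $\widetilde R$ is by definition the greatest Pr\"ufer subextension of $[R,S]$, the equality $\widetilde R=R$ is precisely the assertion that $R$ is Pr\"ufer closed in $S$, which completes the argument.

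I do not expect any genuine obstacle here; the substance is entirely contained in Theorem~\ref{4.09}, and what remains is a one-line unwinding of a definition. The only point worth a passing remark is that the hypothesis $(R:\overline R)=0$ is in fact quite restrictive --- by \cite[Theorem 4.2]{DPP2} it forces $R$ (if nonzero) to be Artinian and $R\subsetneq\overline R$ --- but none of this is needed for the proof; alternatively one could argue through the intersection formula $\widetilde R=\cap[R_{[M]}\mid M\in\mathrm{MSupp}(\overline R/R)]$ of Theorem~\ref{4.09} together with $\mathrm{MSupp}(\overline R/R)=\mathrm{V}_R(I)$ and $R_{[M]}\subseteq R_{[1+I]}=R$, but the direct route via $R_{[1+I]}$ is cleaner.
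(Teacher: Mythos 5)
Your argument is correct and is exactly the paper's proof: the paper simply notes that by Theorem~\ref{4.09}, $\widetilde R=R_{[1+(R:\overline R)]}=R_{[1]}=R$ when the conductor vanishes. Your unwinding of the Knebusch--Zhang definition at $\Sigma=\{1\}$ is the same one-line evaluation, so there is nothing to add.
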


\begin{proof} Obvious, since $\widetilde{R}=R_{[1]}=R$.
\end{proof}

 \begin{corollary} \label{0.149}
 An FCP extension $R\subseteq S$, with conductor $I:=(R:\overline R)$ is almost-Pr\"ufer if and only if $R_{[1+I]}\subseteq S$ is integral, in which case $\overline R \subseteq S=\overline{R}_{[1+I]}$ is Pr\"ufer.
\end{corollary}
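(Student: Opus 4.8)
The plan is to read off Corollary~\ref{0.149} directly from Theorem~\ref{4.09}, which already identifies the Pr\"ufer hull as $\widetilde R = R_{[1+I]}$ with $I=(R:\overline R)$, so that essentially no new argument is needed for the equivalence itself. By the definition of an almost-Pr\"ufer extension recalled in Section~2, $R\subseteq S$ is almost-Pr\"ufer if and only if $\widetilde R\subseteq S$ is integral; substituting $\widetilde R = R_{[1+I]}$ turns this into the assertion that $R_{[1+I]}\subseteq S$ is integral, which is exactly the stated equivalence. So the first (and only substantive) ingredient is Theorem~\ref{4.09} together with the definition.

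For the ``in which case'' clause I would first observe that ``$\overline R\subseteq S$ is Pr\"ufer'' does not use the almost-Pr\"ufer hypothesis at all: an FCP extension is quasi-Pr\"ufer by \cite[Corollary 3.4]{Pic 5}, which is precisely the statement that $\overline R\subseteq S$ is Pr\"ufer. It then remains to identify $S$ with $\overline R_{[1+I]}$. Here I would use that, when $R\subseteq S$ is FCP and almost-Pr\"ufer, $S=\widetilde R\,\overline R$ by \cite[Theorem 4.6]{Pic 5}. Since $I=(R:\overline R)$ is an ideal of $\overline R$, the set $1+I$ is a multiplicatively closed subset of $\overline R$, so the large quotient ring $\overline R_{[1+I]}=\{x\in S\mid sx\in\overline R\text{ for some }s\in 1+I\}$ is a well-defined subring of $S$ (see \cite{KZ}); it contains $\overline R$ (take $s=1$), and it contains $\widetilde R=R_{[1+I]}$ because $sx\in R\subseteq\overline R$. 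A subring of $S$ containing both $\widetilde R$ and $\overline R$ must contain their compositum $\widetilde R\,\overline R=S$, whence $S=\overline R_{[1+I]}$.

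I do not expect a genuine obstacle: all the depth sits in Theorem~\ref{4.09}, and the remaining points are routine. The two things to be careful about are that $1+I\subseteq\overline R$ really is multiplicatively closed (immediate, since $I$ is an ideal: $(1+a)(1+b)=1+(a+b+ab)$), so that the symbol $\overline R_{[1+I]}$ is meaningful as written, and that a large quotient ring is closed under the ring operations, so that containing $\widetilde R$ and $\overline R$ forces it to contain $\widetilde R\,\overline R$. Both are standard facts about large quotient rings from \cite{KZ}, and no FCP-specific input is needed for them.
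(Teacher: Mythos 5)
Your proposal is correct, and the first half (the equivalence itself, read off from Theorem~\ref{4.09} plus the definition of almost-Pr\"ufer, and the fact that $\overline R\subseteq S$ is Pr\"ufer because an FCP extension is quasi-Pr\"ufer) coincides with what the paper does. Where you genuinely diverge is in proving $S=\overline R_{[1+I]}$: the paper argues directly from the definition of almost-Pr\"ufer, taking $x\in S\setminus\overline R$, writing an integral dependence equation of $x$ over $\widetilde R=R_{[1+I]}$, and clearing denominators (multiplying by $s^n$ with $s=\prod s_i\in 1+I$) to produce a monic equation for $sx$ with coefficients in $R$, so that $sx\in\overline R$ and hence $S\subseteq\overline R_{[1+I]}$. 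You instead invoke the FCP characterization $S=\widetilde R\,\overline R$ (Proposition~\ref{split}, i.e.\ \cite[Theorem 4.6]{Pic 5}) and observe that $\overline R_{[1+I]}$ is a well-defined subring of $S$ (since $1+I$ is multiplicatively closed and contained in $R\subseteq\overline R$) containing both $\overline R$ and $R_{[1+I]}=\widetilde R$, hence containing their compositum $S$. Both arguments are valid; yours is shorter and more lattice-theoretic, at the cost of leaning on the compositum formula, which is specific to the FCP setting, whereas the paper's denominator-clearing computation only uses that $R_{[1+I]}\subseteq S$ is integral and so is a more self-contained (and in principle more portable) verification. Your remark that the Pr\"ufer property of $\overline R\subseteq S$ needs no almost-Pr\"ufer hypothesis is also accurate, though harmless either way.
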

\begin{proof}
The first part follows from Theorem \ref{4.09}. Set $\Sigma:=1+I$ and assume that the extension is almost-Pr\"ufer. Let $x\in S\setminus \overline R$. There exist a positive integer $n>1$ and $a_0,\ldots,a_{n-1}\in R_{[\Sigma]}$ such that $x^n+\sum_{i=0}^{n-1}a_ix^{i}=0\ (*)$. For each $i\in\{0,\ldots,n-1\}$, there exists $s_i\in\Sigma$ such that $b_i:=a_is_i\in R$. Set $s:=\prod_{i=0}^{n-1}s_i=s_is'_i$, where $s'_i\in\Sigma$ for each $i\in\{0,\ldots,n-1\}$. Multiplying the two sides of $(*)$ by $s^n$, we get $(sx)^n+\sum_{i=0}^{n-1}s^{n-1-i}s'_ib_i(sx)^{i}=0\ (**)$, so that $sx\in\overline{R}$. To sum up, we have proved that for any $x\in S$, there exists some $s\in \Sigma$ such that $sx\in\overline{R}$, which means that $S=\overline{R}_{[\Sigma]}$. Since an almost-Pr\"ufer extension is quasi-Pr\"ufer, $\overline R \subseteq S=\overline{R}_{[\Sigma]}$ is Pr\"ufer.
\end{proof}

The following lemma gives a new result on  $\vec R$.

\begin{lemma}\label{4.12} Let $R\subset S$ be an FCP extension such that $\overline R\neq S$. Then, $\vec R\neq\overline R$ if and only if $\mathrm{MSupp}_R(\overline R /R)\neq\mathrm {MSupp}_R(S/R)$,  in which case $[\overline R,\vec R]=\{T'\in[\overline R,S]\mid R\subseteq T'$ almost-Pr\"ufer$\}$.
\end{lemma}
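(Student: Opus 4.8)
The plan is to prove Lemma~\ref{4.12} by treating the two directions of the equivalence separately and then deriving the description of $[\overline R,\vec R]$ from the structure of the almost-Pr\"ufer closure. Throughout I would keep in mind the basic facts already established: $\vec R=\overline R\widetilde R=\overline{\widetilde R}$ is the greatest almost-Pr\"ufer subextension of $R\subset S$; $\overline R$ is the integral closure of $R$ in $\vec R$ and $\widetilde R$ its Pr\"ufer hull; and, by Proposition~\ref{split} applied to $R\subseteq\vec R$, the extension $R\subseteq\vec R$ splits at $\overline R$ with $\widetilde R=(\overline R)^o$. In particular, by Theorem~\ref{4.5}(3) (the crosswise exchange), $\mathrm{MSupp}_R(\overline R/R)=\mathrm{MSupp}_R(\vec R/\widetilde R)$ and $\mathrm{MSupp}_R(\widetilde R/R)=\mathrm{MSupp}_R(\vec R/\overline R)$, and $\mathrm{MSupp}_R(\vec R/R)=\mathrm{MSupp}_R(\overline R/R)\sqcup\mathrm{MSupp}_R(\widetilde R/R)$.

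For the forward direction, suppose $\vec R\neq\overline R$. Then $\widetilde R\neq R$ (otherwise $\vec R=\overline R\widetilde R=\overline R$), so $\mathrm{MSupp}_R(\widetilde R/R)\neq\emptyset$; pick $M$ in it. Since $R\subseteq\widetilde R$ is Pr\"ufer, hence a flat epimorphism, $R_M\neq\widetilde R_M$ forces $M\notin\mathrm{MSupp}_R(\overline R/R)$ — indeed $\overline R_M$ is integral over $R_M$ while $\widetilde R_M$ is a nontrivial flat epimorphic image, and one checks $\overline R_M=R_M$ here because $\mathrm{MSupp}(\overline R/R)$ and $\mathrm{MSupp}(\widetilde R/R)$ are disjoint as subsets of $\mathrm{MSupp}(\vec R/R)$ (this disjointness is exactly the splitting at $\overline R$, localized). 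On the other hand $M\in\mathrm{MSupp}_R(S/R)$ since $\widetilde R_M\neq R_M$ implies $S_M\neq R_M$. Hence $M\in\mathrm{MSupp}_R(S/R)\setminus\mathrm{MSupp}_R(\overline R/R)$, giving $\mathrm{MSupp}_R(\overline R/R)\neq\mathrm{MSupp}_R(S/R)$. For the contrapositive of the reverse direction, suppose $\vec R=\overline R$; then $\widetilde R\subseteq\overline R$, and since $\widetilde R$ is also the Pr\"ufer hull of $R$ in $\vec R=\overline R$, and $R\subseteq\overline R$ is integral, a Pr\"ufer integral extension is an isomorphism (stated in the excerpt), so $\widetilde R=R$. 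Thus $\mathrm{MSupp}_R(\widetilde R/R)=\emptyset$ and $\mathrm{MSupp}_R(S/R)=\mathrm{MSupp}_R(\overline R/R)\cup\mathrm{MSupp}_R(S/\overline R)$. The point is then to show $\mathrm{MSupp}_R(S/\overline R)\subseteq\mathrm{MSupp}_R(\overline R/R)$: for $M\in\mathrm{MSupp}_R(S/\overline R)$ we have $\overline R_M\neq S_M$, and $\overline R_M$ is the integral closure of $R_M$ in $S_M$, so $R_M\neq\overline R_M$ (else $R_M=\overline R_M$ would be its own integral closure in $S_M$ yet $R_M\subseteq S_M$ would then be Pr\"ufer at $M$, so $S_M=\widetilde R_M=R_M$, contradiction); hence $M\in\mathrm{MSupp}_R(\overline R/R)$. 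Therefore $\mathrm{MSupp}_R(S/R)=\mathrm{MSupp}_R(\overline R/R)$.

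For the final assertion, assume $\vec R\neq\overline R$, i.e. $R\subseteq\vec R$ properly splits at $\overline R$. Applying Theorem~\ref{4.5}(1) to the split FCP extension $R\subseteq\vec R$ at $\overline R$ gives an order-isomorphism $[R,\vec R]\to[R,\overline R]\times[\overline R,\vec R]$; in particular $[\overline R,\vec R]$ consists precisely of those $T'\in[\overline R,\vec R]$, and for each such $T'$ the extension $\overline R\subseteq T'$ sits inside $\overline R\subseteq\vec R$ which is Pr\"ufer (as $\overline R\subseteq S$ is Pr\"ufer because $R\subseteq S$ is FCP whence quasi-Pr\"ufer), so $\overline R\subseteq T'$ is Pr\"ufer, hence certainly almost-Pr\"ufer. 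Conversely, if $T'\in[\overline R,S]$ and $R\subseteq T'$ is almost-Pr\"ufer, then $T'\subseteq\vec R$ by maximality of $\vec R$ among almost-Pr\"ufer subextensions, so $T'\in[\overline R,\vec R]$. This proves $[\overline R,\vec R]=\{T'\in[\overline R,S]\mid R\subseteq T'\text{ almost-Pr\"ufer}\}$.

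The main obstacle I anticipate is the bookkeeping in the forward direction: one must be careful that the disjointness $\mathrm{MSupp}_R(\overline R/R)\cap\mathrm{MSupp}_R(\widetilde R/R)=\emptyset$ is legitimately available — it comes from $R\subseteq\vec R$ splitting at $\overline R$ together with $\widetilde R=(\overline R)^o$, so it is the relation of Theorem~\ref{4.5} rather than something about $R\subseteq S$ directly, and one must phrase the maximal-ideal arguments inside $\vec R$ before transferring to $S$. Everything else is a routine localization-and-flat-epimorphism argument using the already-quoted structure theorems.
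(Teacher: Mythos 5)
Your forward direction is fine (and takes a slightly different route from the paper, which instead produces a Pr\"ufer minimal step above $\overline R$ and uses its crucial ideal), but two steps do not do what is needed. First, in the contrapositive direction the chain ``$R_M=\overline R_M$ forces $R_M\subseteq S_M$ Pr\"ufer, so $S_M=\widetilde R_M=R_M$'' silently assumes that the Pr\"ufer hull commutes with localization, i.e.\ that the Pr\"ufer hull of $R_M$ in $S_M$ equals $(\widetilde R)_M$; only the inclusion $(\widetilde R)_M\subseteq\widetilde{R_M}$ is automatic, and the reverse inclusion (the one you need) is established nowhere in the paper. The conclusion is true but requires an argument: for instance, since $R\subset S$ has FCP, \cite[Lemma 1.8]{Pic 6} provides $T\in[R,S]$ with $R\subset T$ minimal and $\mathcal{C}(R,T)=M$; this $T$ cannot be integral over $R$, since then $T\subseteq\overline R$ would give $T_M\subseteq\overline R_M=R_M$, contradicting $\mathrm{MSupp}(T/R)=\{M\}$; hence $R\subset T$ is Pr\"ufer minimal and $\widetilde R\neq R$, the contradiction you want. (The paper avoids $\widetilde R$ altogether: it builds a Pr\"ufer minimal extension $\overline R\subset T_1$ crucial at a maximal ideal lying over $M$ and checks that $R\subset T_1$ is almost-Pr\"ufer, whence $\vec R\neq\overline R$.)

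The more serious gap is in the last assertion. You must show that $R\subseteq T'$ is almost-Pr\"ufer for every $T'\in[\overline R,\vec R]$, but what you prove is that $\overline R\subseteq T'$ is Pr\"ufer, hence almost-Pr\"ufer. That is a statement about a different extension, and it holds for \emph{every} $T'\in[\overline R,S]$ (because $\overline R\subseteq S$ is Pr\"ufer), so it cannot single out $[\overline R,\vec R]$; recall that ``integral followed by Pr\"ufer'' yields quasi-Pr\"ufer, not almost-Pr\"ufer, and Example~\ref{0.23} gives $T'\in[\overline R,S]$ (namely $T'=K$) with $\overline R\subseteq T'$ Pr\"ufer while $R\subseteq T'$ is not almost-Pr\"ufer. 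The correct argument is the paper's one line: for $T'\in[\overline R,\vec R]$ one has $\mathrm{MSupp}_R(T'/\overline R)\subseteq\mathrm{MSupp}_R(\vec R/\overline R)$, which is disjoint from $\mathrm{MSupp}_R(\overline R/R)$ because $R\subseteq\vec R$ is almost-Pr\"ufer; since $\overline R$ is the integral closure of $R$ in $T'$, Proposition~\ref{split} then shows that $R\subseteq T'$ is almost-Pr\"ufer. Your converse inclusion, via maximality of the almost-Pr\"ufer closure $\vec R$, is correct.
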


\begin{proof} Obviously, $\mathrm{MSupp}_R(S/R)=\mathrm{MSupp}_R(\overline R/R)\cup\mathrm {MSupp}_R(S/\overline R)$. Set $T:=\vec R$. Assume first that $T\neq\overline R$.
Since $\overline R$ is also the integral closure of $R$ in $T$, with $R\subset T$ almost-Pr\"ufer, we have $\mathrm{MSupp}(\overline R/R)\cap\mathrm{MSupp}(T/\overline R)=\emptyset$. Moreover, there exists $R'\in[\overline R,T]$ such that $\overline R\subset R'$ is a minimal Pr\"ufer extension. Set $N:=\mathcal{C}(\overline R,R')\in\mathrm{Max}(\overline R)$ and $M:=N\cap R$. Then $M\in\mathrm{MSupp}(T/\overline R)$ in view of Proposition~\ref{2.11}. The assumption gives that $M\not\in\mathrm{MSupp}(\overline R/R)$, so that $\mathrm{MSupp}(\overline R/R)\neq\mathrm {MSupp}(S/R)$. 

Conversely, assume that $\mathrm{MSupp}(\overline R/R)\neq\mathrm{MSupp}(S/R)$ and let $M\in\mathrm{MSupp}(S/R)\setminus\mathrm{MSupp}(\overline R/R)$, so that $M\in\mathrm{MSupp}(S/\overline R)$. Let $\mathcal C$ be a maximal chain defined by $\overline R=R_0\subset\cdots\subset R_i\subset\cdots\subset R_n=S$, where $R_i\subset R_{i+1}$ is minimal with $M_i:=\mathcal{C}(R_i,R_{i+ 1})$. There exists some $k$  such that  $M_k\cap R=M$.  
 Set $N:=M_k\cap \overline R\in\mathrm{Spec}(\overline R)$. Then,  $M=N\cap R$ with $N\in\mathrm{Max}(\overline R)$ because $R\subseteq\overline R$ is integral and $N\in\mathrm{MSupp}_{\overline R}(S/\overline R)$. According to \cite[Lemma 1.8]{Pic 6}, there exists $T_1\in[\overline R,S]$ such that $\overline R\subset T_1$ is Pr\"ufer minimal with $N=\mathcal{C}(\overline R,T_1)$, so that $\{N\}=\mathrm {MSupp}_{\overline R}(T_1/\overline R)$, leading to $\{M\}=\mathrm {MSupp}(T_1/\overline R)$; 
   so that, $\mathrm{MSupp}(T_1/\overline R)\cap\mathrm{MSupp}(\overline R/R)=\emptyset$ because $M\not\in\mathrm{MSupp}(\overline R/R)$. It follows that $R\subset T_1$ is almost-Pr\"ufer and $T_1\in[\overline R,T]$.
  To conclude $\vec R\neq\overline R$. 

The last assertion comes from $\mathrm{MSupp}_R(T'/\overline R)\subseteq\mathrm{MSupp}_R(T/\overline R)$ for any $T'\in[\overline R,T]$.
\end{proof}

The above lemma allows us  to calculate the cardinality of $[R,S]$ for an FIP extension $R\subset S$.

\begin{corollary}\label{4.13} Let $R\subset S$ be an FIP extension with $\mathrm{MSupp}_R(\overline R/R)\neq\mathrm{MSupp}_R(S/R)$. Set $[R,\overline R]:=\{R_i\}_{i=1,\ldots,n}$ and $[\overline R,S]:=\{R'_j\}_{j=1,\ldots,p}$. Then $|[R,S]|=\sum_{j=1}^pr_j=\sum_{i=1}^ns_i$, where

\noindent $r_j:=|\{R_i\in[R,\overline R]\mid R_i\subseteq R'_j$  almost-Pr\"ufer$\}|$ for each $j$, and

\noindent $s_i:=|\{R'_j\in[\overline R,S]\mid R_i\subseteq R'_j$  almost-Pr\"ufer$\}|$ for each $i$.

More precisely, $s_i=|[\overline R,T_i]|$, where $T_i$ is the almost-Pr\"ufer closure of $R_i\subset S$.
\end{corollary}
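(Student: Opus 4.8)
The plan is to exploit the order-isomorphism $\psi\colon[R,S]\to\{(T',T'')\in[R,\overline R]\times[\overline R,S]\mid T'\subseteq T''\text{ almost-Pr\"ufer}\}$ of Theorem \ref{4.10}, which sends $T\mapsto(T\cap\overline R,\overline RT)$. Since $\psi$ is a bijection, $|[R,S]|$ equals the cardinality of the set $\Lambda:=\{(i,j)\in\mathbb N_n\times\mathbb N_p\mid R_i\subseteq R'_j\text{ almost-Pr\"ufer}\}$, which is finite because $R\subset S$ has FIP. The whole statement is then simply the two ways of counting $\Lambda$ by fibering over its two coordinate projections: fixing the second coordinate $R'_j$ gives $r_j=|\{i\mid (i,j)\in\Lambda\}|$ and summing over $j$ yields $|[R,S]|=\sum_{j=1}^p r_j$; fixing the first coordinate $R_i$ gives $s_i=|\{j\mid (i,j)\in\Lambda\}|$ and summing over $i$ yields $|[R,S]|=\sum_{i=1}^n s_i$. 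Formally, $\Lambda=\bigsqcup_{j=1}^p(\{j\}\times\{i\mid R_i\subseteq R'_j\text{ a.-P.}\})=\bigsqcup_{i=1}^n(\{i\}\times\{j\mid R_i\subseteq R'_j\text{ a.-P.}\})$, so the two sums agree with $|\Lambda|$.

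It remains to identify $s_i$ with $|[\overline R,T_i]|$, where $T_i$ denotes the almost-Pr\"ufer closure of $R_i\subset S$; this is where Lemma \ref{4.12} enters. First observe that since $R_i\in[R,\overline R]$, the ring $\overline R$ is also the integral closure of $R_i$ in $S$, so the relevant Pr\"ufer hull and integral closure for the extension $R_i\subset S$ are $\widetilde{R_i}$ and $\overline R$, and its almost-Pr\"ufer closure is $T_i=\overline R\,\widetilde{R_i}\in[\overline R,S]$. For $R'_j\in[\overline R,S]$, the extension $R_i\subseteq R'_j$ has integral closure $\overline R$ inside $R'_j$, so by Proposition \ref{split} it is almost-Pr\"ufer if and only if $R_i\subseteq R'_j$ splits at $\overline R$, equivalently (by the definition of the almost-Pr\"ufer closure and Lemma \ref{4.12} applied to the extension $R_i\subset S$) if and only if $R'_j\in[\overline R,T_i]$. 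Indeed, Lemma \ref{4.12} says that when $\mathrm{MSupp}_{R_i}(\overline R/R_i)\neq\mathrm{MSupp}_{R_i}(S/R_i)$ we have $[\overline R,\vec{R_i}]=\{T'\in[\overline R,S]\mid R_i\subseteq T'\text{ almost-Pr\"ufer}\}$, and $\vec{R_i}=T_i$; in the remaining case $\mathrm{MSupp}_{R_i}(\overline R/R_i)=\mathrm{MSupp}_{R_i}(S/R_i)$ the equality $T_i=\overline R$ holds and the only $R'_j$ with $R_i\subseteq R'_j$ almost-Pr\"ufer is $R'_j=\overline R$ (any $R'_j\supsetneq\overline R$ has $\overline R\subsetneq R'_j$ Pr\"ufer, so $R_i\subseteq R'_j$ is not almost-Pr\"ufer by maximality of the Pr\"ufer hull), again matching $[\overline R,T_i]=\{\overline R\}$. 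Hence $\{j\mid R_i\subseteq R'_j\text{ a.-P.}\}$ is exactly $\{j\mid R'_j\in[\overline R,T_i]\}$, and $s_i=|[\overline R,T_i]|$.

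The genuinely substantive point — everything else is bookkeeping with a bijection — is the identification of the fiber $\{R'_j\mid R_i\subseteq R'_j\text{ almost-Pr\"ufer}\}$ with the interval $[\overline R,T_i]$, and in particular checking that no $R'_j$ strictly above $T_i$ can give an almost-Pr\"ufer extension over $R_i$. This is where one must be careful to invoke Lemma \ref{4.12} for the extension $R_i\subset S$ (not $R\subset S$) together with the fact that the almost-Pr\"ufer closure is the \emph{greatest} almost-Pr\"ufer subextension, so that $R_i\subseteq R'_j$ almost-Pr\"ufer forces $R'_j\subseteq T_i$; combined with $R'_j\supseteq\overline R\supseteq R_i$ this places $R'_j$ in $[\overline R,T_i]$, and conversely every element of $[\overline R,T_i]$ yields an almost-Pr\"ufer extension of $R_i$ since subextensions of almost-Pr\"ufer extensions need only be handled via Proposition \ref{split} at the common integral closure $\overline R$. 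Once this is in place, the corollary follows immediately from the double count of $\Lambda$.
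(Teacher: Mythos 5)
Your proof is correct and follows essentially the same route as the paper: the two sum formulas come from double-counting the image of the bijection $\psi$ of Theorem \ref{4.10}, and the identification $s_i=|[\overline R,T_i]|$ comes from Lemma \ref{4.12} applied to the extension $R_i\subset S$ (legitimate because $\overline R\neq S$ under the hypothesis). The only divergence is the treatment of the case $\mathrm{MSupp}_{R_i}(\overline R/R_i)=\mathrm{MSupp}_{R_i}(S/R_i)$: the paper shows by a maximal-chain argument that this cannot happen when $R_i\neq\overline R$ (handling $R_i=\overline R$ by a separate remark), whereas you absorb it directly by noting that then the almost-Pr\"ufer closure of $R_i\subset S$ equals $\overline R$, so the fibre collapses to $\{\overline R\}=[\overline R,T_i]$ --- an equally valid and slightly shorter handling.
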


\begin{proof} In Theorem~\ref{4.10}, we define the bijection $\psi:[R,S]\to 
\{(T',T'')\in[R,\overline R]\times[\overline R,S]\mid T'\subseteq T''$ almost-Pr\"ufer$\}$ by $\psi(T):=(T\cap\overline R,\overline RT)$, so that $|[R,S]|=|\psi([R,S])|$. But $\psi([R,S])=\cup_{j=1}^p\{(R_i,R'_j)\in[R,\overline R]\times[\overline R,S]\mid R_i\subseteq R'_j$ almost-Pr\"ufer$\}=\cup_{i=1}^n\{(R_i,R'_j)\in[R,\overline R]\times[\overline R,S]\mid R_i\subseteq R'_j$ almost-Pr\"ufer$\}$. This gives the first result.

Fix some $R_i\in[R,\overline R],\ R_i\neq\overline R$. The assumption gives $\overline R\neq S$. Then, $\overline R$ is also the integral closure of $R_i$ in $S$. We may apply Lemma~\ref{4.12} to the extension $R_i\subseteq S$, because $\mathrm{MSupp}_{R_i}(\overline R/R_i)\subseteq\mathrm{MSupp}_{R_ i}(S/R_i)$. If $\mathrm{MSupp}_{R_i}(\overline R/R_i)=\mathrm{MSupp}_{R_i}(S/R _i)$, then, $N\in\mathrm{MSupp}_{R_i}(\overline R/R_i)$ for any $N\in\mathrm{MSupp}_{R_i}(S/R_i)$. Let $R_i=:T_0\subset\cdots\subset T_l:=\overline R\subset\cdots\subset T_ m:=S$ be a maximal chain of extensions such that $T_k\subset T_{k+1}$ is a minimal extension for each $k\in\{0,\ldots,m-1\}$. Set $M_k:=\mathcal C(T_k,T_{k+1})$. This means that for each $k\geq l$, there exists $k'<l$ such that $ M_k\cap R_i=M_{k'}\cap R_i$, so that $M_k\cap R=M_{k'}\cap R\in\mathrm {MSupp}_R(\overline R/R)$. Since $\mathrm{MSupp}_R(\overline R/R)\neq\mathrm{MSupp}_R(S/R)$, there exists some $k\geq l$ such that $M_k\cap R\not\in\mathrm{MSupp}_R(\overline R/R)$. In particular, $M_k\cap R_i\not\in\mathrm{MSupp}_{R_i}(\overline R/R_i)$ for any $i<k$ while $M_k\cap R_i\in\mathrm{MSupp}_{R_i}(S/R_i)$, a contradiction. It follows that $\mathrm{MSupp}_{R_i}(\overline R/R_i)\neq \mathrm{MSupp}_{R_i}(S/R _i)$.

Let $T_i$ be the almost-Pr\"ufer closure of $R_i\subset S$. By Lemma~\ref{4.12}, we have $[\overline R,T_i]=\{R'_j\in[\overline R,S]\mid R_i\subseteq R'_j$ almost-Pr\"ufer$\}$, so that $s_i=|[\overline R,T_i]|$.

We may remark that, when $R_i=\overline R$, we get $s_i=|[\overline R,S]|$, since $\mathrm{MSupp}_{\overline R}(\overline R/\overline R)=\emptyset$.
\end{proof}

 Let $R\subset S$ be an FCP extension. Since $R\subseteq \vec R$ is almost-Pr\"ufer, we may apply the previous results to the extension $R\subseteq\vec R$, keeping in mind that $\overline{R}$ (resp. $\widetilde {R}$) is also the integral closure (resp. Pr\"ufer hull) of $R\subseteq \vec R$.

\begin{proposition}\label{7.2} Let $R\subset S$ be a FCP extension. The following statements hold:
\begin{enumerate}
\item The maps $\psi:[R,\vec R]\to[R,\overline{R}]\times[\overline{R},\vec R]$ defined by $\psi(T):=(T\cap\overline{R},T\overline{R})$ and $\psi':[R,\vec R]\to[R,\widetilde{R}]\times[\widetilde{R},\vec R]$ defined by $\psi'(T):=(T\cap\widetilde{R},T\widetilde {R})$ for any $T\in[R,\vec R]$ are order-isomorphisms.

\item $\mathrm{MSupp}(\vec R/\widetilde{R})=\mathrm{MSupp}(\overline{R}/R)$, $\mathrm{MSupp}(\vec R/\overline{R})=\mathrm{MSupp}(\widetilde{R}/R)$ and
 $\mathrm{MSupp}(\vec R/R)=\mathrm{MSupp}(\widetilde{R}/R)\cup \mathrm{MSupp}(\overline{R}/R)$.

\item The maps $\psi_1:[R,\widetilde{R}]\to[\overline{R},\vec R]$ defined by $\psi_1(T):=T\overline {R}$ and $\psi'_1:[R,\overline{R}]\to[\widetilde{R},\vec R]$ defined by $\psi'_1(T):=T\widetilde {R}$ are order-isomorphisms.

\item The map $\theta:[R,\widetilde{R}]\times[R,\overline{R}]\to[R,\
\vec R]$ defined by $\theta(T,T'):=TT'$, is an order-isomorphism. In particular, if $R\subset S$ has FIP, then $ |[R,\vec R] \|=|[R,\widetilde{R}]||[R,\overline{R}]|$.
\end{enumerate}
\end{proposition}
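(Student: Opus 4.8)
The plan is to apply the machinery already developed for split FCP extensions to the almost-Pr\"ufer extension $R\subseteq\vec R$. The crucial observation, which I would state first, is that $R\subseteq\vec R$ is an almost-Pr\"ufer FCP extension (by \cite[Theorem 4.6]{Pic 5}, $\vec R=\overline R\widetilde R=\overline{\widetilde R}$), and that $\overline R$ is the integral closure of $R$ in $\vec R$ while $\widetilde R$ is the Pr\"ufer hull of $R$ in $\vec R$. Hence, by Proposition~\ref{split} (applied to $R\subseteq\vec R$ in place of $R\subseteq S$), the extension $R\subseteq\vec R$ splits at $\overline R$ \emph{and} at $\widetilde R$, and moreover $\widetilde R=(\overline R)^o$ is the complement of $\overline R$ in $[R,\vec R]$, with $\overline R=(\widetilde R)^o$. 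Once this is in place, all four items follow by specializing the general split-extension results of Section~4 to the extension $R\subseteq\vec R$ with $T=\overline R$ or $T=\widetilde R$.

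For item (1), since $R\subseteq\vec R$ splits at $\overline R$, Theorem~\ref{4.5}(1) gives that $\psi\colon[R,\vec R]\to[R,\overline R]\times[\overline R,\vec R]$, $\psi(T)=(T\cap\overline R,\,T\overline R)$, is an order-isomorphism; symmetrically, splitting at $\widetilde R$ yields that $\psi'$ is an order-isomorphism. For item (2): by Theorem~\ref{4.5}(3) applied at $T=\overline R$ (with complement $T^o=\widetilde R$), we get $\mathrm{MSupp}(\overline R/R)=\mathrm{MSupp}(\vec R/\widetilde R)$ and $\mathrm{MSupp}(\vec R/\overline R)=\mathrm{MSupp}(\widetilde R/R)$; the last equation of (2) is then the obvious partition $\mathrm{MSupp}(\vec R/R)=\mathrm{MSupp}(\widetilde R/R)\cup\mathrm{MSupp}(\overline R/R)$, which is a disjoint union precisely because $R\subseteq\vec R$ splits at $\widetilde R$ (equivalently at $\overline R$).

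For item (3): apply Proposition~\ref{1.169}(1) to $R\subseteq\vec R$ split at $T=\overline R$, whose complement is $T^o=\widetilde R$; that proposition gives that $V\mapsto V\,T=V\overline R$ is an order-isomorphism $[R,T^o]=[R,\widetilde R]\to[T,\vec R]=[\overline R,\vec R]$, which is exactly $\psi_1$. Swapping the roles of $\overline R$ and $\widetilde R$ gives the statement for $\psi'_1$. For item (4): apply Proposition~\ref{1.169}(2) to $R\subseteq\vec R$ split at $T=\overline R$ with complement $T^o=\widetilde R$: the map $(V,W)\mapsto VW$ from $[R,T]\times[R,T^o]=[R,\overline R]\times[R,\widetilde R]$ to $[R,\vec R]$ is an order-isomorphism, and after reordering the factors this is $\theta$. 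The FIP cardinality statement then follows immediately by taking cardinalities.

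I do not expect a genuine obstacle here: every ingredient is already proved in Section~4 for arbitrary split FCP extensions, and the only thing to verify carefully is the \emph{bookkeeping} of which of $\overline R,\widetilde R$ plays the role of $T$ versus $T^o$ in each invocation (and that $\overline R$, $\widetilde R$ are respectively the integral closure and Pr\"ufer hull of $R$ in $\vec R$, not merely in $S$). The mild subtlety worth a sentence in the writeup is that Proposition~\ref{split} must be applied to the extension $R\subseteq\vec R$ rather than to $R\subseteq S$ itself --- the latter need not be almost-Pr\"ufer --- but this is legitimate because $R\subseteq\vec R$ has FCP and is almost-Pr\"ufer by construction.
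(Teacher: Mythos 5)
Your proposal is correct and follows essentially the same route as the paper: pass to the almost-Pr\"ufer extension $R\subseteq\vec R$ (noting $\overline R$ and $\widetilde R$ are its integral closure and Pr\"ufer hull), invoke Proposition~\ref{split} to see it splits at both, and then deduce (1)--(2) from Theorem~\ref{4.5} and (3)--(4) from Proposition~\ref{1.169}, using the symmetry between $\overline R$ and $\widetilde R$. The only cosmetic difference is that the paper also cites Corollary~\ref{4.6} for (1)--(2), which changes nothing of substance.
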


\begin{proof} (1) and (2) follow from Proposition ~\ref{split}, Theorem ~\ref{4.5}  and Corollary ~\ref{4.6}. 

(3) We begin to remark that $\overline{R}$ and $\widetilde{R}$ play symmetric roles. Then, use Proposition ~\ref{1.169} (1).

(4) is Proposition ~\ref{1.169} (2).  The FIP case is obvious.
\end{proof}

\begin{corollary}\label{simplifiable 1}Let $R\subset S$ be an FCP ring extension. The following properties hold:
\begin{enumerate}
\item $U\overline{R}=V\overline{R}$ implies $U=V$ for any $U,V\in[R,\widetilde{R}]$.
\item $U\cap \overline{R}=V\cap \overline{R}$ implies $U=V$ for any $U,V\in[\widetilde{R},\vec R]$.
\item $U\widetilde{R}=V\widetilde{R}$ implies $U=V$ for any $U,V\in[R,\overline{R}]$.
\item $U\cap \widetilde{R}=V\cap \widetilde{R}$ implies $U=V$ for any $U,V\in[\overline{R},\vec R]$.
\end{enumerate}
\end{corollary}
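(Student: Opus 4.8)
The plan is to deduce all four statements from the already established Corollary~\ref{simplifiable} by working inside the almost-Pr\"ufer extension $R\subseteq\vec R$. First I would recall the relevant structure: since $\vec R\in[R,S]$, the extension $R\subseteq\vec R$ is again FCP, and by construction $R\subseteq\vec R$ is almost-Pr\"ufer with integral closure $\overline R$ and Pr\"ufer hull $\widetilde R$. By Proposition~\ref{split} it therefore splits both at $\overline R$ and at $\widetilde R$; moreover, applying Proposition~\ref{split}(4) to $R\subseteq\vec R$ gives $R=\overline R\cap\widetilde R$ and $\vec R=\overline R\,\widetilde R$, so $\widetilde R$ is a complement of $\overline R$ in $[R,\vec R]$ and, by the uniqueness statement in Theorem~\ref{4.5}(2), $\widetilde R=(\overline R)^o$; symmetrically $\overline R=(\widetilde R)^o$.

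With this identification in hand, the four assertions are immediate instances of Corollary~\ref{simplifiable} applied to the FCP extension $R\subseteq\vec R$. Taking $T:=\overline R$, so that $T^o=\widetilde R$, part (1) of Corollary~\ref{simplifiable} gives statement (1) and part (2) gives statement (2). Taking instead $T:=\widetilde R$, so that $T^o=\overline R$, part (1) gives statement (3) and part (2) gives statement (4).

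Alternatively one can argue directly from the order-isomorphisms of Proposition~\ref{7.2}: for (1), if $U,V\in[R,\widetilde R]$ then $U\cap\overline R=V\cap\overline R=R$ because $\widetilde R\cap\overline R=R$, so the hypothesis $U\overline R=V\overline R$ forces $\psi(U)=(R,U\overline R)=(R,V\overline R)=\psi(V)$, whence $U=V$ by injectivity of $\psi$; statements (2), (3), (4) follow the same way, using $\psi$ for (2) and $\psi'$ for (3) and (4). There is essentially no obstacle in this corollary — the only point deserving a word of care is the identification of complements, namely that $\widetilde R$ (resp.\ $\overline R$) is the unique complement of $\overline R$ (resp.\ $\widetilde R$) inside $[R,\vec R]$, which is precisely what Proposition~\ref{split} together with Theorem~\ref{4.5}(2) yields once one records that $\vec R=\overline R\,\widetilde R$.
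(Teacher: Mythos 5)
Your proposal is correct and follows essentially the same route as the paper: the paper's proof is precisely to apply Corollary~\ref{simplifiable} to the extension $R\subseteq\vec R$, which splits at $\overline R$ and $\widetilde R$, and your matching of $T:=\overline R$ (with $T^o=\widetilde R$) for (1)--(2) and $T:=\widetilde R$ (with $T^o=\overline R$) for (3)--(4) is exactly the intended instantiation. The extra details you supply (uniqueness of the complement via Theorem~\ref{4.5}(2) and the alternative argument via Proposition~\ref{7.2}) are consistent with, but not needed beyond, the paper's one-line proof.
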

\begin{proof} Apply Corollary \ref{simplifiable} to the extension $R\subseteq\vec R$ since $R\subseteq\vec R$ splits at $\overline{R}$ and $\widetilde{R}$.
\end{proof}

\begin{corollary}\label{simplifiable 2}Let $R\subset S$ be an FCP ring extension and let $T,U\in[R,S]$ be such that $R\subset T$ is integral and $R\subset U$ is Pr\"ufer.    
\begin{enumerate}
\item $VT=WT$ implies $V=W$ for any $V,W\in[R,U]$.
\item $V\cap U=W\cap U$ implies $V=W$ for any $V,W\in[R,T]$.
\end{enumerate}
\end{corollary}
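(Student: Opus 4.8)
The plan is to reduce both cancellation properties to Corollary \ref{simplifiable 1}, using only the two elementary facts that $T\subseteq\overline R$ (since $R\subset T$ is integral) and $U\subseteq\widetilde R$ (since $R\subset U$ is Pr\"ufer). These give at once $T\overline R=\overline R$, $U\widetilde R=\widetilde R$, $[R,U]\subseteq[R,\widetilde R]$ and $[R,T]\subseteq[R,\overline R]$, so that the full closures $\overline R,\widetilde R$ can always be multiplied in or intersected in to replace $T$ and $U$.

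For (1): given $V,W\in[R,U]$ with $VT=WT$, I would multiply both sides on the right by $\overline R$; since $T\overline R=\overline R$ this gives $V\overline R=(VT)\overline R=(WT)\overline R=W\overline R$. As $V,W\in[R,\widetilde R]$, Corollary \ref{simplifiable 1}(1) then forces $V=W$.

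For (2): the argument should mirror the intersection-cancellation parts of Corollary \ref{simplifiable 1} rather than part (1), since the naive move of intersecting with $\widetilde R$ is useless (it sends every member of $[R,\overline R]$ to $R$). Instead I would pass into $\vec R=\overline R\widetilde R$, where $R\subseteq\vec R$ is almost-Pr\"ufer and hence splits at $\overline R$ and at $\widetilde R$ (Proposition \ref{split}), and transport the datum $V\cap U=W\cap U$ through the order-isomorphisms of Proposition \ref{7.2} (the maps $\psi'$, $\psi'_1$, or $\theta$) together with Proposition \ref{1.169}, so that the hypothesis becomes an equality of the form to which Corollary \ref{simplifiable 1}(2) or (4) applies; a final localization check at the maximal ideals of $\mathrm{MSupp}(S/R)$, using $\mathrm{MSupp}(\overline R/R)\cap\mathrm{MSupp}(\widetilde R/R)=\emptyset$, then upgrades the resulting local equalities to $V=W$.

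The routine half is (1); the genuine obstacle is (2), because the product trick used for (1) does not dualize verbatim to intersections, so the real work lies in selecting the correct order-isomorphism among those of Proposition \ref{7.2} and verifying that the cancellation of Corollary \ref{simplifiable 1} is legitimately applicable to the transported data.
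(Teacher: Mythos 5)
Your treatment of item (1) is correct, and it is only a mild variant of the paper's route: the paper applies Corollary \ref{simplifiable 1} to the subextension $R\subset TU$, which is almost-Pr\"ufer with integral closure $T$ and Pr\"ufer hull $U$ (this follows from Proposition \ref{split}(5) and the uniqueness of the complement in Theorem \ref{4.5}(2)), whereas you push $T$ up to $\overline R$ and invoke Corollary \ref{simplifiable 1}(1) for the ambient extension; both arguments are legitimate and rest on the same cancellation statement.

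For item (2), however, there is a genuine gap, and it is not only that the decisive step is left unexecuted (the ``transport the datum through $\psi'$, $\psi'_1$ or $\theta$ so that Corollary \ref{simplifiable 1}(2) or (4) applies'' is precisely the whole content of a proof and is never carried out). The plan cannot be carried out for the statement read literally: since $R\subseteq T\cap U$ is both integral (it sits inside $T$) and Pr\"ufer (it sits inside $U$), one has $T\cap U=R$, hence $V\cap U=R$ for \emph{every} $V\in[R,T]$. Thus the hypothesis $V\cap U=W\cap U$ is automatic and carries no information, and no order-isomorphism of Proposition \ref{7.2} nor any localization at $\mathrm{MSupp}(S/R)$ can recover $V$ from $V\cap U$; taking $V:=R$ and $W:=T$ already defeats the literal claim. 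What the paper's one-line proof actually delivers, by applying Corollary \ref{simplifiable 1}(3)--(4) to $R\subset TU$ with $T=\overline{R}^{TU}$ and $U=\widetilde{R}^{TU}$, is the pair of statements ``$VU=WU$ implies $V=W$ for $V,W\in[R,T]$'' and ``$V\cap U=W\cap U$ implies $V=W$ for $V,W\in[T,TU]$''; item (2) as printed mixes the two and should be read accordingly. Your proposal neither detects this nor proves either corrected form; in particular, passing to $\vec R=\overline R\widetilde R$ does not help, because elements of $[R,T]$ lie below $\overline R$ while the intersection-cancellation statements of Corollary \ref{simplifiable 1} ((2) and (4)) concern rings above $\widetilde R$, respectively above $\overline R$, and for proper $T\subset\overline R$, $U\subset\widetilde R$ intersecting with $\widetilde R$ is not the same datum as intersecting with $U$. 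The correct mechanism is the paper's: work inside $R\subset TU$, identify $T$ and $U$ as its integral closure and Pr\"ufer hull, and quote the cancellation there.
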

\begin{proof} Use Corollary \ref{simplifiable 1} applied to the extension $R\subset TU$ because $T$ (resp. $U$) is the integral closure (resp. Pr\"ufer hull) of the almost-Pr\"ufer extension $R\subset TU$.   
\end{proof}

Gathering the previous results, we get new equivalences for an FCP extension to be almost-Pr\"ufer.

\begin{theorem}\label{7.6} Let $R\subset S$ be an FCP extension. The following conditions are equivalent:
\begin{enumerate}
\item  $R\subseteq S$ is almost-Pr\"ufer;

\item $\mathrm{MSupp}(S/\overline{R})=\mathrm{MSupp}(\widetilde{R}/R)$;

\item The map $\psi_1:[R,\widetilde{R}]\to[\overline{R},S]$ defined by $\psi_1(T):=T\overline {R}$ is an order-isomorphism;

\item The map  $\psi'_1:[R,\overline{R}]\to[\widetilde{R},S]$ defined by $\psi'_1(T):=T\widetilde {R}$ is an order-isomorphism;

\item The map  $\theta:[R,\widetilde{R}]\times[R,\overline{R}]\to[R,S]$ defined by $\theta(T,T'):=TT'$ is an order-isomorphism.
\end{enumerate}

\noindent If one of these conditions holds, then $\mathrm{MSupp}(S/\widetilde{R})=\mathrm{MSupp}(\overline{R}/R)$.

If the extension has FIP, the preceding  conditions are equivalent to each of the following conditions:
\begin{enumerate}
\item[(6)] $|[R,S]|=|[R,\widetilde{R}]||[R,\overline{R}]|$;

\item[(7)] $|[R,\widetilde{R}]|=|[\overline{R},S]|$;

\item[(8)] $|[R,\overline{R}]|=|[\widetilde{R},S]|$.
\end{enumerate}
\end{theorem}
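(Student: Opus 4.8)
The plan is to reduce every one of the listed conditions to the single criterion $\vec R=S$, where $\vec R=\overline R\widetilde R$ is the almost-Pr\"ufer closure; indeed $R\subset S$ is almost-Pr\"ufer exactly when $\vec R=S$, since $\vec R$ is the greatest almost-Pr\"ufer subextension of $R\subset S$. The engine is that $R\subseteq\vec R$ is \emph{always} almost-Pr\"ufer, with $\overline R$ and $\widetilde R$ serving as its integral closure and Pr\"ufer hull, so Proposition~\ref{split}, Theorem~\ref{4.5}, Proposition~\ref{1.169}, Corollary~\ref{4.6} and Proposition~\ref{7.2} all apply to $R\subseteq\vec R$. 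From this I would harvest, once and for all, three hypothesis-free facts: (a) $\mathrm{MSupp}(\overline R/R)\cap\mathrm{MSupp}(\widetilde R/R)=\emptyset$, because $R\subseteq\vec R$ splits at $\overline R$ (Proposition~\ref{split} applied to $R\subseteq\vec R$) and $\mathrm{MSupp}(\vec R/\overline R)=\mathrm{MSupp}(\widetilde R/R)$ by Proposition~\ref{7.2}(2); (b) the maps $\psi_1\colon[R,\widetilde R]\to[\overline R,S]$, $T\mapsto T\overline R$, and $\psi_1'\colon[R,\overline R]\to[\widetilde R,S]$, $T\mapsto T\widetilde R$, are injective, by Corollary~\ref{simplifiable 1}(1) and (3); and (c) $|[R,\vec R]|=|[R,\widetilde R]|\,|[R,\overline R]|$ when $R\subset S$ has FIP, by Proposition~\ref{7.2}(4), whereas always $[R,\vec R]\subseteq[R,S]$ with equality precisely when $\vec R=S$.

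For \textbf{(1)$\Leftrightarrow$(2)} and the penultimate clause: if $R\subset S$ is almost-Pr\"ufer it splits at $\overline R$ by Proposition~\ref{split}, and $\widetilde R$ is its unique complement (Theorem~\ref{4.5}(2) together with Proposition~\ref{split}(4)); then Theorem~\ref{4.5}(3) gives $\mathrm{MSupp}(S/\overline R)=\mathrm{MSupp}(\widetilde R/R)$ and $\mathrm{MSupp}(S/\widetilde R)=\mathrm{MSupp}(\overline R/R)$, which is (2) and the asserted consequence simultaneously. Conversely, (2) combined with fact (a) yields $\mathrm{MSupp}(\overline R/R)\cap\mathrm{MSupp}(S/\overline R)=\emptyset$, i.e.\ $R\subset S$ splits at $\overline R$, hence is almost-Pr\"ufer by Proposition~\ref{split}.

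For \textbf{(1)$\Rightarrow$(3),(4),(5)}: an almost-Pr\"ufer $R\subset S$ splits at both $\overline R$ and $\widetilde R$, with complements $\widetilde R$ and $\overline R$ respectively, so Proposition~\ref{1.169}(1) applied at $T=\overline R$ gives the order-isomorphism of (3), applied at $T=\widetilde R$ gives that of (4), and Proposition~\ref{1.169}(2) gives the order-isomorphism $\theta$ of (5). For the converses \textbf{(3),(4),(5)$\Rightarrow$(1)}: each of $\psi_1$, $\psi_1'$, $\theta$ is in particular surjective, and every value $T\overline R$, $T'\widetilde R$ or $TT'$ with $T\in[R,\widetilde R]$ and $T'\in[R,\overline R]$ lies in $[R,\vec R]$ because $\widetilde R\,\overline R=\vec R$; hence $S\in[R,\vec R]$, that is $\vec R=S$, and $R\subset S$ is almost-Pr\"ufer.

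The FIP part then runs: (5)$\Rightarrow$(6), (3)$\Rightarrow$(7), (4)$\Rightarrow$(8) are the cardinality shadows of the three bijections; (7)$\Rightarrow$(1) and (8)$\Rightarrow$(1) follow because fact (b) makes $\psi_1$, $\psi_1'$ injective, so the equalities in (7), (8) make them bijective and we conclude $\vec R=S$ as above; and (6)$\Rightarrow$(1) because fact (c) turns (6) into $|[R,\vec R]|=|[R,S]|$, forcing $\vec R=S$. I do not anticipate a real obstacle: the statement is a reorganisation of the split-extension calculus already developed for $\overline R$, $\widetilde R$ and $\vec R$. The only step demanding attention is ensuring each ``converse'' direction genuinely lands on $\vec R=S$, which is why I would isolate the unconditional facts (a),(b),(c) at the outset by applying the split-at-$\overline R$ and split-at-$\widetilde R$ machinery to the always-almost-Pr\"ufer extension $R\subseteq\vec R$, not to $R\subset S$ itself.
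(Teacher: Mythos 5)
Your proposal is correct and follows essentially the same route as the paper: everything is reduced to $S=\vec R$, with Proposition~\ref{split}, Theorem~\ref{4.5}, Proposition~\ref{1.169}/Proposition~\ref{7.2} and Corollary~\ref{simplifiable 1} supplying the isomorphisms, the support identities, and the cardinality arguments. The only cosmetic differences are that you derive the disjointness $\mathrm{MSupp}(\overline R/R)\cap\mathrm{MSupp}(\widetilde R/R)=\emptyset$ internally from the split machinery applied to $R\subseteq\vec R$ (the paper cites an external result), and you handle (7),(8)$\Rightarrow$(1) via injectivity of $\psi_1,\psi_1'$ plus equal cardinalities rather than the paper's appeal to Proposition~\ref{7.2}(3); both are equivalent in substance.
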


\begin{proof} First, $R\subset S$ is an almost-Pr\"ufer if and only if $S=\vec R$ by definition of $\vec R$.

(1) $\Rightarrow$ (2), (3), (4) and (5): Use Proposition~\ref{7.2} (2) to get (2), Proposition~\ref{7.2} (3) to get (3) and (4), and Proposition~\ref{7.2} (4) to get (5). Moreover, Proposition~\ref{7.2} (2) give $\mathrm{MSupp}(S/\widetilde{R})=\mathrm{MSupp}(\overline{R}/R)$. 

(2) $\Rightarrow$ (1): Since $\mathrm{MSupp}(\widetilde{R}/R)\cap \mathrm{MSupp}(\overline{R}/R)=\emptyset$, by \cite[Proposition 4.18]{Pic 5}, (2) leads to $\mathrm{MSupp}(S/\overline{R})\cap\mathrm{MSupp}(\overline{R}/R)=\emptyset$, so that $R\subset S$ splits at $\overline{R}$. Then, use  Proposition~\ref{split}. 

(3), (4) or (5) $\Rightarrow$ (1) because, in each case, we have $S=\overline{R}\widetilde{R}=\vec R$.

Assume now that $R\subset S$ has FIP. 

Then, obviously, (5) $\Rightarrow$ (6), (3) $\Rightarrow$ (7) and (4) $\Rightarrow$ (8). 

(6) $\Rightarrow$ (1): Assume that $|[R,S]|=|[R,\widetilde{R}]||[R,\overline{R}]|$. By Proposition~\ref{7.2} (4), it follows that $|[R,S]|=|[R,\vec R]|$, so that $S=\vec R$.

(7) $\Rightarrow$ (1) and (8) $\Rightarrow$ (1) by Proposition~\ref{7.2} (3) because in each case, we get $S=\vec R$ .
\end{proof}

\begin{example} We give an example where the equivalences  of Theorem~\ref{7.6} do not hold because $R\subseteq S$ has not FCP. Set $R:=\mathbb{Z}_P$ and $S:=\mathbb{Q}[X]/(X^2)$, where $P\in \mathrm{Max}(\mathbb{Z})$. Then, $\widetilde R=\mathbb{Q}$ because $R\subset\widetilde R$ is Pr\"ufer (minimal) and $\widetilde R\subset S$ is integral minimal, so that $R\subset S$ is almost-Pr\"ufer. Set $M:=PR\in\mathrm{Max}(R)$ with $(R,M)$ a local ring. It follows that $M\in\mathrm{MSupp}(\overline{R}/R)\cap\mathrm{MSupp}(S/\overline{R})$. Similarly, $M\in\mathrm{MSupp}(\overline{R}/R)\cap\mathrm{MSupp}(\widetilde R/R)$. Indeed, $R\subset \overline R$ has not FCP. 
\end{example}

By using the  results of the paper, we build the almost-Pr\"ufer closure $\vec R$ of an FIP extension $R\subset S$ in the context of algebraic orders. In this example, we determine the Pr\"ufer hull and the integral closure of the extension. These three rings are  distinct and distinct from $R$ and $S$.
 
\begin{example}\label{0.23} Let $K$ be an algebraic number field, $T'$ be its ring of algebraic integers, and $R'$ be an algebraic order of $K$ such that $R'\subset T'$ is a minimal inert extension. Set $M':=(R':T')\in\mathrm{Max}(R')$, so that $M'\in\mathrm{Max}(T')$. Let $N'\in\mathrm{Max}(R'),\ M'\neq N'$. Set $\Sigma:=R'\setminus(M'\cup N')$, which is a multiplicatively closed subset. Then, $R:=R'_{\Sigma}$ is a semilocal ring with two maximal ideals $M:=M'_{\Sigma}$ and $N:=N'_{\Sigma}$. Moreover $T:=T'_{\Sigma}$ is also a semilocal ring with two maximal ideals, and such that $R\subset T$ is a minimal inert extension with $M=M'_{\Sigma}=(R:T)\in\mathrm{Max}(T)$. It verifies $\{M\}=\mathrm{MSupp}(T/R)$. The other maximal ideal of $T$ is $P:=NT$ thanks to \cite[Lemma 2.4]{DPP2}. Moreover, $T$ is a Pr\"ufer domain with quotient field $K$. According to \cite[Corollary 2.5]{D2}, it follows that there exists a maximal chain $T\subset T_1\subset K$ of length 2 and $T\subset K$ has FCP (and FIP) by \cite[Theorem 6.3]{DPP2}. In fact, $\ell[T,K]=2$ and any maximal chain of $T\subset K$ has length 2 by \cite[Corollary 6.13]{DPP2}. As $T_M\in]T,K[$, it follows that $T\subset T_M$ is minimal Pr\"ufer with $P=\mathcal{C}(T,T_M)$ since $PT_M=T_M$. In fact, $\{N\}=\mathrm{MSupp}_R(T_M/T)$. Therefore $T$ is the integral closure of $R\subset T_M$, and \cite[Theorem 3.13]{DPP2} gives that $R\subset T_M$ is an FCP extension. For the same reason, $T$ is the integral closure $\overline R$ of $R\subset K$, and $R\subset K$ is an FCP extension. More precisely, $R\subset K$ is an FIP extension because $R\subset T$ and $T\subset K$ have FIP (\cite[Theorem 3.13]{DPP2}). We have the following commutative diagram:

\centerline{$\begin{matrix}
 {} &       {}       & R_M &      {}       &   {}   &      {}      & {} \\
 {} & \nearrow &    {}   & \searrow &   {}   &      {}       & {} \\
 R &      {}        &   {}   &       {}      & T_M &      \to      & K \\
 {} & \searrow &   {}   & \nearrow &   {}    & \nearrow & {} \\
 {} &      {}       &   T   &       \to      & T_P  &       {}      & {}
\end{matrix}$}
Because $\mathrm{MSupp}(T/R)\cap\mathrm{MSupp}(T_M/T)=\{M\}\cap\{N\}=\emptyset$, we get that $R\subset T_M$ is almost-Pr\"ufer by Proposition~\ref{split}. Applying Theorem \ref{7.6} to the almost-Pr\"ufer extension $R\subset T_M$, we deduce that $\widetilde{R}\subset T_M$ is minimal integral. But $R_M\subset T_M$ is integral because so is $R\subset T$ with $M=(R:T)$. Because $\widetilde{R}\subseteq R_M\subset T_M$ is minimal, we get that $\widetilde{R}=R_M$, so that $R_M$ is the Pr\"ufer hull of $R\subset T_M$. In the same way, Theorem \ref{7.6} shows that $R\subset R_M$ is Pr\"ufer minimal. 
But $R_M$ is also the Pr\"ufer hull of $R\subset K$ since there does not exist some $W\in]R_M,K[$ such that $R_M\subset W$ is Pr\"ufer. Otherwise, $W$ would be a zero-dimensional integral domain with quotient field $K$, a contradiction. 
Then, $R\subset K$ is not almost-Pr\"ufer, since $R_M\subset K$ is not integral. And $T_M=\vec R$ is the almost-Pr\"ufer closure of $R\subset K$. 

We may apply Corollary \ref{4.13} to the extension $R\subset K$. We already knows that $R\subset K$ has FIP. Moreover, $[R,\overline R]=\{R,T\}$ and $[\overline R,K]=\{T,T_M,T_P,K\}$, so that, setting $R_1:=R$ and $R_2:=T$, we get $|[R,K]|=s_1+s_2$, with $T_1:=T_M$, the almost-Pr\"ufer closure of $R\subset K$ and $T_2:=K$, the almost-Pr\"ufer closure of $T\subset K$. It follows that $s_1=|[T,T_M]|=2$ and $s_2=|[T,K]|=4$ by Proposition \ref{6.12} since $\mathrm{Supp}_T(K/T)=\{M,P\}\subseteq\mathrm{Max}(T)$, so that $|[R,K]|=6$ and $[R,K]=\{R,R_M,T,T_M,T_N,K\}$. We may remark that $R_N=T_N=T_P$.
 Moreover, $R\subset K$ is a $\mathcal B$-extension since $\mathrm{Supp}(K/R)=\{M,N\}=\mathrm{Max}(R)$, but $R$ is not a pm-ring.

\end{example} 

We end by   some length computations in the FCP context.

\begin{proposition}\label{7.8} Let $R\subseteq S$ be an FCP extension. The following statements hold:
\begin{enumerate}
\item $\ell[R,\widetilde R]=\ell[\overline R,\vec R]$ and $\ell[R,\overline R] = \ell [\widetilde R,\vec R]$

\item $\ell[R,\vec R]=\ell[R,\widetilde R]+\ell[\widetilde R,\vec R]=\ell[R,\overline R] + \ell [\overline R, \vec R]$

\item  $\ell[\overline R,\vec R]=|\mathrm{Supp}_{\overline R}(\vec R/ \overline R)|=\ell [R, \widetilde R] = |\mathrm{Supp}_R (\widetilde R/R)|$.
\end{enumerate}
\end{proposition}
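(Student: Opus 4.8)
The plan is to reduce everything to the order-isomorphisms already established in Proposition~\ref{7.2} together with the length formula for FCP Pr\"ufer extensions, so that no genuinely new argument is needed. For part~(1) I would simply invoke Proposition~\ref{7.2}(3): the maps $\psi_1\colon[R,\widetilde R]\to[\overline R,\vec R]$ and $\psi'_1\colon[R,\overline R]\to[\widetilde R,\vec R]$ are order-isomorphisms, and an order-isomorphism carries chains bijectively to chains, hence preserves the supremum of chain-lengths. This yields $\ell[R,\widetilde R]=\ell[\overline R,\vec R]$ and $\ell[R,\overline R]=\ell[\widetilde R,\vec R]$ at once.

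For part~(2) I would use Proposition~\ref{7.2}(4): $\theta\colon[R,\widetilde R]\times[R,\overline R]\to[R,\vec R]$ is an order-isomorphism. Since the length of a product poset is the sum of the lengths of the factors (the argument already used in the proof of Proposition~\ref{6.5}, via \cite[Proposition 3.7(d)]{DPP2}), we get $\ell[R,\vec R]=\ell[R,\widetilde R]+\ell[R,\overline R]$. Substituting $\ell[R,\overline R]=\ell[\widetilde R,\vec R]$ from part~(1) gives the first displayed equality, and substituting $\ell[R,\widetilde R]=\ell[\overline R,\vec R]$ gives the second.

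For part~(3) the middle equality $\ell[\overline R,\vec R]=\ell[R,\widetilde R]$ is exactly part~(1), so it only remains to identify each side with the cardinality of the relevant support. The extension $R\subseteq\widetilde R$ is Pr\"ufer by definition of the Pr\"ufer hull and has FCP as a subextension of $R\subseteq S$; hence \cite[Proposition 6.12]{DPP2} gives $\ell[R,\widetilde R]=|\mathrm{Supp}_R(\widetilde R/R)|$. Likewise $\overline R\subseteq S$ is Pr\"ufer because an FCP extension is quasi-Pr\"ufer, and a subextension of a Pr\"ufer extension is again Pr\"ufer, so $\overline R\subseteq\vec R$ is an FCP Pr\"ufer extension and the same reference gives $\ell[\overline R,\vec R]=|\mathrm{Supp}_{\overline R}(\vec R/\overline R)|$. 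Chaining these identities with part~(1) finishes the proof.

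The only points needing care are bookkeeping ones: that subextensions of FCP extensions are FCP, that a subextension of a Pr\"ufer extension is Pr\"ufer (so both $R\subseteq\widetilde R$ and $\overline R\subseteq\vec R$ qualify for \cite[Proposition 6.12]{DPP2}), and that \cite[Proposition 3.7(d)]{DPP2} indeed yields additivity of length over a direct product of posets. I do not anticipate a real obstacle here; the substantive content has already been absorbed into Proposition~\ref{7.2} and the length formula for FCP Pr\"ufer extensions.
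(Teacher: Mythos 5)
Your proof is correct and follows essentially the paper's own route: part (1) is obtained exactly as in the paper from the order-isomorphisms $\psi_1,\psi'_1$ of Proposition \ref{7.2}(3), and part (3) from \cite[Proposition 6.12]{DPP2} applied to the FCP Pr\"ufer extensions $R\subseteq\widetilde R$ and $\overline R\subseteq\vec R$, combined with (1). The only (harmless) divergence is in part (2), where the paper cites the additivity of length at the integral closure of $R\subseteq\vec R$ (\cite[Theorem 4.11]{DPP3}) and then uses (1), whereas you use the product isomorphism $\theta$ of Proposition \ref{7.2}(4) together with additivity of length over a product of posets (as in the proof of Proposition \ref{6.5}) before substituting the equalities of (1); both arguments are valid and rest on results already established.
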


\begin{proof} To prove (1), use the maps $\psi_1$ and $\psi'_1$ of Proposition~\ref{7.2} (3). Then (2) follows from \cite[Theorem 4.11]{DPP3} and (3) from \cite[Proposition 6.12]{DPP2}.
\end{proof}   

\section{Pinched extensions versus split extensions}

 We recall that a ring extension $R\subset S$ is called {\em pinched} at $T\in]R,S[$ if $[R,S]=[R,T]\cup[T,S]$. 

\begin{proposition}\label{1.145}Let $R\subset S$ be an FCP  ring extension. Then, any splitter is trivial in the following cases:
\begin{enumerate}
\item $R\subset S$ is pinched at some $U\in]R,S[$.

\item $R\subset S$ is chained.
\end{enumerate}
\end{proposition}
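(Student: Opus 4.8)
The plan is to dispose of the two cases by quite different means: case~(2) follows immediately from material already in hand, while case~(1) needs a short argument built on the complement of a split extension.

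For case~(2), I would first invoke Proposition~\ref{6.50}(1): a chained FCP extension is crucial, so $|\mathrm{MSupp}(S/R)|=1$. Since every splitter has the form $\sigma(X)$ for a subset $X\subseteq\mathrm{MSupp}(S/R)$, and the only such subsets are $\emptyset$ and $\mathrm{MSupp}(S/R)$, the only splitters are $\sigma(\emptyset)=R$ and $\sigma(\mathrm{MSupp}(S/R))=S$, both trivial. (Alternatively, once $|\mathrm{MSupp}(S/R)|=1$ the argument of Proposition~\ref{1.144} applies verbatim.) This case is routine.

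For case~(1), I would assume $R\subset S$ is pinched at $U\in\,]R,S[$, so that $[R,S]=[R,U]\cup[U,S]$ and $R\subsetneq U\subsetneq S$, and suppose for contradiction that $T$ is a \emph{nontrivial} splitter, say $T=\sigma(X)$ with $\emptyset\neq X\neq\mathrm{MSupp}(S/R)$. From $\mathrm{MSupp}(T/R)=X\neq\emptyset$ and $\mathrm{MSupp}(S/T)=X^c\neq\emptyset$ one reads off $R\subsetneq T\subsetneq S$, so $R\subset S$ splits at $T$ and Theorem~\ref{4.5} applies: $T$ has a unique complement $T^o=\sigma(X^c)$ with $T\cap T^o=R$ and $TT^o=S$, and the same support computation gives $R\subsetneq T^o\subsetneq S$, so $T^o$ is also nontrivial. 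The pinching hypothesis forces each of $T$ and $T^o$ to be comparable with $U$, and a four-way case analysis then yields a contradiction in every case: if $T\subseteq U$ and $T^o\subseteq U$ then $S=TT^o\subseteq U\subsetneq S$; if $U\subseteq T$ and $U\subseteq T^o$ then $U\subseteq T\cap T^o=R\subsetneq U$; and if $U$ lies between $T$ and $T^o$ in either order, then $T$ and $T^o$ are comparable, so $T\cap T^o$ equals the smaller of the two, forcing $R=T$ or $R=T^o$ against nontriviality. Hence $R$ and $S$ are the only splitters.

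I do not expect a genuine obstacle. The only point requiring care is the bookkeeping that a nontrivial splitter $T$ and its complement $T^o$ both lie \emph{strictly} between $R$ and $S$ (this is exactly what makes the comparisons with $U$ contradictory), together with the remark that the degenerate subcases $T=U$ or $T^o=U$ are already absorbed in the four-way analysis. Everything else is a direct appeal to Theorem~\ref{4.5}, Proposition~\ref{6.50}, and the definition of a splitter.
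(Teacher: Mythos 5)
Your proof of case (1) is essentially the paper's own argument: assume a nontrivial splitter $T$, produce its complement $T^o$ via Theorem~\ref{4.5}, and run the same comparability analysis with $U$ (both below $U$ contradicts $TT^o=S$, both above contradicts $T\cap T^o=R$, and the mixed case makes $T$ and $T^o$ comparable, killing nontriviality). Where you diverge is case (2): the paper simply observes that a chained extension is pinched at every proper subextension and reduces to case (1) (treating the minimal case separately), whereas you invoke Proposition~\ref{6.50}(1) to get $|\mathrm{MSupp}(S/R)|=1$ and then argue as in Proposition~\ref{1.144} that the only subsets of a singleton support are $\emptyset$ and the whole set, so the only splitters are $R$ and $S$. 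Both routes are correct and short; the paper's reduction avoids any appeal to cruciality, while yours avoids the (admittedly trivial) pinching observation and handles the minimal case with no separate remark. No gaps: your bookkeeping that a nontrivial splitter and its complement lie strictly between $R$ and $S$ is exactly the point that makes the case analysis close.
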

\begin{proof}(1) Assume that there exists a proper splitter $T$ of $R\subset S$. By Theorem \ref{4.5}, $T$ has a complement $T^o$ in $[R,S]$ such that $T\cap T^o=R$ and $TT^o=S\ (*)$. If $T,T^o\in [R,U]$, then $S=TT^o\subseteq U\subset S$, a contradiction. If $T,T^o\in [U,S]$, then $R\subset U\subseteq T\cap T^o=R$, a contradiction. Now, if $T$ and $T^o$ are not both in either $[R,U]$ or $[U,S]$, one of then is contained in $U$ while the other contains $U$, so that $T^o$ is  comparable to $T$, a contradiction with $T$ is a proper splitter by $(*)$.

(2) A chained extension is pinched at any of its subextension. Then, apply (1) if $R\subset S$ is not minimal. If $R\subset S$ is  minimal, the result is obvious.
\end{proof}

\begin{corollary}\label{1.162} An FCP ring extension  $R\subset S$  is not pinched at any proper splitter.
\end{corollary}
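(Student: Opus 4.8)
The plan is to deduce this immediately from Proposition~\ref{1.145}(1), which already does all the work; the corollary is just the contrapositive packaging. First I would argue by contradiction: suppose $R\subset S$ is an FCP extension pinched at some proper splitter $T$. Since $T$ is proper, $T\in\,]R,S[\,$ (indeed, if $T=\sigma(X)$ with $X\neq\emptyset$ then $\mathrm{MSupp}(T/R)=X\neq\emptyset$ forces $T\neq R$, and if $X\neq\mathrm{MSupp}(S/R)$ then $X^c\neq\emptyset$ forces $\mathrm{MSupp}(S/T)\neq\emptyset$ and hence $T\neq S$). So the hypothesis of Proposition~\ref{1.145}(1) is met with $U:=T\in\,]R,S[\,$.

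Then I would invoke Proposition~\ref{1.145}(1) directly: it asserts that whenever an FCP extension is pinched at an element of $]R,S[\,$, every splitter is trivial. In particular $T$ itself would be trivial, contradicting that $T$ is a proper splitter. This contradiction shows that no proper splitter of an FCP extension can be a point at which the extension is pinched, which is exactly the statement.

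There is no real obstacle here; the content lives entirely in Proposition~\ref{1.145}, whose proof uses Theorem~\ref{4.5} to produce the complement $T^o$ with $T\cap T^o=R$, $TT^o=S$, and then runs through the three cases for the positions of $T,T^o$ relative to the pinch point. The only thing to be careful about in writing the corollary is to spell out why ``proper splitter'' places $T$ in $]R,S[\,$ so that Proposition~\ref{1.145}(1) applies verbatim; everything else is a one-line citation.

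\begin{proof}
Suppose, by way of contradiction, that $R\subset S$ is pinched at a proper splitter $T$. Being a proper splitter, $T=\sigma(X)$ for some $X\subseteq\mathrm{MSupp}(S/R)$ with $X\neq\emptyset$ and $X\neq\mathrm{MSupp}(S/R)$; hence $\mathrm{MSupp}(T/R)=X\neq\emptyset$ gives $T\neq R$ and $\mathrm{MSupp}(S/T)=X^c\neq\emptyset$ gives $T\neq S$, so $T\in\,]R,S[\,$. Thus $R\subset S$ is pinched at some element of $]R,S[\,$, and Proposition~\ref{1.145}(1) shows that every splitter of $R\subset S$ is trivial. In particular $T$ is trivial, contradicting that $T$ is a proper splitter. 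Therefore $R\subset S$ is not pinched at any proper splitter.
\end{proof}
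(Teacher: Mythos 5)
Your proof is correct and matches the paper's approach: the paper's own proof of this corollary is simply a reference to Proposition~\ref{1.145}(1), and your argument is the straightforward contrapositive of that proposition, with the extra (harmless) detail that a proper splitter lies in $]R,S[$.
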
 
\begin{proof} 
See Proposition \ref{1.145}.
\end{proof}

We have seen in Proposition ~\ref{split}, that when an FCP extension $R\subset S$ is almost-Pr\"ufer and $\overline R\neq R,S$, then $\overline R$ is a non trivial splitter. This implies by Proposition ~\ref{1.162} that $R\subset S$ is not pinched at $\overline R$. We end this paper by a new characterization of FCP extension $R\subset S$ pinched at $\overline R$, improving our earlier result \cite[Proposition 2.7]{Pic 13}, because it avoids to consider minimal extensions of the form $U\subset \overline R$, by only using the properties of the canonical decomposition (Definition \ref{canonical}). 
  We begin by recaling \cite[Proposition 2.7]{Pic 13} and  then give  needed   Lemmata.
 
\begin{proposition}\label{1.131} An FCP extension $R\subseteq S$,  such that $\overline R\neq R,S$,  is pinched at $\overline R$ if and only if, for any $U\in[R,S]$ such that $U\subset\overline R$ is minimal, then $\mathrm{MSupp}_{\overline R}(S/\overline R)\subseteq \mathrm{V}_{\overline R}((U:\overline R))$.  \end{proposition}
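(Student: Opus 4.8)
The plan is to prove both implications by analyzing the structure of a maximal chain through $\overline R$, exploiting the canonical decomposition $R\subseteq {}_S^+R\subseteq {}_S^tR\subseteq\overline R\subseteq S$ together with the basic fact (Proposition~\ref{2.11}) that $\mathrm{Supp}$ is read off from crucial ideals of minimal steps. First I would recall that $R\subset S$ is pinched at $\overline R$ precisely when no subextension is "transversal" to $\overline R$, i.e. when every $T\in[R,S]$ is comparable to $\overline R$; equivalently, there is no composite $U\subset\overline R\subset W$ with a "crosswise" minimal pair as in the CE Lemma (Proposition~\ref{split4}). So the natural strategy is: pinched at $\overline R$ fails $\iff$ there exist $U\in[R,\overline R]$ with $U\subset\overline R$ minimal (necessarily integral) and a minimal Pr\"ufer step $\overline R\subset W$ with $\mathcal C(U,\overline R)\cap R = \mathcal C(\overline R,W)\cap R$, which is exactly the situation in which the Crosswise Exchange Lemma produces a subalgebra incomparable to $\overline R$.

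For the forward direction, I would assume $R\subset S$ is pinched at $\overline R$ and take any $U\in[R,S]$ with $U\subset\overline R$ minimal; such $U$ is integral over $R$, so $(U:\overline R)$ is (the pullback of) a maximal ideal, call it $N:=\mathcal C(U,\overline R)\in\mathrm{Max}(\overline R)$. Suppose, for contradiction, that some $Q\in\mathrm{MSupp}_{\overline R}(S/\overline R)$ lies outside $\mathrm V_{\overline R}((U:\overline R))$, i.e. $Q\not\supseteq N$ (note $(U:\overline R)$ contracts to $N$ in $\overline R$). Since $\overline R\subseteq S$ is Pr\"ufer (the extension is quasi-Pr\"ufer, being FCP), $Q\in\mathrm{MSupp}_{\overline R}(S/\overline R)$ yields by \cite[Lemma 1.8]{Pic 6} a minimal Pr\"ufer step $\overline R\subset W$ in $[\overline R,S]$ with $\mathcal C(\overline R,W)=Q$. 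Now $U\subset\overline R$ and $\overline R\subset W$ are minimal with crucial ideals $N$ and $Q$; because $Q\not\subseteq N$ (indeed $Q$ and $N$ are maximal ideals of $\overline R$ and $Q\neq N$ since $Q\not\supseteq N$), the Crosswise Exchange Lemma (Proposition~\ref{split4}, applied to the base ring $U$, using that $Q\cap U\not\subseteq N\cap U$ — this needs a small check since CE is stated over the bottom ring, so one contracts to $U$) gives $S'\in[U,W]$ with $U\subset S'$ minimal and $S'\subset W$ minimal, and $[U,W]=\{U,\overline R,S',W\}$; in particular $S'$ is incomparable to $\overline R$, contradicting pinchedness. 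Hence $\mathrm{MSupp}_{\overline R}(S/\overline R)\subseteq\mathrm V_{\overline R}((U:\overline R))$ for every such $U$.

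For the converse, assume the support condition holds for every minimal $U\subset\overline R$, and suppose $R\subset S$ is not pinched at $\overline R$: then there is $T\in[R,S]$ incomparable to $\overline R$, so $T\cap\overline R\subset\overline R$ and $\overline R\subset\overline R T$ both properly. Refining, pick $U\in[T\cap\overline R,\overline R]$ with $U\subset\overline R$ minimal (integral), and pick $W\in[\overline R,\overline R T]$ with $\overline R\subset W$ minimal Pr\"ufer, with crucial ideals $N:=\mathcal C(U,\overline R)$ and $Q:=\mathcal C(\overline R,W)$. I must produce a contradiction with the hypothesis, i.e. show $Q\not\supseteq N$ is forced. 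The key point is that $T$ witnesses a genuine transversality: localizing at the relevant maximal ideal of $R$ below and using that $U$ is obtained by "pushing $T\cap\overline R$ up to $\overline R$" while $W$ comes from "pushing $\overline R$ up inside $\overline R T$", one shows the minimal steps $U\subset\overline R$ and $\overline R\subset W$ cannot be "stacked" over a common crucial ideal — otherwise $\overline R$ would be forced between $U$ and $W$ with $[U,W]$ a $3$-element chain, contradicting the existence of the transversal piece coming from $T$. Concretely, $Q\in\mathrm{MSupp}_{\overline R}(S/\overline R)$, so by hypothesis $Q\supseteq (U:\overline R)$, hence $Q\supseteq N$, hence $Q=N$ (both maximal in $\overline R$); but then $U\subset\overline R\subset W$ is a maximal chain in $[U,W]$ with a single crucial ideal of $R$ below, i.e. $U\subset W$ is $\mathcal C(U,\overline R)\cap U$-crucial of length $2$, and a length argument (or direct inspection via \cite[Lemma 2.4]{DPP2}) shows $\overline R$ is then comparable to every element of $[U,W]$; running this for all the minimal refinements covering the interval $[T\cap\overline R,\overline R T]$ forces $T$ comparable to $\overline R$, the desired contradiction. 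Therefore $R\subset S$ is pinched at $\overline R$.

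The main obstacle I anticipate is the converse direction, specifically the bookkeeping needed to pass from "$T$ incomparable to $\overline R$" to a single pair of minimal steps $U\subset\overline R\subset W$ straddling one crucial ideal of $R$, and then to rule that out cleanly — the honest way is to localize at a maximal ideal $M\in\mathrm{MSupp}(\overline R T/(T\cap\overline R))$ and invoke that over a local ring an FCP extension whose integral closure is local is chained (Remark~\ref{6.9}), but here $\overline R$ need not be local, so one instead argues directly with the Crosswise Exchange Lemma and the fact that incomparability of $T$ with $\overline R$ produces exactly a CE configuration; reconciling this with the support hypothesis (which concerns $(U:\overline R)$ rather than its contraction to $R$) requires care about contractions of conductors. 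The forward direction is essentially an immediate application of CE once one checks the hypothesis $Q\not\subseteq N$ translates correctly to the bottom ring $U$.
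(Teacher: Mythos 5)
The paper does not actually prove Proposition~\ref{1.131}: it is recalled verbatim from \cite[Proposition 2.7]{Pic 13}, so your argument has to stand on its own. Your forward direction does: given $U\subset\overline R$ minimal and $Q\in\mathrm{MSupp}_{\overline R}(S/\overline R)$ with $(U:\overline R)\not\subseteq Q$, \cite[Lemma 1.8]{Pic 6} yields a minimal step $\overline R\subset W$ with crucial ideal $Q$, and since $Q\cap U$ and $(U:\overline R)$ are then two distinct maximal ideals of $U$, the Crosswise Exchange Lemma (Proposition~\ref{split4}) over the base $U$ produces $S'\in[U,W]$ incomparable to $\overline R$, contradicting pinchedness. (Minor slip: $(U:\overline R)=\mathcal{C}(U,\overline R)$ is maximal in $U$, not in $\overline R$ unless $U\subset\overline R$ is inert, so ``$Q=N$'' should read ``$Q\cap U=(U:\overline R)$''; this does not harm the argument.)

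The converse, however, has a genuine gap exactly where you flag it. From the hypothesis you only get that each two-step interval $[U,W]$, with $U$ adjacent below and $W$ adjacent above $\overline R$, is pinched at $\overline R$; but an incomparable $T$ need not lie in any such interval (in general $U\not\subseteq T$ and $T\not\subseteq W$), so ``running this over all minimal refinements'' says nothing about $T$. Worse, the implication you would need --- pinchedness of all these two-step intervals forces pinchedness of $[R,S]$ at $\overline R$ --- is, via \cite[Lemma 1.8]{Pic 6} (every element of $\mathrm{MSupp}_{\overline R}(S/\overline R)$ is the crucial ideal of some minimal $\overline R\subset W$), equivalent to the converse itself, so the argument begs the question. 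A repair argues directly from $T$: set $T_1:=T\cap\overline R$; then $T_1$ is integrally closed in $T$ inside an FCP (hence quasi-Pr\"ufer) extension, so $T_1\subset T$ is Pr\"ufer, and one may pick $T_1\subset V\subseteq T$ minimal with crucial ideal $M'$. Since $T_1=V\cap\overline R$ and $\overline R$ is the integral closure of $T_1$ in $V\overline R$, Proposition~\ref{split} shows $T_1\subset V\overline R$ splits at $\overline R$, whence $M'\notin\mathrm{MSupp}_{T_1}(\overline R/T_1)$; on the other hand, if $\overline R_{M'}=S_{M'}$ then $V_{M'}=(V\cap\overline R)_{M'}=(T_1)_{M'}$, a contradiction, so $M'\in\mathrm{MSupp}_{T_1}(S/\overline R)$ and $M'=Q\cap T_1$ for some $Q\in\mathrm{MSupp}_{\overline R}(S/\overline R)$. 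Now any $U\in[T_1,\overline R]$ with $U\subset\overline R$ minimal has $(U:\overline R)\cap T_1\in\mathrm{MSupp}_{T_1}(\overline R/T_1)$, hence $(U:\overline R)\cap T_1\neq M'$, which contradicts $(U:\overline R)\subseteq Q$. As written, your converse is not a proof; with some such argument in place of the last step, the statement goes through.
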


\begin{lemma}\label{1.132} Let $R\subset S$ be an $M$-crucial integral FCP extension with conductor $I$ and $N\in \mathrm{V}_S(I)$. There is some $U\in[R,S]$ such that $U\subset S$ is minimal decomposed with $N\in\mathrm{V}_S((U:S))$ if and only if there exists $N'\in \mathrm{V}_S(I),\ N'\neq N$ such that $S/N'\cong S/N$ as $R/M$-algebras.
\end{lemma}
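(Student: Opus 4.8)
The plan is to analyze the $M$-crucial integral FCP extension $R\subset S$ locally, since such an extension is a $\mathcal{B}$-extension (Proposition~\ref{6.5}), so we may reduce to the case where $(R,M)$ is local. Then $I=(R:S)$ satisfies $M^n\subseteq I$ for some $n$ (Lemma~\ref{Q2}), and $\mathrm{V}_S(I)=\mathrm{Max}(S)$ is finite. The key object is a minimal decomposed subextension $U\subset S$: by Theorem~\ref{minimal}, $U\subset S$ minimal decomposed means $(U:S)\in\mathrm{Max}(U)$, $(U:S)=N_1\cap N_2$ for two distinct maximal ideals $N_1,N_2$ of $S$, and the natural maps $U/(U:S)\to S/N_i$ are both isomorphisms. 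So the condition ``$N\in\mathrm{V}_S((U:S))$'' singles out $N$ as one of these two, say $N=N_1$, and then $S/N_1\cong U/(U:S)\cong S/N_2$ as $R/M$-algebras (noting $M\subseteq (U:S)\cap R$, so everything is an $R/M$-algebra).

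First I would prove the forward direction. Given $U\subset S$ minimal decomposed with $N\in\mathrm{V}_S((U:S))$, write $(U:S)=N\cap N'$ with $N'\neq N$ in $\mathrm{Max}(S)$; since $I=(R:S)\subseteq (U:S)$ we have $N'\in\mathrm{V}_S(I)$, and the isomorphisms $S/N\cong U/(U:S)\cong S/N'$ as $R/M$-algebras give exactly the conclusion. For the converse, suppose $N,N'\in\mathrm{V}_S(I)$ are distinct with $S/N'\cong S/N$ as $R/M$-algebras. I would construct $U$ directly as the pullback: let $\pi:S\to S/N\times S/N'$ be the natural map, let $\Delta\subseteq S/N\times S/N'$ be the ``diagonal'' obtained by identifying $S/N$ and $S/N'$ via the given isomorphism (more precisely the graph of an isomorphism $S/N\xrightarrow{\sim}S/N'$), and set $U:=\pi^{-1}(\Delta)$. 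Then one checks $R\subseteq U$ (because $R\to S/N$ and $R\to S/N'$ both factor through $R/M$ compatibly with the chosen isomorphism, as $R\subset S$ is $M$-crucial so all residue extensions are trivial — here I would use that $R\subset S$ is infra-integral or at least that $R/M\to S/N$ need not be an isomorphism, so some care is needed; see below), that $U\subset S$ is minimal with $(U:S)=N\cap N'\in\mathrm{Max}(U)$, and that $U\subset S$ is decomposed with $N,N'\in\mathrm{V}_S((U:S))$.

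The main obstacle I anticipate is verifying that the constructed pullback $U$ actually contains $R$ and that the isomorphism $S/N\cong S/N'$ as $R/M$-algebras is exactly the compatibility needed — the subtlety is that $R/M\hookrightarrow S/N$ may be a proper field extension, so the ``diagonal'' must be taken as the graph of an $R/M$-algebra isomorphism rather than a naive set-theoretic diagonal, and one must check this graph is an $R$-subalgebra of $S/N\times S/N'$ whose preimage sits between $R$ and $S$. Once $R\subseteq U$ is established, that $[U,S]=\{U,S\}$ follows from the decomposed local picture (or from a length/support count via the $\mathcal{B}$-extension structure), and that $U\subset S$ is of decomposed type is read off from Theorem~\ref{minimal} since $U/(U:S)\hookrightarrow S/N\times S/N'$ is an isomorphism by construction.

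Finally I would unwind the localization: if $U_0\in[R,S]$ with $U_0\subset S$ minimal decomposed and $N\in\mathrm{V}_S((U_0:S))$ exists after localizing at $M$, the $\mathcal{B}$-extension order-isomorphism $[R,S]\cong[R_M,S_M]$ (Proposition~\ref{6.5}) transports it back, and the conditions on conductors and residue algebras are insensitive to this localization since $M^n\subseteq I$; conversely the residue-algebra condition on $\mathrm{V}_S(I)$ is already ``local at $M$''. This gives the equivalence in the general ($(R,M)$ possibly non-local) case.
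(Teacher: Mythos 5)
Your proof is correct, and for the forward direction it coincides with the paper's: read off the decomposed case of Theorem \ref{minimal} for $U\subset S$, note $I\subseteq (U:S)$ so the second maximal ideal $N'$ lies in $\mathrm{V}_S(I)$, and pass from $U/(U:S)$-algebra isomorphisms to $R/M$-algebra isomorphisms via $M\subseteq (U:S)\cap R$. For the converse the paper simply invokes \cite[Lemma 20]{DPP4}, whereas you make the step self-contained by building $U$ as the pullback of the graph of an $R/M$-algebra isomorphism $\varphi:S/N\to S/N'$; that is essentially the content of the cited lemma, and your construction does go through. The obstacle you flag (whether $R\subseteq U$) in fact dissolves: by Lemma \ref{Q2} we have $M^n\subseteq I$, so every $N\in\mathrm{V}_S(I)$ contracts to $M$ in $R$, and the statement that $\varphi$ is an $R/M$-algebra isomorphism says precisely $\varphi(r+N)=r+N'$ for all $r\in R$, i.e.\ the image of $R$ lands in the graph; the graph is a subring because $\varphi$ is a ring isomorphism. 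One can then check directly that $(U:S)=N\cap N'$ (it contains $N\cap N'$, is a proper ideal of $U$, and $N\cap N'$ is maximal in $U$ since $U/(N\cap N')\cong S/N$), and that each projection $U/(U:S)\to S/N$ and $U/(U:S)\to S/N'$ is an isomorphism --- note it is these two projections, not the map into the product, that are isomorphisms --- so Theorem \ref{minimal}, read as an equivalence, already gives that $U\subset S$ is minimal decomposed with no separate length or chain argument needed. For the same reason ($N\cap R=M$ for all $N\in\mathrm{V}_S(I)$) your opening reduction to the local case via the $\mathcal B$-extension isomorphism of Proposition \ref{6.5} is unnecessary: the construction and both directions of the equivalence work verbatim over the given $R$, so the localization--globalization bookkeeping at the end can be dropped, though it causes no harm.
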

\begin{proof} First, since $R\subset S$ is $M$-crucial, $(U:S)\cap R\subseteq M$, so that $N$ lies above $M$.

Assume  that there is some $U\in[R,S]$ such that $U\subset S$ is minimal decomposed with $N\in\mathrm{V}_S((U:S))$. Then, $P:=(U:S)\in \mathrm{Max}(U)$. According to Theorem \ref{minimal}, there exists  
 $N'\in\mathrm{Max}(S),\ N'\neq N$ such that $S/N'\cong S/N$ as $U/P$-algebras. Since $M\subseteq P$, it follows that $S/N'\cong S/N$ as $R/M$-algebras.
 
 Conversely, if there exists $N'\in \mathrm{V}_S(I),\ N'\neq N$ such that $S/N'\cong S/N$ as $R/M$-algebras, \cite[Lemma 20]{DPP4} shows that  that there is some $U\in[R,S]$ such that $U\subset S$ is minimal decomposed with $(U:S)=N\cap N'$, so that $N\in\mathrm{V}_S((U:S))$.
\end{proof}   

\begin{lemma}\label{7.9} 
Let $R\subset S$
 be an $M$-crucial 
 integral FCP extension with conductor $I$ and $N\in \mathrm{V}_S(I)$. There is some $U\in[R,S]$ such that $U\subset S$ is minimal ramified with $\mathrm{V}_S((U:S))=\{N\}$  if and only if 
 $N\in\mathrm{MSupp}_S(N/MS)$.
\end{lemma}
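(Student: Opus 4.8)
The plan is to work locally at the crucial maximal ideal $M$ and use the classification of minimal integral extensions (Theorem~\ref{minimal}) together with the characterization of seminormal/$t$-closed extensions. First I would reduce to the case where $(R,M)$ is local: since $R\subset S$ is $M$-crucial and integral FCP, localizing at $M$ changes neither the property ``there exists $U$ with $U\subset S$ minimal ramified with $\mathrm{V}_S((U:S))=\{N\}$'' (minimal extensions and their crucial ideals localize well, and $[R,S]\cong[R_M,S_M]$ because an integral FCP extension is a $\mathcal B$-extension by Proposition~\ref{6.5}) nor the condition $N\in\mathrm{MSupp}_S(N/MS)$ (this is a condition on the $S$-module $N/MS$ localized at $N$, and $N$ lies over $M$). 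By Lemma~\ref{Q2} there is $n$ with $M^n\subseteq I\subseteq N$, so $N\in\mathrm V_S(I)$ forces $N\in\mathrm V_S(MS)$, i.e. $MS\subseteq N$; the condition $N\in\mathrm{MSupp}_S(N/MS)$ then just says $(MS)_N\subsetneq N_N$, i.e. $MS_N\neq NS_N$, that is $N$ does not ``collapse'' modulo $MS$ after localization.

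For the forward direction: suppose $U\in[R,S]$ with $U\subset S$ minimal ramified and $\mathrm{V}_S((U:S))=\{N\}$. Set $P:=(U:S)$. By Theorem~\ref{minimal} applied to $U\subset S$, $P\in\mathrm{Max}(U)$, there is a unique $N\in\mathrm{Max}(S)$ over $P$ with $N^2\subseteq P\subset N$, $[S/P:U/P]=2$, and $U/P\cong S/N$. The key computation is to show $MS_N\neq NS_N$: since $M\subseteq (R:S)\subseteq P$ (as $R\subset S$ is $M$-crucial, $(R:S)$ is $M$-primary, hence $M$-radical here; more directly $M\subseteq P$ because $P\cap R$ is a maximal ideal containing the $M$-crucial conductor, hence equals $M$), we get $MS\subseteq PS\subseteq N$. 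If $MS_N=NS_N$ then $PS_N\supseteq MS_N=NS_N$, forcing $P S_N=NS_N$, but $N/P$ has $R/M$-length $2$ by the ramified condition, contradiction after localizing at $N$ (where $N$ is the only maximal ideal of $S$ over $P$ so $S_N/PS_N\cong S/PS$ locally is two-dimensional over $U/P=R/M$, not zero). Hence $N\in\mathrm{MSupp}_S(N/MS)$.

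For the converse, assume $N\in\mathrm{MSupp}_S(N/MS)$, i.e. $MS_N\subsetneq NS_N$. I want to produce a $U$ with $U\subset S$ minimal ramified and $(U:S)=N$ (as an ideal of $U$, with $\mathrm V_S$ equal to $\{N\}$). The natural candidate is $U:=R+N^{?}$-type construction, or more structurally: consider $S/MS$, an Artinian $R/M$-algebra with $N/MS$ one of its maximal ideals, and by hypothesis $N/MS$ is not nilpotent-trivial at $N$, so $(N/MS)S_N$ is a nonzero nilpotent ideal of the local Artinian ring $S_N/MS_N$, whence there is a subring of index giving a ramified minimal step. Concretely I would choose a $U$ by taking, at the level of $S_N$, the pullback of $U/N' \hookrightarrow S_N/N'$ for a suitable ideal $N'$ with $N'^2\subseteq N'\subset NS_N$ and $[NS_N/N':R/M]=2$ — such $N'$ exists precisely because $NS_N\neq MS_N$ so $NS_N/MS_N$ is a nonzero module over the Artinian local ring and admits a submodule of colength $1$. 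The hard part will be this converse construction: ensuring that the ring $U$ built locally at $N$ glues to a genuine element of $[R,S]$ (not just of $[R_M,S_M]$, but one uses the $\mathcal B$-extension isomorphism for that) and that $U\subset S$ is minimal \emph{ramified} rather than inert or decomposed — this is exactly where the hypothesis $N\in\mathrm{MSupp}_S(N/MS)$ (versus the decomposed criterion of Lemma~\ref{1.132}, which involves a second maximal ideal $N'$ with $S/N'\cong S/N$) is used to rule out the other two types, via Theorem~\ref{minimal} and the residue-field bookkeeping. I expect the cleanest route is to cite \cite{DPP4} (as in Lemma~\ref{1.132}) for the existence of the ramified minimal $U$ from the module-theoretic data, reducing the work to checking the translation between $N\in\mathrm{MSupp}_S(N/MS)$ and the hypotheses of that external lemma.
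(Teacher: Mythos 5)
Your forward direction is essentially the paper's argument and is correct: for a minimal ramified $U\subset S$ with $P:=(U:S)$, one has $P\cap R=M$ (the only maximal ideal of $R$ over the conductor, by the $M$-crucial hypothesis and Lemma \ref{Q2}), hence $MS\subseteq P$, while $P$ is $N$-primary with $P\subset N$ strictly by Theorem \ref{minimal}; localizing at the unique maximal ideal $N$ over $P$ gives $(MS)_N\subseteq P_N\subsetneq N_N$, i.e. $N\in\mathrm{MSupp}_S(N/MS)$. (Two slips are harmless but should be fixed: the residual field of the ramified step is $U/P\cong S/N$, which need not equal $R/M$, and it is $S/P$, not $N/P$, that is two-dimensional over $U/P$; all you need is $P_N\neq N_N$.)

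The converse, however, is where the real content lies, and as written it has a genuine gap. Your plan is to build $U$ inside the localization $S_N$ as a pullback of a field of representatives and then ``glue''; but the order-isomorphism $[R,S]\cong[R_M,S_M]$ coming from the $\mathcal B$-extension property (Proposition \ref{6.5}) only handles localization at maximal ideals of $R$, not at a maximal ideal $N$ of $S$ (after localizing at $M$, $S_M$ may still be semilocal), so a subring of $S_N$ does not automatically descend to an element of $[R,S]$, and you do not rule out the inert/decomposed possibilities beyond saying you expect to cite \cite{DPP4}. The paper avoids localization at $N$ altogether: since $MI\subseteq I$ one reduces to $I=0$, so $R$ is Artinian local and $S$ Artinian; the hypothesis $N\in\mathrm{MSupp}_S(N/MS)$ is translated (your ``$(MS)_N\neq N_N$'' is the same statement) into: the $N$-primary component $Q$ of the primary decomposition of $MS$ in $S$ satisfies $Q\subset N$ strictly. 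One then chooses a \emph{global} $N$-primary ideal $P$ of $S$ with $Q\subseteq P\subset N$ and $P,N$ adjacent, and, following the proof of \cite[Lemma 17]{DPP4}, takes a field of representatives $K\cong S/N$ of $S/P$ containing $R/M$ and sets $U:=\psi^{-1}(K)$ for the canonical surjection $\psi:S\to S/P$. Because $M\subseteq MS\subseteq Q\subseteq P$, the image of $R$ in $S/P$ is $R/M\subseteq K$, so $U\in[R,S]$ globally, and $U\subset S$ is minimal ramified with $(U:S)=P$, whence $\mathrm V_S((U:S))=\{N\}$; no gluing and no separate exclusion of the other minimal types is needed. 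So your outline identifies the right external tool, but the construction of $U$ as an honest element of $[R,S]$ and the verification that the hypothesis $N\in\mathrm{MSupp}_S(N/MS)$ yields the required adjacent $N$-primary ideal containing $MS$ are precisely the steps you leave unproved.
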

\begin{proof}  
Since $MI\subseteq NI\subseteq I$, we may assume that $I=0$. Then, according to \cite[Theorem 4.2]{DPP2}, $R$ is an Artinian ring, as $S$ and $R\subseteq S$ is an $M$-crucial FCP integral extension. To prove the statement of the Lemma, it is enough to show that there exists some $U\in[R,S]$ such that $U\subset S$ is minimal ramified with $\mathrm{V}_S((U:S))=\{N\}$ if and only if $N\in\mathrm{MSupp}_S(N/MS)$. We use the proof of \cite[Lemma 17]{DPP4}. 

We begin to show that $N\not\in\mathrm{MSupp}_S(N/MS)$ if and only if the $N$-primary component of $MS$ in $S$ is $N$. This is equivalent to show that $N\in\mathrm{MSupp}_S(N/MS)$ if and only if the $N$-primary component of $MS$ in $S$ is different from $N$. Since $S$ is an Artinian ring, let $MS=(\cap_{i\in\Lambda}Q_i)\cap Q\ (*)$ be the primary decomposition of $MS$ into primary ideals of $S$, with $Q$ its $N$-primary component, so that $N$ is the only maximal ideal of $S$ containing $Q$. Localizing $(*)$ at $N$, it follows that $(MS)_N= Q_N$. Then, $Q=N\Leftrightarrow(MS)_N=N_N\Leftrightarrow (N/MS)_N=0\Leftrightarrow N\not\in\mathrm{MSupp}_S(N/MS)$.

Assume first that there exists some $U\in[R,S]$ such that $U\subset S$ is minimal ramified. Then, $(U:S)\in\mathrm{Max}(U)$ and is contained in a unique maximal ideal $N$ of $S$. Set  $P:=(U:S)$, which is an $N$-primary ideal of $S$ and satisfies $M\subseteq P\subset N$ because $R\subseteq S$ is $M$-crucial. It follows that $MS\subseteq P$, because $P$ is also an ideal of $S$. Assume that $MS=(\cap_{i\in\Lambda} P_i)\cap N$ is the primary decomposition of $MS$ in $S$, where $P_i$ is $N_i$-primary for each $i\in\Lambda$. Hence, $(\cap_{i\in\Lambda}P_i)\cap N\subseteq P\ (**)$, a contradiction with the uniqueness of the primary decomposition (it is enough to localize $(**)$ at $N$). Then, the $N$-primary component of $MS$ is different from $N$.

Conversely, assume that the $N$-primary component of $MS$ is different from $N$. Let $MS=(\cap_{i\in\Lambda}Q_i)\cap Q$ be the primary decomposition of $MS$ in $S$,
 where $Q_i$ is $N_i$-primary for any $i\in\Lambda$ and $Q$ is $N$-primary, with $Q\subset N$. Then, there exists an $N$-primary ideal $P$ of $S$ such that $Q\subseteq P\subset N$, where $P$ and $N$ are adjacent ideals. We use again the proof of \cite[Lemma 17]{DPP4}. Then, $S/P$ has a field of representatives $K\cong S/N$ which contains $R/M$. Let $\psi:S\to S/P$ be the canonical surjection and set $T:=\psi^{-1}(K)$. It follows that $T\subset S$ is a minimal ramified extension with $(T:S)=P$, an $N$-primary ideal of $S$. This completes the proof. 
\end{proof}

\begin{proposition}\label{7.10} An FCP extension $R\subseteq S$, such that $\overline R\neq R,S$, is pinched at $\overline R$ if and only if 
$R\subseteq S$ is an $M$-crucial extension, for some $M\in\mathrm{Max}(R)$, $\mathrm{MSupp}_{\overline R}(S/\overline R)\subseteq\mathrm{V}_{\overline R}((R:\overline R))$ and one of the following conditions holds:

\begin{enumerate}
\item $|\mathrm{MSupp}_{\overline R}(S/\overline R)|=2,\ R\subseteq\overline R$ is infra-integral and $M\overline R=N_1\cap N_2$, where $\mathrm{MSupp}_{\overline R}(S/\overline R)=\{N_1,N_2\}$;

\item ${}_{\overline R}^tR\subset\overline R$ and $\overline R\subseteq S$ are $N$-crucial extensions, where
 $\{N\}=\mathrm{MSupp}_{\overline R}(S/\overline R)=\mathrm{MSupp}_{{}_{\overline R}^tR}(\overline R/{}_{\overline R}^tR)$, $\overline R/N'\not\cong\overline R/N''$ for any distinct $N',N''\in\mathrm{V}_{\overline R}((R:\overline R))$, except for at most one pair $(N,N'),\ N'\neq N$ and, 
for any $N''\in\mathrm{V}_{\overline R}((R:\overline R))$ such that $ N''\neq N$, then  $N''\not\in\mathrm{MSupp}_{\overline R}(N''/M\overline R)$;

\item $\overline R\subseteq S$ is an $N$-crucial extension, where $\mathrm{MSupp}_{\overline R}(S/\overline R)=\{N\}$, 
$|\mathrm{V}_{\overline R}((R:\overline R))|=2$, with 
$N\in\mathrm{V}_{\overline R}((R:\overline R))$.
 Setting $\mathrm{V}_{\overline R}((R:\overline R))=\{N,N'\},\ R\subseteq\overline R$ is infra-integral and $N'\not\in\mathrm{MSupp}_{\overline R}(N'/M{\overline R})$;

\item $\overline R\subseteq S$ is crucial, and $R\subseteq\overline R$ is subintegral.
\end{enumerate}  
\end{proposition}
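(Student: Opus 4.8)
The plan is to reduce the pinching-at-$\overline R$ condition, via Proposition~\ref{1.131}, to a statement about minimal extensions $U\subset\overline R$, and then analyze which minimal extensions can sit just below $\overline R$ using the canonical decomposition $R\subseteq{}_S^+R\subseteq{}_S^tR\subseteq\overline R\subseteq S$. First I would show that if $R\subseteq S$ is pinched at $\overline R$ (and $\overline R\neq R,S$), then $R\subseteq S$ must be $M$-crucial for some $M$: indeed, if $\mathrm{MSupp}(S/R)$ had two distinct maximal ideals $M_1,M_2$, then using \cite[Lemma 1.8]{Pic 6} one can produce a minimal extension $R\subset U$ inside $\overline R$ (after localizing, since the tree structure forces the non-integral part to interleave) whose crucial ideal is, say, $M_1$, while a minimal extension living above $\overline R$ can be chosen with crucial ideal over $M_2$; these two would be incomparable elements of $[R,S]$ not both in $[R,\overline R]$ nor both in $[\overline R,S]$, contradicting pinchedness. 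Dually, the condition $\mathrm{MSupp}_{\overline R}(S/\overline R)\subseteq\mathrm{V}_{\overline R}((R:\overline R))$ is exactly the $U=R$ instance of Proposition~\ref{1.131} and must hold; conversely one checks it suffices to verify Proposition~\ref{1.131} only for $U$ with $U\subset\overline R$ minimal, and such $U$ automatically contains $(R:\overline R)$ in its conductor toward $\overline R$ once this containment holds.

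Next I would classify the minimal extensions $U\subset\overline R$ by type using the canonical decomposition. A minimal $U\subset\overline R$ is integral (since $\overline R$ is integrally closed in $S$ and $U\subseteq\overline R$), so it is inert, decomposed, or ramified; moreover its position relative to ${}_S^+R$ and ${}_S^tR$ is determined: a ramified or inert-with-nontrivial-residue minimal extension sits in $[{}_S^tR,\overline R]$ roughly speaking, a decomposed one in $[{}_S^+R,{}_S^tR]$, and a subintegral (ramified $i$-extension) one in $[R,{}_S^+R]$. Propositions~\ref{1.91} and \ref{1.9} characterize seminormality and $t$-closedness of $\overline R$ over the relevant sublayer in terms of the conductor being an irredundant intersection of maximal ideals of $\overline R$, and Lemmata~\ref{1.132} and \ref{7.9} translate the \emph{existence} of a minimal decomposed, respectively minimal ramified, $U\subset\overline R$ with a prescribed conductor-fiber at $N$ into the combinatorial conditions appearing in the four cases: for decomposed, that two distinct maximal ideals $N',N''$ of $\overline R$ over $M$ have isomorphic residue fields as $R/M$-algebras; for ramified, that $N\in\mathrm{MSupp}_{\overline R}(N/M\overline R)$. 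Case (4) is the subintegral case where no proper intermediate layer obstructs anything and $\overline R\subseteq S$ need only be crucial; cases (1)–(3) are the infra-integral/$t$-closed refinements, distinguished by whether the ``new'' minimal extensions below $\overline R$ are decomposed (case (1), forcing $|\mathrm{MSupp}_{\overline R}(S/\overline R)|=2$ and the conductor structure $M\overline R=N_1\cap N_2$), ramified only in a controlled way (case (2)), or a mixed single-decomposed situation (case (3)).

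The matching of Proposition~\ref{1.131}'s condition ``$\mathrm{MSupp}_{\overline R}(S/\overline R)\subseteq\mathrm{V}_{\overline R}((U:\overline R))$ for every minimal $U\subset\overline R$'' against the disjunction (1)–(4) is the technical heart. For each candidate minimal $U\subset\overline R$ produced by Lemma~\ref{1.132} or \ref{7.9}, its conductor $(U:\overline R)$ is an explicit intersection of one or two maximal ideals of $\overline R$ (the decomposed case gives $N\cap N'$, the ramified case gives an $N$-primary ideal whose radical is $N$), so the containment $\mathrm{MSupp}_{\overline R}(S/\overline R)\subseteq\mathrm{V}_{\overline R}((U:\overline R))$ becomes: every maximal ideal in $\mathrm{MSupp}_{\overline R}(S/\overline R)$ already appears among the one-or-two maximal ideals cut out by $(U:\overline R)$. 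Running this over \emph{all} such $U$ forces $\mathrm{MSupp}_{\overline R}(S/\overline R)$ to be small (size $1$, or size $2$ in the decomposed case) and pins down exactly the residue-field-isomorphism and $\mathrm{MSupp}_{\overline R}(N''/M\overline R)$ constraints listed. Conversely, assuming one of (1)–(4) together with the crucial hypothesis and the conductor containment, one shows every minimal $U\subset\overline R$ has $(U:\overline R)$ with $\mathrm{V}_{\overline R}((U:\overline R))\supseteq\mathrm{MSupp}_{\overline R}(S/\overline R)$, invoking Lemmata~\ref{1.132}, \ref{7.9} to enumerate the possible $U$ and Propositions~\ref{1.91}, \ref{1.9} to control how $\overline R$ sits over ${}_S^+R$ and ${}_S^tR$; then Proposition~\ref{1.131} gives pinchedness. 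The main obstacle I anticipate is the bookkeeping in case (2)–(3): carefully verifying that the ``at most one pair'' exception and the $N''\notin\mathrm{MSupp}_{\overline R}(N''/M\overline R)$ clauses are precisely what is needed to forbid \emph{all} minimal $U\subset\overline R$ (decomposed or ramified) whose conductor fails to contain the full $\mathrm{MSupp}_{\overline R}(S/\overline R)$, and conversely that they do not over-restrict; this requires a somewhat delicate localization-at-$N$ argument to compare primary decompositions of $M\overline R$ in $\overline R$, exactly as in the proof of Lemma~\ref{7.9}.
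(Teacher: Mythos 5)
Your overall strategy—reducing to Proposition~\ref{1.131} and translating the existence of minimal decomposed/ramified $U\subset\overline R$ into the residue-field and $\mathrm{MSupp}_{\overline R}(N/M\overline R)$ conditions via Lemmata~\ref{1.132} and~\ref{7.9}—is the paper's strategy. But the step where you derive that $R\subseteq S$ is $M$-crucial is wrong as written. A minimal extension $R\subset U$ with $U\subseteq\overline R$ lies in $[R,\overline R]$, and a minimal extension on top of $\overline R$ lies in $[\overline R,S]$; both belong to $[R,\overline R]\cup[\overline R,S]$, and indeed $U\subseteq\overline R\subseteq T$, so they are comparable and produce no contradiction with pinchedness, which only forbids elements \emph{incomparable to} $\overline R$. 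To salvage your idea you would have to manufacture such an incomparable element, e.g.\ by crosswise exchange (Proposition~\ref{split4}) applied to $U\subset\overline R\subset T$ when the crucial ideals contract to distinct maximal ideals of $R$, yielding $S'$ with $U\subset S'$ Pr\"ufer minimal and $S'$ incomparable to $\overline R$. The paper proceeds differently: it uses the condition of Proposition~\ref{1.131} itself, together with the explicit description of $\mathrm{V}_{\overline R}((U:\overline R))$ for each of the three types, to show that $(U:\overline R)\cap R$ is one and the same maximal ideal $M$ for \emph{every} minimal $U\subset\overline R$, and then globalizes via the $\mathcal B$-extension property of the integral extension $R\subseteq\overline R$ (if some $M'\neq M$ were in $\mathrm{MSupp}_R(\overline R/R)$, one could build a minimal $T\subset\overline R$ with $(T:\overline R)\cap R=M'$); the already-established containment $\mathrm{MSupp}_{\overline R}(S/\overline R)\subseteq\mathrm{V}_{\overline R}((R:\overline R))$ then forces $\mathrm{MSupp}_R(S/R)=\{M\}$. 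As it stands, your proposal has no valid proof of the crucial-ideal hypothesis, which all four cases depend on.

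Two further points. First, the containment $\mathrm{MSupp}_{\overline R}(S/\overline R)\subseteq\mathrm{V}_{\overline R}((R:\overline R))$ is not ``the $U=R$ instance'' of Proposition~\ref{1.131}, since that proposition quantifies only over $U$ with $U\subset\overline R$ minimal; it follows instead from $(R:\overline R)\subseteq(U:\overline R)$ for any one such $U$ (which exists because $R\neq\overline R$ and the extension has FCP). Second, your placement of the types of minimal extensions inside the canonical decomposition is garbled (minimal ramified extensions are subintegral, minimal decomposed ones are seminormal and infra-integral, minimal inert ones are t-closed, and a minimal $U\subset\overline R$ of a given type need not contain ${}_{\overline R}^+R$ or ${}_{\overline R}^tR$); the paper argues instead about which types of minimal $U\subset\overline R$ can exist according to whether ${}_{\overline R}^tR=\overline R$ or ${}_{\overline R}^+R=\overline R$. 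Finally, the substance of cases (1)--(3)---uniqueness of the decomposed $U$ and the identity $M\overline R=N_1\cap N_2$ in (1) (via seminormality and \cite[Lemma 17]{DPP4}), the crosswise-exchange argument giving $\mathrm{MSupp}_{{}_{\overline R}^tR}(\overline R/{}_{\overline R}^tR)=\{N\}$ and the ``at most one pair'' clause via Lemma~\ref{1.132} in (2), and the counting through Proposition~\ref{1.167} forcing ${}_{\overline R}^+R\subset\overline R$ to be minimal decomposed with $|\mathrm{V}_{\overline R}((R:\overline R))|=2$ in (3)---is precisely the bookkeeping you defer, so beyond the flawed cruciality step the decisive verifications of both implications remain to be done.
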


\begin{proof} We use Proposition \ref{1.131}, which says that $R\subset S$ is pinched at $\overline R$ if and only if for any $U\in[R,S]$ such that $U\subset\overline R$ is minimal, then $\mathrm{MSupp}_{\overline R}(S/\overline R)\subseteq\mathrm{V}_{\overline R}((U:\overline R))$. In the two parts of the proof, we discuss according to the type of the minimal extension  $U\subset\overline R$.

Assume first that $R\subset S$ is pinched at $\overline R$ and let $U\in[R,S]$ be such that $U\subset\overline R$ is minimal. Then $\mathrm{MSupp}_{\overline R}(S/\overline R)\subseteq\mathrm{V}_{\overline R}((U:\overline R))$. For any $N\in\mathrm{MSupp}_{\overline R}(S/\overline R)$, we have $(U:\overline R)\subseteq N$.  Since  $(R:\overline R)\subseteq(U:\overline R)\subseteq N$, then $\mathrm{MSupp}_{\overline R}(S/\overline R)\subseteq\mathrm{V}_{\overline R}((R:\overline R))$ holds. 

Now, $U\subset\overline R$ being minimal, there are at most two maximal ideals in $\overline R$ containing $(U:\overline R)$, so that $|\mathrm{MSupp}_{\overline R}(S/\overline R)|\leq 2$, with $|\mathrm{MSupp}_{\overline R}(S/\overline R)|$ $=2$ only if $U\subset\overline R$ is minimal decomposed (case (I)). This generalizes a result by Ben Nasr and Zeidi \cite[Corollary 2.10 and Remark 2.11]{BZ} gotten in the context of FCP extensions of integral domains. In case (I), let $N_1,N_2\in\mathrm{Max}(\overline R)$ be such that $(U:\overline R)=N_1\cap N_2\in\mathrm{Max}(U)$ and $\mathrm{MSupp}_{\overline R}(S/\overline R)=\{N_1,N_2\}$. This implies that $M:=(U:\overline R)\cap R=N_i\cap R\in\mathrm{MSupp}_R(\overline R/R)$. If other types of minimal extension $U\subset\overline R$ exist, we also have $M:=(U:\overline R)\cap R\in\mathrm{MSupp}_R(\overline R/R)$. If there does not exist some $U\in[R,\overline R]$ such that $U\subset\overline R$ is minimal decomposed, then, $|\mathrm{MSupp}_{\overline R}(S/\overline R)|=1$ (case (II)). Set $\{N\}:=\mathrm{MSupp}_{\overline R}(S/\overline R)$ in this case and $M:=N\cap R$.

Assume that  there exists some $U\in[R,S]$ such that $U\subset\overline R$ is minimal with $N'\in\mathrm{V}_{\overline R}((U:\overline R))$. Let $N''\in\mathrm{MSupp}_{\overline R}(S/\overline R)$, so that $N''=N_i$ for some $i=1,2$ in case (I) and $N''=N$ in case (II). There exists some $T\in[\overline R,S]$ such that $\overline R\subset T$ is minimal with crucial ideal $N''$ by \cite[Lemma 1.8]{Pic 6}. Then, $N'\cap U=(U:\overline R)\subseteq N''$. If $U\subset\overline R$ is minimal either inert or ramified, and then an $i$-extension, it follows that $N''=N'$ and $\mathrm{V}_{\overline R}((U:\overline R))=\{N''\}\subseteq\mathrm{MSupp}_{\overline R}(S/\overline R)$, so that $\mathrm{V}_{\overline R}((U:\overline R))=\mathrm{MSupp}_{\overline R}(S/\overline R)$. If $U\subset\overline R$ is minimal decomposed and $|\mathrm{MSupp}_{\overline R}(S/\overline R)|=2$, then $(U:\overline R)=N_1\cap N_2$ as we have just seen before; so that, $N'=N_i$ for some $i=1,2$. 
 Then, we still have $\mathrm{V}_{\overline R}((U:\overline R))=\mathrm{MSupp}_{\overline R}(S/\overline R)$. To conclude, $(U:\overline R)\cap R=M$ for any $U\in[R,\overline R]$ such that $U\subset\overline R$ is minimal.
 
We claim that, in any case, $|\mathrm{MSupp}_R(S/R)|=1$ with $\mathrm{MSupp}_R(S/R)=\{M\}$ where $M\in\mathrm{Max}(R)$ is such that either $M=N_i\cap R,\ i=1,2$ in case (I) or $M=N\cap R$ in case (II).  
 It is enough to show that $R\subset\overline R$ is an $M$-crucial extension because $\mathrm{MSupp}_R(S/\overline R)\subseteq \mathrm{V}_R((R:\overline R))=\mathrm{MSupp}_R(\overline R/R)$. 
 The last equality holds because $(R:\overline R)$ is the annihilator of the $R$-module $\overline R/R$ \cite[Proposition 17, page 133]{Bki A3} and $R\subset\overline R$ is integral. Assume that there exists $M'\in\mathrm{MSupp}_R(\overline R/R),\ M'\neq M$. Since $R\subseteq\overline R$ is an FCP integral extension, it is a $\mathcal B$-extension. Let $T'\in[R_{M'},\overline R_{M'}]$ be such that $T'\subset\overline R_{M'}$ is a minimal extension. In particular, $(T':\overline R_{M'})\cap R_{M'}=M'_{M'}$. There is a (unique) $T\in[R,\overline R]$ such that $T_{M'}=T'$ and $T_{M''}=\overline R_{M''}$ for any $M''\in\mathrm{Max}(R),\ M''\neq M'$. Then, $T\subset\overline R$ is minimal with $(T:\overline R)\cap R=M'$, a contradiction with $(V:\overline R)\cap R=M$ for any $V\in[R,\overline R]$ such that $V\subset\overline R$ is minimal as we have just proved above. Then, $\mathrm{MSupp}_R(\overline R/R)=\{M\}$, so that $|\mathrm{MSupp}_R(S/R)|=1$ and $R\subseteq\overline R$ is an $M$-crucial extension.

In order to find the other conditions, we make a discussion according to the cardinality of $\mathrm{MSupp}_{\overline R}(S/\overline R)$ and the canonical decomposition of $R\subset S$.

Case (I): $|\mathrm{MSupp}_{\overline R}(S/\overline R)|=2$. 
 This case happens if there exists some $U\in[R,\overline R]$ where $U\subset\overline R$ is minimal decomposed with $(U:\overline R)=N_1\cap N_2$ for some $N_1,N_2\in\mathrm{Max}(\overline R)$ such that $\mathrm{MSupp}_{\overline R}(S/\overline R)=\{N_1,N_2\}$ since $|\mathrm{MSupp}_{\overline R}(S/\overline R)|=2$. There cannot be some $V\in [R,S]$ such that $V\subset\overline R$ is either ramified or inert. Otherwise $(V:\overline R)$ is contained in only one maximal ideal of $\overline R$, a contradiction with $\mathrm{MSupp}_{\overline R}(S/\overline R)\subseteq\mathrm{V}_{\overline R}((V:\overline R))$. In particular, $R\subset\overline R$ is infra-integral since ${}_{\overline R}^tR=\overline R$. We claim that $U$ is the unique $V\in[R,\overline R]$ such that $(V:\overline R)=N_1\cap N_2$. If there exists $V\in[R,\overline R],\ V\neq U$ such that $(V:\overline R)=N_1\cap N_2$, then \cite[Proposition 5.7]{DPPS} shows that $N_1\cap N_2\in\mathrm{Max}(U\cap V)$. But $N_1\cap N_2\in\mathrm{Max}(U)$ with $(U\cap V)/(N_1\cap N_2)\cong U/(N_1\cap N_2)$ since $U\cap V\subset U$ is infra-integral, a contradiction.

 In particular, $\mathrm{V}_{\overline R}((R:\overline R))=\{N_1,N_2\}$ since $R\subseteq{}_{\overline R}^+R$ is an $i$-extension. It follows that $M\overline R$ is a radical ideal of $\overline R$ thanks to \cite[Lemma 17]{DPP4}. As $M\overline R\subseteq(U:\overline R)=N_1\cap N_2$  we get $M\overline R  =N_1\cap N_2$ and (1) is proved. 

Case (II): $|\mathrm{MSupp}_{\overline R}(S/\overline R)|=1$. Set 
 $\mathrm{MSupp}_{\overline R}(S/\overline R)=\{N\}$, so that $\overline R\subset S$ is $N$-crucial.  
 Then, any $U\in [R,S]$ such that $U\subset\overline R$ is minimal satisfies $N\in\mathrm{V}_{\overline R}((U:\overline R))$. If  
 $U\subset\overline R$ is minimal either inert or ramified, then  
 $\mathrm{V}_{\overline R}((U:\overline R))=\{N\}\ (*)$. 
  If $U\subset\overline R$ is minimal decomposed, we have necessarily  $(U:\overline R)=N\cap N'$ with $N'\in\mathrm{V}_{\overline R}((U:\overline R))$ and $(U:\overline R)=N\cap U=N'\cap U$ with $\mathrm{V}_{\overline R}((U:\overline R))=\{N,N'\}\ (**)$ and $N'\not\in\mathrm{MSupp}_{\overline R}(S/\overline R)$. In particular, $M=N\cap R$ since $\mathrm{MSupp}_R(S/\overline R)\subseteq\mathrm{MSupp}_R(S/ R)$.
    
Assume first that ${}_{\overline R}^tR\subset\overline R$, which is a t-closed extension and then an $i$-extension (Definition \ref{canonical}).

 We have $N\cap{}_{\overline R}^tR\in\mathrm{MSupp}_{{}_{\overline R}^tR}(\overline R/{}_{\overline R}^tR)$ because $(U:\overline R)=N$ for any $U\in[R,\overline R]$ such that $U\subset\overline R$ is minimal inert by $(*)$. Assume that there exists some $P\in\mathrm{MSupp}_{{}_{\overline R}^tR}(\overline R/{}_{\overline R}^tR)$ with $P\neq N\cap{}_{\overline R}^tR$. There is a unique $Q\in\mathrm{Max}(\overline R)$ lying over $P$, with $Q\neq N$, and successive applications of (CE) give some $U'\in[R,\overline R]$ such that $U'\subset\overline R$ is minimal inert with $(U':\overline R)=Q$, a contradiction by $(*)$. Then, $\mathrm{MSupp}_{{}_{\overline R}^tR}(\overline R/{}_{\overline R}^tR)=\{N\}$ since ${}_{\overline R}^tR\subset\overline R$ is t-closed, and then an $i$-extension, with $({}_{\overline R}^tR:\overline R)$ a radical ideal of $\overline R$. 

If there exists some $V\in[R,\overline R]$ such that $V\subset\overline R$ is minimal decomposed, we have $(V:\overline R)=N\cap N'$ with $N'\in\mathrm{V}_{\overline R}((V:\overline R))$ and $(V:\overline R)=N\cap V=N'\cap V$ by $(**)$. 

There is at most one $N'\in\mathrm{V}_{\overline R}((R:\overline R)),\ N'\neq N$ such that $N'\in\mathrm{V}_{\overline R}((V:\overline R))$ for some $V\in[R,S]$. Otherwise, if there exists another $N''\neq N$ corresponding to some $W\in[R,S]$ such that $W\subset\overline R$ is minimal decomposed with $(W:\overline R)=N\cap N''$, there also exists some $V'\in[R,S]$ such that $V'\subset\overline R$ is minimal decomposed with $(V':\overline R)=N'\cap N''$, because $\overline R/N'\cong\overline R/N\cong\overline R/N''$, a contradiction by Lemma \ref{1.132}. Then, there are no distinct $N',N''\neq N$ such that $\overline R/N'\not\cong\overline R/N''$.

In the same way, there cannot be some $U\in[R,S]$ such that $U\subset \overline R$ is minimal ramified with $(U:\overline R)$ contained in some $N''\neq N$ by $(*)$. According to Lemma \ref{7.9}, this is equivalent to $N''\not\in\mathrm{MSupp}_{\overline R}(N''/M\overline R)$, for any $N''\in\mathrm{V}_R((R:\overline R))$, with $N''\neq N$. In this case, (2) holds.

Assume now that ${}_{\overline R}^tR=\overline R$, so that $R\subset \overline R$ is infra-integral. Set $T:={}_{\overline R}^+R$. Assume also that $T\neq\overline R$. Set $C:=(T:\overline R)$ which is an intersection of finitely many maximal ideals of $\overline R$ by Proposition \ref{1.91}, and, more precisely, $C=\cap_{i=1}^nN_i$, for some integer $n\geq 2$, with, for instance $N_1:=N$  since $R\subseteq\overline R$ is an $M$-crucial extension.

Since $R\subseteq T$ is an $i$-extension and $R\subset\overline R$ is $M$-crucial, $C\in\mathrm{Max}(T)$ as the only maximal ideal of $T$ lying over $M$. Then, there is an order-isomorphism $[T,\overline R]\to[T_C,\overline R_C]$ given by $U\mapsto U_C$, which leads to $\overline R/C\cong\overline R_C/C_C\cong(T_C/C_C)^n\cong(T/C)^n$ by Proposition \ref{1.167} where $n:=\ell[T,\overline R]+1$. Then, $|\mathrm{V}_{\overline R}(C)|=n$. 

If $n\geq 3$, by Lemma \ref{1.132}, there exists $V\in[R,\overline R]$ such that $V\subset\overline R$ is minimal decomposed with $(V:\overline R)=N_i\cap N_j$ such that $i,j>1,\ i\neq j$, a contradiction by $(**)$. Then, $n=2$ and $T\subset\overline R$ is minimal decomposed. In particular, $(T:\overline R)=N\cap N'=N\cap T=N'\cap T$ is the only maximal ideal of $T$ lying over $M$ since $R\subseteq T$ is an $i$-extension. It follows that $\mathrm{V}_{\overline R}((R:\overline R))=\{N,N'\}$.

Assume that there exists some $W\in[R,\overline R]$ such that $W\subset\overline R$ is minimal ramified. Then, the only possible case is when $\mathrm{V}_{\overline R}((W:\overline R))=\{N\}$ by $(*)$. As $N'\cap W\neq N\cap W=(W:\overline R)$, it follows that there does not exist any $W\in[R,\overline R]$ such that $W\subset\overline R$ is minimal ramified with $(W:\overline R)\subset N'$, which is equivalent to 
 $N'\not\in\mathrm{MSupp}_{\overline R}(N'/M\overline R)$, by Lemma \ref{7.9}. In this case, (3) holds.

At last, if ${}_{\overline R}^+R=\overline R$, then $R\subset\overline R$ is subintegral, and (4) holds.

Conversely, assume that $|\mathrm{MSupp}_R(S/R)|=1$, 
  $\mathrm{MSupp}_{\overline R}(S/\overline R)\subseteq\mathrm{V}_{\overline R}((R:\overline R))$, and one of conditions (1), (2), (3) or (4) holds, where $\mathrm{MSupp}_R(S/R)=:\{M\}$.
  
Let $U\in[R,S]$ be such that $U\subset\overline R$ is minimal. Then, $(U:\overline R)\cap R=M$, because $(U:\overline R)\cap R\in\mathrm{MSupp}_R(S/R)$, and $(R:\overline R)\subseteq M$. We are going to show that $\mathrm{MSupp}_{\overline R}(S/\overline R)\subseteq\mathrm{V}_{\overline R}((U:\overline R))$ in each case,  according to 
Proposition \ref{1.131}.

(1) $|\mathrm{MSupp}_{\overline R}(S/\overline R)|=2,\ R\subseteq\overline R$ is infra-integral, with $\mathrm{MSupp}_{\overline R}(S/\overline R)$ $=\{N_1,N_2\}$ and $M\overline R=N_1\cap N_2$. 
 
It follows that $N_1$ and $N_2$ are the only maximal ideals of $\overline R$ lying over $M$. Since $R\subseteq\overline R$ is infra-integral, $U\subset\overline R$ cannot be inert. If $U\subset\overline R$ is minimal decomposed, it implies that $(U:\overline R)=N_1\cap N_2$, since $M\overline R\subseteq(U:\overline R)$ and $M\overline R=N_1\cap N_2$, which is a radical ideal of $\overline R$. Then, there cannot be any $U\in[R,S]$ such that $U\subset\overline R$ is minimal ramified according to \cite[Lemma 17]{DPP4}. It follows that, $\mathrm{MSupp}_{\overline R}(S/\overline R)\subseteq\mathrm{V}_{\overline R}((U:\overline R))$.

(2) $\mathrm{MSupp}_{\overline R}(S/\overline R)=\mathrm{MSupp}_{{}_{\overline R}^tR}(\overline R/{}_{\overline R}^tR)=\{N\},\ \overline R/N'\not\cong\overline R/N''\ (\dag)$ for any distinct $N',N''$ $\in\mathrm{V}_{\overline R}((R:\overline R))$, except for at most one pair $(N,N'),\ N'\neq N$ and, for any $N''\in\mathrm{V}_{\overline R}((R:\overline R))$ such that $ N''\neq N$, then $N''\not\in\mathrm{MSupp}_{\overline R}(N''/M\overline R)$.

Since $\mathrm{MSupp}_{{}_{\overline R}^tR}(\overline R/{}_{\overline R}^tR)=\{N\}$ and ${}_{\overline R}^tR\subset\overline R$ is a t-closed extension, $N=({}_{\overline R}^tR:\overline R)$ is both a maximal ideal of ${}_{\overline R}^tR$ and of $\overline R$ by Proposition \ref{1.9}.
 Moreover, $\mathrm{MSupp}_R(S/R)=\{M\}$ and $\mathrm{MSupp}_{{}_{\overline R}^tR}(\overline R/{}_{\overline R}^tR)=\{N\}$ implies that $M=N\cap R$. If $U\subset\overline R$ is minimal inert, then $(U:\overline R)=N$. If $U\subset\overline R$ is minimal decomposed, then $(U:\overline R)=N\cap N'$ by $(\dag)$, and there is only at most one $N'\neq N$ such that $\overline R/N\cong\overline R/N'$. Moreover, there are no distinct $N',N''\neq N$ such that $\overline R/N'\cong\overline R/N''$. Then, $(U:\overline R)\subseteq N$. At last, if $U\subset\overline R$ is minimal ramified, as $N''\not\in\mathrm{MSupp}_{\overline R}(N''/M\overline R)$, Lemma \ref{7.9} shows that $N$ is the only maximal ideal of $\overline R$ containing $(U:\overline R)$. In any case, we have $\mathrm{MSupp}_{\overline R}(S/\overline R)\subseteq\mathrm{V}_{\overline R}((U:\overline R))$.

(3) $|\mathrm{MSupp}_{\overline R}(S/\overline R)|=1,\ |\mathrm{V}_{\overline R}((R:\overline R))|=2$, with $\mathrm{MSupp}_{\overline R}(S/\overline R)\subset\mathrm{V}_{\overline R}((R:\overline R))$. Setting
$\mathrm{MSupp}_{\overline R}(S/\overline R)=\{N\}$ and $\mathrm{V}_{\overline R}((R:\overline R))=\{N,N'\},\ R\subseteq\overline R$ is infra-integral and $N'\not\in\mathrm{MSupp}_{\overline R}(N'/M{\overline R})$.

First, $U\subset\overline R$ cannot be minimal inert. Since ${}_{\overline R}^+R\subset\overline R$ is minimal decomposed, because $|\mathrm{V}_{\overline R}((R:\overline R))|=2$, ${}_{\overline R}^+R$ is the only $U\in[R,\overline R]$ such that $U\subset\overline R$ is minimal decomposed. We have necessarily $M=N\cap R$ because $\mathrm{MSupp}_R(S/\overline R)\subseteq\mathrm{MSupp}_R(S/ R)$.  
Then, $\mathrm{V}_{\overline R}((R:\overline R))=\{N,N'\}$ implies that $({}_{\overline R}^+R:\overline R)=N\cap N'$, so that $({}_{\overline R}^+R:\overline R)\subset N$. If $U\subset\overline R$ is minimal ramified, as $N'\not\in\mathrm{MSupp}_{\overline R}(N'/M{\overline R})$, Lemma \ref{7.9} shows that $N$ is the only maximal ideal of $\overline R$ containing $(U:\overline R)$. In any case, we have $\mathrm{MSupp}_{\overline R}(S/\overline R)\subseteq\mathrm{V}_{\overline R}((U:\overline R))$.

(4) $|\mathrm{MSupp}_{\overline R}(S/\overline R)|=1$ and $R\subseteq\overline R$ is subintegral.

First, $U\subset\overline R$ cannot be minimal either inert or decomposed. As in (3), we have necessarily $M=N\cap R$, where $\mathrm{MSupp}_R(S/\overline R)=:\{N\}$, because $\mathrm{MSupp}_R(S/\overline R)\subseteq\mathrm{MSupp}_R(S/ R)$. Since $R\subset\overline R$ is an $i$-extension, $N$ is the only maximal ideal of $\overline R$ lying above $M$. Then, $\mathrm{V}_{\overline R}((U:\overline R))=\{N\}$ and $\mathrm{MSupp}_{\overline R}(S/\overline R)\subseteq\mathrm{V}_{\overline R}((U:\overline R))$.

To sum up, in any case, $\mathrm{MSupp}_{\overline R}(S/\overline R)\subseteq\mathrm{V}_{\overline R}((U:\overline R))$, which shows that $R\subset S$ is pinched at $\overline R$.
\end{proof}

We deduce from Ayache's result \cite[Theorem 26]{A1} and Proposition \ref{7.10}, the following  Corollary:

\begin{corollary}\label{7.11}\cite[Theorem 26]{A1} Let $(R,M)$ be a local integral domain with quotient field $K$ such that $R\subset K$ has FCP. The following conditions are equivalent:
\begin{enumerate}
\item $R\subset K$ is pinched at $\overline R$;

\item  $R\subseteq\overline R$ is almost unbranched;

\item either $R\subset\overline R$ is infra-integral, ${}_{\overline R}^+R\subset\overline R$ is minimal decomposed and $R\subset\overline R$ is pinched at ${}_{\overline R}^+R$, or $R\subset \overline R$ is an i-extension.
\end{enumerate}
\end{corollary}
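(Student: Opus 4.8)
The plan is to read off the equivalence $(1)\Leftrightarrow(3)$ from Proposition~\ref{7.10} applied to $S:=K$, and to quote $(1)\Leftrightarrow(2)$ from Ayache's Theorem~26~\cite{A1}; together these give the Corollary. First I would clear the degenerate cases. Since $R\subset K$ is proper and $K$ is a field, $\overline R\neq K$ (if each $1/r$, $r\in R\setminus\{0\}$, were integral over $R$ then $R$ would be a field). And if $\overline R=R$, then $[R,\overline R[=\emptyset$ and $\overline R$ is an $i$-extension, so $(1)$, $(2)$ and $(3)$ all hold; hence we may assume $R\subsetneq\overline R\subsetneq K$, the range in which Proposition~\ref{7.10} applies.

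Next I would match the data to the hypotheses of Proposition~\ref{7.10}. As $R$ is local, $R\subset K$ is $M$-crucial. As $\overline R$ is integrally closed in $K$ with $R\subset K$ of FCP, $\overline R\subseteq K$ is an FCP Pr\"ufer extension by \cite[Proposition 6.9]{DPP2}; since $K$ is its quotient field, $\overline R$ is a Pr\"ufer domain, semilocal (FCP makes $\mathrm{MSupp}_{\overline R}(K/\overline R)$ finite) with $\mathrm{MSupp}_{\overline R}(K/\overline R)=\mathrm{Max}(\overline R)$. Moreover $(R:\overline R)$ is a proper ideal lying in every maximal ideal $N$ of $\overline R$: otherwise some $s\in(R:\overline R)\setminus N$ would be a unit of $R$ (because $s\in R$ and $N\cap R=M$), whence $\overline R=s^{-1}(s\overline R)\subseteq R$. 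Therefore $\mathrm{V}_{\overline R}((R:\overline R))=\mathrm{Max}(\overline R)=\mathrm{MSupp}_{\overline R}(K/\overline R)$, so the standing hypotheses of Proposition~\ref{7.10} hold automatically, and $(1)$ is equivalent to one of the four cases of Proposition~\ref{7.10}.

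Now I would simplify those four cases. In cases $(2),(3),(4)$ the ring $\overline R$ is local, hence a valuation domain (local Pr\"ufer), so $R\subset\overline R$ is an $i$-extension (INC for integral extensions together with the comparability of all primes of a valuation domain); conversely an $i$-extension forces $\overline R$ local. Case $(3)$ cannot occur, since it requires $|\mathrm{V}_{\overline R}((R:\overline R))|=2$ whereas $\overline R$ local gives $\le 1$; and in cases $(2)$ and $(4)$ every subsidiary condition mentioning \emph{distinct} elements of $\mathrm{V}_{\overline R}((R:\overline R))$ (the residue-field isomorphisms, the ramification test of Lemma~\ref{7.9}) is vacuous, that set being a singleton. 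Thus case $(4)$ reduces to ``$R\subseteq\overline R$ is subintegral'' and case $(2)$ to ``$\overline R$ is local and ${}_{\overline R}^tR\subsetneq\overline R$''; together $(2)$ or $(4)$ holds iff $\overline R$ is local, i.e. iff $R\subset\overline R$ is an $i$-extension --- precisely the second alternative of $(3)$ (if $\overline R$ is local, then either ${}_{\overline R}^tR=\overline R$, so $R\subseteq\overline R$ is infra-integral and hence, being an $i$-extension, subintegral --- case $(4)$; or ${}_{\overline R}^tR\subsetneq\overline R$ --- case $(2)$).

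Finally I would match case $(1)$ with the first alternative of $(3)$. Case $(1)$ reads $\mathrm{Max}(\overline R)=\{N_1,N_2\}$, $R\subseteq\overline R$ infra-integral, $M\overline R=N_1\cap N_2$. Here ${}_{\overline R}^+R$ is local (subintegral over the local ring $R$) and, since ${}_{\overline R}^+R\neq\overline R$, the conductor argument above applied to ${}_{\overline R}^+R\subseteq\overline R$ together with seminormality (Proposition~\ref{1.91}) gives $({}_{\overline R}^+R:\overline R)=N_1\cap N_2\in\mathrm{Max}({}_{\overline R}^+R)$; by Theorem~\ref{minimal} (conductor maximal, decomposed residual behaviour forced by infra-integrality and the Chinese Remainder Theorem) ${}_{\overline R}^+R\subset\overline R$ is minimal decomposed. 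Conversely the first alternative of $(3)$ gives $({}_{\overline R}^+R:\overline R)=N_1\cap N_2$, hence $\mathrm{Max}(\overline R)=\{N_1,N_2\}$, and $M\overline R=N_1\cap N_2$ by \cite[Lemma 17]{DPP4} --- that is, case $(1)$. The remaining point, that in this situation $R\subset\overline R$ is genuinely \emph{pinched} at ${}_{\overline R}^+R$ (not merely that ${}_{\overline R}^+R\subset\overline R$ is minimal decomposed), and thereby the full equivalence with $(2)$, is exactly \cite[Theorem 26]{A1}. This last implication is the main obstacle: either one cites Ayache's argument, or one checks directly that every $T\in[R,\overline R[$ lies in ${}_{\overline R}^+R$ by distinguishing according as $T$ is local or has two maximal ideals and invoking the structure of the subintegral and the seminormal parts of $R\subseteq\overline R$.
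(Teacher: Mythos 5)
Your overall route is the paper's: quote Ayache's \cite[Theorem 26]{A1} for $(1)\Leftrightarrow(2)$, and reduce $(1)\Leftrightarrow(3)$ to the four cases of Proposition~\ref{7.10}, whose standing hypotheses you correctly check are automatic here; your reduction of cases (2)--(4) of that proposition to ``$\overline R$ local, i.e. $R\subset\overline R$ an $i$-extension'', with case (3) impossible, is exactly the paper's argument (with a welcome explicit valuation-domain justification for ``local $\Rightarrow$ $i$-extension''). The problem is the remaining case, and you name it yourself without closing it: from case (1) of Proposition~\ref{7.10} you obtain ``$R\subseteq\overline R$ infra-integral and ${}_{\overline R}^+R\subset\overline R$ minimal decomposed'', but not that $R\subset\overline R$ is pinched at ${}_{\overline R}^+R$, which is part of the first alternative of (3). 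Deferring this to \cite[Theorem 26]{A1} does not settle it as stated: that theorem, as used for $(1)\Leftrightarrow(2)$, only says every $T\in[R,\overline R[$ is local, and to conclude $T\subseteq{}_{\overline R}^+R$ one still needs the extra step ``$T$ local, $R\subseteq T$ infra-integral and all primes over $M$ maximal $\Rightarrow$ $R\subseteq T$ is an $i$-extension, hence subintegral, hence $T\subseteq{}_{\overline R}^+R$'', which you do not supply. The paper closes this point by a different mechanism, taken from inside the proof of Proposition~\ref{7.10}: when $M\overline R=N_1\cap N_2$ there is no minimal inert (infra-integrality) nor minimal ramified (Lemma~\ref{7.9}) extension $U\subset\overline R$, and the minimal decomposed one with conductor $N_1\cap N_2$ is unique, so every $T\in[R,\overline R[$ lies below it and it coincides with ${}_{\overline R}^+R$; this is what the terse clause ``since there is no minimal ramified extension $U\subset\overline R$'' in the paper's proof encodes.

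A secondary point in the same case: in your converse (first alternative of (3) $\Rightarrow$ case (1)), the equality $M\overline R=N_1\cap N_2$ is not a formal consequence of ``infra-integral and ${}_{\overline R}^+R\subset\overline R$ minimal decomposed'' alone. The cited \cite[Lemma 17]{DPP4} (equivalently Lemma~\ref{7.9}) yields radicality of $M\overline R$ only after minimal ramified subextensions $U\subset\overline R$ have been excluded, and excluding them is precisely where the hypothesis ``pinched at ${}_{\overline R}^+R$'' must be invoked: any minimal $U\subset\overline R$ then lies in $[R,\overline R[\subseteq[R,{}_{\overline R}^+R]$, forcing $U={}_{\overline R}^+R$, which is decomposed. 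So both directions of the case-(1) matching hinge on the pinching statement you left open; that is the genuine gap to fill.
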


\begin{proof} (1) $\Leftrightarrow$ (2): By \cite[Theorem 26]{A1}, $R\subset K$ is pinched at $\overline R$ if and only if $T$ is local for any $T\in[R,\overline R[$, which is equivalent to $R\subseteq\overline R$ is almost unbranched. 

(1) $\Leftrightarrow$ (3): Since $(R:\overline R)\subseteq M$, we have $\mathrm{MSupp}_{\overline R}(K/\overline R)=\mathrm{V}_{\overline R}((R:\overline R))=\mathrm{Max}(\overline R)$. Now, using Proposition \ref{7.10}, $R\subset K$ is pinched at $\overline R$ if and only if $|\mathrm{MSupp}_R(K/R)|=1,\ \mathrm{MSupp}_{\overline R}(K/\overline R)\subseteq\mathrm{V}_{\overline R}((R:\overline R))$, conditions that always hold and one of  conditions (1)--(4) of Proposition \ref{7.10} holds: so that, (1) is equivalent to one of conditions (1)--(4) of Proposition \ref{7.10}. Then, it is enough to get conditions equivalent to conditions  (1)--(4) under the assumptions of the Corollary.

 Condition (1): $|\mathrm{MSupp}_{\overline R}(K/\overline R)|=2,\ R\subseteq\overline R$ is infra-integral and $M\overline R=N_1\cap N_2$, where $\mathrm{MSupp}_{\overline R}(K/\overline R)=\{N_1,N_2\}$. These conditions are equivalent to $R\subseteq\overline R$ is infra-integral, ${}_{\overline R}^+R\subset\overline R$ is minimal decomposed and $R\subset\overline R$ is pinched at ${}_{\overline R}^+R$, since there is no minimal ramified extension $U\subset \overline R$.
   
If one of Conditions (2)--(4) holds, $R\subset\overline R$ is always an $i$-extension since $|\mathrm{MSupp}_{\overline R}(K/\overline R)|$ $=1$ implies that $\overline R$ is a local ring. 

Conversely, assume that $R\subset\overline R$ is an $i$-extension. It follows that $\overline R$ is a local ring, so that $|\mathrm{MSupp}_{\overline R}(K/\overline R)|$ $=1$. Set $\mathrm{MSupp}_{\overline R}(K/\overline R)=\{N\}$. Then $\overline R\subset K$ is $N$-crucial.

If ${}_{\overline R}^tR\neq\overline R$, then 
$|\mathrm{MSupp}_{{}_{\overline R}^tR}(\overline R/{}_{\overline R}^tR)|=1$, with $\mathrm{MSupp}_{{}_{\overline R}^tR}(\overline R/{}_{\overline R}^tR)=\{N\}$, so that $|\mathrm{V}_{\overline R}((R:\overline R))|=1$. We get that Condition (2) of Proposition \ref{7.10} is obviously satisfied: $\overline R/N'\not\cong\overline R/N''$ since there are not distinct $N',N''\in\mathrm{V}_{\overline R}((R:\overline R))$, and, also any $N''\in\mathrm{V}_{\overline R}((R:\overline R))$ such that $ N',N''\neq N$.
 
If ${}_{\overline R}^tR=\overline R$, Condition (3) of Proposition \ref{7.10} cannot happen because $\overline R$ is a local ring, contradicting $|\mathrm{V}_{\overline R}((R:\overline R))|=2$. But Condition (4) of Proposition \ref{7.10} holds because
$|\mathrm{MSupp}_{\overline R}(K/\overline R)|=1$ and $R\subseteq\overline R$ is subintegral.

To sum up, $R\subset K$ is pinched at $\overline R$ if and only if either $R\subset\overline R$ is infra-integral, ${}_{\overline R}^+R\subset\overline R$ is minimal decomposed and $R\subset\overline R$ is pinched at ${}_{\overline R}^+R$ or $R\subset\overline R$ is an $i$-extension.
\end{proof}

 In \cite{DJ}, Dobbs and Jarboui introduced the notion of AV-ring pairs. If $R\subseteq S$ is a ring extension, $(R,S)$ is an {\em almost valuation ring pair} (or AV-ring pair) if, for any $T\in[R,S]$ and any $a,b\in T$, there exists an integer $n\geq 1$ such that either $a^nT\subseteq b^nT$ or $b^nT\subseteq a^nT$. They proved that when $(R,S)$ is an AV-ring pair such that $R\subseteq  \overline R$ has FCP, then $R\subset S$ is pinched at $\overline R$  \cite[Theorem 5.6]{DJ}. This leads us to the following example built from \cite[Example 3.3(a)]{DJ} by Dobbs-Jarboui.

\begin{example}\label{7.13} Let $F$ be a field with characteristic a prime integer $p$. Let $S:=F[[X]]$ be the ring of formal power series in the indeterminate $X$ and $K$ its quotient field. Set $R:=F[[X^p]]$. According to \cite[Example 3.3(a)]{DJ}, $R\subseteq K$ is an AV-ring pair and $S$ is a valuation domain, so that $S=\overline R$. Assume that $p>3$ and let $n$ be an integer such that $2<n<p$. Set $T:R+X^nS$. It follows that $T\subseteq K$ is an AV-ring pair such that $C:=(T:S)=X^nS$, and $S$ is also the integral closure $\overline T$ of $T$ in $K$. Since $T/C\cong R/(R\cap C)=F$, then $T/C$ is a field and $S$ is a finitely generated $T$-module with basis $\{1,\ldots,X^{n-1}\}$. Therefore, $T\subseteq S$ is an FCP integral extension. Then, \cite[Theorem 5.6]{DJ} shows that $T\subset K$ is pinched at $S$. We recover in this example case (4) of Proposition \ref{7.10} since $T\subset S$ is subintegral.
\end{example}

 \begin{remark}\label{AV pair}We can note that an AV-ring pair is quasi-Pr\"ufer. Let $R\subset S$ be an AV-ring pair. We may assume that $\overline R\neq R,S$ because if $\overline R=R,S$, the result is obvious. Then, $\overline R\subset S$ is integrally closed and also an AV-ring pair. It follows from \cite[ Proposition 2.2]{DJ} that $\overline R\subset S$ is a normal pair, or equivalently, a Pr\"ufer extension, so that $R\subset S$ is quasi-Pr\"ufer. Moreover, $R$ is unbranched in $S$ since $\overline R$ is local, thanks to \cite[Theorem 2.6]{DJ}. If, in addition, we assume that $R\subset S$ has FCP, with $\overline R\neq R,S$, we claim that $R\subset S$ is not almost-Pr\"ufer. Otherwise, $R\subset S$ splits at $\widetilde R\neq R$. Then, applying again  \cite[Theorem 2.6]{DJ} to the Pr\"ufer extension (then a normal pair) $R\subset\widetilde R$, we get that $R$ is a local ring, in contradiction with Theorem \ref{7.6} since $\mathrm{MSupp}(S/\widetilde{R})=\mathrm{MSupp}(\overline{R}/R)$ and $\mathrm{MSupp}(S/\widetilde{R})\cap\mathrm{MSupp}(\widetilde{R}/R)=\emptyset=\mathrm{MSupp}(\overline R/R)\cap\mathrm{MSupp}(\widetilde{R}/R)=\mathrm{Max}(R)$.
\end{remark}

In the context of  FIP extensions, the concepts of split extensions and  pinched extensions are mutually exclusive as it is explained in the next remark.

\begin{lemma}\label{7.14} Let $R\subset S$ be an FIP extension and $T\in]R,S[$. Then $R\subset S$ is pinched at $T$ if and only if  $|[R,S]|=|[R,T]|+|[T,S]|-1$.
\end{lemma}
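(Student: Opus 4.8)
The plan is to reduce the whole statement to an elementary counting argument, using the FIP hypothesis only to guarantee that all the relevant sets are finite. First I would record two observations that hold for \emph{any} extension and any $T\in\,]R,S[$: the union $[R,T]\cup[T,S]$ is always contained in $[R,S]$, and the intersection $[R,T]\cap[T,S]$ equals $\{T\}$, because any $U$ with $R\subseteq U\subseteq T$ and $T\subseteq U\subseteq S$ must coincide with $T$. Since $R\subset S$ has FIP, each of $[R,S]$, $[R,T]$, $[T,S]$ is finite, so inclusion--exclusion gives
\[
|[R,T]\cup[T,S]|=|[R,T]|+|[T,S]|-|[R,T]\cap[T,S]|=|[R,T]|+|[T,S]|-1 .
\]

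For the direct implication, if $R\subset S$ is pinched at $T$ then by definition $[R,S]=[R,T]\cup[T,S]$, and the displayed identity immediately yields $|[R,S]|=|[R,T]|+|[T,S]|-1$. Conversely, assume $|[R,S]|=|[R,T]|+|[T,S]|-1$; combining this with the displayed identity gives $|[R,S]|=|[R,T]\cup[T,S]|$. As $[R,T]\cup[T,S]\subseteq[R,S]$ and both are finite sets, equality of cardinalities forces $[R,S]=[R,T]\cup[T,S]$, which is precisely the assertion that $R\subset S$ is pinched at $T$.

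There is no genuine obstacle here; the only points requiring a moment's care are invoking FIP so that ``a subset of the same finite cardinality is the whole set'' is legitimate, and noting that the hypothesis $T\in\,]R,S[$ is what makes $[R,T]\cap[T,S]$ nonempty, hence of cardinality exactly one (which is what produces the $-1$ in the formula).
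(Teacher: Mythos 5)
Your proof is correct and follows essentially the same route as the paper: both rest on the observations that $[R,T]\cup[T,S]\subseteq[R,S]$ and $[R,T]\cap[T,S]=\{T\}$, and then use finiteness (FIP) to convert the cardinality identity into the set equality defining a pinched extension.
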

\begin{proof} Since $[R,T]\cup[T,S]\subseteq [R,S]$ with $[R,T]\cap[T,S]=\{T\}$, we always have $|[R,S]|\geq|[R,T]|+|[T,S]|-1$. Then, $|[R,S]|>|[R,T]|+|[T,S]|-1$ if and only if there exists some $U\in[R,S]\setminus ([R,T]\cup[T,S])$, so that $|[R,S]|=|[R,T]|+|[T,S]|-1$ if and only if there does not exist any $U\in[R,S]\setminus([R,T]\cup[T,S])$ if and only if $R\subset S$ is pinched at  $T$.
\end{proof}

\begin{proposition}\label{7.15} Let $R\subset S$ be an FIP extension. Then, the following statements hold:
\begin{enumerate}
\item $|[R,\overline R]|+|[\overline R,S]|-1\leq|[R,S]|\leq|[R,\overline R]||[\overline R,S]|$;

\item $|[R,\overline R]|+|[\overline R,S]|-1=|[R,S]|$ if and only if $R\subset S$ is  pinched at $\overline R$;
\item $[R,S]|=|[R,\overline R]||[\overline R,S]|$ if and only if $R\subset S$   splits at $\overline R$.
\end{enumerate}
\end{proposition}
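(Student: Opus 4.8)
The plan is to obtain all three statements by assembling results already proved: the injection of Theorem~\ref{4.10}, the almost-Pr\"ufer characterizations of Corollary~\ref{4.6} and Proposition~\ref{split}, and the counting lemma~\ref{7.14}, together with the trivial inclusion $[R,\overline R]\cup[\overline R,S]\subseteq[R,S]$.

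First I would prove (1). The upper bound $|[R,S]|\leq|[R,\overline R]|\,|[\overline R,S]|$ is precisely the last assertion of Theorem~\ref{4.10}, since the map $\psi\colon T\mapsto(T\cap\overline R,\overline RT)$ embeds $[R,S]$ into $[R,\overline R]\times[\overline R,S]$. For the lower bound, observe that $[R,\overline R]$ and $[\overline R,S]$ are subsets of $[R,S]$ with $[R,\overline R]\cap[\overline R,S]=\{\overline R\}$, whence $|[R,S]|\geq|[R,\overline R]\cup[\overline R,S]|=|[R,\overline R]|+|[\overline R,S]|-1$; this is exactly the inequality appearing in the proof of Lemma~\ref{7.14}.

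Next, (2) is immediate from Lemma~\ref{7.14} applied with $T:=\overline R$: the extension is pinched at $\overline R$ if and only if $|[R,S]|=|[R,\overline R]|+|[\overline R,S]|-1$, i.e.\ equality holds in the left-hand inequality of (1). Finally, (3) follows by chaining two equivalences. Corollary~\ref{4.6}, in its FIP form, gives that $|[R,S]|=|[R,\overline R]|\,|[\overline R,S]|$ holds if and only if $R\subset S$ is almost-Pr\"ufer (equivalently, $\psi$ is onto $[R,\overline R]\times[\overline R,S]$); and Proposition~\ref{split} gives that an FCP extension is almost-Pr\"ufer if and only if it splits at $\overline R$. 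Composing these yields (3).

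I do not expect a genuine obstacle: the proposition is a synthesis of previously established facts, and the only point to be careful about is the degenerate case $\overline R\in\{R,S\}$, in which one of $[R,\overline R]$, $[\overline R,S]$ is a singleton and all three displayed quantities coincide with $|[R,S]|$, so that (1), (2) and (3) hold trivially (the extension being then pinched at, and split at, $\overline R$ in the vacuous sense).
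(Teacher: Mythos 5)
Your proposal is correct and follows essentially the same route as the paper: the lower bound from $[R,\overline R]\cup[\overline R,S]\subseteq[R,S]$ meeting in $\overline R$, the upper bound from Theorem~\ref{4.10}, part (2) from Lemma~\ref{7.14} with $T:=\overline R$ (the degenerate case $\overline R\in\{R,S\}$ being treated separately, as in the paper's Remark~\ref{7.16}(2)), and part (3) by combining Corollary~\ref{4.6} with Proposition~\ref{split}. No gaps.
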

\begin{proof} (1) The first inequality comes from $[R,\overline R]\cup[\overline R,S]\subseteq[R,S]$, with $\overline R\in[R,\overline R]\cap[\overline R,S]$. The second inequality comes from Theorem \ref{4.10}.

(2) is Lemma \ref{7.14} applied to $T:=\overline R$ if $\overline R\neq R,S$. Otherwise, if $\overline R\in\{R,S\}$, the result is obvious (see Remark \ref{7.16} (2)).

(3) comes from Corollary \ref{4.6} and Proposition \ref{split}.
\end{proof}

\begin{remark}\label{7.16} (1) The three statements of Proposition \ref{7.15} show that when $R\subset S$ is an FIP extension, the least value of $|[R,S]|$ is gotten when $R\subset S$ is  pinched at $\overline R$, and its greatest value is gotten when $R\subset S$ is  split at $\overline R$. This means that the fact that $R\subset S$ is either pinched at $\overline R$ or  split at $\overline R$ leads to extreme situation.

(2) We have seen in Proposition \ref{1.145} that a splitter of $[R,S]$ is trivial when $R\subset S$ is pinched at some $U\in]R,S[$. In particular, if $R\subset S$ has FIP, the only case where $R\subset S$ is both pinched and split at $\overline R$ is when $|[R,\overline R]|+|[\overline R,S]|-1=|[R,\overline R]||[\overline R,S]|$. An easy calculation shows that this equation  is equivalent to either $\overline R=R$ or $\overline R=S$, which recovers Proposition \ref{1.145}.

(3) We have seen in Remark \ref{4.7} that there exists an FIP extension $R\subset S$ and $T\in]R,S[$ such that $|[R,S]|=|[R,T]||[T,S]|$ although $R\subset S$ is not split at $T$.

(4) As for the length, for an arbitrary FCP extension $R\subset S$, we always have $\ell[R,S]=\ell[R,\overline R]+\ell[\overline R,S]$ by \cite[Theorem 4.11]{DPP3}.
\end{remark}

\end{document}